\let\pa=\partial
\let\e=\varepsilon
\let\f=\frac
\let\ve=\varepsilon
\let\na=\nabla
\let\tre=\triangleq
\def\R{\mathbb{R}}
\def\no{\noindent}
\def\eqdef{\buildrel\hbox{\footnotesize def}\over =}
\newcommand{\beq}{\begin{equation}}
\newcommand{\eeq}{\end{equation}}
\newcommand{\ben}{\begin{eqnarray}}
\newcommand{\een}{\end{eqnarray}}
\newcommand{\beno}{\begin{eqnarray*}}
\newcommand{\eeno}{\end{eqnarray*}}
\renewcommand{\theequation}{\thesection.\arabic{equation}}
\newtheorem{Theorem}{Theorem}[section]
\newtheorem{Definition}[Theorem]{Definition}
\newtheorem{Proposition}[Theorem]{Proposition}
\newtheorem{Lemma}[Theorem]{Lemma}
\newtheorem{Remark}[Theorem]{Remark}
\begin{document}

\title[Zero-viscosity limit of the Navier-Stokes equations]
{On the zero-viscosity limit of the Navier-Stokes equations in the half-space}

\author{Mingwen Fei}
\address{School of  Mathematics and Computer Sciences, Anhui Normal University, Wuhu, China}
\email{ahnufmwen@126.com}

\author{Tao Tao}
\address{School of  Mathematical Sciences, Peking University, Beijing 100871, China}
\email{taotao@amss.ac.cn}

\author{Zhifei Zhang}
\address{School of  Mathematical Sciences, Peking University, Beijing 100871, China}
\email{zfzhang@math.pku.edu.cn}

\date{\today}
\maketitle

\renewcommand{\theequation}{\thesection.\arabic{equation}}
\setcounter{equation}{0}

\begin{abstract}
We consider the zero viscosity limit of the incompressible Navier-Stokes equations with non-slip boundary condition in the half-space
for the initial vorticity located away from the boundary. By using the vorticity formulation and Cauchy-Kowaleskaya theorem, Maekawa proved the local in time convergence of the Navier-Stokes equations in the half- plane to the Euler equations outside a boundary layer and to the Prandtl equations in the boundary layer. In this paper, we develop the direct energy method to generalize Maekawa's result to the half-space.
\end{abstract}

\numberwithin{equation}{section}

\section{Introduction}

In this paper, we are concerned with the zero-viscosity limit of the incompressible Navier-Stokes equations in the half-space $\R^3_+$:
\begin{eqnarray}
\left \{
\begin {array}{ll}
\partial_tu^{\varepsilon}+u^{\varepsilon}\cdot\na_xu^{\varepsilon}+v^{\varepsilon}\partial_yu^{\varepsilon}+\na_xp^{\varepsilon}=\varepsilon^2
\Delta u^{\varepsilon},\\[3pt]
 \partial_tv^{\varepsilon}+u^{\varepsilon}\cdot\na_xv^{\varepsilon}+v^{\varepsilon}\partial_yv^{\varepsilon}+\partial_yp^{\varepsilon}=\varepsilon^2
\Delta v^{\varepsilon},\\[3pt]
 \na_x\cdot u^{\varepsilon}+ \partial_yv^{\varepsilon}=0,\\[3pt]
  (u^{\varepsilon}, v^{\varepsilon})(t,x,0)=(0,0).\label{ins}
\end{array}
\right.
\end{eqnarray}
Here and in what follows, $(x,y)\in\mathbb{R}^{2}\times\mathbb{R}_+$ and $\na_x=(\partial_{x_1},\partial_{x_2})$, $u^{\varepsilon}=(u^{\varepsilon}_1,u^{\varepsilon}_2)$, $\varepsilon^2$ is the viscosity coefficient, $( u^{\varepsilon},v^{\varepsilon})$ and $p^{\varepsilon}$ denote the velocity field and the pressure respectively.

In the absence of the boundary, the Navier-Stokes equations indeed converge to the Euler equations
\begin{align}\label{equ:Euler}
\left\{
\begin{aligned}
&\pa_t u^{e}+ u^{e}\cdot\na_x u^{e}+v^{e}\pa_y u^{e}+\na_x p^{e}=0,\\
&\pa_t v^{e}+u^{e}\cdot\na_x v^{e}+v^{e}\pa_y v^{e}+\pa_y p^{e} =0,\\
&\na_x\cdot u^{e}+\pa_y v^{e}=0.
\end{aligned}
\right.
\end{align}
This problem has been well studied in various functional settings \cite{K1, Swann, CW, Mar, Mas}.

In the presence of the boundary, the zero-viscosity limit will become very complicated due to the possible appearance of boundary layer. For the Navier slip boundary condition
\beno
v^{\varepsilon}=0,\quad \pa_yu^{\varepsilon}=0\quad\text{on}\quad y=0,
\eeno
the boundary layer is weak. In such case, the limit  from the Navier-Stokes equations to the Euler equations
was justified by Xiao and Xin for the half-space \cite{XX} and by Rousset and Masmoudi \cite{MR} for general domain,
see \cite{IP, IS, WXZ} and references therein for more relevant results. 
For the non-slip boundary condition, the boundary layer is strong. In 1904, Prandtl introduced the boundary layer theory in \cite{Pr}. Using a formal boundary layer expansion
\ben\label{formal expan}
 \left\{
 \begin{array}{l}
 u^{\e}(t,x,y) =u^{e}(t,x,y)+ u^{p}(t,x,\f{y}{\e})+O(\e),\\
 v^{\e}(t,x,y)= v^{e}(t,x,y)+\e v^{p}(t,x,\f{y}{\e})+O(\e),
 \end{array}\right.
 \een
  he derived the Prandtl boundary layer equation
 \begin{eqnarray}\label{equ:P}
\left\{\begin{aligned}
&\partial_t u+u\cdot\na_x u+v\pa_y u+\pa_xp=\pa^2_{y}u,\\
&\na_x\cdot u+\pa_y v=0,\\
&u|_{y=0}=v|_{y=0}=0,\quad\lim_{y\rightarrow+\infty}u(t,x,y)=U(t,x).
\end{aligned}\right.
\end{eqnarray}
Up to now, the justification of this formal boundary expansion is still a challenging problem.

The first step toward this problem is to deal with the well-posedness of the Prandtl equation. Initiated by Oleinik and Samokhin \cite{OS}, the well-posed problem was well understood for the monotonic data in Sobolev spaces \cite{XZ, AW, MW, GD} and general analytic data  \cite{SC1,LCS,GM,ZZ}.

The first rigorous verification of the Prandtl boundary layer theory was achieved in the analytic setting by Sammartino and Caflisch \cite{SC2}(see also \cite{WWZ} for a proof based on direct energy method). In the case when the domain and the initial data have a circular symmetry,
the convergence was justified in \cite{LMT, MT}. Guo and Nguyen \cite{GN} justify the zero-viscosity limit of steady Navier-Stokes equations over a moving plate.
Initiated by Kato \cite{K2},  there are many works devoted to the conditional convergence \cite{Ke1, Ke2, WX, CKV}.

Recently, Maekawa \cite{M}  justified the zero-viscosity limit for the initial vorticity located away from the boundary in the half-plane.  A very interesting point is that this kind of data is only analytic near the boundary.
Intuitively, this seems enough to exclude the instability of  boundary layer.
However,  the  proof in \cite{M} used another mechanism in a crucial way
: weak interaction between the outer vorticity and the inner vorticity.
\smallskip

The goal of this paper is two fold. The first one is to generalize Maekawa's result to the half space $\R^3_+$.  In $\R^3_+$,  the data with vorticity located away from the boundary is not analytic near the boundary.
However, we find that the data is still analytic in the tangential direction near the boundary. Indeed, the tangential analyticity is enough to ensure the well-posedness of the Prandtl equation \cite{LCS,ZZ}. The second one is to develop a direct energy method for the zero-viscosity limit problem. The proof in \cite{SC2, M} is based on the Cauchy-Kowaleskaya theorem, where the representation formula of the solution was used in a crucial way. In particular, the representation formula of the vorticity is used in \cite{M}. So, this method seems difficult to apply to the zero-viscosity limit problem in general physical domain. While, energy method may be applicable for the case of general domain.

For the simplicity, we consider the initial data of  the form
\beno
u^{\e}(0,x,y)=u_0(x,y),\quad v^\e(0,x,y)=v_0(x,y),
\eeno
which satisfies
\ben\label{ass:initial}
\nabla_x\cdot u_0+\pa_yv_0=0,\quad u_0(x,0)=0, \quad v_0(x,0)=0,
\een
and  the initial vorticity $\omega_0=\text{curl}(u_0,v_0)$ satisfies
\begin{align}\label{e:initial vorticity assumption}
 2d_0\tre\text{dist}\big(\text{supp}\omega_0, \{y=0\}\big)>0.
\end{align}
Without loss of generality, we take $d_0=1$.

To state our main result, we introduce the following Prandtl system
\begin{align}\label{e:prandtl equation}
\left\{
\begin{aligned}
&\partial_tu^p-\partial_{zz}u^p+u^p\cdot\na_xu^e(t,x,0)+\big(u^e(t,x,0)+u^p\big)\cdot\na_xu^p\\
&\qquad\qquad\qquad\qquad+\Big(v^p-\int_0^\infty\partial_xu^p(t,x,z)dz+ z\partial_yv^e(t,x,0)\Big)\partial_zu^p=0,\\[3pt]
&\na_x\cdot u^p+\partial_zv^p=0,\\[3pt]
&u^p(0,x,y)=0,\\[3pt]
&\lim\limits_{z\rightarrow \infty}(u^p, v^p)(t,x,z)=0,\quad u^p(t,x,0)=-u^e(t,x,0),
\end{aligned}
\right.
\end{align}
where $(u^e, v^e, p^e)$ is the solution of  the Euler equations (\ref{equ:Euler}).
\smallskip

Now, our main result is stated as follows.

\begin{Theorem}\label{thm:main}
There exist $T> 0$ and $C> 0$ independent of $\varepsilon$ such that for any $(u_0,v_0)\in H^{30}(\R^3_+)$ satisfying \eqref{ass:initial}-(\ref{e:initial vorticity assumption}), there exists a unique solution $(u^\varepsilon, v^\varepsilon)$ of the Navier-Stokes equations (\ref{ins}) in $[0,T]$, which satisfies
\begin{align}
\sup_{0\leq t\leq T}\big\|u^\varepsilon(t,x,y)-u^e(t,x,y)-u^p(t,x,\frac{y}{\varepsilon})\big\|_{L^2\cap L^\infty(\R^3_+)}\leq C\varepsilon,\nonumber\\
\sup_{0\leq t\leq T}\big\|v^\varepsilon(t,x,y)-v^e(t,x,y)-\varepsilon v^p(t,x,\frac{y}{\varepsilon})\big\|_{L^2\cap L^\infty(\R^3_+)}\leq C\varepsilon.\nonumber
\end{align}
\end{Theorem}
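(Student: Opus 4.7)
The plan is to construct a sufficiently accurate approximate solution, reduce Theorem~\ref{thm:main} to a stability estimate for the remainder, and close that estimate by a direct energy method in a tangentially analytic functional framework. For the approximate solution I would take
\[
(u^a,v^a) = \bigl(u^e + u^p + \varepsilon u^{(1)},\; v^e + \varepsilon v^p + \varepsilon v^{(1)}\bigr),
\]
where $(u^e,v^e,p^e)$ solves the Euler system \eqref{equ:Euler}, $(u^p,v^p)$ solves the Prandtl system \eqref{e:prandtl equation}, and $(u^{(1)},v^{(1)})$ is a higher-order corrector chosen so that $(u^a,v^a)$ satisfies the no-slip, divergence-free and outer-matching conditions exactly and produces a residual of order $\varepsilon^{3/2}$ when inserted into \eqref{ins}. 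Hypothesis \eqref{e:initial vorticity assumption} guarantees, via Biot--Savart, that $u^e$ is analytic in the tangential variable $x$ on $\{0\le y\le d_0\}$ for a short time, which in turn supplies tangentially analytic outer data for \eqref{e:prandtl equation} and places us inside the tangentially analytic Prandtl well-posedness framework of \cite{LCS,ZZ}.

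Writing $u^\varepsilon = u^a + R^u$ and $v^\varepsilon = v^a + R^v$, one obtains a perturbed Navier--Stokes system for $(R^u,R^v)$ with vanishing initial data, no-slip boundary condition and $O(\varepsilon^{3/2})$ forcing. I propose to control the remainder in a scale of Gevrey-$1$ tangentially analytic norms of the form
\[
\|f\|_{\rho}^{2}=\sum_{k\ge 0}\frac{\rho^{2k}}{(k!)^{2}}(k+1)^{2\alpha}\|\partial_{x}^{k}f\|_{H^{N}_{y}}^{2},
\]
with radius $\rho(t)$ decreasing linearly in time, so that the proof reduces to a Cauchy--Kovalevskaya-type inequality
\[
\tfrac{d}{dt}\|R\|_{\rho}^{2} + |\dot\rho|\,\|R\|_{\rho,\mathrm{an}}^{2} + \varepsilon^{2}\|\partial_y R\|_{\rho}^{2} \;\lesssim\; \|R\|_{\rho}^{2} + \varepsilon^{3}.
\]
Gronwall then gives $\|R\|_{\rho}\lesssim\varepsilon$ on a uniform interval $[0,T]$, and the $L^{2}\cap L^{\infty}$ conclusion of Theorem~\ref{thm:main} follows from Sobolev embedding in $y$ together with the $\varepsilon^{3/2}$ accuracy of the ansatz.

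The main obstacle is the linearized convection term $R^v\,\partial_y u^p(\cdot/\varepsilon)$, in which $\partial_y u^p=O(\varepsilon^{-1})$ inside the boundary layer; this is the analogue of the Orr--Sommerfeld unstable mode that forced Maekawa to adopt the vorticity formulation in \cite{M}. The essential cancellation for an energy treatment is the divergence-free representation $R^v(t,x,y)=-\int_0^y \nabla_x\!\cdot\!R^u(t,x,z)\,dz$: substituting it and integrating by parts in $y$ recasts the offending contribution as
\[
\int R^v\,\partial_y u^p\,R^u\,dx\,dy = \int(\nabla_x\!\cdot\!R^u)\,u^p\,R^u\,dx\,dy - \int R^v\,u^p\,\partial_y R^u\,dx\,dy,
\]
with $u^p=O(1)$ and integrands now involving either a tangential derivative of $R^u$ or a normal derivative $\partial_y R^u$ absorbable by the viscous dissipation. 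The gain of one $\partial_x$ is exactly what the analytic-radius loss $|\dot\rho|\,\|R\|_{\rho,\mathrm{an}}^{2}$ can pay for, and the same cancellation has to be executed at every tangential level $\partial_x^{k}R$, which will require Alinhac-type sharp commutator bounds for Gevrey-$1$ sequences together with the $\varepsilon$-smallness and wall-vanishing of $v^a$. The last delicate item is the pressure, recovered from the Neumann problem $-\Delta p^\varepsilon=\mathrm{div}\,\mathrm{div}(u^\varepsilon\!\otimes\!u^\varepsilon)$ with $\partial_y p^\varepsilon|_{y=0}=\varepsilon^{2}\partial_y^{2}v^\varepsilon|_{y=0}$; its boundary coupling produces a nonlocal feedback on $R$ that must be controlled by elliptic estimates within the same tangentially analytic scale.
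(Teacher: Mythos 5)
Your overall architecture (approximate solution, remainder equation, tangentially analytic energy with shrinking radius paying for the derivative loss in $v^R\partial_y u^p$) matches the paper's strategy, and your integration-by-parts identity for the singular term is essentially the cancellation the paper exploits via $R^v/y\sim -\nabla_x\cdot R^u$ and Hardy's inequality. However, there are two concrete gaps in the functional framework that would prevent the estimate from closing as written. First, your norm $\sum_k \rho^{2k}(k!)^{-2}(k+1)^{2\alpha}\|\partial_x^k f\|^2_{H^N_y}$ imposes tangential analyticity \emph{uniformly in $y$}, but the data is only $H^{30}$ and the Euler flow (hence the coefficients $u_a$, $\nabla u_a$ in the remainder equation, and therefore the remainder itself) is only Sobolev away from the boundary; the assumption \eqref{e:initial vorticity assumption} yields tangential analyticity only on a strip near $y=0$. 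A globally analytic norm of $R$ is therefore infinite and cannot be propagated. One must localize the analytic weight in $y$ (the paper uses $\Phi=\phi(t,y)\langle\xi\rangle$ with $\phi\le 0$ for $y\ge y(t)$) and couple it to a plain tangential Sobolev energy in the bulk, with a matching argument across $y=y(t)$.

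Second, the normal regularity $H^N_y$ inside your norm is incompatible with the boundary layer: each $\partial_y$ applied to $u^p(\cdot,y/\varepsilon)$ costs $\varepsilon^{-1}$, so one cannot propagate an $O(\varepsilon^{3/2})$ bound on $\partial_y^j R$ for $j\ge 1$, while with $N=0$ you lose both the absorption of $\int R^v u^p\,\partial_y R^u$ (the prefactor $\varepsilon^{-2}$ from Cauchy--Schwarz against the dissipation $\varepsilon^2\|\partial_y R\|^2$ must be compensated by the vanishing of $R^v$ at $y=0$ \emph{and} by a quantitative bound on $\partial_yR^u$ in terms of tangential quantities) and the Sobolev embedding in $y$ needed for the $L^\infty$ conclusion. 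This is precisely why the paper replaces $\partial_y$ by the conormal derivative $Z=\varphi(y)\partial_y$ and supplements the velocity estimate with a vorticity estimate: the identity $\partial_y u=w_h^{\perp}+\nabla_x v$ recovers the one normal derivative, but the vorticity itself must then be controlled, which forces the decomposition $w=w_e+w_p$ with distinct weights $e^{\Psi_e}$, $e^{\Psi_p}$ (to handle $\widetilde v_a\partial_y w$), the derivation of the vorticity boundary condition through the Dirichlet--Neumann operator, and the observation that $w_3$ and $\varphi w$ carry an extra factor of $\varepsilon$. None of this machinery is replaceable by the Gevrey commutator estimates you invoke, so the proposal as it stands does not close.
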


Let us present a sketch of the proof and ideas.

\begin{itemize}

\item[1.] Construction of the approximate solution $U^a=(u^a, v^a)$ of the system (\ref{ins}) by using the asymptotic matched expansion method.

\item[2.] The error $U^R$ between the solution and the approximate solution satisfies
$$\partial_tU^R-\varepsilon^2\triangle U^R+U^R\cdot\nabla U^a+U^a\cdot\nabla U^R+U^R\cdot\nabla U^R+\nabla P^R=R.$$
In this equation, the main trouble is to control the linear terms $U^R\cdot\nabla U^a$ and $U^a\cdot\nabla U^R$, while the nonlinear term $U^R\cdot\nabla U^R$ is in fact easy to control. One of the most singular terms is
$$v^R\partial_y(u^p(t,x,\frac{y}{\varepsilon}))=\frac{v^R}{y}z\partial_zu^p(t,x,\frac {y}{\varepsilon})\sim -\na_x\cdot u^R\partial_zu^p(t,x,\frac {y}{\varepsilon}).
$$
So, this term will lead to the loss of one horizontal derivative in the process of energy estimates. To remedy the loss of the derivative, it is natural to work in the analytic setting.
In our case, we will use the tangential analyticity to recover one derivative loss near the boundary, and use the exponential decay in $z$ of $u^p(t,x,z)$ away from the boundary.

\item[4.] In order to avoid the singularity like $\pa_yu^p(t,x,\f y \varepsilon)\sim \f 1 \varepsilon$, it is better to work in the conormal Sobolev spaces with $\pa_y$ replaced by  conormal derivative $y\pa_y$. The disadvantage is that we have no control on the regularity in $y$ variable near the boundary.

\item[5.] To gain one derivative in $y$ variable, we need to use the vorticity formulation of the error equation, which takes the form
\begin{eqnarray}
&\partial_tw-\varepsilon^2\triangle w+\widetilde{U}_a\cdot\nabla w+\widetilde{U}\cdot\nabla w_a+\widetilde{U}\cdot\nabla w
-w_{a}\cdot\nabla U-w\cdot\nabla U_a-w\cdot\nabla U\nonumber\\
&=-{\rm curl}(R_h,R_v)-M,\nonumber
\end{eqnarray}
where the boundary condition of the vorticity can be determined by introducing the Dirichlet-Neumann operator.

\item[6.] In the vorticity formulation, one of trouble terms is $\widetilde{v}_a\pa_yw.$ To handle it, we need to decompose the vorticity into two parts: Euler part $w_e$ and Prandtl part $w_p$. The Euler part has the exponential decay in $\varepsilon$ near the boundary, and  the Prandtl part has the exponential decay in $\f y \varepsilon$.

\item[7.] The third component of the vorticity has better behaviour. This observation is crucial to close our estimate.

\item[8.] The most subtle task is to construct a suitable energy functional to reveal all mechanism such as the analyticity near the boundary,
the exponential decay in $\varepsilon$ of $w_e$ near the boundary and
the exponential decay in $\f y \varepsilon$ of $w_p$. Some ideas are motivated by \cite{CGP, PZ}.
\end{itemize}

The paper is organized as follows. In Section 2, we construct the approximate solution by using the matched asymptotic expansion method. In Section 3, we derive the error equation and give a decomposition of vorticity formulation of the error equation. Section 4 is devoted to the functional framework and some product estimates. In Section 5, we construct the energy functional and prove Theorem \ref{thm:main} under some assumptions. Section 6-Section 12 is devoted to the key energy estimates in the analytic setting and Sobolev setting for the velocity and the vorticity. Finally, we present the well-posedness of the Euler system and the Prandtl system in the appendix.

\section{Construction of the approximate solution}

In this section, we use the asymptotic matched method to construct the approximate solution.

\subsection{Outer(Euler) expansions }

Away from the boundary, we construct the approximate solution by the following expansions
\begin{align*}
&u^{\varepsilon}(t,x,y)=u_e^{(0)}(t,x,y)+\varepsilon u_e^{(1)}(t,x,y)+\cdots,\\&
v^{\varepsilon}(t,x,y)=v_e^{(0)}(t,x,y)+\varepsilon v_e^{(1)}(t,x,y)+\cdots,\\&
p^{\varepsilon}(t,x,y)=p_e^{(0)}(t,x,y)+\varepsilon p_e^{(1)}(t,x,y)+\cdots.
\end{align*}
By substituting the above expansions into (\ref{ins}) and matching the (leading) zeroth order terms, we find that $(u_e^{(0)},v_e^{(0)},p_e^{(0)})$ should satisfy the Euler equations
\begin{eqnarray}\label{e:Euler equation}
\left \{
\begin {array}{ll}
\partial_tu_e^{(0)}+u_e^{(0)}\cdot\na_xu_e^{(0)}+v_e^{(0)}\partial_yu_e^{(0)}+\na_xp_e^{(0)}=0,\\[3pt]
 \partial_tv_e^{(0)}+u_e^{(0)}\cdot\na_xv_e^{(0)}+v_e^{(0)}\partial_yv_e^{(0)}+\partial_yp_e^{(0)}=0,\\[3pt]
 \na_x\cdot u_e^{(0)}+ \partial_yv_e^{(0)}=0,
\end{array}
\right.
\end{eqnarray}
which will be equipped with the boundary condition
\begin{align*}
v_e^{(0)}(t,x,0)=0,\ \ t\geq0, \ x\in\mathbb{ R}^2,
\end{align*}
and the initial condition
\begin{align*}
u_e^{(0)}(0,x,y)=u_0(x,y),  \ v_e^{(0)}(0,x,y)=v_0(x,y),\ (x,y)\in\mathbb{R}^2\times \mathbb{R}_+.
\end{align*}

By matching the $\varepsilon$-order terms, we find that $(u_e^{(1)},v_e^{(1)},p_e^{(1)})$ should satisfy the linearized Euler equations
\begin{eqnarray}\label{e:linearized Euler equation}
\left \{
\begin {array}{ll}
\partial_tu_e^{(1)}+u_e^{(1)}\cdot\na_xu_e^{(0)}+v_e^{(1)}\partial_yu_e^{(0)}+u_e^{(0)}\cdot\na_xu_e^{(1)}+v_e^{(0)}\partial_yu_e^{(1)}
+\na_xp_e^{(1)}=0,\\[3pt]
 \partial_tv_e^{(1)}+u_e^{(1)}\cdot\na_xv_e^{(0)}+v_e^{(1)}\partial_yv_e^{(0)}+u_e^{(0)}\cdot\na_xv_e^{(1)}+v_e^{(0)}\partial_yv_e^{(1)}
 +\partial_yp_e^{(1)}=0,\\[3pt]
 \na_x\cdot u_e^{(1)}+ \partial_yv_e^{(1)}=0,\\[3pt]
 (u_e^{(1)}, v_e^{(1)})(0,x,y)=(0, 0).
\end{array}
\right.
\end{eqnarray}
Here the boundary condition on $v_e^{(1)}$ is determined by $v_p^{(1)}$
$$v_e^{(1)}|_{y=0}=-v_p^{(1)}|_{y=0}.$$

\subsection{Boundary(Prandtl) layer  expansions }

Near the boundary,  we will make the boundary layer(Prandtl layer) expansion. For this,  we introduce the scaled(Prandtl) variable  $z=\f y \ve\in[0,+\infty)$ and  write
\begin{align}
&u^{\varepsilon}(t,x,y)=u_P^{(0)}(t,x,y,z)+\varepsilon u_P^{(1)}(t,x,y,z)+\cdots,\nonumber\\&
v^{\varepsilon}(t,x,y)=v_P^{(0)}(t,x,y,z)+\varepsilon v_P^{(1)}(t,x,y,z)+\cdots,\nonumber\\&
p^{\varepsilon}(t,x,y)=p_P^{(0)}(t,x,y,z)+\varepsilon p_P^{(1)}(t,x,y,z)+\cdots,\nonumber
\end{align}
where for every $i\in\{1,2,\cdots\}$,
\begin{align}
&u_P^{(i)}(t,x,y,z)=u_e^{(i)}(t,x,y)+u_p^{(i)}(t,x,z),
\nonumber\\& v_P^{(i)}(t,x,y,z)=v_e^{(i)}(t,x,y)+v_p^{(i)}(t,x,z),
\nonumber\\& p_P^{(i)}(t,x,y,z)=p_e^{(i)}(t,x,y)+p_p^{(i)}(t,x,z).\nonumber
\end{align}
The matched boundary condition requires that
\begin{align}\label{e:matched boundary}
u_p^{(i)}(t,x,z)\rightarrow 0, \ \ v_p^{(i)}(t,x,z)\rightarrow 0, \ \ p_p^{(i)}(t,x,z)\rightarrow 0,\ \ \ \text{as} \ z\rightarrow+\infty.
\end{align}
While, the boundary condition of $(u^\varepsilon, v^\varepsilon)$ on $y=0$ requires that
\begin{align}
u_p^{(i)}(t,x,0)=-u_e^{(i)}(t,x,0),\ \ v_e^{(i)}(t,x,0)=-v_p^{(i)}(t,x,0),\ i=0,1,\cdots.\label{ep:boundary}
\end{align}

To derive the equation of $(u_p^{(i)}, v_p^{(i)}, p_p^{(i)})$, we put the expansions into (\ref{ins}) and then put the terms with the same order in $\varepsilon$ together.

First of all,  we deduce from the $\varepsilon^{-1}$-order terms  and boundary condition (\ref{e:matched boundary}) that
\begin{align}
v_p^{(0)}=0,\quad p_p^{(0)}=0.\nonumber
\end{align}
Then, collecting $\varepsilon^0$-th term of the $u^\varepsilon$ equation, divergence free condition and boundary condition, we obtain
\begin{eqnarray}\label{e:prandtl equation}
\left \{
\begin {array}{ll}
&\partial_tu_p^{(0)}-\partial_{zz}u_p^{(0)}+u_p^{(0)}\cdot\na_xu_e^{(0)}(t,x,0)+\big(u_p^{(0)}+u_e^{(0)}(t,x,0)\big)\cdot\na_xu_p^{(0)}\\
&\qquad\qquad\qquad\qquad+\big(v_p^{(1)}+v_e^{(1)}(t,x,0)+ z\partial_yv_e^{(0)}(t,x,0)\big)\partial_zu_p^{(0)}=0,\\[3pt]
&\na_x\cdot u_p^{(0)}+\partial_zv_p^{(1)}=0,\\[3pt]
&u_p^{(0)}(0,x,y)=0,\\[3pt]
&\lim\limits_{z\rightarrow +\infty}(u_p^{(0)}, v_p^{(1)})(t,x,z)=0,\\[3pt]
&u_p^{(0)}(t,x,0)=-u_e^{(0)}(t,x,0).
\end{array}
\right.
\end{eqnarray}
While, using $v_p^{(0)}=0$ and boundary condition (\ref{e:matched boundary}),  collecting $\varepsilon^0$-order term of the $v^\varepsilon$ equation, we obtain
$$p_p^{(1)}=0.$$
\begin{Remark}
We set
\begin{align}
&\widetilde{u}_p^{(0)}(t,x,z)\tre u_p^{(0)}(t,x,z)+u_e^{(0)}(t,x,0),\nonumber
&\widetilde{v}_p^{(1)}(t,x,z)\tre v_p^{(1)}(t,x,z)+v_e^{(1)}(t,x,0)+ z\partial_yv_e^{(0)}(t,x,0).\nonumber
\end{align}
Then, by Bernoulli law,
\begin{align}
\partial_tu_e^{(0)}(t,x,0)+u_e^{(0)}(t,x,0)\cdot\na_xu_e^{(0)}(t,x,0)+\na_x p_e^{(0)}(t,x,0)=0,\nonumber
\end{align}
we arrive at
\begin{eqnarray}
\left \{
\begin{array}{lll}
&\partial_t\widetilde{u}_p^{(0)}-\partial_{zz}\widetilde{u}_p^{(0)}+\widetilde{u}_p^{(0)}\cdot\na_x\widetilde{u}_p^{(0)}+\widetilde{v}_p^{(1)}\partial_z\widetilde{u}_p^{(0)}
+\na_x p_e^{(0)}(t,x,0)=0,\\[3pt]
&\na_x\cdot\widetilde{u}_p^{(0)}+\partial_z\widetilde{v}_p^{(1)}=0,\\[3pt]
&\widetilde{u}_p^{(0)}(0,x,z)=u_e^{(0)}(0,x,0),\\[3pt]
&\lim\limits_{z\rightarrow +\infty}\widetilde{u}_p^{(0)}(t,x,z)=u_e^{(0)}(t,x,0),\\[3pt]
&\widetilde{u}_p^{(0)}(t,x,0)=0,\ \ \widetilde{v}_p^{(1)}(t,x,0)=0.\nonumber
\end{array}
\right.
\end{eqnarray}
This is just the Prandtl equation.
\end{Remark}

Finally, we collect the $\varepsilon$-order terms that
\begin{align}\label{e:linearized prandtl equation}
&\partial_tu_p^{(1)}-\partial_{zz}u_p^{(1)}+\big(u_p^{(0)}+u_e^{(0)}(t,x,0)\big)\cdot\na_xu_p^{(1)}+\big(v_p^{(1)}+v_e^{(1)}(t,x,0)+z\partial_yv_e^{(0)}(t,x,0)\big)\partial_zu_p^{(1)}
\nonumber\\&\qquad+u_p^{(1)}\cdot\na_x\big(u_p^{(0)}+u_e^{(0)}(t,x,0)\big)+\big(u_e^{(1)}(t,x,0)+z\partial_yu_e^{(0)}(t,x,0)\big)\cdot\na_xu_p^0\nonumber\\
&\qquad +\Big(v_p^{(2)}-v_p^{(2)}(t,x,0)+z\partial_yv_e^{(1)}(t,x,0)+\frac12z^2\partial_{yy}v_e^{(0)}(t,x,0)\Big)\partial_zu_p^{(0)}\nonumber\\
&=-\Big(u_p^{(0)}z\cdot\na_x\partial_{y}u_e^{(0)}(t,x,0)+u_p^{(0)}\cdot\na_xu_e^{(1)}(t,x,0)+v_p^{(1)}\partial_yu_e^{(0)}(t,x,0)\Big),
\end{align}
with the initial condition $u_p^{(1)}(0,x,z)=0$ and the boundary condition
$$u_p^{(1)}(t,x,0)=-u_e^{(1)}(t,x,0),\quad \lim\limits_{z\rightarrow+\infty} u_p^{(1)}(t,x,z)=0.$$
Here $v_p^{(2)}$ is determined by
$$v_p^{(2)}(t,x,z)=\int_z^{+\infty} \na_x\cdot u_p^{(1)}(t,x,z')dz'.$$\\
Moreover, the pressure $p_p^{(2)}$ is determined by
$$p_p^{(2)}(t,x,z)=-\int_z^{+\infty}\mathcal{P}_2(t,x,z')dz',$$
where
\begin{align}
&\mathcal{P}_2= \partial_{zz}v_p^{(1)}-\partial_tv_p^{(1)}-u_e^{(0)}(t,x,0)\cdot\na_xv_p^{(1)}-u_p^{(0)}\cdot\big(\na_xv_e^{(1)}(t,x,0)
+\na_xv_p^{(1)}\big)-v_p^{(1)}\partial_yv_e^{(0)}(t,x,0)
\nonumber\\&\qquad-\big(v_e^{(1)}(t,x,0)+v_p^{(1)}\big)\partial_zv_p^{(1)}
-z\partial_{y}\na_xv_e^{(0)}(t,x,0)\cdot u_p^{(0)}-z\partial_{y}v_e^{(0)}(t,x,0)\partial_zv_p^{(1)}.\nonumber
\end{align}

\begin{Remark}
These equations can be solved in the following way
$$(u_e^{(0)},v_e^{(0)},p_e^{(0)})\rightarrow (u_p^{(0)},v_p^{(1)})\rightarrow (u_e^{(1)},v_e^{(1)},p_e^{(1)})\rightarrow (u_p^{(1)},v_p^{(2)},p_p^{(2)}). $$
\end{Remark}

\subsection{Construction of the approximate solution}

Let us define the approximate solution ($u_a^{\varepsilon},v_a^{\varepsilon},p_a^{\varepsilon}$) as follows
\begin{align*}
&u_a^{\varepsilon}(t,x,y)\eqdef\sum\limits_{i=0}^{1}\varepsilon^iu_e^{(i)}(t,x,y)+\sum\limits_{i=0}^{1}\varepsilon^iu_p^{(i)}\Big(t,x,\frac{y}{\varepsilon}\Big),\\&
v_a^{\varepsilon}(t,x,y)\eqdef\sum\limits_{i=0}^{1}\varepsilon^iv_e^{(i)}(t,x,y)+\sum\limits_{i=1}^{2}\varepsilon^iv_p^{(i)}\Big(t,x,\frac{y}{\varepsilon}\Big),\\&
p_a^{\varepsilon}(t,x,y)\eqdef\sum\limits_{i=0}^{1}\varepsilon^ip_e^{(i)}(t,x,y)+\varepsilon^2p_p^{(2)}\Big(t,x,\frac{y}{\varepsilon}\Big).
\end{align*}
We set
\ben\label{def:f}
f(t,x)\tre\int_{0}^\infty\partial_xu_p^{(1)}(t,x,z)dz.
\een
In view of the process of the asymptotic expansions, a straightforward computation gives
\begin{eqnarray}
\left \{
\begin {array}{ll}
\partial_tu_a^{\varepsilon}+u_a^{\varepsilon}\cdot\na_xu_a^{\varepsilon}+(v_a^{\varepsilon}-\varepsilon^2f(t,x)e^{-y})\partial_yu_a^{\varepsilon}
+\na_xp_a^{\varepsilon}-\varepsilon^2
\Delta u_a^{\varepsilon}
=-R_{e,h}-R_{p,h}\tre-R_h,\\[3pt] \partial_tv_a^{\varepsilon}+u_a^{\varepsilon}\cdot\na_xv_a^{\varepsilon}+(v_a^{\varepsilon}-\varepsilon^2f(t,x)e^{-y})\partial_yv_a^{\varepsilon}
+\partial_yp_a^{\varepsilon}-\varepsilon^2
\Delta v_a^{\varepsilon}
=-R_{e,v}-R_{p,v}\tre -R_v,\\[3pt]
 \na_x\cdot u_a^{\varepsilon}+\partial_yv_a^{\varepsilon}=0,\\[3pt]
(u_a^\varepsilon, v_a^\varepsilon)(0,x,y)=(u_0(x,y),v_0(x,y)),\\[3pt]
(u_a^\varepsilon, v_a^\varepsilon)(t,x,0)=(0,\varepsilon^2f(t,x)),\label{approximate-equation}
\end{array}
\right.
\end{eqnarray}
where
\begin{align}\label{d:difinition on error term}
-R_{e,h}&=\varepsilon^2\Big(u^{(1)}_e\cdot\na_xu^{(1)}_e+v_e^{(1)}\partial_yu^{(1)}_e-fe^{-y}(\partial_yu^{(0)}_e+\varepsilon\partial_yu^{(1)}_e)\Big)
-\varepsilon^2\triangle (u^{(0)}_e+\varepsilon u^{(1)}_e),
\nonumber\\
-R_{e,v}&=\varepsilon^2\Big(u^{(1)}_e\cdot\na_xv^{(1)}_e+v^{(1)}_e\partial_yv^{(1)}_e-fe^{-y}(\partial_yv^{(0)}_e+\varepsilon\partial_yv^{(1)}_e)\Big)
-\varepsilon^2\triangle (v^{(0)}_e+\varepsilon v^{(1)}_e),\nonumber
\end{align}
and
\begin{align}
-R_{p,h}&=\varepsilon^2\Big(u_e^{(1)}\cdot\na_xu_p^{(1)}+u_p^{(1)}\cdot\na_xu_e^{(1)}+u_p^{(1)}\cdot\na_xu_p^{(1)}
+v_p^{(1)}\partial_yu_e^{(1)}+v_p^{(2)}(\partial_yu_e^{(0)}+\varepsilon \partial_yu_e^{(1)})\nonumber\\
&\qquad-fe^{-y}\partial_zu_p^{(1)}+v_p^{(2)}\partial_zu_p^{(1)}\Big)+\varepsilon\Big((u_e^{(0)}-u_e^{(0)}(t,x,0))\cdot\na_xu_p^{(1)}+(\na_xu_e^{(1)}
-\na_xu_e^{(1)}(t,x,0))\cdot u_p^{(0)}\nonumber\\
&\qquad+\big(u_e^{(1)}-u_e^{(1)}(t,x,0)\big)\cdot\na_xu_p^{(0)}+\big(\na_xu_e^{(0)}
-\na_xu_e^{(0)}(t,x,0)\big)u_p^{(1)}+\big(v_e^{(1)}-v_e^{(1)}(t,x,0)\big)\partial_zu_p^{(1)}\nonumber\\
&\qquad+(\partial_yu_e^{(0)}-\partial_yu_e^{(0)}(t,x,0))v_p^{(1)}+f(1-e^{-y})\partial_zu_p^{(0)}\Big)\nonumber\\
&\quad+\frac{1}{\varepsilon}\Big(v_e^{(0)}-y\partial_yv_e^{(0)}(t,x,0)-\frac{y^2}{2}\partial_{yy}v_e^{(0)}(t,x,0)\Big)\partial_zu_p^{(0)}+
(v_e^{(0)}-y\partial_yv_e^{(0)}(t,x,0))\partial_zu_p^{(1)}\nonumber\\
&\quad+\big(v_e^{(1)}-v_e^{(1)}(t,x,0)-y\partial_yv_e^{(1)}(t,x,0)\big)\partial_zu_p^{(0)}
+\big(u_e^{(0)}-u_e^{(0)}(t,x,0)-y\partial_yu_e^{(0)}(t,x,0)\big)\cdot\na_xu_p^{(0)}\nonumber\\
&\quad+\big(\na_xu_e^{(0)}
-\na_xu_e^{(0)}(t,x,0)-y\partial_{y}\na_xu_e^{(0)}(t,x,0)\big)\cdot u_p^{(0)}-\varepsilon^2\Delta_{x}u_{a,p}+\varepsilon^2\partial_xp_p^{(2)},
\nonumber\\
-R_{p,v}&=\varepsilon^2\Big(\partial_tv_p^{(2)}+(u_e^{(0)}+u_p^{(0)})\cdot\na_xv_p^{(2)}
+(v_e^{(1)}+v_p^{(1)})\partial_zv_p^{(2)}+(u_e^{(1)}+u_p^{(1)})\cdot\na_xv_p^{(1)}
\nonumber\\
&\quad+v_p^{(2)}\partial_yv_{a,e}+(v_p^{(2)}-fe^{-y})\partial_zv_{a,p}+u_p^{(1)}\cdot\na_xv_e^{(1)}+v_p^{(1)}\partial_yv_e^{(1)}-\partial_{zz}v_p^{(2)}\Big)
\nonumber\\
&\quad+\varepsilon\Big((u_e^{(0)}-u_e^{(0)}(t,x,0))\cdot\na_xu_p^{(1)}+(\na_xv_e^{(1)}
-\na_xv_e^{(1)}(t,x,0))\cdot u_p^{(0)}\nonumber\\
&\qquad+(v_e^{(1)}-v_e^{(1)}(t,x,0))\partial_zv_p^{(1)}
+(\partial_yv_e^{(0)}-\partial_yv_e^{(0)}(t,x,0))v_p^{(1)}+ u_p^{(1)}\cdot\na_xv_e^{(0)}+ v_e^{(0)}\partial_zv_p^{(2)}\Big) \nonumber\\
&\quad+(v_e^{(0)}-y\partial_yv_e^{(0)}(t,x,0)\big)\partial_zv_p^{(1)}+\big(\na_xv_e^{(0)}
-y\partial_{y}\na_xv_e^{(0)}(t,x,0)\big)\cdot u_p^{(0)}
\nonumber\\
&\quad+\varepsilon^3\big(u_e^{(1)}+u_p^{(1)}\big)\cdot\na_xv_p^{(2)} -\varepsilon^3\Delta_{x}v_{a,p}.\nonumber
\end{align}
Here and in what follows, $u_{a,e}, v_{a,e}, u_{a,p}, v_{a,p}$ are defined by
\begin{align}
u_{a,e}(t,x,y)&=u_e^{(0)}(t,x,y)+\varepsilon u_e^{(1)}(t,x,y),\quad v_{a,e}(t,x,y)=v_e^{(0)}(t,x,y)+\varepsilon v_e^{(1)}(t,x,y),\nonumber\\
u_{a,p}(t,x,z)&=u_p^{(0)}(t,x,z)+\varepsilon u_p^{(1)}(t,x,z),\quad v_{a,p}(t,x,z)=v_p^{(1)}(t,x,z)+\varepsilon v_p^{(2)}(t,x,z).\nonumber
\end{align}
Formally,  it holds that
\begin{align}
R_{e,h}=O(\varepsilon^{2}),\ \ R_{p,h}=O(\varepsilon^{2}),\ \ R_{e,v}=O(\varepsilon^{2}),\ \ R_{p,v}=O(\varepsilon^{2}).\nonumber
\end{align}
Later on, we will make them precise.

\section{The error system and vorticity formulation}

To justify the boundary layer expansion, the most key ingredient is to show that the remainder is
uniformly small in a suitable sense.

\subsection{The error system}
We introduce the error between the solution and the approximate solution
\begin{align}
u_R^{\varepsilon}\eqdef u^{\varepsilon}-u_a^{\varepsilon},\quad v_R^{\varepsilon}\eqdef v^{\varepsilon}-v_a^{\varepsilon},\quad p_R^{\varepsilon}\eqdef p^{\varepsilon}-p_a^{\varepsilon}.\nonumber
\end{align}
From (\ref{ins}) and (\ref{approximate-equation}), we deduce that $(u_R^\varepsilon, v_R^\varepsilon, p_R^\varepsilon)$ satisfies the following system
\begin{eqnarray}\label{error-equation}
\left \{
\begin {array}{ll}
\partial_tu_R^{\varepsilon}+u_a^{\varepsilon}\cdot\na_xu_R^{\varepsilon}+(v_a^{\varepsilon}-\varepsilon^2f(t,x)e^{-y})\partial_yu_R^{\varepsilon}
+u_R^{\varepsilon}\cdot\na_xu_R^{\varepsilon}+(v_R^{\varepsilon}+\varepsilon^2f(t,x)e^{-y})\partial_yu_R^{\varepsilon}
+u_R^{\varepsilon}\cdot\na_xu_a^{\varepsilon}\\[3pt]
\qquad \qquad \qquad +(v_R^{\varepsilon}+\varepsilon^2f(t,x)e^{-y})\partial_yu_a^{\varepsilon}
+\na_xp_R^{\varepsilon}-\varepsilon^2
\Delta u_R^{\varepsilon}
=R_h,\\ [3pt] \partial_tv_R^{\varepsilon}+u_a^{\varepsilon}\cdot\na_xv_R^{\varepsilon}+(v_a^{\varepsilon}-\varepsilon^2f(t,x)e^{-y})\partial_yv_R^{\varepsilon}
+u_R^{\varepsilon}\cdot\na_xv_R^{\varepsilon}+(v_R^{\varepsilon}+\varepsilon^2f(t,x)e^{-y})\partial_yv_R^{\varepsilon}
+u_R^{\varepsilon}\cdot\na_xv_a^{\varepsilon}\\[3pt]
 \qquad\qquad\qquad+(v_R^{\varepsilon}+\varepsilon^2f(t,x)e^{-y})\partial_yv_a^{\varepsilon}
+\partial_yp_R^{\varepsilon}-\varepsilon^2
\Delta v_R^{\varepsilon}
=R_v,\\ [3pt]
\na_x\cdot u_R^{\varepsilon}+\partial_yv_R^{\varepsilon}=0,\\[3pt]
(u_R^{\varepsilon}, v_R^{\varepsilon})(t,x,0)=(0, -\varepsilon^2f(t,x)),\\[3pt]
(u_R^{\varepsilon}, v_R^{\varepsilon})(0,x,y)=(0, 0).
\end{array}
\right.
\end{eqnarray}

For simplicity of notations, we will omit the superscript $\varepsilon$ and set $u:=u_R^\varepsilon,~~~v:=v_R^\varepsilon,~~~ p:=p_R^\varepsilon, ~~~u_a:=u_a^\varepsilon,~~~v_a:=v_a^\varepsilon$ and introduce
\begin{align}
&U_a=(u_{a},v_a),\quad \widetilde{U}_a=(u_{a},v_a-\varepsilon^2fe^{-y}),\nonumber\\
&U=(u,v),\quad \widetilde{U}=(u,v-\varepsilon^2fe^{-y}),\quad R=(R_h, R_v),\nonumber
\end{align}
then the error system (\ref{error-equation}) reads
\begin{eqnarray}\label{scaled-error-equation-2}
\left \{
\begin {array}{ll}
\partial_tU-\varepsilon^2
\Delta U+\widetilde{U}_a\cdot\nabla U+\widetilde{U}\cdot\nabla U_a+\widetilde{U}\cdot\nabla U
+\nabla p=R,\\[3pt]
\na_{x,y}\cdot U=0,\\[3pt]
U(t,x,0)=(0,-\varepsilon^2f(t,x)),\\[3pt]
U(0,x,y)=(0,0).
\end{array}
\right.
\end{eqnarray}

\subsection{The vorticity formulation of error equation}

To gain the derivative in $y$ variable, we need to use the vorticity formulation. One can check \cite{KM} for the derivation in 3-D.

Let us first introduce the Dirichlet-Neumann map and Neumann-Dirichlet map.
 \begin{Definition}
Let $f\in C_0^\infty(\R^2)$. We denote by $E_Df$ and $E_Nf$, respectively, the solution to the Dirichlet problem
\begin{align}
\left\{
\begin{array}{ll}
\triangle E_Df=0,\\
E_Df|_{\partial \R^3_+}=f,
\end{array}
\right.\nonumber
\end{align}
and the solution to the Neumman problem
\begin{align}
\left\{
\begin{array}{ll}
\triangle E_Nf=0,\\
-\partial_3E_Nf|_{\partial \R^3_+}=f.
\end{array}
\right.\nonumber
\end{align}
Then the Dirichlet-Neumann map $\Lambda_{DN}$ and Neumann-Dirichlet map $\Lambda_{ND}$ are respectively defined by
\beno
\Lambda_{DN}f=-\gamma\partial_3E_Df,\quad \Lambda_{ND}f=\gamma E_Nf,
\eeno
where $\gamma$ is the trace operator.
\end{Definition}

Next, we introduce
 $$w=(w_h,w_3)={\rm curl}(u,v),\quad w_a=(w_{a,h},w_{a,3})={\rm curl}(u_a,v_a).$$
 Then taking ${\rm curl}$ on both sides of
 (\ref{scaled-error-equation-2}), we arrive at
\begin{eqnarray}
\left \{
\begin {array}{ll}
\partial_tw-\varepsilon^2\triangle w+\widetilde{U}_a\cdot\nabla w+\widetilde{U}\cdot\nabla w_a+\widetilde{U}\cdot\nabla w
-w_{a}\cdot\nabla U-w\cdot\nabla U_a-w\cdot\nabla U\\\quad={\rm curl}(R_h,R_v)-M,\\[3pt]
w(0,x,z)=0,\\[3pt]
-\varepsilon^2(\partial_y+|D_x|)w_h(t,x,0)-\varepsilon^2\partial_x\Lambda_{ND}(\gamma\nabla_x\cdot w_h)(t,x,0)=-\partial_y(-\triangle_D)^{-1}J_h+\partial_x(-\triangle_N)^{-1}J_3 ,\\[3pt]
w_3(t,x,0)=0.
\end{array}
\right.\nonumber
\end{eqnarray}
Here
\begin{align}
M=\left(
\begin{array}{lll}
\varepsilon^2\partial_{x_2}fe^{-y}\partial_yv_a+\varepsilon^2fe^{-y}\partial_yu_{a,2}\\
-\varepsilon^2\partial_{x_1}fe^{-y}\partial_yv_a-\varepsilon^2fe^{-y}\partial_yu_{a,1}\\
\varepsilon^2\partial_{x_1}fe^{-y}\partial_yu_{a,2}-\varepsilon^2\partial_{x_2}fe^{-y}\partial_yu_{a,1}
\end{array}
\right),\nonumber
\end{align}
and
\begin{align}
J={\rm curl}\left(
\begin{array}{lll}
-\widetilde{U}_a\cdot\nabla u-\widetilde{U}\cdot\nabla u_a-\widetilde{U}\cdot\nabla u+R_{h}\\[3pt]
-\widetilde{U}_a\cdot\nabla v-\widetilde{U}\cdot\nabla v_a-\widetilde{U}\cdot\nabla v+R_{v}
\end{array}
\right)\tre (J_h,J_3).\nonumber
\end{align}

In the following, let us explain the derivation of boundary condition of the vorticity.
First, we have the following Biot-Savart law
\beno
u={\rm curl}(\Psi(w)), \quad
\Psi(w)=\Big(
\begin{array}{ll}
(-\triangle_D)^{-1}w_h\\
(-\triangle_N)^{-1}w_3
\end{array}
\Big).
\eeno
Therefore,
\begin{align*}
0=\partial_tu_1|_{y=0}=&\partial_t(\partial_{x_2}(-\triangle_N)^{-1}w_3-\partial_y(-\triangle_D)^{-1}w_2)|_{y=0}\\
=&\partial_{x_2}(-\triangle_N)^{-1}(\varepsilon^2\triangle w_3+J_3)|_{y=0}
-\partial_y(-\triangle_D)^{-1}(\varepsilon^2\triangle w_2+J_2)|_{y=0}\\
=&-\varepsilon^2(\gamma\partial_{x_2}w_3+\partial_{x_2}\Lambda_{ND}\gamma \partial_yw_3)+\partial_{x_2}(-\triangle_N)^{-1}J_3|_{y=0}\\
&+\varepsilon^2(\gamma\partial_yw_2+\Lambda_{DN}\gamma w_2)-\partial_y(-\triangle_D)^{-1}J_2|_{y=0},
\end{align*}
which gives, by $w_3|_{y=0}=0$ and divergence free of vorticity,  that
\ben\label{e:boundary condition of vorticity 1}
-\varepsilon^2(\gamma\partial_yw_2+\Lambda_{DN}\gamma w_2)-\varepsilon^2\partial_{x_2}\Lambda_{ND}\gamma (\nabla_x\cdot w_h)
=\partial_{x_2}(-\triangle_N)^{-1}J_3|_{y=0}
-\partial_y(-\triangle_D)^{-1}J_2|_{y=0}.\nonumber
\een
Similarly, there holds
\begin{align*}
0=\partial_tu_2|_{y=0}=&\partial_t(-\partial_{x_1}(-\triangle_N)^{-1}w_3+\partial_y(-\triangle_D)^{-1}w_1)|_{y=0}\\
=&-\partial_{x_1}(-\triangle_N)^{-1}(\varepsilon^2\triangle w_3+J_3)|_{y=0}
+\partial_y(-\triangle_D)^{-1}(\varepsilon^2\triangle w_1+J_1)|_{y=0}\\
=&\varepsilon^2(\gamma\partial_{x_1}w_3+\partial_{x_1}\Lambda_{ND}\gamma \partial_yw_3)-\partial_{x_1}(-\triangle_N)^{-1}J_3|_{y=0}\\
&-\varepsilon^2(\gamma\partial_yw_1+\Lambda_{DN}\gamma w_1)+\partial_y(-\triangle_D)^{-1}J_1|_{y=0},
\end{align*}
thus
\ben\label{e:boundary condition of vorticity 2}
-\varepsilon^2(\gamma\partial_yw_1+\Lambda_{DN}\gamma w_1)-\varepsilon^2\partial_{x_1}\Lambda_{ND}\gamma (\nabla_x \cdot w_h)=\partial_{x_1}(-\triangle_N)^{-1}J_3|_{y=0}
-\partial_y(-\triangle_D)^{-1}J_1|_{y=0}.\nonumber
\een
Using the fact that
\beno
\Lambda_{DN}f=|D_x|f,
\eeno
we deduce the boundary condition of the vorticity.

\subsection{Decomposition of the vorticity}
Motivated by \cite{M},  we decompose the vorticity into two parts: Euler part $w_e$ and Prandtl part $w_p$, which are respectively defined by the following system
\begin{eqnarray}\label{e:decompose vorticity equation-1}
\left \{
\begin {array}{ll}
\partial_tw_e-\varepsilon^2
\Delta  w_e+\widetilde{U}_{a}\cdot\nabla w_e+\widetilde{U}\cdot\nabla w_{a,e}
+\widetilde{U}\cdot\nabla w_e-w_{a,e}\cdot\nabla U-w_{e}\cdot\nabla U_a-w_{e}\cdot\nabla U\\[3pt]
\quad={\rm curl}(R_{e,h},R_{e,v})-M_e,\\[3pt]
w_{e}(0,x,y)=0,\quad w_{e,3}(t,x,0)=0, \\[3pt]
-\varepsilon^2(\partial_y+|D_x|)w_{e,h}(t,x,0)-\varepsilon^2\partial_x\Lambda_{ND}(\gamma\nabla_x\cdot w_{e,h})(t,x,0)=0,
\end{array}
\right.
\end{eqnarray}
and
\begin{eqnarray}\label{e:decompose vorticity equation-2}
\left \{
\begin {array}{ll}
 \partial_tw_p-\varepsilon^2
\Delta w_p+\widetilde{U}_{a}\cdot\nabla w_p+\widetilde{U}\cdot\nabla w_{a,p}
+\widetilde{U}\cdot\nabla w_p-w_{a,p}\cdot\nabla U-w_{p}\cdot\nabla U_a-w_{p}\cdot\nabla U\\[3pt]
\quad={\rm curl}(R_{p,h},R_{p,v})-M_p,\\[3pt]
w_{p}(0,x,y)=0,\quad w_{p,3}(t,x,0)=0, \\[3pt]
-\varepsilon^2(\partial_y+|D_x|)w_{p,h}(t,x,0)-\varepsilon^2\partial_x\Lambda_{ND}(\gamma\nabla_x\cdot w_{p,h})(t,x,0)\\
\quad=-\partial_y(-\triangle_D)^{-1}J_h+\partial_x(-\triangle_N)^{-1}J_3.
\end{array}
\right.
\end{eqnarray}
Here we denote
$$w_{a,e}=(w_{a,e,h},w_{a,e,3})={\rm curl}(u_{a,e},v_{a,e}),\quad w_{a,p}=(w_{a,p,h},w_{a,p,3})={\rm curl}(u_{a,p},v_{a,p}),$$
and
\begin{align}
M_e=\left(
\begin{array}{lll}
\varepsilon^2\partial_{x_2}fe^{-y}\partial_yv_{a,e}+\varepsilon^2fe^{-y}\partial_yu_{a,e,2}\\
-\varepsilon^2\partial_{x_1}fe^{-y}\partial_yv_{a,e}-\varepsilon^2fe^{-y}\partial_yu_{a,e,1}\\
\varepsilon^2\partial_{x_1}fe^{-y}\partial_yu_{a,e,2}-\varepsilon^2\partial_{x_2}fe^{-y}\partial_yu_{a,e,1}
\end{array}
\right)\nonumber
\end{align}
and
\begin{align}
M_p=\left(
\begin{array}{lll}
\varepsilon^2\partial_{x_2}fe^{-y}\partial_yv_{a,p}+\varepsilon^2fe^{-y}\partial_yu_{a,p,2}\\
-\varepsilon^2\partial_{x_1}fe^{-y}\partial_yv_{a,p}-\varepsilon^2fe^{-y}\partial_yu_{a,p,1}\\
\varepsilon^2\partial_{x_1}fe^{-y}\partial_yu_{a,p,2}-\varepsilon^2\partial_{x_2}fe^{-y}\partial_yu_{a,p,1}
\end{array}
\right).\nonumber
\end{align}
It is easy to find that $w$ and $w_e+w_p$ satisfy the same equation and initial-boundary conditions. Therefore, we get
\ben
w=w_e+w_p.
\een

\section{Functional framework and product estimates}

In this section, we introduce the functional spaces we are working on and some product estimates, which will be used in the energy estimate.

Throughout this paper, let $\delta>0$ be a small constant and $\lambda>0$ be a large constant, which are determined later. We denote by $C_0$ a constant independent of $\delta$, which may change from line to line.

\subsection{Functional framework}

Let $\varphi$ be a smooth function  such that
\begin{align}
\varphi(y)=\left\{
\begin{array}{ll}
\delta y,\quad \ \ y\leq 1,\\
\frac{\delta y}{1+y}, \quad y\geq2.
\end{array}
\right.
\end{align}
We introduce the conormal operator $Z=\varphi(y)\partial_y$ and  denote
$$Z^k\tre \varphi(y)^k\partial_y^k,\quad \widetilde{Z}^k\tre (\delta z)^k\partial_z^k.$$

Let us introduce the following Sobolev type spaces
\begin{align}
H_{co}^m(\mathbb{R}^3_+)\eqdef&\Big\{u\in L^2(\mathbb{R}^3_+): \|u\|_{H_{co}^m}=\sum_{|i|+j\leq m}\|\partial_x^iZ^ju\|_{L^2(\R^3_+)}<\infty\Big\},\nonumber\\
H_{tan}^m(\mathbb{R}^3_+)\eqdef&\Big\{u\in L^2(\mathbb{R}^3_+): \|u\|_{H_{tan}^m}=\sum_{|i|\leq m}\|\partial_x^iu\|_{L^2(\R^3_+)}<\infty\Big\}.\nonumber
\end{align}
Here and in what follows, $\partial_x^iu$ means
$\partial^{i_1}_{x_1}\partial^{i_2}_{x_2}u$ for $i=(i_1,i_2)\in {\rm N}^2.$
$H_{co}^m(\R^3_+)$ is the so called conormal  Sobolev space.

We also introduce the norms
\begin{align}
&\|u\|^2_{H_{co}^m(0,a)}=\sum_{|i|+j\leq m}\int_0^a\int_{\mathbb{R}^2}\big|\partial_x^iZ^ju(x,y)\big|^2dxdy,\nonumber\\
&\|u\|^2_{H_{tan}^m(0,a)}=\sum_{|i|\leq m}\int_0^a\int_{\mathbb{R}^2}\big|\partial_x^iu(x,y)\big|^2dxdy,\nonumber\\
&\|u\|^2_{H_{co}^{m,\frac{1}{2}}(0,a)}=\sum_{|i|+j\leq m}\int_0^a\int_{\mathbb{R}^2}\big|\langle D_x\rangle^{\frac{1}{2}}\partial_x^iZ^ju(x,y)\big|^2dxdy,\nonumber\\
&\|u\|^2_{H_{tan}^{m,\frac{1}{2}}(0,a)}=\sum_{|i|\leq m}\int_0^a\int_{\mathbb{R}^2}\big|\langle D_x\rangle^{\frac{1}{2}}\partial_x^iu(x,y)\big|^2dxdy\nonumber
\end{align}
for some $a\in \R_+$ and the inner product
\begin{align}
&\big<u,v\big>_{H_{co}^m}=\sum_{|i|+j\leq m}\int_0^{+\infty}\int_{\mathbb{R}^2}\partial_x^iZ^ju\partial_x^iZ^jv(x,y)dxdy,\nonumber\\
&\big<u,v\big>_{H_{tan}^m}=\sum_{|i|\leq m}\int_0^{+\infty}\int_{\mathbb{R}^2}\partial_x^iu\partial_x^iv(x,y)dxdy.\nonumber
\end{align}

For the vorticity, we need to make the estimates in the weighted type spaces.  Let $\theta(y)$ be an increasing  function $\theta(y)$  satisfying \begin{align}\label{e:property of cutoff function}
|\theta'(y)|+|\theta{''}(y)|\leq C_0 \delta,\quad
\theta(0)=0,\quad \theta(\frac12)=\delta, \quad\theta'(0)=0,\quad\theta'(y)=0\quad {\rm for} \quad y\geq \frac12.
\end{align}
We define
\begin{align}
\phi(t,y)\eqdef \delta-\theta(y)-\lambda t.\nonumber
\end{align}
Let $y(t)\in (0,\frac12)$ be such that $\phi(t,y(t))=0$ for small $t\le \f \delta\lambda $. Let $T_0=\f \delta {2\lambda}$. Then for $t\in [0,T_0]$, there exists $c_0>0$ so that
\ben\label{eq:critical}
y(t)\ge c_0>0.
\een
We introduce two weights
\begin{align}
\Psi_{e}(t,y)\tre e^{\frac{1}{\varepsilon^{2}}\phi(t,y)},\quad
\Psi_{p}(t,y)\tre e^{\frac{y^2}{\varepsilon^2}(\delta-\lambda t)},\nonumber
\end{align}
where $\Psi_e$ is the weight for Euler part $w_e$ and $\Psi_p$ is the weight for Prandtl part $w_p$.
We denote
$$\|u\|^2_{H^m_e(\R^3_+)}=\sum\limits_{|i|+j\leq m}\Big\|e^{\Psi_e}\partial_x^iZ^ju\Big\|^2_{L^2(\R^3_+)},\quad \|u\|^2_{H^m_p(\R^3_+)}=\sum\limits_{|i|+j\leq m}\Big\|e^{\Psi_p}\partial_x^iZ^ju\Big\|^2_{L^2(\R^3_+)}.$$
Moreover, we denote
\begin{align}
&\|u\|^2_{H^m_e(0,a)}=\sum\limits_{|i|+j\leq m}\Big\|e^{\Psi_e}\partial_x^iZ^ju\Big\|^2_{L^2(\R^2\times (0,a))},\nonumber\\
&\|u\|^2_{H^m_p(0,a)}=\sum\limits_{|i|+j\leq m}\Big\|e^{\Psi_p}\partial_x^iZ^ju\Big\|^2_{L^2(\R^2\times (0,a))},\nonumber\\
&\|u\|^2_{H^{m,\frac{1}{2}}_e(0,a)}=\sum\limits_{|i|+j\leq m}\Big\|e^{\Psi_e}\langle D_x\rangle^{\frac{1}{2}}\partial_x^iZ^ju\Big\|^2_{L^2(\R^2\times (0,a))},\nonumber\\
&\|u\|^2_{H^{m,\frac{1}{2}}_p(0,a)}=
\sum\limits_{|i|+j\leq m}\Big\|e^{\Psi_p}\langle D_x\rangle^{\frac{1}{2}}\partial_x^iZ^ju\Big\|^2_{L^2(\R^2\times (0,a))}.\nonumber
\end{align}

For function $u$ compactly supported in Fourier space in $x$ variable, we define
\begin{align}
u_\Phi(t,x,y)\eqdef \mathcal{F}^{-1}_{\xi \rightarrow x}\big(e^{\Phi(t,\xi, y)}\mathcal{F}_{x\rightarrow \xi}u\big)(t,x,y),\nonumber
\end{align}
where $\Phi(t,\xi,y)= \phi(t,y)\langle \xi\rangle$.

\subsection{Product estimates}

Let $a\in [0,\f12]$ and $I=[0,a]$. 

\begin{Lemma}\label{e:product estimate}
Let $m\geq 8$ and $\sigma\in [0,1]$. It holds that
\begin{align}
\|\big<D_x\big>^{-\sigma}(uv)_\Phi\|^2_{H_{co}^m(I)}\leq C\Big(&\sum_{|i|+j\leq m}\big\|\partial_x^iZ^j\langle D_x\rangle^{-\sigma}u_\Phi\big\|^2_{L^\infty_y(I;L^2(\R^2))}\|v_\Phi\|^2_{H_{co}^{m-2}(I)}\nonumber\\
\quad&+\sum_{|i|+j\leq m-2}\big\|\partial_x^iZ^ju_\Phi\big\|^2_{L^\infty_y(I;L^2(\R^2))}\|\langle D_x\rangle^{-\sigma}v_\Phi\|^2_{H_{co}^m(I)}\Big),\nonumber\\
\|\big<D_x\big>^{-\sigma}(uv)_\Phi\|^2_{H_{co}^m(I)}\leq C\Big(&\sum_{|i|+j\leq m-2}\big\|\partial_x^iZ^ju_\Phi\big\|^2_{L^\infty_y(I;L^2(\R^2))}\|\langle D_x\rangle^{-\sigma}v_\Phi\|^2_{H_{co}^m(I)}\nonumber\\
\quad&+\sum_{|i|+j\leq m-2}\big\|\partial_x^iZ^jv_\Phi\big\|^2_{L^\infty_y(I;L^2(\R^2))}\|\langle D_x\rangle^{-\sigma}u_\Phi\|^2_{H_{co}^m(I)}\Big).\nonumber
\end{align}
In particular, we have
\begin{align}
\|(uv)_\Phi\|^2_{H_{co}^m(I)}\leq C\Big( &\big(\|u_\Phi\|^2_{H_{co}^{m-1}(I)}+\|(\partial_yu)_\Phi\big\|^2_{H_{co}^{m-2}(I)}\big)\|v_\Phi\|^2_{H_{co}^m(I)}\nonumber\\
&+\big(\|v_\Phi\|^2_{H_{co}^{m-1}(I)}+\|(\partial_yv)_\Phi\big\|^2_{H_{co}^{m-2}(I)}\big)\|u_\Phi\|^2_{H_{co}^m(I)}\Big).\nonumber
\end{align}
Similar estimates also hold in the space $H^m_{tan}(I)$.
\end{Lemma}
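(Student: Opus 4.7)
The plan is to reduce the analytic-weighted product estimate to a classical conormal product estimate by transferring the analytic weight across the product in Fourier space. Using $\widehat{uv}=\hat u*_\xi\hat v$ in the tangential variables, a direct calculation gives
\begin{equation*}
\widehat{(uv)_\Phi}(\xi,y)=\int_{\R^2}e^{\phi(t,y)[\langle\xi\rangle-\langle\xi-\eta\rangle-\langle\eta\rangle]}\,\widehat{u_\Phi}(\xi-\eta,y)\,\widehat{v_\Phi}(\eta,y)\,d\eta.
\end{equation*}
Since $\phi(t,y)\geq 0$ on $[0,T_0]\times[0,\tfrac12]$ and $\langle\cdot\rangle$ is subadditive, the exponent is nonpositive, and we obtain the pointwise Fourier majorization $|\widehat{(uv)_\Phi}(\xi,y)|\leq(|\widehat{u_\Phi}|*_\xi|\widehat{v_\Phi}|)(\xi,y)$. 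Introducing auxiliary $y$-dependent functions $U,V$ with $\hat U(\xi,y)=|\widehat{u_\Phi}(\xi,y)|$ and $\hat V(\xi,y)=|\widehat{v_\Phi}(\xi,y)|$, Plancherel gives $\widehat{UV}=\hat U*\hat V$, and the identities $\|\partial_x^i\langle D_x\rangle^{-\sigma}U(\cdot,y)\|_{L^2_x}=\|\partial_x^i\langle D_x\rangle^{-\sigma}u_\Phi(\cdot,y)\|_{L^2_x}$ hold at each $y$. This reduces the lemma, slice by slice in $y$, to a classical tangential product estimate for $UV$.

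Next I would handle the conormal derivative $Z^j=\varphi(y)^j\partial_y^j$, which commutes with $\partial_x$ and $\langle D_x\rangle^{-\sigma}$ and obeys the Leibniz rule $Z^j(fg)=\sum_k\binom{j}{k}Z^kf\cdot Z^{j-k}g$ because $\varphi$ depends on $y$ alone. Expanding $\partial_x^iZ^j\langle D_x\rangle^{-\sigma}(UV)$ via Leibniz and splitting the sum according to whether $U$ or $V$ carries more than $m-2$ combined derivatives, I would bound each term by the H\"older-type inequality $\|fg\|_{L^2_{x,y}}\leq\|f\|_{L^\infty_y(I;L^2_x)}\|g\|_{L^2_y(I;L^\infty_x)}$, absorbing $L^\infty_x$ into $H^{3/2+}_x$ via the 2D Sobolev embedding; this is where $m\ge 8$ enters, providing two spare horizontal derivatives for the low-derivative factor. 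To keep track of the $\langle D_x\rangle^{-\sigma}$ in the first estimate, I would dyadically decompose via Bony's paraproduct: on the region $|\xi-\eta|\ll|\eta|$ one has $\langle\xi\rangle\sim\langle\eta\rangle$, so $\langle D_x\rangle^{-\sigma}$ can be transferred to the high-frequency factor, and symmetrically on the opposite regime; the diagonal piece is handled by the standard high-low swap. The second estimate is the analogous symmetric version where $\langle D_x\rangle^{-\sigma}$ is kept on the top-derivative factor throughout.

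The special case with $\sigma=0$ then follows from the two general estimates by applying the 1D Sobolev embedding $\|f\|_{L^\infty_y(I)}^2\leq C_0(\|f\|_{L^2_y(I)}^2+\|\partial_y f\|_{L^2_y(I)}^2)$ to $f=\partial_x^iZ^ju_\Phi$ for $|i|+j\leq m-2$. The main technical obstacle is that $\partial_y(\partial_x^iZ^j u_\Phi)$ introduces one more normal derivative than the conormal norm of $u_\Phi$ controls, which must be re-expressed through $(\partial_y u)_\Phi$ via the commutator identity $\partial_y u_\Phi=(\partial_y u)_\Phi+\phi_y(y)\langle D_x\rangle u_\Phi$. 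Each time $\partial_y$ falls on the weight $e^{\phi(y)\langle\xi\rangle}$ it produces a factor of $\phi_y\langle\xi\rangle$, trading one normal derivative for one tangential derivative with the small bounded coefficient $|\phi_y^{(k)}|\leq C_0\delta$ from \eqref{e:property of cutoff function}. Carefully assembling these commutators into the norms $\|u_\Phi\|_{H^{m-1}_{co}}^2+\|(\partial_y u)_\Phi\|_{H^{m-2}_{co}}^2$ on the right-hand side, while respecting the conormal structure near $y=0$ where $\varphi$ vanishes, is the most delicate bookkeeping in the proof.
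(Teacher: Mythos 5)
Your proposal follows essentially the same route as the paper: a Leibniz expansion of $\partial_x^iZ^j$, a high-low split according to which factor carries more than $m-2$ derivatives, classical tangential product estimates with Sobolev embedding (your Bony paraproduct argument is just an explicit proof of the product estimate the paper cites), and the Fourier-side majorization $|\widehat{(uv)_\Phi}|\leq|\widehat{u_\Phi}|\ast|\widehat{v_\Phi}|$ together with the commutator $\partial_yu_\Phi=(\partial_yu)_\Phi-\theta'\langle D_x\rangle u_\Phi$ for the third inequality, all of which the paper uses implicitly. The only caveat is your claim that $\phi(t,y)\geq 0$ on all of $[0,\tfrac12]$: in fact $\phi$ becomes negative for $y>y(t)$, so the weight-transfer step is justified only on $I=[0,y(t)]$, which is the range in which the lemma is actually applied (the paper's own proof tacitly relies on the same restriction).
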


\begin{proof}
We deduce from the definition of $H_{co}^m(I)$ that
\begin{align}
\|\langle D_x\rangle^{-\sigma}(uv)_\Phi\|^2_{H_{co}^m(I)}=\sum_{|i|+j\leq m}\int_0^a\int_{\mathbb{R}^2}\big|\langle D_x\rangle^{-\sigma}\partial_x^iZ^j(uv)_\Phi\big|^2(x,y)dxdy.\nonumber
\end{align}

We only show the case of $j=m$, other cases are similar. By Leibniz's rule, we have
\beno
&&\int_0^a\int_{R^2}\big|\langle D_x\rangle^{-\sigma}Z^m(uv)_\Phi\big|^2dxdy\\
&&\leq C\sum_{m_2+m_3=m-m_1}\int_0^a\big\|\langle D_x\rangle^{-\sigma}(Z^{m_2}uZ^{m_3}v)_\Phi\big\|^2_{H^{m_1}(\R^2)}dy\tre I.
\eeno
We split it into two cases according to the value of $m_1$.
\smallskip

{\bf Case 1. $m_1\geq 4$.}

By classical product estimate(\cite{BCD}), we have
\begin{align*}
I\leq& C\Big(\sum_{m_2+m_3=m-m_1}\int_0^a\big\|\langle D_x\rangle^{-\sigma}(Z^{m_2}u)_\Phi\big\|^2_{H^{m_1}(\R^2)}\|(Z^{m_3}v)_\Phi\|^2_{H^2(\R^2)}dy\\
&\quad+\sum_{m_2+m_3=m-m_1}\int_0^a\big\|(Z^{m_2}u)_\Phi\big\|^2_{H^{2}(\R^2)}\|\langle D_x\rangle^{-\sigma}(Z^{m_3}v)_\Phi\|^2_{H^{m_1}(\R^2)}dy\Big)\\
\leq& C\Big(\sum_{|i|+j\leq m}\big\|\partial_x^iZ^j\langle D_x\rangle^{-\sigma}u_\Phi\big\|^2_{L^\infty(I;L^2(\R^2))}\|v_\Phi\|^2_{H_{co}^{m-2}(I)}\nonumber\\
&\qquad+\sum_{|i|+j\leq m-2}\big\|\partial_x^iZ^ju_\Phi\big\|^2_{L^\infty(I;L^2(\R^2))}\|\langle D_x\rangle^{-\sigma}v_\Phi\|^2_{H_{co}^m(I)}\Big).
\end{align*}

{\bf Case 2. $m_1\leq 3$.}

First of all, we have the following classical product estimate: for any $s\ge 0$,
\ben\label{e:esimate in sobolev space}
&&\big\|\langle D_x\rangle^{-\sigma}(uv)\big\|_{H^s(\mathbb{R}^2)}\leq C\big\|\langle D_x\rangle^{-\sigma}u\big\|_{H^{s}(\mathbb{R}^2)}\|v\big\|_{H^{s+2}(\mathbb{R}^2)}.
\een
We decompose $I$ into three parts as following
\begin{align*}
I\leq &C\sum_{m_2\leq\big[\frac{m-m_1}{2}\big]-1}\int_0^a\big\|\langle D_x\rangle^{-\sigma}(Z^{m_2}uZ^{m_3}v)_\Phi\big\|^2_{H^{m_1}(\R^2)}dy\\
&+C\sum_{m_3\leq \big[\frac{m-m_1}{2}\big]-1}\int_0^a\big\|\langle D_x\rangle^{-\sigma}(Z^{m_2}uZ^{m_3}v)_\Phi\big\|^2_{H^{m_1}(\R^2)}dy\\
&+C\int_0^a\big\|\langle D_x\rangle^{-\sigma}(Z^{[\frac{m-m_1}{2}]}uZ^{m-m_1-[\frac{m-m_1}{2}]}v)_\Phi\big\|^2_{H^{m_1}(\R^2)}dy\tre I_1+I_2+I_3.
\end{align*}
It follows from (\ref{e:esimate in sobolev space}) that
\begin{align*}
I_1\leq& C\sum_{m_2\leq\big[\frac{m-m_1}{2}\big]-1}\int_0^a\big\|(Z^{m_2}u)_\Phi\big\|^2_{H^{m_1+2}(\R^2)}\|\langle D_x\rangle^{-\sigma}(Z^{m_3}v)_\Phi\|^2_{H^{m_1}(\R^2)}dy\\
\leq& C\sum_{|i|+j\leq m-2}\big\|\partial_x^iZ^ju_\Phi\big\|^2_{L^\infty(I;L^2(\R^2))}\big\|\langle D_x\rangle^{-\sigma}v_\Phi\big\|^2_{H_{co}^m(I)}.
\end{align*}
Similar argument gives
\beno
I_2\leq C\sum_{|i|+j\leq m}\big\|\partial_x^iZ^j\langle D_x\rangle^{-\sigma}u_\Phi\big\|^2_{L^\infty(I;L^2(\R^2))}\big\|v_\Phi\big\|^2_{H_{co}^{m-2}(I)}.
\eeno
For $I_3$, we have
\begin{align*}
I_3\leq& C\int_0^a\Big\|(Z^{[\frac{m-m_1}{2}]}u)_\Phi\Big\|^2_{L^\infty(\R^2)}\Big\| \langle D_x\rangle^{-\sigma}(Z^{m-m_1-[\frac{m-m_1}{2}]}v)_\Phi\Big\|^2_{H^{m_1}(\R^2)}dy\\
&\quad+ C\int_0^a\Big\|\langle D_x\rangle^{-\sigma}(Z^{[\frac{m-m_1}{2}]}u)_\Phi\Big\|^2_{H^{m_1}(\R^2)}\Big\| (Z^{m-m_1-[\frac{m-m_1}{2}]}v)_\Phi\Big\|^2_{L^\infty(\R^2)}dy\\
\leq& C\Big(\sum_{|i|+j\leq m}\big\|\partial_x^iZ^j\langle D_x\rangle^{-\sigma}u_\Phi\big\|^2_{L^\infty(I;L^2(\R^2))}\|v_\Phi\|^2_{H_{co}^{m-2}(I)}\\
&\qquad+\sum_{|i|+j\leq m-2}\big\|\partial_x^iZ^ju_\Phi\big\|^2_{L^\infty(I;L^2(\R^2))}\|\langle D_x\rangle^{-\sigma}v_\Phi\|^2_{H_{co}^m(I)}\Big).
\end{align*}
The first inequality follows by collecting these estimates. The proof of the second inequality is similar. The third inequality can be deduced from Sobolev embedding and the second one directly.
\end{proof}

\smallskip
To deal with the term like  $u\partial_xv$, we need the following lemma.
\begin{Lemma}\label{e:double linear estimate}
Let $m\geq 8$. It  holds that
\begin{align}
\Big|\big<(u\partial_xv)_\Phi, w_\Phi\big>_{H_{co}^m(I)}\Big|\leq& C\Big(\sum_{|i|+j\leq m}\big\|\partial_x^iZ^j\langle D_x\rangle^{-\frac{1}{2}}u_\Phi\big\|^2_{L^\infty_y(I;L^2(\R^2))}\nonumber\\
&+\sum_{|i|+j\leq m-2}\big\|\partial_x^iZ^ju_\Phi\big\|^2_{L^\infty_y(I;L^2(\R^2))}\Big)\|v_\Phi\|^2_{H_{co}^{m,\frac{1}{2}}(I)}+ C\|w_\Phi\|^2_{H_{co}^{m,\frac{1}{2}}(I)},\nonumber\\
\Big|\big<(u\partial_xv)_\Phi, w_\Phi\big>_{H_{co}^m(I)}\Big|\leq& C\|u_\Phi\|^2_{H_{co}^{m}(I)}\Big(\sum_{|i|+j\leq m}\big\|\partial_x^iZ^j\langle D_x\rangle^{\frac{1}{2}}v_\Phi\big\|^2_{L^\infty_y(I;L^2(\R^2))}\nonumber\\
&+\sum_{|i|+j\leq m-1}\big\|\partial_x^iZ^jv_\Phi\big\|^2_{L^\infty_y(I; L^2(\R^2))}\Big)+ C\|w_\Phi\|^2_{H_{co}^{m,\frac{1}{2}}(I)},\nonumber\\
\Big|\big<(u\partial_xv)_\Phi, w_\Phi\big>_{H_{co}^m(I)}\Big|\leq& C\|u_\Phi\|^2_{H_{co}^{m}(I)}\Big(\|v_\Phi\|^2_{H_{co}^{m}(I)}+\|(\partial_yv)_\Phi\|^2_{H_{co}^{m-1}(I)}\Big)\nonumber\\
&+C\|v_\Phi\|^2_{H_{co}^{m,\frac{1}{2}}(I)}\Big(\|u_\Phi\|^2_{H_{co}^{m-1}(I)}+\|(\partial_yu)_\Phi\|^2_{H_{co}^{m-2}(I)}\Big)
+ C\|w_\Phi\|^2_{H_{co}^{m,\frac{1}{2}}(I)}.\nonumber
\end{align}
Similar estimates also hold in the space $H^m_{tan}(I)$.
\end{Lemma}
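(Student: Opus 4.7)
The plan is to reduce all three bounds to the product estimates of Lemma \ref{e:product estimate}, after using Cauchy--Schwarz in $H^m_{co}(I)$ to move half a tangential derivative onto the test function. Since $\langle D_x\rangle^s$ is a self-adjoint Fourier multiplier in $x$ commuting with every $\partial_x^i Z^j$, one has
$$\big|\langle (u\partial_x v)_\Phi, w_\Phi\rangle_{H^m_{co}(I)}\big| \leq \|\langle D_x\rangle^{-\frac{1}{2}}(u\partial_x v)_\Phi\|_{H^m_{co}(I)}\,\|w_\Phi\|_{H^{m,\frac{1}{2}}_{co}(I)},$$
and Young's inequality isolates the $C\|w_\Phi\|^2_{H^{m,\frac{1}{2}}_{co}(I)}$ term common to all three statements. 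The Fourier fact I use throughout is that $\partial_x\langle D_x\rangle^{-1}$ is a bounded $L^2$-multiplier, so $\|\langle D_x\rangle^{-1/2}\partial_x f\|_{L^2} \leq \|\langle D_x\rangle^{1/2} f\|_{L^2}$; in particular each factor of $\partial_x v$ carrying the $\langle D_x\rangle^{-1/2}$-smoothing rewrites as a factor of $\langle D_x\rangle^{1/2} v$.

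For the first inequality I apply the first form of Lemma \ref{e:product estimate} with $\sigma=1/2$ to $u\cdot(\partial_x v)$, keeping $u$ in the high-regularity slot: both $\|(\partial_x v)_\Phi\|^2_{H^{m-2}_{co}}$ and $\|\langle D_x\rangle^{-1/2}(\partial_x v)_\Phi\|^2_{H^m_{co}}$ are absorbed by $\|v_\Phi\|^2_{H^{m,1/2}_{co}(I)}$ via the Fourier fact, and the $u$-factors $\sum_{|i|+j\leq m}\|\partial_x^i Z^j \langle D_x\rangle^{-1/2}u_\Phi\|^2_{L^\infty_y L^2_x}$ and $\sum_{|i|+j\leq m-2}\|\partial_x^i Z^j u_\Phi\|^2_{L^\infty_y L^2_x}$ match the statement verbatim. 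For the second inequality I swap the roles of $u$ and $\partial_x v$ in the same form: the $u$-factors $\|u_\Phi\|^2_{H^{m-2}_{co}}$ and $\|\langle D_x\rangle^{-1/2}u_\Phi\|^2_{H^m_{co}}$ both collapse into $\|u_\Phi\|^2_{H^m_{co}(I)}$, while the $\partial_x v$-factors become $\sum_{|i|+j\leq m}\|\partial_x^i Z^j \langle D_x\rangle^{1/2}v_\Phi\|^2_{L^\infty_y L^2_x}$ (via the Fourier rewrite) and $\sum_{|i|+j\leq m-1}\|\partial_x^i Z^j v_\Phi\|^2_{L^\infty_y L^2_x}$ (after absorbing one tangential derivative).

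For the third inequality I instead use the symmetric (second) form of Lemma \ref{e:product estimate} with $\sigma=1/2$ on $u\cdot(\partial_x v)$, which gives
$$\|\langle D_x\rangle^{-\frac{1}{2}}(u\partial_x v)_\Phi\|^2_{H^m_{co}} \leq C\!\!\sum_{|i|+j\leq m-2}\!\!\|\partial_x^i Z^j u_\Phi\|^2_{L^\infty_y L^2_x}\|v_\Phi\|^2_{H^{m,\frac{1}{2}}_{co}} + C\!\!\sum_{|i|+j\leq m-1}\!\!\|\partial_x^i Z^j v_\Phi\|^2_{L^\infty_y L^2_x}\|u_\Phi\|^2_{H^m_{co}},$$
and then convert the $L^\infty_y L^2_x$ factors into $L^2$-based norms via the one-dimensional Sobolev estimate $\|g\|^2_{L^\infty_y L^2_x} \leq C\|g\|^2_{L^2} + C\|g\|_{L^2}\|\partial_y g\|_{L^2}$ (combined with AM--GM). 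The paper's convention $Z^j = \varphi(y)^j\partial_y^j$ admits the clean Leibniz rule $Z^j(fg)=\sum_{k\leq j}\binom{j}{k}Z^k f\,Z^{j-k}g$, and the commutator identity $\partial_y Z^j = Z^j\partial_y + j\varphi'\,Z^{j-1}\partial_y$ holds because $\varphi^{j-1}\partial_y^j = Z^{j-1}\partial_y$ by definition; since $\varphi'$ is uniformly bounded by $\delta$, the extra $\partial_y$ generated by Sobolev stays within the conormal scale and is absorbed into $\|(\partial_y u)_\Phi\|^2_{H^{m-2}_{co}}$ and $\|(\partial_y v)_\Phi\|^2_{H^{m-1}_{co}}$ exactly as in the statement.

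The main subtle point is the last step. A naive expansion of the commutator yields $j\varphi^{j-1}\varphi'\partial_y^j$, which \emph{a priori} is not a conormal derivative; only by regrouping $\varphi^{j-1}\partial_y^j = Z^{j-1}\partial_y$ does it become one, and this relies on the concrete shape of $\varphi$ which keeps $\varphi'$ bounded even though $\varphi'/\varphi = 1/y$ is singular at the boundary. The $H^m_{tan}(I)$ analogues are strictly simpler because no $Z$-derivatives appear and follow from the same Cauchy--Schwarz plus product-estimate template.
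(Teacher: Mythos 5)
Your proposal is correct and follows essentially the same route as the paper: duality to move $\langle D_x\rangle^{1/2}$ onto $w_\Phi$, Young's inequality, then the first form of Lemma \ref{e:product estimate} with $\sigma=\tfrac12$ (with the two factor orderings) for the first two bounds, and its second form plus one-dimensional Sobolev embedding in $y$ for the third. The extra details you supply — the multiplier bound $\|\langle D_x\rangle^{-1/2}\partial_x f\|_{L^2}\le\|\langle D_x\rangle^{1/2}f\|_{L^2}$ and the commutator identity $\partial_yZ^j=Z^j\partial_y+j\varphi'Z^{j-1}\partial_y$ — are accurate and consistent with what the paper leaves implicit.
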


\begin{proof}
Using the first inequality of Lemma \ref{e:product estimate}, we deduce that
\begin{align}
\Big|\big<(u\partial_xv)_\Phi, w_\Phi\big>_{H_{co}^m(I)}\Big|=&\Big|\big<\langle D_x\rangle^{-\frac{1}{2}}(u\partial_xv)_\Phi,\langle D_x\rangle^{\frac{1}{2}} w_\Phi\big>_{H_{co}^m(I)}\Big|\nonumber\\
\leq& C\|\langle D_x\rangle^{-\frac{1}{2}}(u\partial_xv)_\Phi\|^2_{H_{co}^m(I)}+C\|w_\Phi\|^2_{H_{co}^{m,\frac{1}{2}}(I)}\nonumber\\
\leq& C\Big(\sum_{|i|+j\leq m}\big\|\partial_x^iZ^j\big<D_x\big>^{-\frac{1}{2}}u_\Phi\big\|^2_{L^\infty_y(I;L^2_x(\R^2))}\nonumber\\
&+\sum_{|i|+j\leq m-2}\big\|\partial_x^iZ^ju_\Phi\big\|^2_{L^\infty_y(I; L^2_x(\R^2))}\Big)\|v_\Phi\|^2_{H_{co}^{m,\frac{1}{2}}(I)}+ C\|w_\Phi\|^2_{H_{co}^{m,\frac{1}{2}}(I)},\nonumber
\end{align}
which shows the first inequality. The second inequality can be proved in a similar way. By the second inequality of Lemma \ref{e:product estimate} and Sobolev embedding, we can deduce the third inequality.
\end{proof}

\medskip

The following two lemmas are the analogous of Lemma \ref{e:product estimate} and Lemma \ref{e:double linear estimate} in the weighted spaces. Since the proof is the almost same, we omit the details.

\begin{Lemma}\label{e:weighted estimate product}
Let $m\geq 8$ and $\sigma\in [0,1]$. It holds that
\begin{align}
\|\big<D_x\big>^{-\sigma}(uv)_\Phi\|^2_{H_e^m(I)}\leq C\Big(&\sum_{|i|+j\leq m}\big\|\partial_x^iZ^j\langle D_x\rangle^{-\sigma}u_\Phi\big\|^2_{L^\infty_y(I; L^2(\R^2))}\|v_\Phi\|^2_{H_e^{m-2}(I)}\nonumber\\
&+\sum_{|i|+j\leq m-2}\big\|\partial_x^iZ^ju_\Phi\big\|^2_{L^\infty_y(I;L^2(\R^2))}\|\langle D_x\rangle^{-\sigma}v_\Phi\|^2_{H_e^m(I)}\Big),\nonumber\\
\langle D_x\rangle^{-\sigma}(uv)_\Phi\|^2_{H_e^m(I)}\leq C\Big(&\sum_{|i|+j\leq m-2}\big\|\partial_x^iZ^ju_\Phi\big\|^2_{L^\infty_y(I; L^2(\R^2))}\|\big<D_x\big>^{-\sigma}v_\Phi\|^2_{H_e^m(I)}\nonumber\\
&+\sum_{|i|+j\leq m-2}\big\|e^{\Psi_e}\partial_x^iZ^jv_\Phi\big\|^2_{L^\infty_y(I; L^2(\R^2))}\|\langle D_x\rangle^{-\sigma}u_\Phi\|^2_{H_{co}^m(I)}\Big).\nonumber
\end{align}
Similar estimates also hold in the weighted space $H^m_p(I)$.
\end{Lemma}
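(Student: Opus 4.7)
The plan is to reduce the weighted product estimates in $H^m_e$ (and $H^m_p$) to the unweighted ones of Lemma~\ref{e:product estimate} by exploiting the fact that the weight $e^{\Psi_e(t,y)}$ is a function of $(t,y)$ only. In particular, it commutes with $\partial_x$ and with the analytic Fourier multiplier $e^{\Phi(t,\xi,y)}$ at each fixed $y$, and can be moved freely across either factor of a pointwise product without generating any new terms.

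First I would expand
\[
\|\langle D_x\rangle^{-\sigma}(uv)_\Phi\|^2_{H_e^m(I)}
=\sum_{|i|+j\leq m}\int_0^a\int_{\R^2}\bigl|e^{\Psi_e(t,y)}\langle D_x\rangle^{-\sigma}\partial_x^iZ^j(uv)_\Phi\bigr|^2\,dxdy
\]
and apply Leibniz's rule to $\partial_x^iZ^j(uv)$, producing a sum of terms of the form $\partial_x^{i_1}Z^{j_1}u\cdot\partial_x^{i_2}Z^{j_2}v$. The key algebraic step is to rewrite each such product as
\[
e^{\Psi_e}\,\partial_x^{i_1}Z^{j_1}u\cdot\partial_x^{i_2}Z^{j_2}v
=\partial_x^{i_1}Z^{j_1}u\cdot\bigl(e^{\Psi_e}\partial_x^{i_2}Z^{j_2}v\bigr),
\]
attaching the weight to the $v$-factor without letting any derivative fall on $e^{\Psi_e}$ (which would otherwise generate a dangerous $\varepsilon^{-2}$ factor coming from $\partial_y\Psi_e=-\theta'/\varepsilon^2$).

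Next I would reproduce, term by term, the case analysis of Lemma~\ref{e:product estimate} (splitting into $m_1\geq 4$ and $m_1\leq 3$, further subdivided according to whether $m_2$ or $m_3$ is below $[(m-m_1)/2]-1$), but with $e^{\Psi_e}\partial_x^{i_2}Z^{j_2}v$ in place of $\partial_x^{i_2}Z^{j_2}v$ throughout. Applying the classical Sobolev product inequality \eqref{e:esimate in sobolev space} and the embedding $H^2(\R^2)\hookrightarrow L^\infty(\R^2)$ as before, the weighted $L^\infty_y L^2_x$ or $H_e^m$-norm of $v$ appears automatically, while the $u$-side remains unweighted; this produces the first inequality. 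For the second inequality, one makes the opposite choice in the high--low split, still keeping the weight on $v$: the low-regularity factor is $v$ (measured in weighted $L^\infty_y L^2_x$), the high-regularity factor is $u$ (measured in the \emph{unweighted} $H^m_{co}$-norm), which is exactly the structure of the right-hand side.

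The main technical point to verify is the compatibility of the weight with the analytic lifting $(\cdot)_\Phi$: since $\Phi(t,\xi,y)$ is independent of $x$ while $e^{\Psi_e(t,y)}$ is independent of $\xi$, the two multipliers commute at each fixed $y$, so every estimate on $\|(Z^{m_2}u\,Z^{m_3}v)_\Phi\|_{H^{m_1}(\R^2)}$ used in the proof of Lemma~\ref{e:product estimate} transfers verbatim after the weight has been attached to the $v$-factor. Finally, the case of $H^m_p$ is strictly analogous: the Prandtl weight $\Psi_p(t,y)=y^2(\delta-\lambda t)/\varepsilon^2$ is again a function of $(t,y)$ alone, so the identical argument yields the corresponding estimates after replacing $\Psi_e$ by $\Psi_p$ throughout, which is precisely why the authors announce that the details may be safely omitted.
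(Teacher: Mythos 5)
Your proposal is correct and is precisely the argument the paper has in mind when it says the proof is ``almost the same'' as Lemma~\ref{e:product estimate}: since the weight $e^{\Psi_e(t,y)}$ (resp.\ $e^{\Psi_p(t,y)}$) is a scalar at each fixed $(t,y)$ that commutes with $\partial_x$, $\langle D_x\rangle^{-\sigma}$ and the Fourier multiplier $e^{\Phi}$, it can be attached to the $v$-factor after the Leibniz expansion, so every line of the unweighted proof carries over verbatim with the $v$-norms replaced by their weighted analogues. You also correctly flag the one danger to avoid — never let $Z$ fall on $e^{\Psi_e}$, which would produce $\theta'/\varepsilon^2$ factors — and your reduction sidesteps it because the weight is only multiplied in after all the derivatives have been distributed.
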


\begin{Lemma}\label{e:weighted double linear estimate}
Let $m\geq 8$. It holds that
\begin{align}
\Big|\big<(u\partial_xv)_\Phi, w_\Phi\big>_{H_e^m(I)}\Big|\leq& C\Big(\sum_{|i|+j\leq m}\big\|\partial_x^iZ^j\langle D_x\rangle^{-\frac{1}{2}}u_\Phi\big\|^2_{L^\infty_y(I;L^2(\R^2))}\nonumber\\
&+\sum_{|i|+j\leq m-2}\big\|\partial_x^iZ^ju_\Phi\big\|^2_{L^\infty_y(I;L^2(\R^2))}\Big)\|v_\Phi\|^2_{H_e^{m,\frac{1}{2}}(I)}
+ C\|w_\Phi\|^2_{H_e^{m,\frac{1}{2}}(I)},\nonumber\\
\Big|\big<(u\partial_xv)_\Phi, w_\Phi\big>_{H_e^m(I)}\Big|\leq& C\|u_\Phi\|^2_{H_e^{m}(I)}\Big(\sum_{|i|+j\leq m}\big\|\partial_x^iZ^j\langle D_x\rangle^{\frac{1}{2}}v_\Phi\big\|^2_{L^\infty_y(I;L^2(\R^2))}\nonumber\\
&+\sum_{|i|+j\leq m-1}\big\|\partial_x^iZ^jv_\Phi\big\|^2_{L^\infty_y(I; L^2(\R^2))}\Big)+ C\|w_\Phi\|^2_{H_e^{m,\frac{1}{2}}(I)}.\nonumber
\end{align}
Similar estimates also hold in the weighted space $H^m_p(I)$.
\end{Lemma}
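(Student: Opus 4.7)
The plan is to mimic the proof of Lemma \ref{e:double linear estimate} almost verbatim, replacing the unweighted product estimates of Lemma \ref{e:product estimate} by the weighted versions of Lemma \ref{e:weighted estimate product}. The key observation that makes this adaptation routine is that the weights $\Psi_e(t,y)$ and $\Psi_p(t,y)$ depend only on $(t,y)$; consequently multiplication by $e^{\Psi_e}$ or $e^{\Psi_p}$ commutes with the tangential derivatives $\partial_x^i$, with the Fourier multipliers $\langle D_x\rangle^{\pm\frac12}$, and with the analytic regularization $(\cdot)_\Phi$. Only the conormal derivative $Z=\varphi(y)\partial_y$ fails to commute with the weight, but this difficulty has already been absorbed into Lemma \ref{e:weighted estimate product}.

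For the first inequality, I would begin with the identity
\beno
\big<(u\partial_xv)_\Phi, w_\Phi\big>_{H_e^m(I)} = \big<\langle D_x\rangle^{-\frac12}(u\partial_xv)_\Phi,\,\langle D_x\rangle^{\frac12}w_\Phi\big>_{H_e^m(I)},
\eeno
which is legitimate because $\langle D_x\rangle^{\pm\frac12}$ commutes with $e^{\Psi_e}$. By Cauchy--Schwarz in $H_e^m(I)$ followed by Young's inequality, the right-hand side is bounded by
\beno
C\big\|\langle D_x\rangle^{-\frac12}(u\partial_xv)_\Phi\big\|^2_{H_e^m(I)} + C\|w_\Phi\|^2_{H_e^{m,\frac12}(I)}.
\eeno
The first term is then handled by invoking the first estimate of Lemma \ref{e:weighted estimate product} with $\sigma=\frac12$, applied to the product $u\cdot\partial_x v$. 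Since $\partial_x$ commutes with $\langle D_x\rangle^{-\frac12}$, one has $\|\langle D_x\rangle^{-\frac12}(\partial_xv)_\Phi\|_{H_e^m(I)}\leq C\|v_\Phi\|_{H_e^{m,\frac12}(I)}$, and also $\|(\partial_x v)_\Phi\|_{H_e^{m-2}(I)} \leq \|v_\Phi\|_{H_e^{m-1}(I)} \leq \|v_\Phi\|_{H_e^{m,\frac12}(I)}$ by interpolation in the tangential variable. Collecting these bounds yields exactly the first claimed inequality.

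The second inequality is proved by the symmetric argument: after the same Cauchy--Schwarz reduction, apply the second estimate of Lemma \ref{e:weighted estimate product}, which places the derivative loss on the $v$ factor rather than on $u$. Combined with the commutation of $\langle D_x\rangle^{\pm\frac12}$ with $\partial_x$, one obtains a bound in terms of $\|u_\Phi\|^2_{H_e^m(I)}$ multiplied by $L^\infty_y L^2_x$-norms of $\langle D_x\rangle^{\frac12}v_\Phi$ and of $\partial_x^iZ^jv_\Phi$ for $|i|+j\le m-1$. The $H_p^m$ variants follow by the identical argument, since only the $(t,y)$-dependence of the weight is used; the specific functional form of $\Psi_e$ versus $\Psi_p$ plays no role.

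I do not anticipate a genuine obstacle: once Lemma \ref{e:weighted estimate product} is in hand, the argument is mechanical, and the only point requiring care is the accounting of tangential versus conormal derivatives, which is carried out exactly as in the proof of Lemma \ref{e:double linear estimate}.
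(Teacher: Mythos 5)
Your proposal is correct and is exactly the argument the paper intends: the paper omits the proof of this lemma, stating only that it is ``almost the same'' as the unweighted Lemma \ref{e:double linear estimate}, and your adaptation --- splitting the inner product with $\langle D_x\rangle^{\mp\frac12}$, applying Cauchy--Schwarz, then invoking the weighted product estimates, justified by the observation that the weight depends only on $(t,y)$ and hence commutes with $\partial_x^i$, $\langle D_x\rangle^{\pm\frac12}$ and $(\cdot)_\Phi$ --- is precisely that. One small correction: for the second inequality you should invoke the \emph{first} estimate of Lemma \ref{e:weighted estimate product} with the roles of the two factors interchanged (so that $\partial_x v$ is the factor measured in $L^\infty_y L^2_x$ and $u$ carries the full $H_e^m(I)$ norm), rather than its second estimate, whose right-hand side does not produce the factor $\|u_\Phi\|^2_{H_e^{m}(I)}$ appearing in the stated bound.
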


\section{Energy functional and Proof of Theorem \ref{thm:main}}

\subsection{Construction of energy functional}

To control the error, let us introduce the following energy functional
\begin{align}
&E_v(t)\eqdef \varepsilon^{-2}\Big(\big\|U_\Phi\big\|^2_{H^9_{tan}}+\big\|U\big\|^2_{H^{10}_{tan}}\Big),\nonumber\\
&K_v(t)\eqdef \varepsilon^{-2}\big\|U_\Phi\big\|^2_{H^{9,\frac12}_{tan}(0,y(t))},\nonumber\\
&E_w(t)\eqdef \varepsilon^{-2}\Big(\big\|(\varphi w_{e})_\Phi\big\|^2_{H_e^8}+\big\|(\varphi w_p)_\Phi\big\|^2_{H_p^8}+\big\|(w_{e,3})_\Phi\big\|^2_{H_e^8}+\big\|(w_{p,3})_\Phi\big\|^2_{H_p^8}+\nonumber\\
&\qquad\qquad\quad\quad+\big\|\varphi w_e\big\|^2_{H^9_{co}}+\big\|\varphi w_p\big\|^2_{H_p^9}+\big\|w_{e,3}\big\|^2_{H^9_{co}}+\big\|w_{p,3}\big\|^2_{H_p^9}\Big)\nonumber\\
&\qquad\qquad+\big\|(w_e)_\Phi\big\|^2_{H_e^8}+\big\|(w_p)_\Phi\big\|^2_{H_p^8}+\big\|w_{e}\big\|^2_{H_{co}^9}+\big\|w_{p}\big\|^2_{H_p^9},\nonumber\\
&K_w(t)\eqdef \varepsilon^{-2}\Big(\big\|(\varphi w_{e})_\Phi\big\|^2_{H_e^{8,\frac12}(0,y(t))}+\big\|(\varphi w_p)_\Phi\big\|^2_{H_p^{8,\frac12}(0,y(t))}
+\big\|(w_{e,3})_\Phi\big\|^2_{H_e^{8,\frac12}(0,y(t))}\nonumber\\
&\qquad\qquad\quad+\big\|(w_{p,3})_\Phi\big\|^2_{H_p^{8,\frac12}(0,y(t))}\Big)+\big\|(w_{e})_\Phi\big\|^2_{H_e^{8,\frac12}(0,y(t))}
+\big\|(w_{p})_\Phi\big\|^2_{H_p^{8,\frac12}(0,y(t))}.\nonumber
\end{align}
We denote
\beno
E(t)\tre E_v(t)+E_w(t),\quad K(t)\tre K_v(t)+K_w(t).
\eeno

In the following Section 7-Section 12, we will show that if $E(t)\le C_1\varepsilon^2$, then it holds that
\ben\label{eq:energy}
\f d {dt}E(t)+(\lambda-C)K(t)\le C\varepsilon^2
\een
under the following uniform estimates for the approximate solutions and the remainders $R_{e,h}, R_{e,v}, R_{p,h}, R_{p,v}$. The proof will be presented in appendix.

\begin{Lemma}\label{e:uniform boundness for approximate solution}
There exist $T_a> 0$ such that for any $t\in [0,T_a]$, there holds
\begin{align}
&\big\|(u_e^{(i)},v_e^{(i)})_{\Phi_e}\big\|_{H^{15}(\R^3_+)}\leq C,\nonumber\\
&\sum_{|m|+n\leq 15}\Big(\sum_{l\leq 1}\big\|e^{z^2}\partial_x^m\widetilde{Z}^n\partial_t^l(u_p^{(i)},v_p^{(i+1)})_{\Phi_p}\big\|_{L^2(\R^3_+)}
+\big\|e^{z^2}\partial_x^m\widetilde{Z}^n\partial_z^2(u_p^{(i)},v_p^{(i+1)})_{\Phi_p}\big\|_{L^2(\R^3_+)}\Big)\leq C\nonumber
\end{align}
for $i=0,1$, where $\Phi_e=(1-y)_+\langle\xi\rangle$ and $\Phi_p=(\f12-\lambda_pt)\langle\xi\rangle$.
\end{Lemma}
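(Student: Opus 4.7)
My plan is to construct the four pieces of the approximate solution along the hierarchy stated at the end of Section 2.2,
$(u_e^{(0)},v_e^{(0)},p_e^{(0)})\to(u_p^{(0)},v_p^{(1)})\to(u_e^{(1)},v_e^{(1)},p_e^{(1)})\to(u_p^{(1)},v_p^{(2)},p_p^{(2)})$,
and verify the claimed weighted bound at each stage on a common interval $[0,T_a]$, taking $T_a$ to be the minimum of the four lifespans.

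For the Euler pieces the analyticity encoded by $\Phi_e=(1-y)_+\langle\xi\rangle$ comes \emph{for free} from the support assumption \eqref{e:initial vorticity assumption}, rather than from a persistence-of-analyticity argument. I would first solve \eqref{e:Euler equation} in $H^{15}$ by classical energy methods on a time interval $[0,T_e]$, which produces a Lipschitz flow map. Since the vorticity $\omega_e^{(0)}$ satisfies a transport equation with bounded stretching in $H^{14}$, the support preservation $\mathrm{supp}\,\omega_e^{(0)}(t,\cdot)\subset\{y\ge 1\}$ holds provided $T_a\le T_e$ is chosen small. After even/odd reflection across $\{y=0\}$ consistent with $v_e^{(0)}|_{y=0}=0$, the Biot--Savart law then implies $u_e^{(0)}$ is \emph{harmonic} on $\{|y|<1\}$, so standard interior Cauchy estimates give the pointwise Fourier bound $|\widehat{u_e^{(0)}}(t,\xi,y)|\lesssim e^{-(1-y)_+|\xi|}\|u_e^{(0)}(t)\|_{H^{15}}$, which yields the first line of the lemma after Plancherel; higher-order derivatives are treated identically because they remain harmonic on $\{|y|<1\}$. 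The linearized Euler system \eqref{e:linearized Euler equation} is a transport system with the inhomogeneous boundary datum $v_e^{(1)}|_{y=0}=-v_p^{(1)}|_{y=0}$, which I would lift by a harmonic extension of the analytic trace (of radius $\ge 1/2$) and then run the same Biot--Savart analyticity argument on the associated vorticity.

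For the Prandtl pieces I would appeal to the tangentially-analytic well-posedness theory \cite{LCS,ZZ} applied to \eqref{e:prandtl equation}, noting that the boundary data $u_e^{(0)}(t,x,0)$ and its higher $y$-moments are tangentially analytic with radius $\ge 1$ from Step 1, so the weight $\Phi_p=(\tfrac12-\lambda_p t)\langle\xi\rangle$ leaves a safety margin for taking the parameter $\lambda_p$ large. The Gaussian decay $e^{z^2}$ is propagated by a weighted energy estimate: test the equation for $\partial_x^m\widetilde Z^n u_p^{(0)}$ against $e^{2z^2}\partial_x^m\widetilde Z^n u_p^{(0)}$, use the viscous term $-\partial_z^2$ to produce both a good $\|e^{z^2}\partial_z(\cdot)\|_{L^2}^2$ and a bad Herbst-type commutator $\|z\,e^{z^2}(\cdot)\|_{L^2}^2$, and absorb the latter into the former together with the analytic dissipation $-\lambda_p\langle\xi\rangle\|(\cdot)_{\Phi_p}\|^2$ coming from $\partial_t\Phi_p$. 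The time and second normal derivatives $\partial_t(u_p^{(0)},v_p^{(1)})$, $\partial_z^2(u_p^{(0)},v_p^{(1)})$ demanded by the second line of the lemma are then read off algebraically from the Prandtl equation itself. The linearized Prandtl system \eqref{e:linearized prandtl equation} is handled identically with linear source terms built from the already-constructed quantities.

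The main obstacle is the joint propagation in the Prandtl step of the tangential analytic weight $e^{\Phi_p}$ with the Gaussian weight $e^{z^2}$: the two weights do not commute cleanly with $\partial_z$, and $[e^{z^2},\partial_z]=-2ze^{z^2}$ produces a linear-in-$z$ factor that must be absorbed. The remedy is to couple the parabolic smoothing of $-\partial_z^2$, which controls the commutator after an integration by parts, with the linear-in-$\langle\xi\rangle$ dissipation from $\partial_t\Phi_p$, which absorbs the tangential-derivative loss produced by $(u_e^{(0)}(t,x,0)+u_p^{(0)})\cdot\nabla_x u_p^{(0)}$. Quantitatively this forces $\lambda_p$ to be chosen large compared with $\|u_e^{(0)}|_{y=0}\|_{H^{15}_x}$, and correspondingly a small $T_a\le 1/(4\lambda_p)$ so that the analytic radius $\tfrac12-\lambda_p T_a$ remains positive throughout the existence interval.
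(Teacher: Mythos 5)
Your overall architecture (the four-stage hierarchy, a common small $T_a$, Euler tangential analyticity coming from the support of the vorticity, tangentially analytic well-posedness for Prandtl) matches the paper's, and your treatment of the Euler pieces is essentially the paper's argument in different clothing: the paper obtains the weight $e^{(1-y)_+\langle\xi\rangle}$ from weighted elliptic estimates for the div--curl system whose source $w^e$ vanishes for $y\le\frac32$, while you obtain it from reflection, harmonicity of the velocity in the strip, and the explicit Fourier representation; these are interchangeable, and your lifting of the inhomogeneous trace $v_e^{(1)}|_{y=0}=-v_p^{(1)}|_{y=0}$ by a harmonic extension is a reasonable substitute for the paper's (unstated) treatment of the same point.

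The Prandtl step, however, has a genuine gap: the \emph{fixed} Gaussian weight $e^{z^2}$ cannot be propagated by the weighted energy estimate you describe. Integrating by parts with $u|_{z=0}=0$ gives exactly $\langle -\partial_z^2u,\,e^{2z^2}u\rangle=\|e^{z^2}\partial_zu\|_{L^2}^2-2\|e^{z^2}u\|_{L^2}^2-8\|ze^{z^2}u\|_{L^2}^2$, while the sharp Hardy/uncertainty inequality (set $w=e^{z^2}u$, so $\|e^{z^2}\partial_zu\|^2=\|\partial_zw\|^2+4\|zw\|^2+2\|w\|^2$) only yields $\|e^{z^2}\partial_zu\|^2\ge4\|ze^{z^2}u\|^2$; the quadratic form is therefore not coercive, and the leftover $\|ze^{z^2}u\|^2$ is not controlled by $\|e^{z^2}u\|^2$ either (try $u=ze^{-(1+a)z^2}$ with $a\downarrow0$). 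The analytic dissipation $\lambda_p\langle\xi\rangle$ acts on tangential frequencies and cannot absorb a weight in $z$. This is not a technicality: the heat semigroup does not preserve $L^2(e^{2z^2}dz)$ --- the datum $e^{-az^2}$ with $a>1$ evolves to a multiple of $e^{-az^2/(1+4at)}$ and leaves the space at time $(a-1)/(4a)$, which is arbitrarily small as $a\downarrow1$. The paper's remedy, which your proposal is missing, is to make the Gaussian weight time-dependent and \emph{decreasing}, $\phi_p(t,z)=\rho_p(t)z^2$ with $\rho_p(t)=\rho_p(0)-\lambda_pt$: then $\partial_te^{2\phi_p}$ contributes the good term $+\lambda_p\|ze^{\phi_p}\cdot\|_{L^2}^2$ on the left-hand side (this is precisely the second summand in the paper's norm $\overline{H}^{m,\frac12}_p$), which absorbs both the $8\|ze^{\phi_p}u\|^2$ commutator and the linearly growing convection term $z\partial_yv^e(t,x,0)\partial_zu^p$ once $\lambda_p$ is large --- the same shrinking-radius device you already invoke for $\Phi_p$, but applied in the $z$-variable. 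One must then start from $\rho_p(0)>1$ so that $\rho_p(t)\ge1$ on $[0,T_a]$ and the stated bound with $e^{z^2}$ follows; the paper also homogenizes the boundary condition via $\overline{u}^p=u^p+e^{-2\phi_p}u^e(t,x,0)$ before testing, which you would need as well so that the boundary terms in the integrations by parts vanish.
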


\begin{Lemma}\label{e:uniform boundness for error}
There exist $T_a> 0$ such that for any $t\in [0,T_a]$, there holds
\begin{align}
&\big\|(R_{e,h},R_{e,v})_{\Phi_e}\big\|_{H^{11}(\R^3_+)}\leq C\varepsilon^2,\nonumber\\
&\sum_{|m|+n\leq 10}\big\|e^{z^2}\partial_x^m\widetilde{Z}^n(R_{p,h},R_{p,v})_{\Phi_p}\big\|_{L^2(\R^3_+)}\leq C\varepsilon^2.\nonumber
\end{align}
\end{Lemma}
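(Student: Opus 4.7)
The plan is to verify the lemma by inspecting the explicit expressions for the remainders $R_{e,h}, R_{e,v}, R_{p,h}, R_{p,v}$ displayed in Section 2.3, and bounding each summand using the regularity of the Euler and Prandtl profiles from Lemma \ref{e:uniform boundness for approximate solution}, combined with Taylor expansion of the Euler profiles at $y=0$. The proof is conceptually a direct consequence of the asymptotic matched construction in Section 2, but requires careful tracking of the prefactors in $\varepsilon$.

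For the Euler remainders $R_{e,h}$ and $R_{e,v}$, every summand carries an explicit factor $\varepsilon^2$ multiplying either a quadratic form in $(u_e^{(1)}, v_e^{(1)})$, a Laplacian of $u_e^{(0)} + \varepsilon u_e^{(1)}$, or a term of the form $f e^{-y}\partial_y u_e^{(j)}$. Since the analyticity weight $\Phi_e = (1-y)_+\langle\xi\rangle$ is purely tangential and Lemma \ref{e:uniform boundness for approximate solution} provides control up to $H^{15}_{\Phi_e}$, which exceeds the required $H^{11}$ by four derivatives, Moser-type product estimates in the analytic Sobolev space immediately yield $\|(R_{e,h}, R_{e,v})_{\Phi_e}\|_{H^{11}(\R^3_+)} \le C \varepsilon^2$.

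The Prandtl remainders $R_{p,h}, R_{p,v}$ are more delicate because several of their terms carry prefactors $\varepsilon^0$ or even $\varepsilon^{-1}$. These arose from substituting $y=\varepsilon z$ in Euler quantities, so the parentheses in each such term are precisely Taylor remainders of Euler profiles at $y=0$. The most critical example is
\begin{equation*}
\frac{1}{\varepsilon}\Bigl(v_e^{(0)} - y\partial_y v_e^{(0)}(t,x,0) - \frac{y^2}{2}\partial_{yy} v_e^{(0)}(t,x,0)\Bigr)\partial_z u_p^{(0)},
\end{equation*}
for which Taylor's formula bounds the parenthesis by $C y^3 \|\partial_y^3 v_e^{(0)}\|_{L^\infty_y}$, so after substituting $y = \varepsilon z$ and combining with the prefactor $\varepsilon^{-1}$, the term becomes $O(\varepsilon^2 z^3 \partial_z u_p^{(0)})$; the polynomial growth $z^3$ is then absorbed by the Gaussian decay $e^{-z^2}$ of the Prandtl profile granted by Lemma \ref{e:uniform boundness for approximate solution}. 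The $\varepsilon^0$ terms such as $(u_e^{(0)} - u_e^{(0)}(t,x,0))\cdot\nabla_x u_p^{(1)}$ are handled analogously through a one-term Taylor bound $O(y) = O(\varepsilon z)$, and since they already sit in front of one explicit factor of $\varepsilon$ the full contribution is $O(\varepsilon^2)$, with the single $z$ absorbed by the Gaussian weight. All remaining summands in $R_{p,h}, R_{p,v}$ carry an explicit $\varepsilon^2$ in front and are controlled by direct product estimates in the $e^{z^2}\Phi_p$-weighted space.

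The main technical obstacle is the bookkeeping: after applying the derivatives $\partial_x^m \widetilde{Z}^n$ with $|m|+n \le 10$ and multiplying by $e^{z^2}$ together with the tangential analyticity weight $\Phi_p$, one must verify that every Taylor remainder of the Euler profile still contributes the requisite three (respectively one) powers of $y=\varepsilon z$ needed to cancel the $\varepsilon^{-1}$ (respectively $\varepsilon^0$) prefactor, while the Gaussian decay of $u_p^{(i)}, v_p^{(i+1)}$ provided in Lemma \ref{e:uniform boundness for approximate solution} dominates the compound weight $e^{z^2}(\delta z)^n z^k$ for every fixed $n, k$. Since the conormal derivatives $\widetilde{Z}^n = (\delta z)^n\partial_z^n$ commute harmlessly with polynomial $z$-weights and Lemma \ref{e:uniform boundness for approximate solution} provides five spare derivatives of regularity beyond the required $10$, Moser-type estimates close the argument term by term, yielding the two claimed bounds.
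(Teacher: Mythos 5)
Your proposal is correct and is a detailed elaboration of the argument the paper compresses into ``with these bounds, [the lemma] follows easily'': the core mechanism --- Taylor expansion of the Euler profiles at $y=0$, conversion of the resulting powers of $y$ into powers of $\varepsilon z$, and absorption of the polynomial $z$-growth by the Gaussian decay of the Prandtl profiles from Lemma \ref{e:uniform boundness for approximate solution} --- is exactly what makes the remainders $O(\varepsilon^2)$. One bookkeeping slip worth flagging: the claim that ``all remaining summands in $R_{p,h}, R_{p,v}$ carry an explicit $\varepsilon^2$ in front'' overlooks the terms with no explicit $\varepsilon$ prefactor at all, such as $(v_e^{(0)}-y\partial_yv_e^{(0)}(t,x,0))\partial_z u_p^{(1)}$ and $\big(u_e^{(0)}-u_e^{(0)}(t,x,0)-y\partial_yu_e^{(0)}(t,x,0)\big)\cdot\nabla_x u_p^{(0)}$; these need a two-term Taylor bound $O(y^2)=O(\varepsilon^2 z^2)$ (using the boundary condition $v_e^{(0)}|_{y=0}=0$ for the first), which is the same mechanism you already deploy for the $\varepsilon^{-1}$ and $\varepsilon$ families, so the conclusion stands.
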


\subsection{Proof of Theorem \ref{thm:main}}

Let us  prove Theorem \ref{thm:main}  under the energy inequality \eqref{eq:energy}. \smallskip

For fixed $\varepsilon>0$, the local well-posedness of the Navier-Stokes equations can be easily proved in the energy space like
\beno
\mathcal{E}(t)=\big\|U_\Phi^\varepsilon\big\|^2_{H^{10}_{tan}}+\big\|U^\varepsilon\big\|_{H^{10}_{tan}}+
\big\|(w_{e}^\varepsilon)_\Phi\big\|^2_{H_{co}^9}+\big\|(w_{p}^\varepsilon)_\Phi\big\|^2_{H_p^9}+\big\|w_{e}^\varepsilon\big\|^2_{H_{co}^9}+\big\|w_{p}^\varepsilon\big\|^2_{H_p^9},
\eeno
where $w^\varepsilon=\text{curl} U^\varepsilon=w_e^\varepsilon+w_p^\epsilon$ with
\begin{eqnarray*}
\left \{
\begin {array}{ll}
\partial_tw_e^\varepsilon-\varepsilon^2
\Delta  w_e^\varepsilon+{U^\varepsilon}\cdot\nabla w_e^\varepsilon
-w_{e}^\varepsilon\cdot\nabla U^\varepsilon=0,\\[3pt]
w_{e}^\varepsilon(0,x,y)=w_0,\quad w_{e,3}^\varepsilon(t,x,0)=0, \\[3pt]
-\varepsilon^2(\partial_y+|D_x|)w_{e,h}^\varepsilon(t,x,0)-\varepsilon^2\na_x\Lambda_{ND}(\gamma\nabla_x\cdot w_{e,h}^\varepsilon)(t,x,0)=0,
\end{array}
\right.
\end{eqnarray*}
and
\begin{eqnarray*}\left \{
\begin {array}{ll}
 \partial_tw_p^\varepsilon-\varepsilon^2
\Delta w_p^\varepsilon+{U^\varepsilon}\cdot\nabla w_p^\varepsilon
-w_{p}^\varepsilon\cdot\nabla U^\varepsilon=0,\\[3pt]
w_{p}^\varepsilon(0,x,y)=0,\quad w_{p,3}^\varepsilon(t,x,0)=0, \\[3pt]
-\varepsilon^2(\partial_y+|D_x|)w_{p,h}^\varepsilon(t,x,0)-\varepsilon^2\na_x\Lambda_{ND}(\gamma\nabla_x\cdot w_{p,h}^\varepsilon)(t,x,0)\\
\quad=-\partial_y(-\triangle_D)^{-1}J_h^\varepsilon+\partial_x(-\triangle_N)^{-1}J_3^\varepsilon.
\end{array}
\right.
\end{eqnarray*}
Here $(J_h^\varepsilon, J_3^\varepsilon)=\text{curl}(U^\varepsilon\cdot\na U^\varepsilon)$.

Let $T^*_\varepsilon$ be the maximal existence time of the solution $U^\varepsilon$.
If we take $\lambda\ge C$ and $T_1\le T_a$ so that $T_1C\le \f {C_1} 2$, we deduce from \eqref{eq:energy} that
\beno
E(t)\le \f {C_1} 2\varepsilon^2 \quad\text{for}\quad t\in \big[0,\min(T^*_\varepsilon, T_1)\big],
\eeno
which in turn implies $T^*_\varepsilon=T_1$ by a continuous argument. Therefore, there holds
\begin{align}
\sup_{0\leq t\leq T_0}\Big(\varepsilon^{-2}\big\|U^\varepsilon(t)-U_{a}(t)\big\|^2_{H^{10}_{tan}}+\big\|w_e^\varepsilon(t)-w_{e,a}(t)\big\|^2_{H_{co}^8}+\big\|w_p^\varepsilon(t)-w_{p,a}(t)\big\|^2_{H_p^8}\Big)\leq C\varepsilon^2.\nonumber
\end{align}
Then Sobolev embedding gives
$$
\|U^\varepsilon(t)-U_a(t)\|_{L^\infty(\R^3_+)}\leq C\varepsilon^\f32 \quad\text{for}\quad t\in [0,T_1]
$$
with $C$ independent of $\varepsilon$. The proof of Theorem \ref{thm:main} is completed.
\medskip

Let us conclude this section by the following lemma, which will be used in the energy estimate of the vorticity.

\begin{Lemma}\label{e:one order elliptic estimate}
Let $w$ solve the equation
\begin{eqnarray}
\left \{
\begin {array}{ll}
\partial_yw  +|D_x|w=f,\\ [3pt]
\displaystyle\lim_{y\rightarrow+\infty}w(x,y)=0.
\end{array}
\right.\nonumber
\end{eqnarray}
Then we have
\begin{align}
\||D_x|w\|^2_{L^2_{x,y}}+\|\partial_yw\|^{2}_{L^2_{x,y}}\leq C\|f\|^{2}_{L^2_{x,y}}.\nonumber
\end{align}
\end{Lemma}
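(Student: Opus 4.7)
The plan is to reduce the problem to an ODE by a tangential Fourier transform and then use Young's inequality.

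Taking the Fourier transform in $x$, the equation becomes, for each fixed $\xi\in\mathbb{R}^2$,
\begin{equation*}
\partial_y \hat w(\xi,y) + |\xi|\,\hat w(\xi,y) = \hat f(\xi,y), \qquad \lim_{y\to\infty}\hat w(\xi,y)=0.
\end{equation*}
Using the integrating factor $e^{|\xi| y}$ and integrating from $y$ to $+\infty$ (where the boundary term vanishes by the decay condition), I get the explicit formula
\begin{equation*}
\hat w(\xi,y) = -\int_y^{\infty} e^{-|\xi|(y'-y)}\,\hat f(\xi,y')\,dy'.
\end{equation*}

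Multiplying by $|\xi|$ and viewing the right-hand side as a convolution in $y$ against the kernel $|\xi|e^{-|\xi| y}\mathbf{1}_{y>0}$, whose $L^1_y$ norm equals $1$ uniformly in $\xi$, Young's inequality gives
\begin{equation*}
\bigl\| |\xi|\hat w(\xi,\cdot)\bigr\|_{L^2_y(\mathbb{R}_+)} \le \bigl\|\hat f(\xi,\cdot)\bigr\|_{L^2_y(\mathbb{R}_+)}.
\end{equation*}
Squaring, integrating in $\xi$, and applying Plancherel yields $\||D_x|w\|_{L^2_{x,y}}^2 \le \|f\|_{L^2_{x,y}}^2$. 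The bound on $\partial_y w$ then follows immediately from the equation $\partial_y w = f - |D_x|w$ and the triangle inequality, giving $\|\partial_y w\|_{L^2_{x,y}} \le 2\|f\|_{L^2_{x,y}}$. Combining these two estimates produces the claimed inequality.

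There is no real obstacle here: the operator $\partial_y + |D_x|$ is diagonalized by the Fourier transform in $x$, and the resulting first-order ODE with decay at infinity is solved explicitly by a stable (decaying) semigroup. The only small point to be careful about is the direction of integration — integrating from $y$ to $\infty$ (not from $0$ to $y$) is what produces the decaying exponential kernel $e^{-|\xi|(y'-y)}$ and allows the uniform $L^1_y$ bound on $|\xi|$ times the kernel; integrating the other way would pick up the growing exponential and the no data at $y=0$ is prescribed, which is consistent with the statement of the lemma.
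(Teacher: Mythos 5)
Your proof is correct and follows essentially the same route as the paper: Fourier transform in $x$, explicit solution of the resulting ODE by integrating from $y$ to $+\infty$, Young's convolution inequality with the unit-mass kernel $|\xi|e^{-|\xi|y}$, and recovery of $\partial_y w$ from the equation. (Your sign in the exponent, $e^{-|\xi|(y'-y)}$, is in fact the correct decaying one; the paper's displayed formula has a harmless sign typo there.)
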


\begin{proof}
Taking Fourier transform in $x$ variable, we get
\begin{eqnarray}
\partial_y\widehat{w} +|\xi|\widehat{w}=\widehat{f}.
\end{eqnarray}
Solving this ODE, we get
$$\widehat{w}(\xi,y)=-\int_{y}^{+\infty}e^{-|\xi|(y-y')}\widehat{f}(\xi,y')dy^{'},$$
from which, it follows that
\begin{align}
\||D_x|w\|_{L^{2}_{x,y}}\leq\||\xi|e^{-|\xi|y}\ast_y\widehat{f}(\xi,\cdot)\|_{L^2_xL^2_y}
\leq \sup_{\xi}\Big(|\xi|\|e^{-|\xi|y}\|_{L^{1}_y}\Big{)}\|f\|_{L^2_xL^2_y}
\leq\|f\|_{L^2_xL^2_y}.\nonumber
\end{align}
Using the equation, we can obtain the estimate of $\|\partial_yw\|^{2}_{L^2_{x,y}}$.
\end{proof}

\section{Tangential analytic and Sobolev estimates of the pressure}

\subsection{Elliptic equation of the pressure}

Taking ${\rm div}$ on both sides of the system (\ref{scaled-error-equation-2}), we obtain the following elliptic equation on the pressure $p$ with Neumann boundary condition
\begin{eqnarray}\label{e:equation for pressure}
\left \{
\begin {array}{ll}
-\triangle p=({\rm div}_xF+\partial_yG)-({\rm div}_xR_{h}+\partial_yR_{v}),\\[3pt]
\partial_yp(x,0)=\varepsilon^2\partial_{yy}v(t,x,0)+R_{v}(x,0)+\varepsilon^2\partial_tf(t,x)-\varepsilon^4\Delta_{x}f(t,x),
\end{array}
\right.
\end{eqnarray}
where
\begin{align}\label{q:some new quantity}
F\tre \widetilde{U}_a\cdot\nabla u+\widetilde{U}\cdot\nabla u_a+\widetilde{U}\cdot\nabla u,\quad
G\tre\widetilde{U}_a\cdot\nabla v+\widetilde{U}\cdot\nabla v_a+\widetilde{U}\cdot\nabla v.
\end{align}
It is easy to see that $F(t,x,0)=0$ and $G(t,x,0)=0$.

To proceed, let us present the estimate of $(F,G)$.

\begin{Lemma}
\label{lem:F-analytic}
It holds that
\begin{align*}
\big\|( F,G)_{\Phi}\big\|^2_{H^{8,\frac{1}{2}}_{tan}(0,y(t))}
\leq C\varepsilon^2\big(1+E(t)\big)\big(E(t)+K(t)+\varepsilon^2\big).\end{align*}
\end{Lemma}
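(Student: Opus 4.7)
The plan is to expand $F$ and $G$ into their constituent products and bound each by the tangential product estimates of Section~4, using the uniform analytic bounds on the approximate solution from Lemma~\ref{e:uniform boundness for approximate solution}. Writing out
$$F = u_a\cdot\nabla_x u + \widetilde v_a\,\partial_y u + u\cdot\nabla_x u_a + \widetilde v\,\partial_y u_a + u\cdot\nabla_x u + \widetilde v\,\partial_y u,$$
and analogously for $G$, the approximate-solution factors $u_a,\nabla u_a, v_a, \nabla v_a$ together with the corrective term $\varepsilon^2 fe^{-y}$ are uniformly bounded in the relevant analytic weighted norms, so in every product the smoothness may be placed on the error factor.

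The tangential-derivative terms fit the framework of Lemma~\ref{e:double linear estimate} applied in $H^{8,\frac12}_{tan}(0,y(t))$: for $u_a\cdot\nabla_x u$ the extra $\langle D_x\rangle^{\frac12}$ is distributed onto $\nabla_x u$, yielding a contribution $\le C\|u_\Phi\|^2_{H^{9,\frac12}_{tan}(0,y(t))}\le C\varepsilon^2 K(t)$; for $u\cdot\nabla_x u_a$ the error $u$ contributes $\varepsilon^2 E(t)$ while the smooth $u_a$ absorbs the half-derivative; and for the nonlinear $u\cdot\nabla_x u$ I split via Lemma~\ref{e:double linear estimate} so that one copy receives the half-derivative (giving $K(t)$) while the other is estimated in $E(t)$. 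These account for the $\varepsilon^2(E+K)$ and the cross term inside $\varepsilon^2 E(E+K)$ in the statement.

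The substantive difficulty is the normal-derivative terms, in particular $\widetilde v_a\,\partial_y u$ and $\widetilde v\,\partial_y u$, because $H^{8,\frac12}_{tan}$ does not control $\partial_y u$. My resolution is to substitute the vorticity identity coming from $w=\mathrm{curl}(u,v)$, namely $\partial_y u_1 = w_2+\partial_{x_1}v$ and $\partial_y u_2 = -w_1+\partial_{x_2}v$, which rewrites
$$\widetilde v_a\,\partial_y u = \widetilde v_a\,w_h^\perp + \widetilde v_a\,\nabla_x v,\qquad \widetilde v\,\partial_y u = \widetilde v\,w_h^\perp + \widetilde v\,\nabla_x v,$$
where $w_h^\perp=(w_2,-w_1)$. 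The $\widetilde v_a\,\nabla_x v$ and $\widetilde v\,\nabla_x v$ pieces are now tangential and handled exactly as in the previous paragraph. For the vorticity pieces I use the decomposition $w=w_e+w_p$ of Section~3.3 and apply Lemma~\ref{e:weighted double linear estimate} separately in $H^{8,\frac12}_e$ against $w_e$ (with weight $\Psi_e$, absorbing the analytic singularity near $y=y(t)$) and in $H^{8,\frac12}_p$ against $w_p$ (with weight $\Psi_p$, capturing the Prandtl decay in $y/\varepsilon$); the resulting terms are controlled by the vorticity parts of $K_w(t)$ and $E_w(t)$. The companion term $\widetilde v\,\partial_y u_a$ is easier, since $\partial_y u_a$ is uniformly bounded by Lemma~\ref{e:uniform boundness for approximate solution} and $\widetilde v$ alone plays the role of the error factor.

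Finally, the $\varepsilon^2\cdot\varepsilon^2$ piece in the final bound arises from the $\varepsilon^2 fe^{-y}$ correction inside $\widetilde U_a,\widetilde U$: since $f$ is uniformly bounded by the Prandtl estimates in Lemma~\ref{e:uniform boundness for approximate solution}, this correction contributes an $O(\varepsilon^2)$ term in every product it enters, hence $O(\varepsilon^4)$ after squaring. Summing all contributions gives the bound $C\varepsilon^2(1+E(t))(E(t)+K(t)+\varepsilon^2)$. The principal obstacle is the term $\widetilde v_a\,\partial_y u$: closing it requires combining the vanishing of $\widetilde v_a$ on $\{y=0\}$ (making $\widetilde v_a/\varphi(y)$ bounded), the vorticity substitution which trades the unavailable normal derivative for a vorticity factor plus a tangential one, and the carefully designed weighted norms $\Psi_e,\Psi_p$ of Section~4 that absorb the remaining analytic-boundary behaviour. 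This is precisely why the energy functional $E(t)$ is built to mix velocity and vorticity pieces.
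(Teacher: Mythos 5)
Your overall strategy is exactly the paper's: expand $F,G$ term by term, use the product estimates of Section~4 together with Lemma \ref{e:uniform boundness for approximate solution}, trade $\partial_y u$ for $w_h^{\perp}+\nabla_x v$ and $\partial_y v$ for $-\nabla_x\cdot u$, exploit the vanishing of $\tilde v_a$ and $\tilde v$ at $y=0$ to make $\tilde v_a/\varphi$ and $\tilde v/\varphi$ controllable, and transfer the weight $\varphi$ onto the vorticity so that $\|(\varphi w)_\Phi\|^2_{H^{8,1/2}_{co}(0,y(t))}$ is absorbed into $\varepsilon^2 K_w(t)$; the $\varepsilon^4$ tail from the $\varepsilon^2 f e^{-y}$ correction is also as in the paper.

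One step is wrong as stated: for the term $\tilde v\,\partial_y u_a$ you claim that $\partial_y u_a$ is uniformly bounded by Lemma \ref{e:uniform boundness for approximate solution} and that $\tilde v$ alone can carry the estimate. This fails, because $u_a$ contains the boundary-layer profile $u_p^{(0)}(t,x,y/\varepsilon)$, so $\partial_y u_a$ contains $\varepsilon^{-1}\partial_z u_p^{(0)}(t,x,y/\varepsilon)$ and is of size $\varepsilon^{-1}$ in $L^\infty$ near the boundary; Lemma \ref{e:uniform boundness for approximate solution} only bounds the conormal quantities $\widetilde Z^n u_p^{(i)}$, i.e.\ $z\partial_z u_p^{(i)}$, not $\partial_z u_p^{(i)}$ multiplied by $\varepsilon^{-1}$. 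The paper handles this term by writing $\tilde v\,\partial_y u_a=\frac{\tilde v}{\varphi}\,Zu_a$, where $Zu_a=\varphi\partial_y u_a$ is bounded (since $\varphi(y)\varepsilon^{-1}\partial_z u_p = \delta z\,\partial_z u_p$ near $y=0$) and $\tilde v/\varphi$ is controlled through Hardy's inequality and $\tilde v|_{y=0}=0$, which is exactly the same mechanism you already invoke for $\tilde v_a\,\partial_y u$. With that correction the argument closes and yields the stated bound $C\varepsilon^2(1+E)(E+K+\varepsilon^2)$.
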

\begin{proof}
We first handle $F$ and estimate it term by term. By Lemma \ref{e:product estimate},  Lemma \ref{e:uniform boundness for approximate solution}, $\tilde{v}_a|_{y=0}=0$ and $\partial_yu=(w_2,-w_1)+\na_xv\tre w_h^{\bot}+\na_xv$, we deduce that
\begin{align*}
\big\|(\widetilde{U}_a\cdot\nabla u)_\Phi\big\|^2_{H^{8,\frac{1}{2}}_{tan}(0,y(t))}\leq&\big\|(u_a\partial_x u)_\Phi\big\|^2_{H^{8,\frac{1}{2}}_{tan}(0,y(t))}+
\big\|(\tilde{v}_a\partial_y u)_\Phi\big\|^2_{H^{8,\frac{1}{2}}_{tan}(0,y(t))}\\
\leq&C\big\| u_\Phi\big\|^2_{H^{9,\frac{1}{2}}_{tan}(0,y(t))}+
\Big\|\Big(\frac{\tilde{v}_a}{\varphi}(\varphi w^{\perp}_h+\varphi\na_xv)\Big)_\Phi\Big\|^2_{H^{8,\frac{1}{2}}_{tan}(0,y(t))}\\
\leq&C\Big(\|U_\Phi\|^2_{H_{tan}^{9,\frac{1}{2}}(0,y(t))}+\|(\varphi w)_\Phi\|^2_{H_{co}^{8,\frac{1}{2}}(0,y(t))}+\varepsilon^4\Big)\\
\leq& C\varepsilon^2(K(t)+\varepsilon^2).
\end{align*}
Similarly, using $\tilde{v}|_{y=0}=0$ and $-\pa_yv=\na_x\cdot u$, we get
\begin{align*}
\big\|(\widetilde{U}\cdot\nabla u_a)_\Phi\big\|^2_{H^{8,\frac{1}{2}}_{tan}(0,y(t))}\leq&\big\|(u\partial_x u_a)_\Phi\big\|^2_{H^{8,\frac{1}{2}}_{tan}(0,y(t))}+
\big\|(\tilde{v}\partial_y u_a)_\Phi\big\|^2_{H^{8,\frac{1}{2}}_{tan}(0,y(t))}\\
\leq&C\big\| u_\Phi\big\|^2_{H^{8,\frac{1}{2}}_{tan}(0,y(t))}+
\Big\|\Big(\frac{\tilde{v}}{\varphi}Zu_a\Big)_\Phi\Big\|^2_{H^{8,\frac{1}{2}}_{tan}(0,y(t))}\\
\leq&C\big(\|U_\Phi\|^2_{H_{tan}^{9,\frac{1}{2}}(0,y(t))}+\varepsilon^4\big)\leq C\varepsilon^2\big(K(t)+\varepsilon^2\big).
\end{align*}

To deal with the nonlinear term, we need the following product estimate
\ben\label{e:product estimate classical}
\|\langle D_x\rangle^{\frac{1}{2}}(uv)_\Phi(\cdot,y)\|^2_{L^2(\R^2)}\leq C\|\langle D_x\rangle^{\frac{1}{2}}u_\Phi(\cdot,y)\|^2_{L^2(\R^2)}\|v_\Phi(\cdot,y)\|^2_{H^2(\R^2)} \quad\text{for}\quad y\in (0,y(t)).\nonumber
\een
Then by Sobolev embedding, $\partial_yu=w_h^{\bot}+\na_xv$ and $-\pa_yv=\na_x\cdot u$, we get
\begin{align*}
\big\|(u\partial_x u)_\Phi\big\|^2_{H^{8,\frac{1}{2}}_{tan}(0,y(t))}
\leq&C\sum_{|i|\leq 8, |j|\leq 4,}\int_0^{y(t)}\big\|\langle D_x\rangle^{\frac{1}{2}}(\partial_x^ju\partial_x \partial_x^{i-j}u)_\Phi(\cdot,y)\big\|^2_{L^2(\R^2)}dy\\
&+
C\sum_{|i|\leq 8, |j|> 4}\int_0^{y(t)}\big\|\langle D_x\rangle^{\frac{1}{2}}(\partial_x^ju\partial_x \partial_x^{i-j}u)_\Phi(\cdot,y)\big\|^2_{L^2(\R^2)}dy\\
\leq&C \|u_\Phi\|_{H^6_{tan}}\big(\|(\partial_yu)_\Phi\|_{H^6_{tan}}+\|u_\Phi\|_{H^7_{tan}}\big)\|u_\Phi\|^2_{H_{tan}^{9,\frac{1}{2}}(0,y(t))}\\
\leq&C \big(\|w_\Phi\|^2_{H^8_{co}}+\|U_\Phi\|^2_{H^9_{tan}}\big)\|u_\Phi\|^2_{H_{tan}^{9,\frac{1}{2}}(0,y(t))}\leq C\varepsilon^2E(t)K(t),
\end{align*}
and
\begin{align}
\big\|(\tilde{v}\partial_y u)_\Phi\big\|^2_{H^{8,\frac{1}{2}}_{tan}(0,y(t))}
\leq&C\sum_{|i|\leq 8, |j|\leq 4}\int_0^{y(t)}\big\|\langle D_x\rangle^{\frac{1}{2}}(\partial_x^j\tilde{v}\partial_y \partial_x^{i-j}u)_\Phi(\cdot,y)\big\|^2_{L^2(\R^2)}dy\nonumber\\
&+
C\sum_{|i|\leq 8, |j|> 4}\int_0^{y(t)}\big\|\langle D_x\rangle^{\frac{1}{2}}(\partial_x^j\tilde{v}\partial_y \partial_x^{i-j}u)_\Phi(\cdot,y)\big\|^2_{L^2(\R^2)}dy\nonumber\\
\leq&C \|\tilde{v}_\Phi\|_{H^6_{tan}}\big(\|(\partial_y\tilde{v})_\Phi\|_{H^6_{tan}}+\|\tilde{v}_\Phi\|_{H^7_{tan}}\big)
\Big(\|w_\Phi\|^2_{H_{co}^{8,\frac{1}{2}}(0,y(t))}
+\|v_\Phi\|^2_{H_{tan}^{9,\frac{1}{2}}(0,y(t))}\Big)\nonumber\\
&+C \|w_\Phi+\partial_xv_\Phi\|^2_{H^6_{tan}}
\Big(\|U_\Phi\|^2_{H_{tan}^{9,\f12}(0,y(t))}+\varepsilon^4\Big)\nonumber\\
\leq&C \varepsilon^2E(t)\big(\varepsilon^2+K(t)\big).\nonumber
\end{align}

Summing up, we deduce that
\beno
\big\|F_{\Phi}\big\|^2_{H^{8,\frac{1}{2}}_{tan}(0,y(t))}
\leq C\varepsilon^2\big(1+E(t)\big)\big(E(t)+K(t)+\varepsilon^2\big).
\eeno
The estimate for $G$ is similar.
\end{proof}

\begin{Lemma}\label{lem:F-Sob}
It holds that for $k=7,8,9$,
\beno
\|(F,G)\|^2_{H^k_{tan}}\le C \varepsilon^2\big(\varepsilon^2+E(t)\big)(E(t)+1).
\eeno
\end{Lemma}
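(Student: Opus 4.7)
\textit{Proof proposal.} The plan is to repeat the argument used for Lemma \ref{lem:F-analytic}, except that one works in the unweighted Sobolev space $H^k_{tan}$ (no analytic weight $e^\Phi$, no $\tfrac{1}{2}$-derivative gain from $\langle D_x\rangle^{1/2}$). For each $k\in\{7,8,9\}$ I split
\[
F=\underbrace{\widetilde{U}_a\!\cdot\!\nabla u}_{F_1}+\underbrace{\widetilde{U}\!\cdot\!\nabla u_a}_{F_2}+\underbrace{\widetilde{U}\!\cdot\!\nabla u}_{F_3},
\]
and estimate each piece separately. The entire argument rests on two structural facts already used in Lemma \ref{lem:F-analytic}: (i) $\widetilde v_a|_{y=0}=0$ and $\widetilde v|_{y=0}=0$, so that $\widetilde v_a/\varphi$ and $\widetilde v/\varphi$ are genuine multipliers (via Hardy and smoothness, respectively, via Hardy and $\partial_y\widetilde v=-\nabla_x\!\cdot\! u-\varepsilon^2 fe^{-y}$); (ii) the conversion identities $\partial_y u=w_h^{\perp}+\nabla_x v$ and $-\partial_y v=\nabla_x\!\cdot\! u$, which trade a normal derivative of the error velocity for a vorticity plus a tangential derivative.

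\medskip

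For the linear pieces $F_1$ and $F_2$, the tangential parts $u_a\partial_x u$ and $u\partial_x u_a$ are handled directly by a standard product estimate in $H^k_{tan}(\R^3_+)$ (the $L^\infty$ factor is absorbed via $H^{k-2}_{tan}\hookrightarrow L^\infty$, which forces $k\ge 7$), using the uniform bounds of Lemma \ref{e:uniform boundness for approximate solution} for $u_a$; both contributions are dominated by $C\|U\|^2_{H^{10}_{tan}}\le C\varepsilon^2 E(t)$. For the vertical parts I write
\[
\widetilde v_a\,\partial_y u=\frac{\widetilde v_a}{\varphi}\bigl(\varphi w_h^{\perp}+\varphi\nabla_x v\bigr),\qquad \widetilde v\,\partial_y u_a=\frac{\widetilde v}{\varphi}\,Z u_a.
\]
In the first, $\widetilde v_a/\varphi$ is uniformly bounded with all its derivatives (Lemma \ref{e:uniform boundness for approximate solution}), so the product estimate gives a bound by $\|\varphi w\|^2_{H^k_{co}}+\|U\|^2_{H^{k+1}_{tan}}\le C\varepsilon^2 E(t)$, using $w=w_e+w_p$ and dropping the nonnegative weights $e^{\Psi_e},e^{\Psi_p}$ in the definitions of $E_w$. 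In the second, $Zu_a$ is bounded with all tangential and conormal derivatives, and Hardy gives $\|\widetilde v/\varphi\|_{H^k_{tan}}\lesssim \|\partial_y\widetilde v\|_{H^k_{tan}}\lesssim \|U\|_{H^{k+1}_{tan}}+\varepsilon^2\lesssim \varepsilon\sqrt{E(t)}+\varepsilon^2$.

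\medskip

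For the trilinear piece $F_3$ the same decomposition is used, except that both factors now come from the error. The tangential contribution $u\,\partial_x u$ is controlled by $C\|u\|^2_{H^{k}_{tan}}\|u\|^2_{H^{10}_{tan}}\le C\varepsilon^4 E(t)^2$ (again using $H^{k-2}_{tan}\hookrightarrow L^\infty$). The critical piece $\widetilde v\,\partial_y u=(\widetilde v/\varphi)(\varphi w_h^{\perp}+\varphi\nabla_x v)$ is estimated by combining the Hardy bound on $\widetilde v/\varphi$ above with the control of $\varphi w$ and $\nabla_x v$ coming from $E_w$ and $E_v$, yielding at most $C\varepsilon^2 E(t)(E(t)+\varepsilon^2)$. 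Summing the three contributions gives
\[
\|F\|^2_{H^k_{tan}}\le C\varepsilon^2\bigl(\varepsilon^2+E(t)\bigr)\bigl(1+E(t)\bigr),
\]
and the estimate for $G$ is obtained by the same argument after replacing $\partial_y u=w_h^{\perp}+\nabla_x v$ by $-\partial_y v=\nabla_x\!\cdot u$ whenever a normal derivative of $v$ arises.

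\medskip

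The main obstacle, just as in the analytic version, is the trilinear term $\widetilde v\,\partial_y u$ in $F_3$: the normal derivative $\partial_y u$ is not directly tamed by the tangential energy $E_v$, and the factor $\widetilde v$ has no obvious decay. What saves it is the combination of the two structural identities above: $\partial_y u=w_h^{\perp}+\nabla_x v$ replaces the uncontrolled normal derivative with the vorticity plus a tangential derivative (both of which are bounded by $E$ after multiplication by $\varphi$), while $\widetilde v|_{y=0}=0$ makes $\widetilde v/\varphi$ a bona fide multiplier whose size is governed by $\partial_y\widetilde v=-\nabla_x\!\cdot u-\varepsilon^2 fe^{-y}$. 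The rest of the proof is routine product estimates and bookkeeping.
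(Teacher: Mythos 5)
Your proposal follows exactly the paper's proof: same three-piece decomposition of $F=\widetilde U_a\cdot\nabla u+\widetilde U\cdot\nabla u_a+\widetilde U\cdot\nabla u$, same structural identities ($\widetilde v_a|_{y=0}=\widetilde v|_{y=0}=0$, $\partial_y u=w_h^\perp+\nabla_x v$, $-\partial_y v=\nabla_x\cdot u$), same multiplier rewriting $\widetilde v_a\partial_y u=(\widetilde v_a/\varphi)(\varphi w_h^\perp+\varphi\nabla_x v)$, and the same final bookkeeping.

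There is, however, one technical misstatement that would make the argument fail as written. You invoke an embedding ``$H^{k-2}_{tan}\hookrightarrow L^\infty$'' to control $\|u\|_{L^\infty}$ and thereby bound the nonlinear pieces $u\partial_x u$ and $\widetilde v\,\partial_y u$. This is false: $H^m_{tan}$ only controls tangential derivatives and gives $H^m_xL^2_y$, which does not embed into $L^\infty(\R^3_+)$ because there is no regularity in $y$. The paper instead uses the one-dimensional interpolation $\|g\|_{L^\infty_y}\le C\|g\|_{L^2_y}^{1/2}\|\partial_y g\|_{L^2_y}^{1/2}$, so each $L^\infty_y$ factor necessarily brings in a normal derivative $\partial_y u$ (or $\partial_y\widetilde v$), which is then converted to $w$ plus a tangential derivative via the structural identities — see the terms $\|u\|_{H^7_{tan}}\|\partial_y u\|_{H^7_{tan}}$ and $\|\widetilde v\|_{H^6_{tan}}\|\partial_y\widetilde v\|_{H^6_{tan}}$ in the paper's estimate of $\|u\partial_x u\|^2_{H^9_{tan}}$ and $\|\widetilde v\,\partial_y u\|^2_{H^9_{tan}}$. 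As a consequence, the nonlinear tangential piece actually comes out as $C\varepsilon^2E(t)^2$ rather than your claimed $C\varepsilon^4E(t)^2$ (since $\|w\|^2_{H^9_{co}}\lesssim E_w$ carries no extra factor of $\varepsilon^2$), but this does not affect the stated conclusion of the lemma, which is $C\varepsilon^2(\varepsilon^2+E(t))(E(t)+1)$. Once the $L^\infty$ control is corrected in this way, your proposal matches the paper's proof.
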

\begin{proof}
We only prove the case of $k=9$ for $F$. By Lemma \ref{e:product estimate}  and Lemma \ref{e:uniform boundness for approximate solution}, $\tilde{v}_a|_{y=0}=0$, $\partial_yu=w_h^{\bot}+\na_xv$ and $-\pa_yv=\na_x\cdot u$, we deduce that
\begin{align*}
\big\|\widetilde{U}_a\cdot\nabla u\big\|^2_{H^9_{tan}}\leq&\|u_a\partial_x u\|^2_{H^9_{tan}}+
\|\tilde{v}_a\partial_y u\|^2_{H^9_{tan}}
\leq C\| u_\Phi\|^2_{H^{10}_{tan}}+
\Big\|\frac{\tilde{v}_a}{\varphi}(\varphi w^{\perp}+\varphi\partial_xv)\Big\|^2_{H^9_{tan}}\\
\leq& C\big(\|U\|^2_{H_{tan}^{10}}+\|\varphi w\|^2_{H_{co}^9}+\varepsilon^4\big)
\leq C\varepsilon^2\big(E(t)+\varepsilon^2\big),
\end{align*}
and
\begin{align*}
\big\|\widetilde{U}\cdot\nabla u_a\big\|^2_{H^{9}_{tan}}\leq&\big\|u\partial_x u_a\big\|^2_{H^{9}_{tan}}+
\big\|\tilde{v}\partial_y u_a\big\|^2_{H^{9}_{tan}}
\leq C\big\|u\big\|^2_{H^9_{tan}}+
\Big\|\frac{\tilde{v}}{\varphi}Zu_a\Big\|^2_{H^{9}_{tan}}\\
\leq&C\big(\|U\|^2_{H_{tan}^{10}}+\varepsilon^4\big)\leq C\varepsilon^2(E(t)+\varepsilon^2).
\end{align*}
Similarly, we have
\begin{align*}
\big\|u\partial_x u\big\|^2_{H^{9}_{tan}}
\leq&C\sum_{|i|\leq 9,|j|\leq 4}\int_0^{+\infty}\big\|\partial_x^ju\partial_x \partial_x^{i-j}u(\cdot,y)\big\|^2_{L^2(\R^2)}dy\\
&+
C\sum_{|i|\leq 9, |j|> 4}\int_0^{+\infty}\big\|\partial_x^ju\partial_x \partial_x^{i-j}u(\cdot,y)\big\|^2_{L^2(\R^2)}dy\\
\leq&C \|u\|_{H^7_{tan}}\|\partial_yu\|_{H^7_{tan}}\|u\|^2_{H_{tan}^{10}}
\leq C \big(\|w\|^2_{H^9_{co}}+\|U\|^2_{H^{10}_{tan}}\big)\|u\|^2_{H_{tan}^{10}}\leq C\varepsilon^2E(t)^2,
\end{align*}
and
\begin{align}
\big\|\tilde{v}\partial_y u\big\|^2_{H^{9}_{tan}}
\leq&C\sum_{|i|\leq 9, |j|\leq 4}\int_0^{+\infty}\big\|\partial_x^j\tilde{v}\partial_y \partial_x^{i-j}u(\cdot,y)\big\|^2_{L^2(\R^2)}dy\nonumber\\
&+
C\sum_{|i|\leq 9, |j|> 4}\int_0^{+\infty}\big\|\partial_x^j\tilde{v}\partial_y \partial_x^{i-j}u(\cdot,y)\big\|^2_{L^2(\R^2)}dy\nonumber\\
\leq&C \|\tilde{v}\|_{H^6_{tan}}\|\partial_y\tilde{v}\|_{H^6_{tan}}\big(\|w\|^2_{H_{co}^{9}}
+\|v\|^2_{H_{tan}^{10}}\big)+C \|w+\partial_xv\|^2_{H^6_{tan}}
\big(\|U\|^2_{H_{tan}^{10}}+\varepsilon^4\big)\nonumber\\
\leq&C \varepsilon^2\big(\varepsilon^2+E(t)\big)E(t).\nonumber
\end{align}
Putting these estimates together, we deduce the estimate of $F$.
\end{proof}

\subsection{Tangential analytic estimate of the pressure}

\begin{Lemma}\label{e:analytical pressure estimate}
There exists $\delta_0>0$ such that for any $\delta\in (0, \delta_0)$, there holds
\begin{align}
&\delta\big\|(\nabla p)_{\Phi}\big\|^2_{H^{7}_{tan}}+\big\|\theta'(\nabla p)_{\Phi}\big\|^2_{H^{8,\frac{1}{2}}_{tan}}\nonumber\\
&\leq  C\varepsilon^2\big(E(t)+K(t)+\varepsilon^2\big)\big(1+E(t)\big)\nonumber\\
&\qquad+C_0\delta\varepsilon^4\big\|(\partial_y+|D_x|)w\big\|^2_{H^9_{co}}
+C_0\delta\varepsilon^4\big\|((\partial_y+|D_x|)w)_\Phi\big\|^2_{H^8_{co}}.\nonumber
\end{align}
\end{Lemma}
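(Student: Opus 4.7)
The plan is to derive both estimates in parallel from the elliptic equation \eqref{e:equation for pressure} and its Neumann boundary condition. As a first step, I would simplify the Neumann data via the vorticity identity. From $\partial_yv=-\na_x\cdot u$ and $\partial_yu=w_h^{\bot}+\na_xv$ with $w_h^{\bot}=(w_2,-w_1)$, together with $v|_{y=0}=-\varepsilon^2 f$, a direct computation yields
\begin{align*}
\varepsilon^2\partial_{yy}v(t,x,0)=-\varepsilon^2(\partial_{x_1}w_2-\partial_{x_2}w_1)(t,x,0)+\varepsilon^4\Delta_xf(t,x),
\end{align*}
so the explicit $-\varepsilon^4\Delta_x f$ in \eqref{e:equation for pressure} cancels and the boundary data reduces to $g(t,x):=R_v(t,x,0)+\varepsilon^2\partial_tf(t,x)-\varepsilon^2(\na_x\cdot w_h^{\bot})(t,x,0)$. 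I would then split $p=p_1+p_2$, where $-\Delta p_1=\text{div}_x F+\partial_y G-\text{div}_x R_h-\partial_y R_v$ with $\partial_y p_1|_{y=0}=0$, and $p_2$ is harmonic with $\partial_yp_2|_{y=0}=g$.

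For the interior piece $p_1$, apply $\partial_x^\alpha$ ($|\alpha|\le 7$, resp.\ $|\alpha|\le 8$ for the $\theta'$-weighted term), test against $e^{2\Phi}\partial_x^\alpha p_1$, and integrate by parts in $y$. Since $\partial_y p_1|_{y=0}=0$, the boundary contribution vanishes, leaving
\begin{align*}
\|(\na p_1)_\Phi\|^2_{H^7_{tan}}\le C\big(\|F_\Phi\|^2_{H^7_{tan}}+\|G_\Phi\|^2_{H^7_{tan}}+\|(R_h,R_v)_\Phi\|^2_{H^7_{tan}}\big)+(\text{weight commutators}),
\end{align*}
together with the analogous $\theta'$-weighted inequality at one derivative higher. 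The source terms are controlled by Lemma \ref{lem:F-analytic}, Lemma \ref{lem:F-Sob} and Lemma \ref{e:uniform boundness for error}, giving the $C\varepsilon^2(1+E(t))(E(t)+K(t)+\varepsilon^2)$ contribution. The commutator $[e^\Phi,\partial_y]=-\theta'(y)\langle D_x\rangle e^\Phi$ lands, after pairing with $\langle D_x\rangle^{1/2}$ and Cauchy-Schwarz, precisely in $\|\theta'(\na p)_\Phi\|^2_{H^{8,1/2}_{tan}}$ on the left with the correct sign; the residual is absorbed using $\|\theta'\|_{L^\infty}\le C_0\delta$ and $\theta'(0)=0$, the latter ensuring that the commutator produces no extra boundary term.

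For the harmonic piece $p_2=-E_Ng$, the tangential Fourier symbol $\widehat{(\partial_yp_2)_\Phi}(\xi,y)=\widehat{g_\Phi}(\xi)\,e^{-\theta(y)\langle\xi\rangle-|\xi|y}$ gives $\|(\na p_2)_\Phi\|^2_{H^k_{tan}}\le C\|g_\Phi\|^2_{H^{k-1/2}(\R^2)}$, and analogous bounds for the $\theta'$- and $\langle D_x\rangle^{1/2}$-lifted variants. The pieces $R_v|_{y=0}+\varepsilon^2\partial_t f$ are of size $C\varepsilon^2$ by Lemma \ref{e:uniform boundness for error} and Lemma \ref{e:uniform boundness for approximate solution}. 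The vorticity trace $\varepsilon^2(\na_x\cdot w_h^{\bot})|_{y=0}$ is handled by the standard trace inequality $\|\psi(\cdot,0)\|^2_{L^2(\R^2)}\le 2\|\psi\|_{L^2(\R^3_+)}\|\partial_y\psi\|_{L^2(\R^3_+)}$, applied to tangential derivatives of $w_h$ both unweighted and with the $e^\Phi$ weight, combined with Lemma \ref{e:one order elliptic estimate} to replace $\partial_yw$ by $(\partial_y+|D_x|)w$ modulo $|D_x|w$. Young's inequality with parameter $\delta$ then produces
\begin{align*}
\varepsilon^4\|(\na_x\cdot w_h^{\bot})(\cdot,0)\|^2_{H^s_x}\le C_0\delta\varepsilon^4\|(\partial_y+|D_x|)w\|^2_{H^{s+1}_{co}}+\tfrac{C\varepsilon^4}{\delta}\|w\|^2_{H^{s+1}_{co}},
\end{align*}
and the weighted analogue produces $C_0\delta\varepsilon^4\|((\partial_y+|D_x|)w)_\Phi\|^2_{H^8_{co}}$; the residuals $\tfrac{\varepsilon^4}{\delta}\|w\|^2$ and $\tfrac{\varepsilon^4}{\delta}\|w_\Phi\|^2$ are reabsorbed into the $C\varepsilon^2(1+E)E$ term via the definition of $E(t)$.

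The main obstacle I anticipate is the precise bookkeeping of the weight commutator in the $H^{8,1/2}_{tan}$ estimate. The factor $\theta'(y)\langle D_x\rangle$ costs a full tangential derivative that cannot be paid by direct absorption and must instead be routed onto the left as $\|\theta'(\na p)_\Phi\|^2_{H^{8,1/2}_{tan}}$ through a careful symmetrization of the integration by parts in $y$, in the spirit of the analytic arguments of \cite{CGP,PZ}. A secondary subtlety is that the vorticity trace contribution must be handled simultaneously in the Sobolev and analytic settings to produce both the $\|(\partial_y+|D_x|)w\|^2_{H^9_{co}}$ and $\|((\partial_y+|D_x|)w)_\Phi\|^2_{H^8_{co}}$ terms on the right-hand side; this uses the positivity of $\phi(t,0)=\delta-\lambda t$ for $t\le T_0$, which makes the boundary weight $e^{\Phi}|_{y=0}$ comparable to the interior analytic weight and allows the trace inequality to be iterated with and without $e^\Phi$ in a uniform way.
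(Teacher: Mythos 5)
Your overall strategy---a $\theta'$-weighted top-order estimate whose boundary contributions vanish because $\theta'(0)=0$, a lower-order estimate that absorbs the Neumann data, the reduction of $\varepsilon^2\partial_{yy}v|_{y=0}$ to a tangential derivative of the vorticity trace, and a Young-with-parameter-$\delta$ step producing the $C_0\delta\varepsilon^4\|(\partial_y+|D_x|)w\|^2$ terms---runs parallel to the paper's proof, and your computation cancelling the $-\varepsilon^4\Delta_xf$ in the boundary datum is correct. Your trace-inequality route to the vorticity contribution is a legitimate alternative to the paper's device of rewriting the boundary integral as $\int_{\R^3_+}\partial_y(\cdots)\,dxdy$, and the reabsorption of the residual $\varepsilon^4\delta^{-1}\|w\|^2$ terms into $E(t)$ is consistent with the definition of the energy functional.

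The genuine gap is in the splitting $p=p_1+p_2$ with $p_2$ harmonic and carrying \emph{all} of the Neumann data $g$. From your own symbol computation, $\|\nabla p_2\|^2_{L^2(\R^3_+)}=\int_{\R^2}|\hat g(\xi)|^2|\xi|^{-1}\,d\xi$, so the estimate $\|(\nabla p_2)_\Phi\|^2_{H^k_{tan}}\le C\|g_\Phi\|^2_{H^{k-1/2}(\R^2)}$ really requires the homogeneous negative-order norm $\||D_x|^{-1/2}g\|$. This is finite for the components of $g$ carrying an explicit tangential derivative, namely $\varepsilon^2\partial_tf$ (recall $f=\int_0^\infty\partial_xu_p^{(1)}\,dz$) and $\varepsilon^2\na_x\cdot w_h^{\bot}|_{y=0}$, but it is \emph{not} controlled for the component $R_v(t,\cdot,0)$: Lemma \ref{e:uniform boundness for error} provides only (weighted) $H^{11}$ bounds, and $|D_x|^{-1/2}$ of a generic such function need not be square-integrable near $\xi=0$. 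Equivalently, $\nabla p_1$ and $\nabla p_2$ need not individually lie in $L^2(\R^3_+)$ even though their sum does, so neither half of your decomposition is separately estimable. The paper avoids this by never separating the interior source $-\partial_yR_v$ from the boundary datum $R_v(\cdot,0)$: in the single energy identity \eqref{e: energy estimate low pressure part}, the boundary integral $\sum_{|i|\le7}\int_{\R^2}|D_x|^{\frac12}\partial_x^ip_\Phi\,|D_x|^{\frac12}\partial_x^i(R_v)_\Phi(t,x,0)\,dx$ generated by integrating $-\partial_y(R_v)_\Phi$ by parts cancels exactly against the identical term coming from the Neumann condition. To repair your argument, regroup the decomposition so that the source $-\partial_yR_v$ travels with the boundary datum $R_v(\cdot,0)$ (whose boundary contributions then cancel in the $p_1$-type energy estimate, exactly as in the paper), and reserve the explicit harmonic extension only for the pieces of $g$ that carry a tangential derivative.
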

\begin{proof}
A straightforward computation gives
\begin{align}\label{e:equation for analytical function}
\left \{
\begin {array}{ll}
-\Delta_{x}p_{\Phi}-\partial_y(\partial_yp)_{\Phi}-\langle D_x\rangle\theta'(\partial_yp)_{\Phi}
=\nabla_x\cdot F_\Phi+\partial_yG_\Phi+\theta'\langle D_x\rangle G_\Phi\\[3pt]
\quad\quad\quad\quad\qquad\qquad -\nabla_x\cdot(R_{h})_\Phi-\partial_y(R_v)_\Phi-\langle D_x\rangle\theta' (R_v)_\Phi,\\[3pt]
(\partial_yp)_\Phi(x,0)=-\varepsilon^2(\partial_{y}\na_x\cdot u)_\Phi(x,0)+(R_{v})_\Phi(x,0)+\varepsilon^2(\partial_tf)_\Phi(t,x)-\varepsilon^4\Delta_{x}f_\Phi(t,x).
\end{array}
\right.
\end{align}
Acting $\langle D_x\rangle^{\frac{1}{2}}\partial_x^i$ on both sides of (\ref{e:equation for analytical function}) , then taking $L^2(\mathbb{R}^3_+)$ inner product with $(\theta')^2\langle D_x\rangle^{\frac{1}{2}}\partial_x^ip_\Phi$ and summing over $1\leq |i|\leq 8$, we obtain
\begin{align}\label{e: energy estimate foe Euler part}
&\sum_{1\leq |i|\leq 8}\Big<-\langle D_x\rangle^{\frac12}\partial_x^i\triangle_xp_{\Phi}-
\langle D_x\rangle^{\frac12}\partial_x^i\partial_y(\partial_yp)_{\Phi}-
\langle D_x\rangle\theta'\langle D_x\rangle^{\frac12}\partial_x^i(\partial_yp)_{\Phi}, (\theta')^2\langle D_x\rangle^{\frac12}\partial_x^ip_\Phi\Big>\nonumber\\
&=\sum_{1\leq |i|\leq 8}\Big<\langle D_x\rangle^{\frac12}\partial_x^i\Big[\nabla_x\cdot F_\Phi+\partial_yG_\Phi+\langle D_x\rangle\theta' G_\Phi\Big]
,(\theta')^2\langle D_x\rangle^{\frac12}\partial_x^ip_\Phi\Big>\nonumber\\
&\quad+\sum_{1\leq |i|\leq 8}\Big<\langle D_x\rangle^{\frac12}\partial_x^i\Big[-\nabla_x\cdot(R_{h})_\Phi-\partial_y(R_{v})_\Phi-\langle D_x\rangle\theta' (R_{v})_\Phi
\Big]
,(\theta')^2\langle D_x\rangle^{\frac{1}{2}}\partial_x^ip_\Phi\Big>\nonumber\\
&\tre I_1+I_2.
\end{align}

First, integrating by parts and using (\ref{e:property of cutoff function}), the left hand side of (\ref{e: energy estimate foe Euler part}) is bigger than
\begin{align}
\big(\frac12-C_0\delta\big)\sum_{1\leq |i|\leq 8}\big\|\theta'\langle D_x\rangle^{\frac12}\partial_x^i(\nabla p)_{\Phi}\big\|^2_{L^2}-C_0\delta\big\|(\partial_xp)_{\Phi}\big\|^2_{H^{7,\frac12}_{tan}}.\nonumber
\end{align}

We get by integration by parts and Lemma \ref{e:uniform boundness for error} that
\begin{align}
I_{1}\leq& C\Big(\big\|F_{\Phi}\big\|^2_{H^{8,\frac{1}{2}}_{tan}}+\big\|G_{\Phi}\big\|^2_{H^{8,\frac{1}{2}}_{tan}}\Big)
+\frac{1}{10}\sum_{1\leq|i|\leq 8}\big\|\theta'\langle D_x\rangle^{\frac12}\partial_x^i(\nabla p)_{\Phi}\big\|^2_{L^2},\nonumber\\
I_2\leq& C\varepsilon^4+\frac{1}{10}\sum_{1\leq|i|\leq 8}\big\|\theta'\langle D_x\rangle^{\frac12}\partial_x^i(\nabla p)_{\Phi}\big\|^2_{L^2}.\nonumber
\end{align}
Thus, collecting the above estimates and fixing $\delta$ small, we arrive at
\begin{align}
&\sum_{1\leq |i|\leq 8}\big\|\theta'\langle D_x\rangle^{\frac12}\partial_x^i(\nabla p)_{\Phi}\big\|^2_{L^2}
\leq C\Big(\big\|F_{\Phi}\big\|^2_{H^{8,\frac{1}{2}}_{tan}}+\big\|G_{\Phi}\big\|^2_{H^{8,\frac{1}{2}}_{tan}}\Big)
+C_0\delta\big\|(\partial_x p)_{\Phi}\big\|^2_{H^{7,\frac{1}{2}}_{tan}}+C\varepsilon^4.\nonumber
\end{align}
Moreover, we get by (\ref{e:property of cutoff function}) that
\begin{align}
\big\|\theta'\langle D_x\rangle^{\frac12}(\nabla p)_{\Phi}\big\|^2_{L^2}\leq C_0\delta\big\|\langle D_x\rangle^{\frac12}(\nabla p)_{\Phi}\big\|^2_{L^2(\R^3_+)}\leq C_0\delta\big\|(\nabla p)_{\Phi}\big\|^2_{H^{7,\frac{1}{2}}_{tan}}.\nonumber
\end{align}
Thus, we arrive at
\begin{align}\label{e:pressure estimate step one}
\big\|\theta'(\nabla p)_{\Phi}\big\|^2_{H^{8,\frac{1}{2}}_{tan}}\leq C\big\| (F,G)_{\Phi}\big\|^2_{H^{8,\frac{1}{2}}_{tan}}
+C_0\delta\big\|(\nabla p)_{\Phi}\big\|^2_{H^{7,\frac{1}{2}}_{tan}}+C\varepsilon^4.
\end{align}

Acting $|D_x|^{\frac12}\partial_x^i$ on  both sides of (\ref{e:equation for analytical function}), then taking $L^2$ inner with $|D_x|^{\frac12}\partial_x^ip_\Phi$ and summing over $|i|\leq 7$, we obtain
\begin{align}\label{e: energy estimate low pressure part}
&\sum_{|i|\leq 7}\Big<-|D_x|^{\frac12}\partial_x^i\triangle_xp_{\Phi}-
|D_x|^{\frac12}\partial_x^i\partial_y(\partial_yp)_{\Phi}-
\langle D_x\rangle\theta'|D_x|^{\frac12}\partial_x^i(\partial_yp)_{\Phi}, |D_x|^{\frac12}\partial_x^ip_\Phi\Big>\nonumber\\
&=\sum_{ |i|\leq 7}\Big<|D_x|^{\frac12}\partial_x^i\Big[\nabla_x\cdot F_\Phi+\partial_yG_\Phi+\langle D_x\rangle\theta' G_\Phi\Big]
,|D_x|^{\frac12}\partial_x^ip_\Phi\Big>\nonumber\\
&\quad+\sum_{ |i|\leq 7}\Big<|D_x|^{\frac12}\partial_x^i\Big[-\nabla_x\cdot(R_{h})_\Phi-\partial_y(R_{v})_\Phi-\langle D_x\rangle\theta' (R_{v})_\Phi
\Big]
,| D_x|^{\frac{1}{2}}\partial_x^ip_\Phi\Big>.
\end{align}

Integrating by parts and using $\pa_yv=-\nabla_x\cdot u$, the left hand side of (\ref{e: energy estimate low pressure part}) is bigger than
\begin{align}
&(1-C_0\delta)\big\|(\nabla p)_{\Phi}\big\|^2_{H^{7,\frac{1}{2}}_{tan}}-C_0\|\nabla p\|^2_{L^2(\R^3_+)}+\sum_{ |i|\leq 7}\int_{\R^2}|D_x|^{\frac12}\partial_x^ip_\Phi|D_x|^{\frac12}\partial_x^i(R_{v})_\Phi(t,x,0)dx\nonumber\\
&+\sum_{ |i|\leq 7}\int_{\R^2}|D_x|^{\frac12}\partial_x^ip_\Phi|D_x|^{\frac12}\partial_x^i(-\varepsilon^2(\partial_{y}\na_x\cdot u)_\Phi+\varepsilon^2(\partial_tf)_\Phi-\varepsilon^4\Delta_{x}f_\Phi)(t,x,0)dx.\nonumber
\end{align}
Recalling that $f=\partial_x\int_0^{+\infty}u_p^{(1)}(t,x,y)dy$, by Lemma \ref{e:uniform boundness for approximate solution}, we get
\begin{align}
&\sum_{ |i|\leq 7}\Big|\int_{\R^2}|D_x|^{\frac12}\partial_x^ip_\Phi\langle |D_x|^{\frac12}\partial_x^i(\varepsilon^2(\partial_tf)_\Phi-\varepsilon^4\Delta_{x}f_\Phi)(t,x,0)dx\Big|\leq  \frac14\big\|(\nabla p)_{\Phi}\big\|^2_{H^{7,\frac{1}{2}}_{tan}}+C\varepsilon^4,\nonumber
\end{align}
and using $\partial_yu=w_h^{\bot}+\na_xv$,
\begin{align}
&\sum_{ |i|\leq 7}\Big|\int_{\R^2}|D_x|^{\frac12}\partial_x^ip_\Phi|D_x|^{\frac12}\partial_x^i\varepsilon^2(\partial_{y}\na_x\cdot u)_\Phi)(t,x,0)dx\Big|\\
&=\sum_{ |i|\leq 7}\Big|\int_{\R^3_+}\pa_y\big(|D_x|^{\frac12}\partial_x^ip_\Phi|D_x|^{\frac12}\partial_x^i\varepsilon^2(\partial_{y}\na_x\cdot u)_\Phi)(t,x,y)\big)dxdy\Big|\nonumber\\
&\leq  \frac14\big\|(\nabla p)_{\Phi}\big\|^2_{H^{7,\frac{1}{2}}_{tan}}+C_0\varepsilon^4\big\|((\partial_y+|D_x|)w)_{\Phi}\big\|^2_{H^{8}_{co}}
+C\varepsilon^4\Big(\big\|w_{\Phi}\big\|^2_{H^{8,\f12}_{co}}+\big\|U_{\Phi}\big\|^2_{H^{9,\f12}_{tan}}\Big).\nonumber
\end{align}
Thus, fixing $\delta$ small, the left hand side of (\ref{e: energy estimate low pressure part}) is bigger than
\begin{align}
&\frac12\big\|(\nabla p)_{\Phi}\big\|^2_{H^{7,\frac{1}{2}}_{tan}}-C_0\|\nabla p\|^2_{L^2(\R^3_+)}+\sum_{ |i|\leq 7}\int_{\R^2}|D_x|^{\frac12}\partial_x^ip_\Phi|D_x|^{\frac12}\partial_x^iR_{v}(t,x,0)dx\nonumber\\
&-C_0\varepsilon^4\big\|((\partial_y+|D_x|)w)_{\Phi}\big\|^2_{H^{8}_{co}}
-C\varepsilon^4\Big(\big\|w_{\Phi}\big\|^2_{H^{8,\f12}_{co}}+\big\|U_{\Phi}\big\|^2_{H^{9,\f12}_{tan}}+1\Big).\nonumber
\end{align}
Integrating by parts and using Lemma \ref{e:uniform boundness for error}, the right hand side of (\ref{e: energy estimate low pressure part}) can be bounded by
\begin{align}
\frac14\big\|(\nabla p)_{\Phi}\big\|^2_{H^{7,\frac{1}{2}}_{tan}}+\sum_{ |i|\leq 7}\int_{\R^2}|D_x|^{\frac12}\partial_x^ip_\Phi|D_x|^{\frac12}\partial_x^iR_{v}(t,x,0)dx+C\varepsilon^4
+C\big\|(F,G)_{\Phi}\big\|^2_{H^8_{tan}}.\nonumber
\end{align}
Therefore, we arrive at
\begin{align}\label{e:low pressure estimate}
\big\|(\nabla p)_{\Phi}\big\|^2_{H^{7,\f12}_{tan}}\leq & C\big\|(F,G)_{\Phi}\big\|^2_{H^8_{tan}}+C_0\varepsilon^4\big\|((\partial_y+|D_x|)w)_{\Phi}\big\|^2_{H^{8}_{co}}\nonumber\\
&+C\varepsilon^4\big(\big\|w_{\Phi}\big\|^2_{H^{8,\f12}_{co}}+\big\|U_{\Phi}\big\|^2_{H^{9,\f12}_{tan}}+1\big)+C_0\|\nabla p\|_{L^2(\R^3_+)}^2.
\end{align}
Following the proof of \eqref{e:low pressure estimate}, we have
\begin{align}\label{eq:p-L2}
\|\nabla p\|_{L^2(\R^3_+)}^2\le & C\big\|(F,G)\big\|^2_{L^2}+C_0\varepsilon^4\big\|((\partial_y+|D_x|)w)\big\|^2_{L^2}\nonumber\\
&+C\varepsilon^4\big(\big\|w\big\|^2_{H^{1}_{co}}+\big\|U\big\|^2_{H^{2}_{tan}}+1\big).
\end{align}

Putting (\ref{e:low pressure estimate}) and \eqref{eq:p-L2}  into (\ref{e:pressure estimate step one}), we obtain
\begin{align}
\big\|\theta'(\nabla p)_{\Phi}\big\|^2_{H^{8,\frac{1}{2}}_{tan}}\leq& C\Big(\big\| (F,G)_{\Phi}\big\|^2_{H^{8,\frac{1}{2}}_{tan}(0,y(t))}+
\big\|(F,G)\big\|^2_{H^9_{tan}}\Big)+C_0\delta\varepsilon^4\big\|((\partial_y+|D_x|)w)\big\|^2_{L^2}\nonumber\\
&+C_0\delta\varepsilon^4\big\|((\partial_y+|D_x|)w)_\Phi\big\|^2_{H^8_{tan}}+C\varepsilon^4\big(1+E(t)+K(t)\big),\nonumber
\end{align}
which along with Lemma \ref{lem:F-analytic} and Lemma \ref{lem:F-Sob} gives our result.
\end{proof}

\subsection{Tangential Sobolev estimate of the pressure}

\begin{Lemma}\label{e:Sobolev pressure estimate}
It holds that
\begin{align}
\big\|\nabla p\big\|^2_{H^7_{tan}}\leq  C\varepsilon^2\big(E(t)+\varepsilon^2\big)\big(1+E(t)\big)+C_0\varepsilon^4\|(\partial_y+|D_x|)w\|^2_{H^8_{co}}.\nonumber
\end{align}
\end{Lemma}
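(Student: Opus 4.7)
The plan is to mimic the lower-frequency part of the argument used in Lemma~\ref{e:analytical pressure estimate} with the analytic weight $e^{\Phi}$ removed. I apply $\partial_x^i$ for $|i|\le 7$ to the pressure equation \eqref{e:equation for pressure}, take the $L^2(\R^3_+)$ inner product with $\partial_x^i p$, and sum. Integration by parts turns the left-hand side into $\|\nabla p\|_{H^7_{tan}}^2$ minus a boundary integral $\sum_{|i|\le 7}\int_{\R^2}\partial_x^i p(x,0)\,\partial_x^i(\partial_y p)(x,0)\,dx$. On the right, I move the $\partial_y$ falling on $G$ and on $R_v$ over onto $\partial_x^i p$, so that only $\partial_y\partial_x^i p$ and boundary traces of $G$ and $R_v$ remain; the first vanishes since $G|_{y=0}=0$, and the second will be cancelled by the Neumann term below.

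After Cauchy--Schwarz and Young's inequality, Lemma~\ref{lem:F-Sob} and Lemma~\ref{e:uniform boundness for error} reduce the bulk contribution to
$$C\|(F,G)\|_{H^8_{tan}}^2+C\|(R_h,R_v)\|_{H^8_{tan}}^2+\tfrac14\|\nabla p\|_{H^7_{tan}}^2\le C\varepsilon^2(\varepsilon^2+E(t))(1+E(t))+\tfrac14\|\nabla p\|_{H^7_{tan}}^2,$$
with the $\tfrac14$-term absorbed to the left. The remaining boundary integral is then treated via the Neumann condition $\partial_y p|_{y=0}=\varepsilon^2\partial_{yy}v(t,x,0)+R_v(t,x,0)+\varepsilon^2\partial_tf-\varepsilon^4\Delta_x f$. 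The $R_v$ trace cancels the one arising from the integration by parts performed on the $R_v$ source term, while the $\partial_t f$ and $\Delta_x f$ contributions are $O(\varepsilon^4)$ thanks to Lemma~\ref{e:uniform boundness for approximate solution}.

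The delicate piece, and the main obstacle, is the term $\varepsilon^2\int_{\R^2}\partial_x^i p(x,0)\,\partial_x^i\partial_{yy}v(x,0)\,dx$. I convert it into a volume integral using $[g]_{y=0}=-\int_0^{+\infty}\partial_y g\,dy$ (all quantities decay as $y\to+\infty$), obtaining a sum of terms of the form $\varepsilon^2\int_{\R^3_+}\partial_x^i\partial_y p\cdot\partial_x^i\partial_{yy}v\,dxdy+\varepsilon^2\int_{\R^3_+}\partial_x^i p\cdot\partial_x^i\partial_{yyy}v\,dxdy$. Using the divergence-free identities $\partial_y v=-\nabla_x\cdot u$ and $\partial_y u=w_h^{\perp}+\nabla_x v$, I rewrite $\partial_{yy}v$ and $\partial_{yyy}v$ in terms of tangential derivatives of $v$ and of $(\partial_y+|D_x|)w$; an elliptic estimate in the spirit of Lemma~\ref{e:one order elliptic estimate} then bounds these integrals by $C_0\varepsilon^4\|(\partial_y+|D_x|)w\|_{H^8_{co}}^2+\tfrac14\|\nabla p\|_{H^7_{tan}}^2+C\varepsilon^2E(t)(1+E(t))$. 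Absorbing the remaining $\|\nabla p\|_{H^7_{tan}}^2$ contribution into the left-hand side yields the stated inequality. The only non-routine work is identifying the correct integration-by-parts pattern so that the $w$-dependent contribution lives in the conormal norm $H^8_{co}$ rather than a full Sobolev norm; the rest is a direct Sobolev transcription of the argument already carried out in the analytic setting.
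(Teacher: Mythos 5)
Your proposal is correct and follows essentially the same route as the paper: testing the Neumann problem for $p$ with $\partial_x^i p$ for $|i|\le 7$, cancelling the $R_v$ boundary traces, controlling the $f$-terms by Lemma \ref{e:uniform boundness for approximate solution}, and converting the delicate boundary term $\varepsilon^2\int_{\R^2}\partial_x^i p\,\partial_x^i\partial_{yy}v(t,x,0)\,dx$ into a volume integral which is then rewritten via $\partial_y v=-\nabla_x\cdot u$ and $\partial_y u=w_h^{\perp}+\nabla_x v$ so that the vorticity contribution lands in $C_0\varepsilon^4\|(\partial_y+|D_x|)w\|^2_{H^8_{co}}$. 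The concluding appeal to Lemma \ref{lem:F-Sob} to absorb $\|(F,G)\|^2_{H^7_{tan}}$ matches the paper's final step.
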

\begin{proof}
Acting $\partial_x^i$ on both sides of (\ref{e:equation for pressure}), and then taking $L^2$ inner product with $\partial_x^i p$, summing over all $|i|\leq 7$, we arrive at
\begin{align}\label{e:equality on Euler pressure}
-\sum\limits_{|i|\leq 7}\big<\partial_x^i\triangle p,\partial_x^ip\big>
=\sum\limits_{|i|\leq 7}\big<\partial_x^i(\nabla_x\cdot F+\partial_yG)-\partial_x^i(\nabla_x\cdot R_h+\partial_yR_v),\partial_x^ip\big>.
\end{align}
By integrating by parts, the left hand side of (\ref{e:equality on Euler pressure}) is bigger than
\begin{align}
&\big\|\nabla p\big\|^2_{H^7_{tan}}+\sum\limits_{|i|\leq 7}\int_{\R^2}\partial_x^ip\partial_x^iR_v(t,x,0)dx\nonumber\\
&\quad+\sum\limits_{|i|\leq 7}\int_{\R^2}\partial_x^ip\big(\varepsilon^2\partial_x^i\partial_{yy}v+\varepsilon^2\partial_t\partial_x^if
-\varepsilon^4\Delta_{x}\partial_x^if\big)(t,x,0)dx\nonumber.
\end{align}
Recalling that $f(t,x)=\partial_x\int_0^{+\infty} u_p^{(1)}(t,x,y)dy$ and by Lemma \ref{e:uniform boundness for approximate solution}, we have
\begin{align}
&\sum\limits_{|i|\leq 7}\Big|\int_{\R^2}\partial_x^ip\big(\varepsilon^2\partial_t\partial_x^if
-\varepsilon^4\partial_x^i\Delta_xf\big)(t,x,0)dx\Big|\leq \frac14\big\|\nabla p\big\|^2_{H^7_{tan}}+C\varepsilon^4.\nonumber
\end{align}
Using  $\pa_yv=-\na_x\cdot u$ and $\partial_yu=w_h^{\bot}+\na_xv$, we get
\begin{align}
\sum\limits_{|i|\leq 7}\Big|\int_{\R^2}\partial_x^ip\big(\varepsilon^2\partial_x^i\partial_{yy}\big)(t,x,0)dx\Big|
&=\sum\limits_{|i|\leq 7}\Big|\int_{\R^3_+}\partial_y\big(\partial_x^ip\varepsilon^2\partial_x^i\partial_{xy}u\big)(t,x,y)dxdy\Big|\nonumber\\
&\leq \frac14\big\|\nabla p\big\|^2_{H^7_{tan}}+C_0\varepsilon^4\big(\|\partial_yw\|^2_{H^8_{co}}+\|U\|^2_{H^9_{tan}}\big).\nonumber
\end{align}
So, the left hand side of (\ref{e:equality on Euler pressure}) is bigger than
\begin{align}
&\frac12\big\|\nabla p\big\|^2_{H^7_{tan}}+\sum\limits_{|i|\leq 7}\int_{\R^2}\partial_x^ip\partial_x^iR_v(t,x,0)dx-C_0\varepsilon^4\big(1+\|\partial_yw\|^2_{H^8_{co}}+\|U\|^2_{H^9_{tan}}\big)
\nonumber.
\end{align}
By integrating by parts, the right hand side of (\ref{e:equality on Euler pressure}) can be bounded by
\beno
\frac14\big\|\nabla p\big\|^2_{H^7_{tan}}+C\big\|(F,G)\big\|^2_{H^7_{tan}}+C\varepsilon^4
+\sum\limits_{|i|\leq 7}\int_{\R^2}\partial_x^ip\partial_x^iR_v(t,x,0)dx.
\eeno
Thus, we arrive at
\begin{align}
\big\|\nabla p\big\|^2_{H^7_{tan}}\leq C\big\|(F,G)\big\|^2_{H^7_{tan}}+C\varepsilon^4\big(1+\|w\|^2_{H^9_{co}}+\|U\|^2_{H^9_{tan}}\big)+C_0\ve^4\big\|(\partial_y+|D|)w\big\|^2_{H^8_{co}},\nonumber
\end{align}
which along with Lemma \ref{lem:F-Sob} gives our result.
\end{proof}

\section{Tangential analytic type estimate of the velocity}

In this section, we make tangential analytic type estimates for the velocity. In what follows, we always assume $t\in \big[0,\min{(T_0,T_a)}\big]$.

\begin{Proposition}\label{p:analytical velocity estimate}
There exists $\delta_0>0$ such that for any $\delta\in (0, \delta_0)$, there holds
\begin{align}
&\frac{1}{2}\frac{d}{dt}\big\|U_\Phi\big\|^2_{H^9_{tan}}+\lambda\big\|U_\Phi\big\|^2_{H^{9,\frac{1}{2}}_{tan}(0,y(t))}+\frac{\varepsilon^2}{2}\big\|(\nabla U)_\Phi\big\|^2_{H^9_{tan}}\nonumber\\
&\leq C\varepsilon^2\big(E(t)+K(t)+\varepsilon^2\big)\big(1+E(t)\big)\nonumber\\
&\qquad+\frac{\varepsilon^4}{100}\Big(\big\|((\partial_y+|D_x|)w)_\Phi\big\|^2_{H^8_{co}}+\big\|(\partial_y+|D_x|)w\big\|^2_{H^9_{co}}\Big).\nonumber
\end{align}
\end{Proposition}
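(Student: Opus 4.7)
The plan is to derive the analytic energy estimate by localizing the velocity equation in the tangential Fourier variable with the weight $e^{\Phi(t,\xi,y)}$, taking $\partial_x^i$ derivatives for $|i|\le 9$, and performing an $L^2(\R^3_+)$ inner product against $\partial_x^i U_\Phi$ followed by summing over $i$. Starting from the error system \eqref{scaled-error-equation-2}, the tangential derivative commutes with $e^\Phi$, and since $\Phi$ depends on $(t,y)$ only through $\phi(t,y)=\delta-\theta(y)-\lambda t$, the weighted equation picks up the extra terms $\partial_t\Phi=-\lambda\langle D_x\rangle$ acting multiplicatively and $\theta'\langle D_x\rangle$ appearing from $\partial_y e^\Phi=-\theta'\langle D_x\rangle e^\Phi$. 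The former produces the desired analytic dissipation $\lambda\|U_\Phi\|^2_{H^{9,1/2}_{tan}(0,y(t))}$ after restricting to the region $\{\phi\ge 0\}=\{y\le y(t)\}$ (outside this region the weight is less than one and the gain is harmless by absorbing into $E_v$), while the latter combines with the Laplacian to give a boundary-type contribution that is controlled by $C_0\delta$ times the main dissipation thanks to \eqref{e:property of cutoff function}, which is absorbed for small $\delta$.

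For the viscous term I would integrate by parts, yielding $\varepsilon^2\|(\nabla U)_\Phi\|^2_{H^9_{tan}}$ plus a boundary contribution at $y=0$ coming from $v|_{y=0}=-\varepsilon^2 f$; this boundary term is harmless since $f$ and its tangential derivatives are bounded by Lemma 2.3's consequences (giving an $O(\varepsilon^4)$ error). The transport terms $\widetilde U_a\cdot\nabla U$, $\widetilde U\cdot\nabla U_a$ and $\widetilde U\cdot\nabla U$ are treated with Lemma 4.3 (the $\langle D_x\rangle^{-1/2}$ trick to buy back half a tangential derivative), in the same spirit as in Lemma 6.1: the dangerous vertical derivatives are converted via $\widetilde v_a|_{y=0}=\widetilde v|_{y=0}=0$ so that $\widetilde v_a/\varphi$ and $\widetilde v/\varphi$ are bounded and one can rewrite $\widetilde v\,\partial_y u=(\widetilde v/\varphi)\,Z u$, while $\partial_y u=w_h^\perp+\nabla_x v$ and $\partial_y v=-\nabla_x\cdot u$ let us swap vertical derivatives on $U$ for either $w$ or tangential $\nabla_x U$, placing everything inside the available norms in $E$ and $K$.

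The pressure contribution, after integrating by parts in $\nabla_x$, produces $\langle(\nabla_x p)_\Phi,\partial_x^i U_\Phi\rangle$ controlled via $\|U_\Phi\|^2_{H^{9,1/2}_{tan}(0,y(t))}$ (absorbed into the $\lambda K_v$ dissipation, since the pressure norm on the right of Lemma 6.3 carries the $\theta'$ prefactor that localizes to that same region) plus a boundary piece $\int p\,v\,dx|_{y=0}$ which is $O(\varepsilon^4)$ since $v|_{y=0}=-\varepsilon^2 f$; outside the region $\{y\le y(t)\}$ we instead appeal to the Sobolev pressure bound of Lemma 6.4. The remainder is handled by Lemma 5.3 which gives $O(\varepsilon^4)$. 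Plugging Lemma 6.3 directly supplies the $\varepsilon^4/100$ term involving $(\partial_y+|D_x|)w$ in the final bound.

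The main obstacle is the pressure term: one must carefully split the domain at $y=y(t)$ so that the high-frequency $H^{9,1/2}_{tan}$ piece of $\nabla p$, which only has an analytic estimate weighted by $\theta'$ (Lemma 6.3), is matched against the analytic dissipation $\lambda\|U_\Phi\|^2_{H^{9,1/2}_{tan}(0,y(t))}$ on that same region, while on $\{y>y(t)\}$ the weight $e^\Phi\le 1$ and one must use the Sobolev bound of Lemma 6.4 together with the non-weighted part of $E_v$. A secondary issue is the boundary term at $y=0$ from integration by parts in $\varepsilon^2\Delta U$: writing $\varepsilon^2\int_{\R^2}(\partial_y U_\Phi)\cdot U_\Phi\,dx|_{y=0}$ and using $u|_{y=0}=0$, $v|_{y=0}=-\varepsilon^2 f$ together with Lemma 2.3 reduces it to a clean $O(\varepsilon^4)$ remainder; this is routine but must be done with care to avoid losing powers of $\varepsilon$.
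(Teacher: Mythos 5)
Your proposal follows essentially the same route as the paper: the weighted energy identity with the $\lambda\langle D_x\rangle$ gain from $\partial_t\Phi$, integration by parts for the viscous term, the substitutions $\partial_yu=w_h^{\bot}+\na_xv$, $\partial_yv=-\na_x\cdot u$ together with the Hardy-type bounds on $\widetilde v_a/\varphi$ and $\widetilde v/\varphi$ for the transport terms, the product lemmas of Section 4, and Lemma \ref{e:analytical pressure estimate} for the pressure. Two small corrections: the viscous boundary term at $y=0$ vanishes identically (since $u|_{y=0}=0$ and $\partial_yv|_{y=0}=-\na_x\cdot u|_{y=0}=0$), so no $O(\varepsilon^4)$ bookkeeping is needed there; and for the pressure no splitting of $\nabla p$ at $y=y(t)$ or appeal to Lemma \ref{e:Sobolev pressure estimate} is required --- after integrating by parts with the divergence-free condition the surviving bulk term automatically carries the $\theta'$ weight (which is supported in $(0,\frac12)$, not in $(0,y(t))$), and it is the velocity factor $\langle D_x\rangle^{\frac12}v_\Phi$ that gets split between $\|U_\Phi\|^2_{H^{9,\frac12}_{tan}(0,y(t))}$ and $\|U\|^2_{H^{10}_{tan}}$ on the complement, where $e^{\Phi}\le1$.
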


\begin{proof}
Acting $\partial_x^ie^\Phi$ on both sides of (\ref{scaled-error-equation-2}), taking $L^2$ inner product with $\partial_x^i U_\Phi$, and then summing over all $|i|\leq 9$, we arrive at
\begin{align}
&\sum_{|i|\leq 9}\big<\partial_x^i(\partial_tU)_\Phi,\partial_x^iU_\Phi\big>-\varepsilon^2\sum_{|i|\leq 9}\big<\partial_x^i(\Delta U)_\Phi,\partial_x^iU_\Phi\big>\tre I_0\nonumber\\
&\leq\Big|\sum\limits_{|i|\leq 9}\big<\partial_x^i(\widetilde{U}_{a}\cdot\nabla U)_\Phi,\partial_x^iU_\Phi\big>\Big|
+\Big|\sum\limits_{|i|\leq 9}\big<\partial_x^i(\widetilde{U}\cdot\nabla U_{a})_\Phi,\partial_x^iU_\Phi\big>\Big|\nonumber\\
&\quad+\Big|\sum\limits_{|i|\leq 9}\big<\partial_x^i(\widetilde{U}\cdot\nabla U)_\Phi,\partial_x^iU_\Phi\big>\Big|
+\Big|\sum\limits_{|i|\leq 9}\big<\partial_x^i(\nabla p)_\Phi,\partial_x^iU_\Phi\big>\Big|\nonumber\\
&\quad+\Big|\sum\limits_{|i|\leq 9}\big<\partial_x^i(R_h,R_v)_\Phi,\partial_x^iU_\Phi\big>\Big|
\tre\sum_{i=1}^5I_i.\nonumber
\end{align}

Let us now handle them term by term.\smallskip

\no{\bf Step 1. Estimate of $I_0$}

We  get by integration by parts that
\begin{align}
\sum_{|i|\leq 9}\big<\partial_x^i(\partial_tU)_\Phi,\partial_x^iU_\Phi\big>=&\sum_{|i|\leq 9}\big<\partial_x^i(\partial_tU_\Phi)-\partial_x^i\partial_t\Phi U_\Phi,\partial_x^iU_\Phi\big>\nonumber\\
\geq&\frac{1}{2}\frac{d}{dt}\big\|U_{\Phi}\big\|^2_{H^9_{tan}}+
\lambda\big\|U_{\Phi}\big\|^2_{H^{9,\frac{1}{2}}_{tan}(0, y(t))}.\nonumber
\end{align}
Using $u(t,x,0)=0, \partial_yv|_{y=0}=-\na_x\cdot u|_{y=0}=0$ and $|\theta'(y)|\leq C_0\delta$, we deduce that
\begin{align}
-\varepsilon^2\sum_{|i|\leq 9}\big<\partial_x^i(\Delta U)_\Phi,\partial_x^iU_\Phi\big>
=&-2\varepsilon^2\big<\theta'(\partial_yU)_\Phi,\langle D_x\rangle U_{\Phi}\big>_{H^9_{tan}}
+\varepsilon^2\|(\nabla U)_\Phi\|^2_{H^9_{tan}}\nonumber\\
\geq&\frac{\varepsilon^2}{2}\big\|(\nabla U)_{\Phi}\big\|^2_{H^9_{tan}}-C\varepsilon^2\big\|U_{\Phi}\big\|^2_{H^9_{tan}}\nonumber
\end{align}
for small $\delta$. Thus, we obtain
\begin{align}
I_0\geq\frac{1}{2}\frac{d}{dt}\big\|U_{\Phi}\big\|^2_{H^9_{tan}}+
\lambda\big\|U_{\Phi}\big\|^2_{H^{9,\frac{1}{2}}_{tan}(0, y(t))}+\frac{\varepsilon^2}{2}\big\|(\nabla U)_{\Phi}\big\|^2_{H^9_{tan}}-C\varepsilon^2\big\|U_{\Phi}\big\|^2_{H^9_{tan}}.\nonumber
\end{align}

\no{\bf Step 2. Estimate of $I_1$.}

First of all, we deal with $\big|\big<\widetilde{U}_{a}\cdot\nabla u)_{\Phi},u_{\Phi}\big>_{H^9_{tan}}\big|$, which can be controlled by
\begin{align}
&\sum_{|i|\leq 9}\Big|\int_{0}^{y(t)}\big<\partial_x^i(\widetilde{U}_{a}\cdot\nabla u)_{\Phi}),\partial_x^iu_{\Phi}\big>_{L^2_x}dy\Big|
+\sum_{|i|\leq 9}\Big|\int_{y(t)}^{+\infty}\big<\partial_x^i(\widetilde{U}_{a}\cdot\nabla u)_\Phi,\partial_x^iu_\Phi\big>_{L^2_x}dy\Big|
\tre I_{11}+I_{12}.\nonumber
\end{align}
It follows from the first inequality of Lemma \ref{e:double linear estimate} and  Lemma \ref{e:uniform boundness for approximate solution}  that
\begin{align}
\sum_{|i|\leq9}\Big|\int_{0}^{y(t)}\big<\partial_x^i(u_{a}\partial_{x}u)_{\Phi},\partial_x^iu_{\Phi}\big>_{L^2_x}dy\Big|\leq& C\big\|u_\Phi\big\|^2_{H^{9,\frac{1}{2}}_{tan}(0,y(t))}.\nonumber
\end{align}
Using $\partial_yu=w_h^{\bot}+\na_xv$
and $\big(v_{a}-\varepsilon^2f(t,x)e^{-y}\big)|_{y=0}=0$, the same argument as above gives
\begin{align}
&\sum_{|i|\leq9}\Big|\int_{0}^{y(t)}\big<
\partial_x^i((v_{a}-\varepsilon^2f(t,x)e^{-y})\partial_yu)_{\Phi},\partial_x^iu_{\Phi}\big>_{L^2_x}dy\Big|\nonumber\\
&=\sum_{|i|\leq9}\Big|\int_{0}^{y(t)}\big<
\partial_x^i\Big(\frac{(v_{a}-\varepsilon^2f(t,x)e^{-y})}{\varphi}(\varphi w_h^{\bot}+ \varphi \partial_xv)\Big)_{\Phi},\partial_x^iu_{\Phi}\big>_{L^2_x}dy\Big|\nonumber\\
&\leq C\Big(\big\|U_\Phi\big\|^2_{H^{9,\frac{1}{2}}_{tan}(0,y(t))}+\big\|\varphi w_\Phi\big\|^2_{H^{8,\frac12}_{co}(0,y(t))}\Big).\nonumber
\end{align}
So, we get
\beno
I_{11}\le C\Big(\big\|U_\Phi\big\|^2_{H^{9,\frac{1}{2}}_{tan}(0,y(t))}+\big\|\varphi w_\Phi\big\|^2_{H^{8,\frac12}_{co}(0,y(t))}\Big).
\eeno

Thanks to $\phi(t,y)\leq 0$ for $y\ge y(t)$, $I_{12}$ can be controlled by
\begin{align}
 &C\sum_{|i|\leq 9}\big\|\partial_x^i(u_{a}\partial_{x}u+(v_{a}-\varepsilon^2f(t,x)e^{-y})\partial_yu)\big\|^2_{L^2(\R^3_+)}+C\|u\|^2_{H^9_{tan}}\nonumber\\
 &\le C\Big(\big\|U\big\|^2_{H^{10}_{tan}}+\big\|\varphi w\big\|^2_{H^9_{co}}\Big).\nonumber
\end{align}
Here we used $\partial_yu=w_h^{\bot}+\na_xv$ again.

Collecting the above estimates, we obtain
\begin{align}
&\big|\big<\widetilde{U}_{a}\cdot\nabla u)_{\Phi},u_{\Phi}\big>_{H^9_{tan}}\big|\nonumber\\
&\leq C\Big(\big\|U_\Phi\big\|^2_{H^{9,\frac{1}{2}}_{tan}(0,y(t))}+\big\|\varphi w_\Phi\big\|^2_{H^{8,\frac12}_{co}(0,y(t))}
+\big\|U\big\|^2_{H^{10}_{tan}}+\big\|\varphi w\big\|^2_{H^9_{co}}\Big).\nonumber
\end{align}
Using the first inequality of Lemma \ref{e:double linear estimate}, $\pa_yv=-\nabla_x\cdot u$ and Lemma \ref{e:uniform boundness for approximate solution},  similar argument as above gives
\begin{align}
\big|\big<\widetilde{U}_{a}\cdot\nabla v)_{\Phi},v_{\Phi}\big>_{H^9_{tan}}\big|\leq C\Big(\big\|U_\Phi\big\|^2_{H^{9,\frac{1}{2}}_{tan}(0,y(t))}+\big\|U\big\|^2_{H^{10}_{tan}}\Big).\nonumber
\end{align}
This shows that
\begin{align}
I_1
\leq C\Big(\big\|U_\Phi\big\|^2_{H^{9,\frac{1}{2}}_{tan}(0,y(t))}+\big\|\varphi w_\Phi\big\|^2_{H^{8,\frac12}_{co}(0,y(t))}
+ \big\|U\big\|^2_{H^{10}_{tan}}+\big\|\varphi w\big\|^2_{H^9_{co}}\Big).\nonumber
\end{align}

\no{\bf Step 3. Estimate of $I_2$.}

Using Lemma \ref{e:product estimate} and Lemma \ref{e:uniform boundness for approximate solution}, it is easy to deduce that
\begin{align}
\big|\big<(u\partial_{x}u_{a})_\Phi,u_{\Phi}\big>_{H^9_{tan}}\big|
\leq C\big(\|u_\Phi\|^2_{H^9_{tan}(0,y(t))}+\|u\|^2_{H^9_{tan}}).\nonumber
\end{align}
On the other hand, we have
\begin{align}
&\sum_{|i|\leq9}\big|\big<\partial_x^i((v+\varepsilon^2f(t,x)e^{-y})\partial_yu_{a})_{\Phi},\partial_x^iu_{\Phi}\big>\big|\nonumber\\
&\leq\sum_{|i|\leq9}\Big|\int_{0}^{y(t)}\big<\partial_x^i((v+\varepsilon^2f(t,x)e^{-y})\partial_yu_a)_{\Phi}
,\partial_x^iu_{\Phi}\big>_{L^2_x}dy\Big|\nonumber\\
&\quad+\sum_{|i|\leq 9}\Big|\int_{y(t)}^{+\infty}\big<\partial_x^i((v+\varepsilon^2f(t,x)e^{-y})\partial_yu_a)_\Phi,\partial_x^iu_\Phi\big>_{L^2_x}dy\Big|
\tre I_{21}+I_{22}.\nonumber
\end{align}
Using the fact that $\|\partial_x^i\partial_yu_a\|_{L^\infty(\mathbb{R}^2\times (y(t),\infty))}\leq C$ due to $y(t)\ge c_0$, it is easy to get
\begin{align}
I_{22}\leq C\big(\|U\|^2_{H^9_{tan}}+\varepsilon^4\big).\nonumber
\end{align}
Thanks to $(v+\varepsilon^2f(t,x)e^{-y})|_{y=0}=0$, $\pa_yv=-\nabla_x\cdot u$, Hardy's inequality and Lemma \ref{e:uniform boundness for approximate solution}, we arrive at
\begin{align}
I_{21}\leq &
 \sum_{|i|\leq9}\Big|\int_{0}^{y(t)}\Big<\partial_x^i\Big(\frac{1}{y}\int_0^y\big(-{\rm div}_xu(x,y')
 -\varepsilon^2f(t,x)e^{-y'}\big)dy'z\partial_zu_{a,p}\Big)_{\Phi}
,\partial_x^iu_{\Phi}\Big>_{L^2_x}(y)dy\Big|\nonumber\\
&+C\big(\|U_\Phi\|^2_{H^9_{tan}(0,y(t))}+\varepsilon^4\big)
 \leq C\Big(\big\|u_\Phi\big\|^2_{H^{9,\frac{1}{2}}_{tan}(0,y(t))}+\varepsilon^4\Big).\nonumber
\end{align}
This shows that
\begin{align}
\Big|\big<\big(\widetilde{U}_{a}\cdot\nabla u_a\big)_\Phi,u_{\Phi}\big>_{H^9_{tan}}\Big|
\leq C\Big(\|U\|^2_{H^9_{tan}}+\big\|U_{\Phi}\big\|^2_{H^{9,\frac{1}{2}}_{tan}(0,y(t))}+\varepsilon^4\Big).\nonumber
\end{align}
Using the fact that $\|\partial_x^i\partial_yv_a\|_{L^\infty}\leq C$, it is easy to deduce that
\begin{align}
\Big|\big<(\widetilde{U}_{a}\cdot\nabla v_a)_{\Phi},v_{\Phi}\big>_{H^9_{tan}}\Big|
\leq C\Big(\|U_{\Phi}\|^2_{H^9_{tan}}+\|U\|^2_{H^9_{tan}}+\varepsilon^4\Big).\nonumber
\end{align}
Thus, we obtain
\begin{align}
I_2\leq C\Big(\|U\|^2_{H^9_{tan}}+\big\|U_{\Phi}\big\|^2_{H^{9,\frac{1}{2}}_{tan}(0,y(t))}+\varepsilon^4\Big).\nonumber
\end{align}

\no {\bf Step 4. Estimate of $I_3$.}

We first consider $\big|\big<(\widetilde{U}\cdot\nabla u)_\Phi, u_\Phi\big>_{H^9_{tan}}\big|$, which can be bounded by
\begin{align}
&\sum\limits_{|i|\leq 9}\Big|\int_0^{y(t)}\big<\partial_x^i(\widetilde{U}\cdot\nabla u)_\Phi, \partial_x^iu_\Phi\big>_{L^2_x}dy\Big|
+\sum\limits_{|i|\leq 9}\Big|\int_{y(t)}^{+\infty}\Big|\big<\partial_x^i(\widetilde{U}\cdot\nabla u)_\Phi, \partial_x^iu_\Phi\big>_{L^2_x}dy\Big|\tre I_{31}+I_{32}.\nonumber
\end{align}
Using the interpolation inequality $\|g\|_{L^\infty_y}\le C\|g\|_{L^2_y}^\f12\|\pa_yg\|_{L^2_y}^\f12$ and $\partial_yu=w_h^{\bot}+\na_xv$, we deduce that
\begin{align}
&\sum_{|i|\leq 9}\int_{y(t)}^{+\infty}\big|\big<(\partial_x^{i}(u\cdot\na_xu)_\Phi,\partial_x^i u_\Phi\big>_{L^2_x}\big|dy\nonumber\\
&\leq C\|u\|^2_{H^{10}_{tan}}\|u\|_{L^\infty_yH^8_x}\leq C\|u\|^2_{H^{10}_{tan}}\|u\|^{\frac12}_{H^8_{tan}}\|w_h^{\bot}+\na_xv\|^{\frac12}_{H^8_{tan}}\nonumber\\
&\leq C\|U\|^2_{H^{10}_{tan}}\big(\|U\|_{H^{10}_{tan}}+\|U\|^{\frac12}_{H^{10}_{tan}}\|w\|^{\frac12}_{H^9_{co}}\big),\nonumber
\end{align}
and
\begin{align}
&\sum_{|i|\leq 9}\int_{y(t)}^{+\infty}\big|\big<(\partial_x^{i}((v+\varepsilon^2f(t,x)e^{-y})\partial_yu)_\Phi,\partial_x^i u_\Phi\big>_{L^2_x}\big|dy\nonumber\\
&\leq C\|u\|_{H^9_{tan}}(\|U\|_{H^{10}_{tan}}+\varepsilon^2)\|\partial_yu\|_{H^7_{tan}}
+C\|u\|_{H^9_{tan}}\|\partial_yu\|_{H^9_{tan}}\|v+\varepsilon^2f(t,x)e^{-y}\|_{L^\infty_yH^7_x}\nonumber\\
&\leq C\big(\|U\|^2_{H^{10}_{tan}}+\varepsilon^4\big)\big(\|U\|_{H^{10}_{tan}}+\|w\|_{H^9_{co}}\big),\nonumber
\end{align}
from which, we infer that
\begin{align}
I_{32}\leq & C\big(\|U\|^2_{H^{10}_{tan}}+\varepsilon^4\big)\big(\|U\|_{H^{10}_{tan}}+\|w\|_{H^9_{co}}\big).\nonumber
\end{align}

For $I_{31}$, using the third inequality of  Lemma \ref{e:double linear estimate} and $\partial_yu=w_h^{\bot}+\na_xv$ , we obtain
\begin{align}
&\sum\limits_{|i|\leq 9}\Big|\int_0^{y(t)}\big<\partial_x^i(u\partial_xu)_\Phi, \partial_x^iu_\Phi\big>_{L^2_x}dy\Big|\leq C\|u_\Phi\|^2_{H_{tan}^{9,\frac{1}{2}}(0,y(t))}\Big(1+\|U_\Phi\|^2_{H_{tan}^9}
+\|w_\Phi\|^2_{H_{co}^8}\Big),\nonumber
\end{align}
and by Lemma \ref{e:product estimate} and $\partial_yu=w_h^{\bot}+\na_xv$ and $\pa_yv=-\nabla_x\cdot u$,
\begin{align}
&\sum\limits_{|i|\leq 9}\Big|\int_0^{y(t)}\big<\partial_x^i((v+\varepsilon^2f(t,x)e^{-y})\partial_yu)_\Phi, \partial_x^iu_\Phi\big>_{L^2_x}(y)dy\Big|\nonumber\\
&\leq C\|\langle D_x\rangle^{-\frac12}((v+\varepsilon^2f(t,x)e^{-y})\partial_yu)_\Phi\|^2_{H^{9}_{tan}(0,y(t))}+C\|u_\Phi\|^2_{H^{9,\frac12}_{tan}(0,y(t))}\nonumber\\
&\leq C\Big(\|U_\Phi\|^2_{H^{9}_{tan}}+\varepsilon^4\Big)\Big(\|w_\Phi\|^2_{H^{8,\frac12}_{co}(0,y(t))}+\|v_\Phi\|^2_{H^{9,\frac12}_{tan}(0,y(t))}\Big)
\nonumber\\
&\quad+ C\Big(\|U_\Phi\|^2_{H^{9,\f12}_{tan}(0,y(t))}+\varepsilon^4\Big)\Big(\|w_\Phi\|^2_{H^{8}_{co}}+\|v_\Phi\|^2_{H^{9}_{tan}}\Big)
+C\|u_\Phi\|^2_{H^{9,\frac12}_{tan}(0,y(t))}
.\nonumber
\end{align}

Similarly, $\big|\big<(\widetilde{U}\cdot\nabla v)_\Phi, v_\Phi\big>_{H^9_{tan}}\big|$ can be controlled by
\begin{align}
\sum_{|i|\leq 9}\int_0^{y(t)}\big<\partial_x^i(\widetilde{U}\cdot\nabla v)_\Phi,\partial_x^iv_\Phi \big>_{L^2_x}dy
+\sum_{|i|\leq 9}\int_{y(t)}^{+\infty}\big<\partial_x^i(\widetilde{U}\cdot \nabla v)_\Phi,\partial_x^iv_\Phi \big>dy\tre I_{33}+I_{34}.\nonumber
\end{align}
Similar arguments as $I_{32}$ give
\begin{align}
I_{34}\leq &C\big(\|U\|^2_{H^{10}_{tan}}+\varepsilon^4\big)\big(\|U\|_{H^{10}_{tan}}+\|w\|_{H^9_{co}}\big).\nonumber
\end{align}
Using $\pa_yv=-\nabla_x\cdot u$ and the third inequality of Lemma \ref{e:double linear estimate}, we obtain
\begin{align}
I_{33}\leq C\|U_\Phi\|^2_{H_{tan}^{9,\frac{1}{2}}(0,y(t))}
\Big(1+\|U_\Phi\|^2_{H_{tan}^9}+\|w_\Phi\|^2_{H_{co}^8}\Big).\nonumber
\end{align}
Summing up the estimates of $I_{31}-I_{34}$, we deduce that
\begin{align}
I_3\leq C\ve^2\big(1+E(t)\big)\big(\ve^2+E(t)+K(t)\big).\nonumber
\end{align}

\no {\bf Step 5. Estimate of $I_4$.}

Using $\pa_yv=-\nabla_x\cdot u$ and  integration by parts, we arrive at
\begin{align}
I_4
\leq &\sum\limits_{1\leq |i|\leq 9}\Big|\int_{\mathbb{R}^3_+}\langle D_x\rangle^{\frac{1}{2}}\partial_x^ip_\Phi \theta'\langle D_x\rangle^{\frac{1}{2}}\partial_x^iv_\Phi dxdy\Big|\nonumber\\
&+\sum\limits_{1\leq |i|\leq 9}\Big|\int_{\mathbb{R}^2}\partial_x^ip_\Phi \partial_x^iv_\Phi(x,0) dx\Big|+C_0\delta\big\|(\nabla p)_\Phi\big\|^2_{L^2}+C\big\|U_\Phi\big\|^2_{L^2}\nonumber\\
\leq&C_0\Big(\big\|\theta'(\partial_xp)_\Phi\big\|^2_{H_{tan}^{8,\frac{1}{2}}}
+\big\|U_\Phi\big\|^2_{H_{tan}^{9,\frac{1}{2}}(0,y(t))}+\big\|U\big\|^2_{H_{tan}^{10}}\Big)\nonumber\\
&+C\varepsilon^4
+C_0\delta\big\|(\nabla p)_\Phi\big\|^2_{H_{tan}^{7}}+C\big\|U_\Phi\big\|^2_{H_{tan}^9}.\nonumber
\end{align}
Here we used $v(t,x,0)=-\varepsilon^2f(t,x).$\smallskip

\no{\bf Step 6. Estimate of $I_5$.}

It follows from Lemma \ref{e:uniform boundness for error}  that
\begin{align}
I_5 \leq C\|U_\Phi\|^2_{H^9_{tan}}+C\varepsilon^4.\nonumber
\end{align}

Putting the estimates of $I_1-I_5$ together, we deduce that
\begin{align}
&\frac{1}{2}\frac{d}{dt}\big\|U_\Phi\big\|^2_{H^9_{tan}}+\lambda\big\|U_\Phi\big\|^2_{H^{9,\frac{1}{2}}_{tan}(0,y(t))}+\frac{\varepsilon^2}{2}\big\|(\nabla U)_\Phi\big\|^2_{H^9_{tan}}\nonumber\\
&\leq C\varepsilon^2\big(E(t)+K(t)+\varepsilon^2\big)\big(1+E(t)\big)\nonumber\\
&\qquad
+C_0\delta\big\|(\nabla p)_\Phi\big\|^2_{H_{tan}^7}+C_0\big\|\theta'(\partial_xp)_\Phi\big\|^2_{H_{tan}^{8,\frac{1}{2}}},\nonumber
\end{align}
which along with  Lemma \ref{e:analytical pressure estimate}
gives our result.
\end{proof}

\section{Tangential Sobolev estimates of velocity}

In this section, we make the energy estimate of the velocity in tangential Sobolev space, which will be used to control the regularity of the velocity away from the boundary.

\begin{Proposition}\label{p:Sobolev velocity estimate}
There exists $\delta_0>0$ such that for any $\delta\in (0, \delta_0)$, there holds
\begin{align}
&\frac{1}{2}\frac{d}{dt}\big\|U\big\|^2_{H^{10}_{tan}}+\frac{\varepsilon^2}{2}\big\|\nabla U\big\|^2_{H^{10}_{tan}}
\leq C\varepsilon^{\frac43}\big(E(t)+\varepsilon^2\big)^{\frac{5}{3}}\nonumber\\
&\qquad+C\varepsilon^2\big(E(t)+1\big)\big(E(t)+\varepsilon^2\big)+\f {\varepsilon^4} {100}\|(\partial_y+|D_x|)w\|^2_{H^9_{co}}.\nonumber
\end{align}
\end{Proposition}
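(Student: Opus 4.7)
The plan is to mirror the proof of Proposition~7.1 (Proposition \ref{p:analytical velocity estimate}) but in the pure tangential Sobolev framework, where the crucial loss is that we no longer have the $\sqrt{\langle D_x\rangle}$-coercivity in $y\in(0,y(t))$ supplied by the analytic weight. First, I would act $\pa_x^i$ on both sides of (\ref{scaled-error-equation-2}) for $|i|\le 10$, test against $\pa_x^i U$, sum, and integrate by parts in $y$. Using $u|_{y=0}=0$ and $v|_{y=0}=-\varepsilon^2 f$, together with the approximate-solution bounds in Lemma \ref{e:uniform boundness for approximate solution}, the left-hand side yields $\frac12\frac{d}{dt}\|U\|^2_{H^{10}_{tan}}+\varepsilon^2\|\nabla U\|^2_{H^{10}_{tan}}$ up to a harmless boundary contribution of order $\varepsilon^4$ coming from $f$ and its tangential derivatives.

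The linear terms $(\widetilde U_a\cdot\nabla U, U)_{H^{10}_{tan}}$ and $(\widetilde U\cdot\nabla U_a, U)_{H^{10}_{tan}}$ I would handle exactly as Steps 2--3 of Proposition~\ref{p:analytical velocity estimate}: for the normal derivatives use $\pa_y u=w_h^{\bot}+\na_x v$ and $\pa_y v=-\na_x\cdot u$, and for the $\widetilde v_a\pa_y u$ type terms use $\widetilde v_a|_{y=0}=0$ to write $\widetilde v_a/\varphi$ and apply Hardy's inequality. The product estimate Lemma \ref{e:product estimate} together with Lemma \ref{e:uniform boundness for approximate solution} then gives a bound of the form $C\varepsilon^2\bigl(E(t)+\varepsilon^2\bigr)\bigl(1+E(t)\bigr)$, which fits the desired inequality.

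The main obstacle is the genuinely nonlinear term $(\widetilde U\cdot\nabla U, U)_{H^{10}_{tan}}$, where in the absence of the analytic derivative gain we have to absorb a trilinear object with $10$ tangential derivatives distributed across three copies of $U$. My plan is an anisotropic Sobolev interpolation of the form
\begin{align*}
\|g\|_{L^\infty(\R^3_+)}\lesssim \|g\|^{1/2}_{L^\infty_y H^2_x}\|\pa_y g\|^{1/2}_{L^\infty_y H^2_x},
\end{align*}
combined with $\pa_y u=w_h^{\bot}+\na_x v$ and $\pa_y v=-\na_x\cdot u$, so that all normal derivatives of $U$ in the $L^\infty$ factor are traded for $w$ in a conormal Sobolev norm. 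Using the embedding $\|U\|^2_{H^{10}_{tan}}\le \varepsilon^2 E(t)$ and $\|w\|^2_{H^9_{co}}\le E(t)$, the highest-order piece is dominated by a trilinear expression of the form $C\|U\|^2_{H^{10}_{tan}}\bigl(\|U\|_{H^{10}_{tan}}+\|w\|_{H^9_{co}}\bigr)$, i.e.\ at worst $C\varepsilon^{5/2}(E(t)+\varepsilon^2)^{3/2}$. A Young inequality with exponents $(5/3,5/2)$ on $\varepsilon^{5/2}(E+\varepsilon^2)^{3/2}$ then produces precisely the advertised $C\varepsilon^{4/3}(E(t)+\varepsilon^2)^{5/3}$ term, up to a Sobolev remainder already present on the right-hand side.

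For the pressure, I would integrate by parts in $(\na p, U)_{H^{10}_{tan}}$ using $\na\cdot U=0$ and $v|_{y=0}=-\varepsilon^2 f$, reducing to $\|\na p\|_{H^7_{tan}}^2$ (modulo $O(\varepsilon^4)$ boundary pieces in $f$) and invoking Lemma \ref{e:Sobolev pressure estimate}; this is precisely where the residual term $\frac{\varepsilon^4}{100}\|(\pa_y+|D_x|)w\|^2_{H^9_{co}}$ enters, to be absorbed in the subsequent vorticity estimates. The source term is controlled directly via Lemma \ref{e:uniform boundness for error} by $O(\varepsilon^4)$. Adding up, choosing $\delta$ small to swallow constants from $\theta'$, one obtains the claimed inequality. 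The only genuinely new ingredient relative to Proposition \ref{p:analytical velocity estimate} is the Young-inequality step that produces the anomalous exponent $5/3$; everything else is a direct Sobolev analogue of the analytic computation.
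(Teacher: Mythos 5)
Your plan is structurally faithful for Steps 1--2, 4--5 (the $I_1$, $I_2$, pressure, and remainder terms), and matches the paper's proof up to that point. The gap is in your treatment of the nonlinear term $I_3 = \sum_{|i|\le 10}\langle\partial_x^i(\widetilde U\cdot\nabla U),\partial_x^i U\rangle$.

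You claim that after the $L^\infty_y$ interpolation and the substitution $\partial_y u=w_h^\bot+\nabla_x v$, $\partial_y v=-\nabla_x\cdot u$, the highest-order piece is dominated by
$C\|U\|^2_{H^{10}_{tan}}\bigl(\|U\|_{H^{10}_{tan}}+\|w\|_{H^9_{co}}\bigr)$
and hence by $C\varepsilon^2(E+\varepsilon^2)^{3/2}$ (your stated $\varepsilon^{5/2}$ is already off, but that is minor). This is not correct: when the high tangential derivatives fall on $v$ in the piece $(v+\varepsilon^2 f e^{-y})\partial_y u$, one must control $\|\partial_y u\|_{L^\infty_y H^7_x}$, and the interpolation $\|g\|_{L^\infty_y}\lesssim\|g\|_{L^2_y}^{1/2}\|\partial_y g\|_{L^2_y}^{1/2}$ applied to $g=\partial_y u$ produces $\|\partial_y^2 u\|^{1/2}$, hence $\|\partial_y w\|_{H^7_{co}}^{1/2}$. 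This factor is \emph{not} controlled by $\|w\|_{H^9_{co}}$ (the conormal derivative $Z=\varphi(y)\partial_y$ degenerates at the boundary, so $H^9_{co}$ does not give a bare $\partial_y$), and in the paper $I_3$ in fact ends up bounded by
$C(\|U\|^2_{H^{10}_{tan}}+\varepsilon^4)\bigl(1+\|U\|^2_{H^{10}_{tan}}+\|w\|^2_{H^9_{co}}+\|\partial_y w\|^{1/2}_{H^7_{co}}\|w\|^{1/2}_{H^7_{co}}\bigr)$.

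This is precisely what forces the use of Young's inequality: one writes
$\varepsilon^2(E_v+\varepsilon^2)\|\partial_y w\|^{1/2}_{H^7_{co}}\|w\|^{1/2}_{H^7_{co}}\le(\varepsilon^4\|(\partial_y+|D_x|)w\|^2_{H^9_{co}})^{1/4}\cdot\varepsilon(E_v+\varepsilon^2)E_w^{1/4}$
and applies Young with conjugate exponents $(4,4/3)$, producing exactly $\tfrac{\varepsilon^4}{100}\|(\partial_y+|D_x|)w\|^2_{H^9_{co}}+C\varepsilon^{4/3}(E_v+\varepsilon^2)^{4/3}E_w^{1/3}\le\tfrac{\varepsilon^4}{100}\|(\partial_y+|D_x|)w\|^2_{H^9_{co}}+C\varepsilon^{4/3}(E+\varepsilon^2)^{5/3}$. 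So the $\varepsilon^{4/3}$ exponent is an artifact of absorbing $\partial_y w$ into the dissipation; it does not arise from a Young inequality with exponents $(5/3,5/2)$ applied to $\varepsilon^{5/2}(E+\varepsilon^2)^{3/2}$ — that quantity has no product structure to split, and in fact dominating it by $\varepsilon^{4/3}(E+\varepsilon^2)^{5/3}$ is just a pointwise comparison using $\varepsilon^2\le E+\varepsilon^2$. In short: you need to (i) keep the $\|\partial_y w\|^{1/2}_{H^7_{co}}\|w\|^{1/2}_{H^7_{co}}$ factor in $I_3$, and (ii) perform Young on that factor against the dissipative term; your current interpolation claim silently discards the very object that the residual term $\tfrac{\varepsilon^4}{100}\|(\partial_y+|D_x|)w\|^2_{H^9_{co}}$ is there to absorb.
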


\begin{proof}
Acting $\partial_x^i$ on  both sides of (\ref{scaled-error-equation-2}), and taking $L^2$ inner product with $\partial_x^i U$, then summing over $|i|\leq 10$, we arrive at
\begin{align}
\frac{1}{2}\frac{d}{dt}\big\|U\big\|^2_{H^{10}_{tan}}
+\varepsilon^2\big\|\nabla U\big\|^2_{H^{10}_{tan}}
\leq&\Big|\sum\limits_{|i|\leq 10}\big<\partial_x^i(\widetilde{U}_{a}\cdot\nabla U),\partial_x^iU\big>\Big|
+\Big|\sum\limits_{|i|\leq 10}\big<\partial_x^i(\widetilde{U}\cdot\nabla U_{a}),\partial_x^iU\big>\Big|\nonumber\\
&+\Big|\sum\limits_{|i|\leq 10}\big<\partial_x^i(\widetilde{U}\cdot\nabla U),\partial_x^iU\big>\Big|
+\Big|\sum\limits_{|i|\leq 10}\big<\partial_x^i(\nabla p),\partial_x^iU\big>\Big|\nonumber\\
&+\Big|\sum\limits_{|i|\leq 10}\big<\partial_x^i(R_h,R_v),\partial_x^iU\big>\Big|
\tre\sum_{i=1}^5I_i.\nonumber
\end{align}
Let us now handle them term by term.\smallskip

\no{\bf Step 1. Estimate of $I_1$.}

The term $I_1$ is bounded by
\begin{align}
\Big|\sum\limits_{|i|\leq 10}\big<\widetilde{U}_{a}\cdot\nabla\partial_x^iU,\partial_x^iU\big>\Big|
+\Big|\sum\limits_{|i|\leq 10, j< i}\big<\partial_x^{i-j}\widetilde{U}_{a}\cdot\nabla\partial_x^jU,\partial_x^iU\big>\Big|.\nonumber
\end{align}
Due to $\widetilde{U}_a|_{y=0}=0$, integrating by parts and using Lemma \ref{e:uniform boundness for approximate solution}, the first term can be controlled by
$C\|U\|^2_{H^{10}_{tan}}.$ Using $\partial_yu=w_h^{\bot}+\na_xv$, $\pa_yv=-\na_x\cdot u$ and $(v_{a}-\varepsilon^2f(t,x)e^{-y})|_{y=0}=0$ and Lemma \ref{e:uniform boundness for approximate solution}, the second term can be bounded by
\begin{align}
C\big(\|U\|^2_{H^{10}_{tan}}+\|\varphi w\|^2_{H^9_{co}}\big),\nonumber
\end{align}
where we used the fact that
\begin{align}
&\Big|\sum\limits_{|i|\leq 10, j< i}\big<\partial_x^{i-j}(v_{a}-\varepsilon^2f(t,x)e^{-y})\partial_x^jw,\partial_x^iU\big>\Big|\nonumber\\
&=\Big|\sum\limits_{|i|\leq 10, j< i}\Big<\frac{\partial_x^{i-j}(v_{a}-\varepsilon^2f(t,x)e^{-y})}{\varphi}\partial_x^j(\varphi w),\partial_x^iU\big>\Big|
\leq C\|\varphi w\|^2_{H^9_{co}}.\nonumber
\end{align}
This shows that
$$I_1\leq C\big(\|U\|^2_{H^{10}_{tan}}+\|\varphi w\|^2_{H^9_{co}}\big).$$

\no{\bf Step 2. Estimate of $I_2$.}

It is easy to deduce from Lemma \ref{e:uniform boundness for approximate solution} that
\begin{align}
\Big|\sum\limits_{|i|\leq 10}\big<\partial_x^i(u\partial_xU_{a}+(v+\varepsilon^2f(t,x)e^{-y})\partial_y(U_{a,e}+\ve v_{a,p}e_3),\partial_x^iU\big>\Big|\leq  C\|U\|^2_{H^{10}_{tan}},\nonumber
\end{align}
where $e_3=(0,0,1)$. On the other hand, we have
\begin{align}
&\Big|\sum\limits_{|i|\leq 10}\big<\partial_x^i((v+\varepsilon^2f(t,x)e^{-y})\partial_yu_{a,p}),\partial_x^iu\big>\Big|\nonumber\\
=&\Big|\sum\limits_{|i|\leq 10}\int_{\frac{y(t)} {2}}^{+\infty}\big<\partial_x^i((v+\varepsilon^2f(t,x)e^{-y})\partial_yu_{a,p}),\partial_x^iu\big>\Big|\nonumber\\
&+\Big|\sum\limits_{|i|\leq 10}\int_0^{\frac{y(t)} {2}}\big<\partial_x^i((v+\varepsilon^2f(t,x)e^{-y})\partial_yu_{a,p}),\partial_x^iu\big>\Big|.\nonumber
\end{align}
Using the fact that $\|\partial_x^i\partial_yu_{a,p}\|_{L^\infty(\mathbb{R}^2\times (\frac{y(t)}{2},\infty))}\leq C$ due to $y(t)\ge c_0$,  the first term can be bounded  by
\begin{align}
 C\big(\|U\|^2_{H^{10}_{tan}}+\varepsilon^4\big).\nonumber
\end{align}
 Thanks to $(v+\varepsilon^2f(t,x)e^{-y})|_{y=0}=0$, $\pa_yv=-\nabla_x\cdot u,$ Hardy's inequality and Lemma \ref{e:uniform boundness for approximate solution}, the second term can be bounded by
\begin{align}
&\sum_{|i|\leq10}\Big|\int_{0}^{\frac{y(t)}{2}}\Big<\partial_x^i\Big(\frac{1}{y}\int_0^y\big(-{\rm div}_xu(x,y')
 -\varepsilon^2f(t,x)e^{-y'}\big)dy'(z\partial_zu_{a,p})\Big)
,\partial_x^iU\Big>_{L^2_x}(y)dy\Big|\nonumber\\
&\leq C\Big(\big\|U_\Phi\big\|^2_{H^{9}_{tan}}+\varepsilon^4\Big),\nonumber
\end{align}
where we used the fact that
\begin{align}
\sum_{|i|\leq 11}\int_{0}^{\frac{y(t)}{2}}\int_{\R^2}|\partial_x^{i}u|^2dxdy\leq \sum_{|i|\leq 9}\int_{0}^{\frac{y(t)}{2}}\int_{\R^2}|\partial_x^{i}u_\Phi|^2dxdy\leq C\big\|u_\Phi\big\|^2_{H^{9}_{tan}},\nonumber
\end{align}
due to $\phi(t,y)\geq c\delta> 0$ for $y\leq \frac{y(t)}{2}$.

Summing up, we arrive at
\begin{align}
I_2\leq C\Big(\big\|U_\Phi\big\|^2_{H^{9}_{tan}}+\|U\|^2_{H^{10}_{tan}}+\varepsilon^4\Big).\nonumber
\end{align}

\no{\bf Step 3. Estimate of $I_3$.}

By Sobolev embedding and  $\partial_yu=w_h^{\bot}+\na_xv$, we deduce that
\begin{align}
&\sum_{|i|\leq 10,j< i}\int_{0}^{+\infty}\big|\big<\partial_x^{i-j}u\partial_x\partial_x^ju,\partial_x^i u\big>_{L^2_x}\big|dy\nonumber\\
&\leq C\|u\|^2_{H^{10}_{tan}}\|u\|_{L^\infty_yH^8_x(\R^2)}\leq C\|u\|^2_{H^{10}_x}\|u\|^{\frac12}_{H^8_{tan}}\|w_h^{\bot}+\partial_xv\|^{\frac12}_{H^8_{tan}}\nonumber\\
&\leq C\|u\|^2_{H^{10}_{tan}}\big(\|U\|_{H^{10}_{tan}}+\|u\|^{\frac12}_{H^{10}_{tan}}\|w\|^{\frac12}_{H^9_{co}}\big),\nonumber
\end{align}
and
\begin{align}
&\sum_{|i|\leq 10, j< i}\int_{0}^{+\infty}\big|\big<(\partial_x^{i-j}(v+\varepsilon^2f(t,x)e^{-y})\partial_y\partial_x^ju,\partial_x^i u\big>_{L^2_x}\big|dy\nonumber\\
&\leq C\|u\|_{H^{10}_{tan}}(\|v\|_{H^{10}_{tan}}+\varepsilon^2)\|\partial_yu\|_{L^\infty_yH^7_x}
+C\|u\|_{H^9_{tan}}\|\partial_yu\|_{H^9_{tan}}\|v+\varepsilon^2f(t,x)e^{-y}\|_{L^\infty_yH^7_x}\nonumber\\
&\leq C\big(\|U\|^2_{H^{10}_{tan}}+\varepsilon^4\big)\big(\|U\|_{H^{10}_{tan}}+\|w\|_{H^9_{co}}
+\|\partial_yw\|^{\frac12}_{H^7_{co}}\|w\|^{\frac12}_{H^7_{co}}\big).\nonumber
\end{align}
On the other hand, we get by integration by parts and Lemma \ref{e:uniform boundness for approximate solution}  that
\begin{align}
\Big|\sum\limits_{|i|\leq 10}\big<u\partial_x\partial_x^iU+(v+\varepsilon^2f(t,x)e^{-y})\partial_y\partial_x^iU,\partial_x^iU\big>\Big|\leq C\big(\|U\|_{H^{10}_{tan}}^2+\ve^4\big).\nonumber
\end{align}
Thus, we obtain
\begin{align}
I_3\leq C\big(\|U\|^2_{H^{10}_{tan}}+\varepsilon^4\big)\big(1+\|U\|^2_{H^{10}_{tan}}+\|w\|^2_{H^9_{co}}
+\|\partial_yw\|^{\frac12}_{H^7_{co}}\|w\|^{\frac12}_{H^7_{co}}\big).\nonumber
\end{align}

\no{\bf Step 4. Estimate of $I_4$ and $I_5$.}

Using the divergence free condition and Lemma \ref{e:uniform boundness for approximate solution}, integrating by parts, we obtain
\begin{align}
I_4\leq
\sum\limits_{ |i|\leq 10}\Big|\int_{\mathbb{R}^2}\partial_x^ip\partial_x^iv(x,0) dx\Big|
\leq C\varepsilon^4
+\delta\big\|\nabla p\big\|^2_{H_{tan}^{7}}.\nonumber
\end{align}
Here we used $v(t,x,0)=-\varepsilon^2f(t,x).$

It follows from Lemma \ref{e:uniform boundness for error} that
\begin{align}
I_5\leq C\big(\|U\|^2_{H^{10}_{tan}}+\varepsilon^4\big).\nonumber
\end{align}

Summing up the estimates of $I_1-I_5$, we deduce that
\begin{align}
&\frac{1}{2}\frac{d}{dt}\big\|U\big\|^2_{H^{10}_{tan}}+\frac{\varepsilon^2}{2}\big\|\nabla U\big\|^2_{H^{10}_{tan}}
\leq C\varepsilon^{\frac43}\big(E_v(t)+\varepsilon^2\big)^{\frac{4}{3}}E_w(t)^{\frac13}\nonumber\\
&\quad+C\varepsilon^2\big(E(t)+1\big)\big(E(t)+\varepsilon^2\big)
+\delta\varepsilon^4\|(\partial_y+|D_x|)w\|^2_{H^9_{co}}+\delta\|\nabla p\|^2_{H^7_{tan}},\nonumber
\end{align}
which along with  Lemma \ref{e:Sobolev pressure estimate} gives our result.
\end{proof}

\section{Tangential analytic estimate of the vorticity: Euler part}

In this section, we make tangential analytic  estimates for the Euler part $w_{e}$ of the vorticity.

\subsection{Tangential analytic estimate of $w_e$}

Using  (\ref{e:decompose vorticity equation-1}), we first observe that $(w_{e})_\Phi$ satisfies
\begin{align}\label{e:vorticity equation on Euler part}
&\partial_t(w_e)_\Phi+\lambda\langle D_x \rangle (w_{e})_\Phi-\varepsilon^2
(\Delta  w_e)_\Phi+\big(\widetilde{U}_{a}\cdot\nabla w_e\big)_\Phi
+\big(\widetilde{U}\cdot\nabla w_{a,e}\big)_\Phi
+\big(\widetilde{U}\cdot\nabla w_e\big)_\Phi\nonumber\\[3pt]
&\qquad-(w_{a,e}\cdot\nabla U)_\Phi-(w_{e}\cdot\nabla U_a)_\Phi-(w_{e}\cdot\nabla U)_\Phi
=({\rm curl}(R_{e,h},R_{e,v})-M_e)_\Phi
\end{align}
together with the following initial-boundary conditions
\begin{align}
\left\{
\begin{array}{ll}
-\varepsilon^2((\partial_y+|D_x|)w_{e,h})_\Phi(t,x,0)-\varepsilon^2\partial_x(\Lambda_{ND}(\gamma\nabla_x\cdot w_{e,h}))_\Phi(t,x,0)=0,\\
(w_{e,3})_\Phi(t,x,0)=0,\\
(w_e)_\Phi(0,x,y)=0.\nonumber
\end{array}
\right.
\end{align}
\begin{Proposition}\label{e:euler vorticity etimate}
There exists $\delta_0>0$ such that for any $\delta\in (0, \delta_0)$, there holds
\begin{align}
&\frac{1}{2}\frac{d}{dt}\big\|(w_{e})_\Phi\big\|^2_{H^8_e}+\lambda\big\|(w_{e})_\Phi\big\|^2_{H^{8,\frac12}_e(0,y(t))}
+(\lambda-C)\Big\|\frac{(w_{e})_\Phi}{\varepsilon}\Big\|^2_{H^8_e(0,y(t))}
+\frac{\varepsilon^2}{10}\big\|((\partial_y+|D_x|)w_e)_\Phi\big\|^2_{H^8_e}\nonumber\\
&\leq C\big(E(t)+K(t)+\varepsilon^2\big)\big(1+E(t)\big)+C\varepsilon^{-\f23}E(t)^\f 53+\frac{\varepsilon^2}{100}\|((\partial_y+|D_x|)w)_\Phi\|^2_{H^8_{co}}.\nonumber
\end{align}
\end{Proposition}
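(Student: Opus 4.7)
The plan is a direct tangential-analytic weighted energy estimate on the equation \eqref{e:vorticity equation on Euler part} satisfied by $(w_e)_\Phi$. I apply $e^{\Psi_e}\partial_x^i Z^j$ to both sides and pair with $e^{\Psi_e}\partial_x^i Z^j(w_e)_\Phi$ in $L^2(\R^3_+)$, summing over $|i|+j\le 8$.

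The time derivative, together with the $\lambda\langle D_x\rangle(w_e)_\Phi$ term produced by $\partial_t\Phi=-\lambda\langle\xi\rangle$ and the weight derivative $\partial_t e^{\Psi_e}=-(\lambda/\varepsilon^2)e^{\Psi_e}$, contributes
\[
\tfrac12\tfrac{d}{dt}\|(w_e)_\Phi\|^2_{H^8_e}+\lambda\|(w_e)_\Phi\|^2_{H^{8,\frac12}_e(0,y(t))}+\tfrac{\lambda}{\varepsilon^2}\|(w_e)_\Phi\|^2_{H^8_e(0,y(t))}
\]
up to contributions on $\{y>y(t)\}$ where $e^{\Psi_e}\le 1$, which are easily absorbed. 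The diffusion $-\varepsilon^2(\Delta w_e)_\Phi$ is integrated by parts in $y$: the interior produces the dissipation $\tfrac{\varepsilon^2}{2}\|((\partial_y+|D_x|)w_e)_\Phi\|^2_{H^8_e}$ (the exchange $\partial_y\leftrightarrow|D_x|$ being controlled componentwise via Lemma \ref{e:one order elliptic estimate}), while the boundary traces at $y=0$ are treated using $(w_{e,3})_\Phi|_{y=0}=0$ together with the mixed boundary condition
\[
-\varepsilon^2\bigl((\partial_y+|D_x|)w_{e,h}\bigr)_\Phi(t,x,0)=\varepsilon^2\bigl(\partial_x\Lambda_{ND}\gamma\nabla_x\cdot w_{e,h}\bigr)_\Phi(t,x,0),
\]
which yields the favourable trace $\varepsilon^2\||D_x|^{1/2}\gamma(w_{e,h})_\Phi\|^2$ plus a Dirichlet--Neumann commutator handled via $\Lambda_{DN}=|D_x|$. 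The $\theta'$-errors arising from $\partial_y e^{\Psi_e}$ and from $Z$ hitting $\varphi$ obey $|\theta'|\le C_0\delta$ and are absorbed into the interior dissipation by fixing $\delta$ small.

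The linear transport and stretching terms $(\widetilde U_a\cdot\nabla w_e)_\Phi$, $(\widetilde U\cdot\nabla w_{a,e})_\Phi$, $(w_{a,e}\cdot\nabla U)_\Phi$ and $(w_e\cdot\nabla U_a)_\Phi$ are estimated through the weighted product and bilinear inequalities of Lemmas \ref{e:weighted estimate product}--\ref{e:weighted double linear estimate}, combined with the approximate-solution bounds of Lemma \ref{e:uniform boundness for approximate solution}. Whenever a vertical velocity ($\widetilde v_a$ or $\widetilde v+\varepsilon^2fe^{-y}$) multiplies a $\partial_y$, I use its vanishing at $y=0$ and a Hardy-type splitting $\widetilde v/\varphi\cdot Z$, while the identities $\partial_y u=w_h^\perp+\nabla_x v$ and $\partial_y v=-\nabla_x\cdot u$ let me convert the remaining $\partial_y$ on the velocity factor into tangential derivatives on the test side; this routes the bounds cleanly into the functionals $E(t)$ and $K(t)$. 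The forcing $(\mathrm{curl}(R_{e,h},R_{e,v})-M_e)_\Phi$ is $O(\varepsilon^2)$ directly from Lemma \ref{e:uniform boundness for error}.

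The genuine obstacle is the self-interaction $(\widetilde U\cdot\nabla w_e)_\Phi$ together with $(w_e\cdot\nabla U)_\Phi$: here no approximate-solution $L^\infty$-bound is available and one of the three factors must be placed in $L^\infty_y$. I plan to obtain this via an Agmon-type interpolation $\|g\|_{L^\infty_y}\lesssim\|g\|_{L^2_y}^{1/2}\|\partial_y g\|_{L^2_y}^{1/2}$ on an intermediate factor; after Young's inequality and balancing against the $\varepsilon^2$-diffusion, this produces the anomalous $C\varepsilon^{-2/3}E(t)^{5/3}$ term on the right-hand side. The reserve $\tfrac{\varepsilon^2}{100}\|((\partial_y+|D_x|)w)_\Phi\|^2_{H^8_{co}}$ is precisely the slack required to absorb the Prandtl piece $w_p$ wherever the full vorticity $w=w_e+w_p$ enters a product. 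Finally, choosing $\lambda$ sufficiently large ensures that $\tfrac{\lambda}{\varepsilon^2}\|(w_e)_\Phi\|^2_{H^8_e(0,y(t))}$ dominates the $\theta'$- and commutator-type remainders, producing the stated $(\lambda-C)$ coefficient and closing the inequality.
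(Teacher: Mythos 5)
Your proposal follows the paper's proof essentially step by step: the same weighted energy pairing $e^{2\Psi_e}\partial_x^iZ^j(w_e)_\Phi$, the same factorization $-\Delta=(|D_x|-\partial_y)(|D_x|+\partial_y)$ with the boundary terms closed by the positivity of $\Lambda_{ND}$ (symbol $1/|\xi|$) and $w_{e,3}|_{y=0}=0$, the weighted product/bilinear lemmas for the linear transport and stretching terms, the reserve $\tfrac{\varepsilon^2}{100}\|((\partial_y+|D_x|)w)_\Phi\|^2_{H^8_{co}}$ to absorb $w=w_e+w_p$, and the Agmon-type $L^\infty_y$ interpolation producing the $C\varepsilon^{-2/3}E(t)^{5/3}$ anomaly. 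The only quibble is cosmetic: $\partial_te^{\Psi_e}=-(\lambda/\varepsilon^2)\Psi_e e^{\Psi_e}$ rather than $-(\lambda/\varepsilon^2)e^{\Psi_e}$, but since $\Psi_e\ge 1$ on $(0,y(t))$ this only strengthens the term you need and does not change the argument.
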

\begin{proof}
Acting $\partial_x^iZ^j$ on both sides of (\ref{e:vorticity equation on Euler part}),  taking $L^2$ inner product with $e^{2\Psi_e}\partial_x^iZ^j (w_{e})_\Phi$, and then summing over all $|i|+j\leq 8$, we arrive at
\begin{align}
&\frac{1}{2}\frac{d}{dt}\big\|(w_{e})_\Phi\big\|^2_{H^8_e}+\lambda\big\|(w_{e})_\Phi\big\|^2_{H^{8,\frac12}_e(0,y(t))}
+\lambda\Big\|\frac{(w_{e})_\Phi}{\varepsilon}\Big\|^2_{H^8_e(0,y(t))}
-\varepsilon^2\sum\limits_{|i|+j\leq 8}\big<\partial_x^iZ^j(\Delta  w_e)_\Phi,e^{2\Psi_e}\partial_x^iZ^j(w_{e})_\Phi\big>\nonumber\\
&\leq\Big|\sum\limits_{|i|+j\leq 8}\big<\partial_x^iZ^j(\widetilde{U}_{a}\cdot\nabla w_e)_\Phi,e^{2\Psi_e}\partial_x^iZ^j(w_{e})_\Phi\big>\Big|
+\Big|\sum\limits_{|i|+j\leq 8}\big<\partial_x^iZ^j(\widetilde{U}\cdot\nabla w_{a,e})_\Phi,e^{2\Psi_e}\partial_x^iZ^j(w_{e})_\Phi\big>\Big|\nonumber\\
&\quad+\Big|\sum\limits_{|i|+j\leq 8}\big<\partial_x^iZ^j(\widetilde{U}\cdot\nabla w_e)_\Phi,e^{2\Psi_e}\partial_x^iZ^j(w_{e})_\Phi\big>\Big|
+\Big|\sum\limits_{|i|+j\leq 8}\big<\partial_x^iZ^j(w_{a,e}\cdot\nabla U)_\Phi,e^{2\Psi_e}\partial_x^iZ^j(w_{e})_\Phi\big>\Big|\nonumber\\
&\quad+\Big|\sum\limits_{|i|+j\leq 8}\big<\partial_x^iZ^j(w_{e}\cdot\nabla U_a)_\Phi,e^{2\Psi_e}\partial_x^iZ^j(w_{e})_\Phi\big>\Big|
+\Big|\sum\limits_{|i|+j\leq 8}\big<\partial_x^iZ^j(w_{e}\cdot\nabla U)_\Phi,e^{2\Psi_e}\partial_x^iZ^j(w_{e})_\Phi\big>\Big|\nonumber\\
&\quad+\Big|\sum\limits_{|i|+j\leq 8}\big<\partial_x^iZ^j({\rm curl}(R_{e,h},R_{e,v})-M_e)_\Phi,e^{2\Psi_e}\partial_x^iZ^j(w_{e})_\Phi\big>\Big|\tre\sum_{i=1}^7I_i.\nonumber
\end{align}

\no{\bf Step 1. Estimate of $I_1$.}

For any $|i|+j\leq 8$, there holds
\begin{align}
\big<\partial_x^iZ^j(\widetilde{U}_{a}\cdot\nabla w_e)_\Phi,e^{2\Psi_e}\partial_x^iZ^j(w_{e})_\Phi\big>=&\int_0^{y(t)}\int_{\R^2}\partial_x^iZ^j(\widetilde{U}_{a}\cdot\nabla w_e)_\Phi e^{2\Psi_e}\partial_x^iZ^j(w_{e})_\Phi dxdy\nonumber\\
&+\int_{y(t)}^{+\infty}\int_{\R^2}\partial_x^iZ^j(\widetilde{U}_{a}\cdot\nabla w_e)_\Phi e^{2\Psi_e}\partial_x^iZ^j(w_{e})_\Phi dxdy
\tre I_{11}+I_{12}.\nonumber
\end{align}

Thanks to $\phi(t,y)\leq 0$ and $\varphi(y)\ge c\delta$ for $y\geq y(t)$,  we get by  Lemma \ref{e:uniform boundness for approximate solution}  that
\begin{align}
I_{12} \leq C\|w_e\|^2_{H^9_{co}}.\nonumber
\end{align}

For $I_{11}$, by Lemma \ref{e:uniform boundness for approximate solution} and the first inequality of Lemma \ref{e:weighted double linear estimate}, we have
\begin{align}
\Big|\int_0^{y(t)}\int_{\R^2}\partial_x^iZ^j\big(u_{a}\partial_xw_e-(v_a-\varepsilon^2f(t,x)e^{-y}\big)|D_x|w_e)_\Phi e^{2\Psi_e}\partial_x^iZ^j(w_e)_\Phi dxdy\Big|\leq C\big\|(w_e)_\Phi\big\|^2_{H^{8,\frac{1}{2}}_e(0,y(t))},\nonumber
\end{align}
and  by Lemma \ref{e:uniform boundness for approximate solution} and the first inequality of  Lemma \ref{e:weighted estimate product}, we get
\begin{align}
&\Big|\int_0^{y(t)}\int_R\partial_x^iZ^j(\tilde{v}_a(\partial_y+|D_x|)w_e)_\Phi e^{2\Psi_e}\partial_x^iZ^j(w_e)_\Phi dxdy\Big|\nonumber\\
&\quad\leq  C\Big\|\frac{(w_e)_\Phi}{\varepsilon}\Big\|^2_{H^8_e(0,y(t))}+\frac{\varepsilon^2}{100}\big\|((\partial_y+|D_x|)w_e)_\Phi\big\|^2_{H^8_e(0,y(t))}.\nonumber
\end{align}
Hence, we obtain
\begin{align}
I_{11}
\leq C\big\|(w_e)_\Phi\big\|^2_{H^{8,\frac{1}{2}}_e(0,y(t))}
+C\Big\|\frac{(w_e)_\Phi}{\varepsilon}\Big\|^2_{H^8_e(0,y(t))}+\frac{\varepsilon^2}{100}\big\|(\partial_y+|D_x|)w_{e})_\Phi\big\|^2_{H^8_e}.
\nonumber
\end{align}
This shows that
\begin{align}
I_1\leq C\Big(\|w_e\|^2_{H^9_{co}}+\big\|(w_e)_\Phi\big\|^2_{H^{8,\frac{1}{2}}_e(0,y(t))}
+\Big\|\frac{(w_e)_\Phi}{\varepsilon}\Big\|^2_{H^8_e(0,y(t))}\Big)
+\frac{\varepsilon^2}{100}\big\|((\partial_y+|D_x|)w_e)_\Phi\big\|^2_{H^8_e}.\nonumber
\end{align}

\no{\bf Step 2. Estimate of $I_2$ and $I_7$.}

Using the fact that $w_{a,e}=0$ for $y\leq y(t)$, it is easy to deduce that
\begin{align}
I_2=&\Big|\sum\limits_{|i|+j\leq 8}\int_{y(t)}^{+\infty}\int_{\R^2}\partial_x^iZ^j(u\cdot\na_xw_{a,e}+(v+\varepsilon^2f(t,x)e^{-y})\partial_yw_{a,e})_\Phi e^{2\Psi_e}\partial_x^iZ^j(w_e)_\Phi dxdy\Big|\nonumber\\
\leq& C\Big(\|w_{e}\|^2_{H^8_e}+\big\|U\big\|^2_{H^9_{tan}}
+\|\varphi w\|^2_{H^8_{co}}+\varepsilon^4\Big).\nonumber
\end{align}
Here we used
\ben\label{eq:uw}
\|U\|_{H^9_{co}}\le C\big(\|U\|_{H^9_{tan}}+\|\varphi w\|_{H^8_{co}}\big),
\een
which follows from $Zu=\varphi w_h^{\bot}+\varphi\na_xv$ and
$-\pa_yv=\nabla_x\cdot u$.


It follows from the fact that $w_{a,e}=0$ for $y\leq y(t)$ and Lemma \ref{e:uniform boundness for error}  that
 \begin{align}
 I_7\leq C\big(\|w_{e}\|^2_{H^8_{co}}+\varepsilon^4\big).\nonumber
\end{align}

\no{\bf Step 3. Estimate of $I_3$ and $I_6$.}

Firstly, $I_3$ can be controlled by
\begin{align}
&\Big|\sum\limits_{|i|+j\leq 8}\int_0^{y(t)}\int_{\R^2}\partial_x^iZ^j(\widetilde{U}\cdot\nabla w_e)_\Phi e^{2\Psi_e}\partial_x^iZ^j(w_e)_\Phi dxdy\Big|\nonumber\\
&+\Big|\sum\limits_{|i|+j\leq 8}\int_{y(t)}^{+\infty}\int_{\R^2}\partial_x^iZ^j(\widetilde{U}\cdot\nabla w_e)_\Phi e^{2\Psi_e}\partial_x^iZ^j(w_e)_\Phi dxdy\Big|\tre I_{31}+I_{32}.\nonumber
\end{align}
As $\phi(t,y)\leq 0$ and $\varphi(y)\ge c\delta$ for $y\geq y(t)$, by \eqref{eq:uw}, we get
 \begin{align}
I_{32}\leq& C\|w_e\|^2_{H^9_{co}}\Big(\varepsilon^2+\|U\|^2_{H^{10}_{tan}}+\|\varphi w\|^2_{H^9_{co}}\Big).\nonumber
\end{align}
Using  the first inequality of  Lemma \ref{e:weighted double linear estimate}, we obtain
\begin{align}
&\Big|\sum\limits_{|i|+j\leq 8}\int_0^{y(t)}\int_{\R^2}\partial_x^iZ^j(u\cdot\na_xw_e-(v+\varepsilon^2f(t,x)e^{-y})|D_x|w_e)_\Phi e^{2\Psi_e}\partial_x^iZ^j(w_e)_\Phi dxdy\Big|\nonumber\\
&\leq C\Big(\varepsilon^2+\|U_\Phi\big\|^2_{H^9_{tan}}+\| w_\Phi\big\|^2_{H^8_{co}}\Big)
\big\|(w_e)_\Phi \big\|^2_{H^{8,\frac{1}{2}}_e(0,y(t))}.\nonumber
\end{align}
Here we used
\ben\label{eq:uw-2}
\|\pa_yU_\Phi\|_{H^8_{co}}\le C\big(\|w_\Phi\|_{H^8_{co}}+\|U_\Phi\|_{H^9_{tan}}\big),
\een
which can be deduced from $Zu=\varphi w_h^{\bot}+\varphi\na_xv$ and  $-\pa_yv=\nabla_x\cdot u$.

On the other hand, we have
\begin{align}
&\Big|\sum\limits_{|i|+j\leq 8}\int_0^{y(t)}\int_{\R^2}\partial_x^iZ^j((v+\varepsilon^2f(t,x)e^{-y})(\partial_y+|D_x|)w_e)_\Phi e^{2\Psi_e}\partial_x^iZ^j(w_e)_\Phi dxdy\Big|\nonumber\\
&\leq \big\|(w_e)_\Phi\big\|_{H^8_e}\|((v+\varepsilon^2f(t,x)e^{-y})(\partial_y+|D_x|)w_e)_\Phi \|_{H^8_e}\nonumber\\
&\leq \frac{\varepsilon^2}{100}\|((\partial_y+|D_x|)w_e)_\Phi\|^2_{H^8_e}+
C\varepsilon^{-2}\Big(\varepsilon^4+\|U_\Phi\big\|^2_{H^9_{tan}}+\|(\varphi w)_\Phi\big\|^2_{H^8_{co}}\Big)\|(w_e)_\Phi\big\|^2_{H^8_e}.\nonumber
\end{align}
Summing up, we obtain
\begin{align}
I_3\leq& \frac{\varepsilon^2}{100}\|((\partial_y+|D_x|)w_e)_\Phi\|^2_{H^8_e}
+C\varepsilon^{-2}\|(w_e)_\Phi\|^2_{H^8_e}\big(\|U_\Phi\|^2_{H^9_{tan}}+\|(\varphi w)_\Phi\big\|^2_{H^8_{co}}
+\varepsilon^4\big)\nonumber\\
&+C\Big(\epsilon^2+\|U_\Phi\big\|^2_{H^9_{tan}}+\|w_\Phi\big\|^2_{H^8_{co}}\Big)
\big\|(w_e)_\Phi\big\|^2_{H^{8,\frac{1}{2}}_e(0,y(t))}\nonumber\\
&+C\|w_e\|^2_{H^9_{co}}\Big(\epsilon^2+\|U\|^2_{H^{10}_{tan}}+\|\varphi w\|^2_{H^9_{co}}\Big).\nonumber
\end{align}

Similarly, $I_6$ can be controlled by
\begin{align}
&\Big|\sum\limits_{|i|+j\leq 8}\int_0^{y(t)}\int_{\R^2}\partial_x^iZ^j(w_{e,h}\cdot\na_xU+w_{e,3}\partial_yU)_\Phi e^{2\Psi_e}\partial_x^iZ^j(w_e)_\Phi dxdy\Big|\nonumber\\
&+\Big|\sum\limits_{|i|+j\leq 8}\int_{y(t)}^{+\infty}\int_{\R^2}\partial_x^iZ^j(w_{e,h}\cdot\na_xU+w_{e,3}\partial_yU)_\Phi e^{2\Psi_e}\partial_x^iZ^j(w_e)_\Phi dxdy\Big|\tre I_{61}+I_{62}.\nonumber
\end{align}
Similar to $I_{32}$, we have
\beno
I_{62}\le \big(\|U\|_{H^{10}_{tan}}+\|w\big\|_{H^9_{co}}\big)\|w_e\|_{H^8_{co}}^2.
\eeno
By the second inequality of Lemma \ref{e:weighted estimate product}, Sobolev embedding and \eqref{eq:uw-2}, we obtain
\begin{align}
I_{61}\leq &\|(w_{e,h}\cdot\na_xU+w_{e,3}\partial_yU)_\Phi\|_{H^8_e(0,y(t))}\|(w_e)_\Phi\|_{H^8_e}\nonumber\\
\le& C\Big(\|(w_e)_\Phi\|_{H^8_e}^\f12+\big\|\f {(w_e)_\Phi} {\varepsilon}\big\|_{H^8_e(0,y(t))}^\f12\varepsilon^{-\f12}+\|((\pa_y+|D_x|)w_e)_\Phi\|_{H^8_e}^\f12\Big)\big(\|U_\Phi\|_{H^9_{tan}}+\|w_\Phi\big\|_{H^8_{co}}\big)\|(w_e)_\Phi\|_{H^8_e}^\f32\nonumber\\
&+\Big(\|U_\Phi\|_{H^9_{tan}}+\|w_\Phi\big\|_{H^8_{co}}+\|w_\Phi\|_{H^8_{co}}^\f12\|((\pa_y+|D_x|)w)_\Phi\|_{H^8_{co}}^\f12\Big)\|(w_e)_\Phi\|_{H^8_e}^2\nonumber\\
\leq& \frac{\varepsilon^2}{100}\|((\partial_y+|D_x|)w_e)_\Phi\|^2_{H^8_e}
+\big\|\f {(w_e)_\Phi} {\varepsilon}\big\|_{H^8_e(0,y(t))}^2+C\varepsilon^{-\f23}\|(w_e)_\Phi\|^2_{H^8_e}\Big(\|U_\Phi\|_{H^9_{tan}}+\|w_\Phi\big\|_{H^8_{co}}+\|(w_e)_\Phi\|_{H^8_e}\Big)^\f43\nonumber\\
&+C\big(\|U_\Phi\|_{H^9_{tan}}+\|w_\Phi\big\|_{H^8_{co}}\big)\|(w_e)_\Phi\|_{H^8_e}^2+\frac{\varepsilon^2}{100}\|((\partial_y+|D_x|)w)_\Phi\|^2_{H^8_{co}}.\nonumber
\end{align}

\no{\bf Step 4. Estimate of $I_4$ and $I_5$.}

By Lemma \ref{e:uniform boundness for approximate solution},  \eqref{eq:uw} and \eqref{eq:uw-2}, we get
\begin{align}
I_4\leq& \Big|\sum\limits_{|i|+j\leq 8}\big<\partial_x^iZ^j(w_{a,e,h}\cdot\na_xU+w_{a,e,3}\partial_yU)_\Phi,e^{2\Psi_e}\partial_x^iZ^j(w_e)_\Phi\big>\Big|\nonumber\\
\leq &C\Big(\|U_\Phi\big\|^2_{H^9_{tan}}+\|w_\Phi\big\|^2_{H^8_{co}}+\|(w_e)_\Phi\big\|^2_{H^8_e}
+\|U\big\|^2_{H^9_{tan}}+\|w\big\|^2_{H^8_{co}}\Big).\nonumber
\end{align}
And $I_5$ can be controlled by
\begin{align}
&\Big|\sum\limits_{|i|+j\leq 8}\big<\partial_x^iZ^j(w_{e,3}\partial_yU_a)_\Phi,e^{2\Psi_e}\partial_x^iZ^j(w_e)_\Phi\big>\Big|+
\Big|\sum\limits_{|i|+j\leq 8}\big<\partial_x^iZ^j(w_{e,h}\cdot\na_xU_a)_\Phi,e^{2\Psi_e}\partial_x^iZ^j(w_e)_\Phi\big>\Big|\nonumber\\
&\tre I_{51}+I_{52}.\nonumber
\end{align}
By Lemma \ref{e:uniform boundness for approximate solution}, it is obvious that
$$
I_{52}\leq C\big(\|(w_{e})_\Phi\|^2_{H^8_e}+\|w_{e}\|^2_{H^8_{co}}\big).
$$
We decompose $I_{51}$ into two parts
\begin{align}
I_{51}\leq &\Big|\sum\limits_{|i|+j\leq 8}\int_0^{y(t)}\int_{\R^2}\partial_x^iZ^j(w_{e,3}\partial_yU_a)_\Phi e^{2\Psi_e}\partial_x^iZ^j(w_e)_\Phi dxdy\Big|\nonumber\\
&+\Big|\sum\limits_{|i|+j\leq 8}\int_{y(t)}^{+\infty}\int_{\R^2}\partial_x^iZ^j(w_{e,3}\partial_yU_a)_\Phi e^{2\Psi_e} \partial_x^iZ^j(w_e)_\Phi dxdy\Big|\tre I_{51}^1+I_{51}^2.\nonumber
\end{align}
Using the fact $\|\partial_x^iZ^j\partial_yU_a\|_{L^\infty(\R^2\times (y(t),+\infty))}\leq C$, we deduce that
$$I_{51}^2\leq C\|w_e\|^2_{H^8_{e}}.$$
And we get by Lemma \ref{e:uniform boundness for approximate solution} that
\begin{align}
I_{51}^1\leq C\Big\|\frac{(w_e)_\Phi}{\varepsilon}\Big\|^2_{H^8_e(0,y(t))}.\nonumber
\end{align}
Thus, we arrive at
$$I_{5}\leq C\|w_{e}\|^2_{H^8_{e}}+C\Big\|\frac{(w_e)_\Phi}{\varepsilon}\Big\|^2_{H^8_e(0,y(t))}.$$

\no{\bf Step 5. Estimate of dissipative term.}

Let $A\tre \partial_y+|D_x|.$ Using  the boundary conditions of $w_e$, we get by integration by parts that
 \begin{align}\label{e:diffusion estimate}
&-\varepsilon^2\sum\limits_{|i|+j\leq 8}\big<\partial_x^iZ^j(\Delta  w_e)_\Phi,e^{2\Psi_e}\partial_x^iZ^j(w_e)_\Phi\big>\nonumber\\
&=-\varepsilon^2\sum\limits_{|i|+j\leq 8}\big<\partial_x^iZ^j\big[\partial_y(A  w_e)_\Phi-|D_x|(A  w_e)_\Phi+\theta'\langle D_x\rangle(A  w_e)_\Phi\big],e^{2\Psi_e}\partial_x^iZ^j(w_e)_\Phi\big>\nonumber\\
&=\varepsilon^2\|(Aw_e)_\Phi\|^2_{H^8_e}-\varepsilon^2\sum\limits_{|i|+j\leq 8,j\geq 1}\big<[\partial_x^iZ^j,\partial_y](A  w_e)_\Phi,e^{2\Psi_e}\partial_x^iZ^j(w_e)_\Phi\big>\nonumber\\
&\quad-\varepsilon^2\sum\limits_{|i|\leq 8}\int_{\R^2}\partial_x^i\na_x(\Lambda_{ND}(\gamma\nabla_x\cdot w_{e,h}))_\Phi\cdot e^{2\Psi_e}\partial_x^i(w_{e,h})_\Phi(t,x,0)dx\nonumber\\
&\quad+\varepsilon^2\sum\limits_{|i|+j\leq 8,j\geq 1}\big<\partial_x^iZ^j  (Aw_e)_\Phi,e^{2\Psi_e}[\partial_y,\partial_x^iZ^j](w_e)_\Phi\big>
-2\sum\limits_{|i|+j\leq 8}\big<\partial_x^iZ^j  (Aw_e)_\Phi,e^{2\Psi_e}\theta'\partial_x^iZ^j(w_e)_\Phi\big>\nonumber\\
&\quad-2\varepsilon^2\sum\limits_{|i|+j\leq 8}\big<\partial_x^iZ^j  (Aw_e)_\Phi,e^{2\Psi_e}\theta'\langle D_x\rangle\partial_x^iZ^j(w_e)_\Phi\big>\nonumber\\
&\geq\frac{\varepsilon^2}{2}\big\|(Aw_e)_\Phi\big\|^2_{H^8_e}
-C_0\delta^{2}\varepsilon^{2}\big\|\langle D_x\rangle (w_e)_\Phi\big\|^2_{H^8_e}
-C_0\delta^{2}\Big\|\frac{(w_e)_\Phi}{\varepsilon}\Big\|_{H^8_e}^{2}\nonumber\\
&\quad-\varepsilon^2\sum\limits_{|i|\leq 8}\int_{\R^2}\partial_x^i\na_x(\Lambda_{ND}(\gamma{\rm div}_xw_{e,h}))_\Phi\cdot e^{2\Psi_e}\partial_x^i(w_e)_\Phi(t,x,0)dx.
\end{align}
By Lemma \ref{e:one order elliptic estimate} and (\ref{e:property of cutoff function}), we have
\begin{align}
\varepsilon\big\||D_x|(w_e)_\Phi\big\|_{H^8_e}=&\varepsilon\sum_{|i|+j\leq 8}\Big\||D_x|\big(e^{\Psi_e}\partial_x^iZ^j(w_e)_\Phi\big)\Big\|_{L^2(\R^3_+)}\nonumber\\
\leq& C_0\varepsilon\sum_{|i|+j\leq 8}\Big\|(\partial_y+|D_x|)\big(e^{\Psi_e}\partial_x^iZ^j(w_e)_\Phi\big)\Big\|_{L^2(\R^3_+)}\nonumber\\
\leq& C_0\delta\varepsilon\|(w_e)_\Phi\|_{H^8_e}+C_0\varepsilon\big\|((\partial_y+|D_x|)w_e)_\Phi\big\|_{H^8_e}+C_0\delta\varepsilon\||D_x|(w_e)_\Phi\|_{H^8_e}+
C\Big\|\frac{(w_e)_\Phi}{\varepsilon}\Big\|_{H^8_e},\nonumber
\end{align}
which implies that
\begin{align}\label{e:elliptic estimate one}
\varepsilon\big\||D_x|(w_e)_\Phi\big\|_{H^8_e}\leq C_0\delta\varepsilon\|(w_e)_\Phi\|_{H^8_e}+{C_0\varepsilon}\big\|((\partial_y+|D_x|)w_e)_\Phi\big\|_{H^8_e}
+C\Big\|\frac{(w_e)_\Phi}{\varepsilon}\Big\|_{H^8_e}.
\end{align}
Using the fact that $\mathcal{F}(\Lambda_{ND}f)=\frac{\hat{f}(\xi)}{|\xi|},$
we find that
\begin{align}\label{e:part diffusion estimate positive}
&-\sum\limits_{|i|\leq 8}\int_{\R^2}\partial_x^i\na_x(\Lambda_{ND}(\gamma{\rm div}_xw_{e,h}))_\Phi\cdot e^{2\Psi_e}\partial_x^i(w_{e,h})_\Phi(t,x,0)dx\nonumber\\
&=\sum\limits_{|i|\leq 8}\int_{\R^2}\partial_x^i(\Lambda_{ND}(\gamma{\rm div}_xw_{e,h}))_\Phi e^{2\Psi_e}\partial_x^i({\rm div}_xw_{e,h})_\Phi(t,x,0)\geq 0.
\end{align}
Putting (\ref{e:elliptic estimate one})-(\ref{e:part diffusion estimate positive}) into (\ref{e:diffusion estimate}) and fixing $\delta$ small, we arrive at
\begin{align}
&-\varepsilon^2\sum\limits_{i+j\leq 8}\big<\partial_x^iZ^j(\Delta  w_e)_\Phi,e^{2\Psi_e}\partial_x^iZ^j(w_{e})_\Phi\big>\nonumber\\
&\geq \frac{\varepsilon^2}{4}\big\|((\partial_y+|D_x|)w_e)_\Phi\big\|^2_{H^8_e}
-C\|(w_e)_\Phi\|^2_{H^8_e}-C\Big\|\frac{(w_e)_\Phi}{\varepsilon}\Big\|_{H^8_e}^{2}.\nonumber
\end{align}

Summing up the estimates in Step 1-Step 5, we conclude the proposition.\end{proof}

\subsection{Improved tangential analytic estimate of $w_{e,3}$}

Recall that $(w_{e,3})_\Phi$ satisfies
\begin{align}
&\partial_t(w_{e,3})_\Phi+\lambda\big<D_x\big>(w_{e,3})_\Phi-\varepsilon^2
(\Delta w_{e,3})_\Phi+\big(\widetilde{U}_{a}\cdot\nabla w_{e,3}\big)_\Phi
+\big(\widetilde{U}\cdot\nabla w_{a,e,3}
\big)_\Phi+\big(\widetilde{U}\cdot\nabla w_{e,3}\big)_\Phi\nonumber\\[3pt]
&\quad-(w_{a,e}\cdot\nabla v)_\Phi-(w_{e}\cdot\nabla v_a)_\Phi
-(w_{e}\cdot\nabla v)_\Phi
=({\rm curl}(R_{e,h},R_{e,v})_3-M_{e,3})_\Phi.\nonumber
\end{align}

\begin{Proposition}\label{e:Euler vertical vorticity etimate}
There exists $\delta_0>0$ such that for any $\delta\in (0, \delta_0)$, there holds
\begin{align}
&\frac{1}{2}\frac{d}{dt}\big\|(w_{e,3})_\Phi\big\|^2_{H^8_e}+{\lambda}\big\|(w_{e,3})_\Phi\big\|^2_{H^{8,\frac{1}{2}}_e(0,y(t))}
+({\lambda}-C)\Big\|\frac{(w_{e,3})_\Phi}{\varepsilon}\Big\|^2_{H^8_e(0,y(t))}
+\frac{\varepsilon^2}{2}\big\|(\nabla w_{e,3})_\Phi\big\|^2_{H^8_e}\nonumber\\
&\leq C\varepsilon^2\big(E(t)+1\big)\big(\varepsilon^2+E(t)+K(t)\big)+C\varepsilon^\f 43E(t)^\f53\nonumber\\
&\qquad+\frac{\varepsilon^2}{100}\Big(\varepsilon^2\big\|((\partial_y+|D_x|)w_e)_\Phi\big\|^2_{H^8_e}+\Big\|\frac{(w_{e})_\Phi}{\varepsilon}\Big\|^2_{H^8_e((0,y(t))}+\varepsilon^2\big\|((\partial_y+|D_x|)w)_\Phi\big\|^2_{H^8_{co}}\Big).\nonumber
\end{align}
\end{Proposition}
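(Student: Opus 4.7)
\textbf{Proof plan for Proposition \ref{e:Euler vertical vorticity etimate}.} The overall strategy mirrors the proof of Proposition \ref{e:euler vorticity etimate}: I will apply $\partial_x^i Z^j$ to the equation for $(w_{e,3})_\Phi$, test against $e^{2\Psi_e}\partial_x^iZ^j(w_{e,3})_\Phi$, sum over $|i|+j\le 8$, and then bound the resulting terms. The time derivative term together with the contribution from $\partial_t\Phi$ and $\partial_t\Psi_e$ produces $\tfrac12\tfrac{d}{dt}\|(w_{e,3})_\Phi\|_{H^8_e}^2 +\lambda\|(w_{e,3})_\Phi\|_{H^{8,1/2}_e(0,y(t))}^2+\lambda\|(w_{e,3})_\Phi/\varepsilon\|_{H^8_e(0,y(t))}^2$ exactly as before.

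The first key improvement comes from the dissipative term. Because the boundary condition here is the homogeneous Dirichlet condition $w_{e,3}(t,x,0)=0$ (rather than the mixed condition for $w_{e,h}$), integration by parts against $e^{2\Psi_e}\partial_x^iZ^j(w_{e,3})_\Phi$ produces no boundary contribution. Hence, after controlling the standard commutators $[\partial_x^iZ^j,\partial_y]$ and the harmless terms involving $\theta'\langle D_x\rangle$ as in \eqref{e:diffusion estimate} (and fixing $\delta$ small), one gains the full gradient dissipation $\tfrac{\varepsilon^2}{2}\|(\nabla w_{e,3})_\Phi\|_{H^8_e}^2$, not merely $\|((\partial_y+|D_x|)w_{e,3})_\Phi\|_{H^8_e}^2$. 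The lower-order commutators will be absorbed into the $\tfrac{\varepsilon^2}{100}$-type correction terms on the RHS.

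The second key improvement concerns the stretching terms $-(w_{a,e}\cdot\nabla v)_\Phi$, $-(w_e\cdot\nabla v_a)_\Phi$ and $-(w_e\cdot\nabla v)_\Phi$. In each of them the only normal derivative that can appear is $\partial_y v$ (or $\partial_y v_a$), which by the divergence-free conditions equals $-\nabla_x\cdot u$ (or $-\nabla_x\cdot u_a$). Thus every stretching term is effectively tangential, avoiding the dangerous $w_{e,3}\partial_y U_a$ contribution that forced the appearance of $\|\tfrac{(w_e)_\Phi}{\varepsilon}\|_{H^8_e(0,y(t))}^2$ in Proposition \ref{e:euler vorticity etimate}. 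Handling them through Lemmas \ref{e:weighted estimate product}--\ref{e:weighted double linear estimate} combined with Lemma \ref{e:uniform boundness for approximate solution} and Sobolev embedding (exactly as in Steps 3--4 of the previous proof) gives bounds of the form $C\varepsilon^2(E+1)(E+K+\varepsilon^2)+C\varepsilon^{4/3}E^{5/3}$, plus small terms which are absorbed into the dissipation or into the $\varepsilon^{-2}\|(w_e)_\Phi\|^2_{H^8_e(0,y(t))}$ correction on the RHS.

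The remaining terms $(\widetilde U_a\cdot\nabla w_{e,3})_\Phi$, $(\widetilde U\cdot\nabla w_{a,e,3})_\Phi$, $(\widetilde U\cdot\nabla w_{e,3})_\Phi$ and the source $(\mathrm{curl}(R_e)_3-M_{e,3})_\Phi$ are treated by a direct imitation of Steps 1, 2 and 6 of Proposition \ref{e:euler vorticity etimate}, using the splits $\int_0^{y(t)}+\int_{y(t)}^\infty$, the boundary vanishing $(\tilde v_a - \varepsilon^2 f e^{-y})|_{y=0}=0$, Hardy's inequality, Lemmas \ref{e:uniform boundness for approximate solution}--\ref{e:uniform boundness for error}, and the identities $Zu=\varphi w_h^\perp+\varphi\nabla_x v$ and $\partial_y v=-\nabla_x\cdot u$. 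The main technical obstacle I anticipate is \emph{bookkeeping}: ensuring every residual term is of the stated form, in particular that the $w_p$-part of $w$ appearing through $\widetilde U=\widetilde U_e+\widetilde U_p$ is absorbed into $K(t)$ or into the small corrections $\tfrac{\varepsilon^2}{100}\|((\partial_y+|D_x|)w)_\Phi\|^2_{H^8_{co}}$ after combining with Proposition \ref{e:euler vorticity etimate}; no new analytic idea beyond what the previous proposition required is needed.
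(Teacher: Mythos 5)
Your proposal is correct and follows essentially the same route as the paper: the paper's proof of this proposition likewise reduces to noting that the stretching terms now involve $\nabla v$ and $\nabla v_a$ (so the normal derivative is harmless, via $\partial_y v=-\nabla_x\cdot u$ and $\|\partial_x^iZ^j\partial_yv_a\|_{L^\infty}\le C$), treating $w_{e,h}\cdot\nabla_x(v_a-\varepsilon^2fe^{-y})$ and $w_{e,h}\cdot\nabla_x(v+\varepsilon^2fe^{-y})$ with the weighted product lemmas exactly as you describe, and declaring the remaining terms to follow Proposition \ref{e:euler vorticity etimate} line by line. Your observation that the homogeneous Dirichlet condition $w_{e,3}|_{y=0}=0$ removes the boundary contribution and yields the full gradient dissipation is also the reason the paper's statement carries $\frac{\varepsilon^2}{2}\|(\nabla w_{e,3})_\Phi\|^2_{H^8_e}$.
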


\begin{proof}
Compared with the case in Proposition \ref{e:euler vorticity etimate},
we have more decay in $\varepsilon$ for the third component $w_{e,3}$ of $w_e$. The key reason is that the following terms
\beno
(w_{a,e}\cdot\nabla v)_\Phi+(w_{e}\cdot\nabla v_a)_\Phi
+(w_{e}\cdot\nabla v)_\Phi
\eeno
behave better than the corresponding terms in the equation of $w_e$ due to $\pa_yv=-\nabla_x\cdot u$. Let us just show the following estimates:
\begin{align}
&\big|\big<-(w_{e,h}\cdot\na_xv_a)_\Phi-(w_{e,h}\cdot\na_xv)_\Phi, (w_{e,3})_\Phi\big>_{H^8_e}\big|\nonumber\\
&\leq \big|\big<(w_{e,h}\cdot\na_x(v_a-\varepsilon^2f(t,x)e^{-y}))_\Phi, (w_{e,3})_\Phi\big>_{H^8_e}\big|+\big|(w_{e,h}\cdot\na_x(v+\varepsilon^2f(t,x)e^{-y}))_\Phi, (w_{e,3})_\Phi\big>_{H^8_e}\big|.\nonumber
\end{align}
Using $(v_a-\varepsilon^2f(t,x)e^{-y})(t,x,0)=0$ and $\|\partial_x^iZ^j\partial_yv_a\|_{L^\infty}\leq C$, the first term can be bounded by
\begin{align}
&\Big|\Big<-\Big(\varphi w_{e,h}\cdot\frac{\na_x(v_a-\varepsilon^2f(t,x)e^{-y})}{\varphi}\Big)_\Phi, (w_{e,3})_\Phi\Big>_{H^8_e}\Big|\nonumber\\
&\leq C\Big(\|(w_{e,3})_\Phi\|^2_{H^8_e}+\|w_{e,3}\|^2_{H^8_{co}}+\|(\varphi w_e)_\Phi\|^2_{H^8_e}+\|(\varphi w_e)\|^2_{H^8_{co}}\Big).\nonumber
\end{align}
Using the second inequality of Lemma \ref{e:weighted estimate product} and Sobolev emdedding,  the second term in $(0,y(t))$ can be controlled by
\begin{align}
 & C\big\|(w_{e,3})_\Phi\big\|_{H^8_e}\big\|(w_{e,h}\partial_x(v+\varepsilon^2f(t,x)e^{-y}))_\Phi\big\|_{H^8_e(0,y(t))}\nonumber\\
&\leq  C\big\|(w_{e,3})_\Phi\big\|_{H^8_e}\big\|(w_{e})_\Phi\big\|_{H^8_e}\Big(\|U_\Phi\|_{H^9_{tan}}+\varepsilon^2+\|(\varphi w)_\Phi\|_{H^8_{co}}\Big)\nonumber\\
&\quad+\varepsilon^{-\f43}\|(w_{e,3})_\Phi\|^\f 43_{H^8_e}\big(\|U_\Phi\|_{H^9_{tan}}+\|(\varphi w)_\Phi\|_{H^8_{co}}\big)^\f 43\|(w_e)_\Phi\|^\f23_{H^8_e}+\frac{\varepsilon^2}{100}\Big\|\frac{(w_e)_\Phi}{\varepsilon}\Big\|_{H^8_e(0,y(t))}^{2}\nonumber
\end{align}
and using Sobolev embedding, the second term in $(y(t), \infty)$ can be controlled by
\beno
\frac{\varepsilon^4}{100}\|((\partial_y+|D_x|)w_e)_\Phi\big\|^2_{H^8_e}
+C\big\|w_{e,3}\big\|_{H^8_{co}}\big\|w_{co}\big\|_{H^8_e}\Big(\|U\|_{H^{10}_{tan}}+\varepsilon^2+\|w\|_{H^9_{co}}\Big).
\eeno
The estimates of the other terms can follow the proof of Proposition \ref{e:euler vorticity etimate}  line by line.
\end{proof}

\subsection{Improved tangential analytic estimate of $\varphi w_e$}
It is easy to verify that $\varphi w_e$ satisfies
\begin{align*}
&\partial_t(\varphi w_e)-\varepsilon^2
\Delta  (\varphi w_e)+\varphi\widetilde{U}_{a}\cdot\nabla w_e+\varphi\widetilde{U}\cdot\nabla w_{a,e}
+\varphi\widetilde{U}\cdot\nabla w_e-\varphi w_{a,e}\cdot\nabla U\\[4pt]
&\quad-(\varphi w_{e})\cdot\nabla U_a-(\varphi w_{e})\cdot\nabla U
=\varphi({\rm curl}(R_{e,h},R_{e,v})-M_e)
-\varepsilon^2\varphi''w_e-2\varepsilon^2\varphi'\partial_yw_e.
\end{align*}

\begin{Proposition}\label{e:analytical weighted Euler vorticity estimate}
There exists $\delta_0>0$ such that for any $\delta\in (0, \delta_0)$, there holds
\begin{align}
&\frac{1}{2}\frac{d}{dt}\|(\varphi w_e)_\Phi\|^2_{H^8_e}+{\lambda}\big\|(\varphi w_e)_\Phi\big\|^2_{H^{8,\frac12}_e(0,y(t))}
+({\lambda}-C)\Big\|\frac{(\varphi w_e)_\Phi}{\varepsilon}\Big\|^2_{H^8_e(0,y(t))}+\frac{\varepsilon^2}{2}\big\|(\nabla(\varphi w_e))_\Phi\big\|^2_{H^8_e}\nonumber\\
&\leq C\varepsilon^2\big(1+E(t)\big)\big(\varepsilon^2+E(t)+
K(t)\big)+C\varepsilon^{\f 43}E(t)^\f 53+\frac{\varepsilon^4}{100}\big\|((\partial_y+|D_x|)w)_\Phi\big\|^2_{H^8_{co}}+{\varepsilon^2}\Big\|\frac{(w_e)_\Phi}{\varepsilon}\Big\|_{H^8_e}^{2}.
\nonumber
\end{align}
\end{Proposition}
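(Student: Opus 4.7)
The proof will follow the strategy of Proposition \ref{e:euler vorticity etimate} applied now to $(\varphi w_e)_\Phi$. I plan to apply $\partial_x^iZ^j$ to the equation for $\varphi w_e$, take the weighted $L^2$-inner product with $e^{2\Psi_e}\partial_x^iZ^j(\varphi w_e)_\Phi$, and sum over $|i|+j\le 8$. The time derivative together with $\partial_t\Phi=-\lambda\langle D_x\rangle$ and $\partial_t\Psi_e=\varepsilon^{-2}\partial_t\phi=-\lambda\varepsilon^{-2}$ (restricted to the region where $\phi\ge 0$, i.e.\ $y\le y(t)$) will produce the three positive left-hand side terms
\[
\tfrac{1}{2}\tfrac{d}{dt}\|(\varphi w_e)_\Phi\|^2_{H^8_e}+\lambda\|(\varphi w_e)_\Phi\|^2_{H^{8,\frac12}_e(0,y(t))}+\lambda\Bigl\|\tfrac{(\varphi w_e)_\Phi}{\varepsilon}\Bigr\|^2_{H^8_e(0,y(t))}.
\]

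For the dissipation $-\varepsilon^2\Delta(\varphi w_e)$ the situation is in fact \emph{simpler} than in Proposition \ref{e:euler vorticity etimate}: since $\varphi(0)=0$, the trace $(\varphi w_e)(t,x,0)=0$, so the Dirichlet--Neumann boundary terms drop out and integration by parts yields $\tfrac{\varepsilon^2}{2}\|(\nabla(\varphi w_e))_\Phi\|^2_{H^8_e}$ modulo lower-order errors of size $C_0\delta^2\varepsilon^2\|\langle D_x\rangle(\varphi w_e)_\Phi\|^2_{H^8_e}+C_0\delta^2\|\varepsilon^{-1}(\varphi w_e)_\Phi\|^2_{H^8_e}$, which are absorbed for $\delta$ small exactly as in \eqref{e:diffusion estimate}. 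Here I will also use Lemma \ref{e:one order elliptic estimate} to control $\varepsilon\||D_x|(\varphi w_e)_\Phi\|_{H^8_e}$ by $\varepsilon\|A(\varphi w_e)_\Phi\|_{H^8_e}+\|\varepsilon^{-1}(\varphi w_e)_\Phi\|_{H^8_e}$ so that $\|(\nabla(\varphi w_e))_\Phi\|_{H^8_e}^2$ is recovered from $\|(A(\varphi w_e))_\Phi\|_{H^8_e}^2$.

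The six convective/stretching terms $\varphi\widetilde{U}_a\cdot\nabla w_e$, $\varphi\widetilde{U}\cdot\nabla w_{a,e}$, $\varphi\widetilde{U}\cdot\nabla w_e$, $\varphi w_{a,e}\cdot\nabla U$, $\varphi w_e\cdot\nabla U_a$, $\varphi w_e\cdot\nabla U$ are estimated mimicking Steps 1--4 of Proposition \ref{e:euler vorticity etimate}, with each term split over $y\in(0,y(t))$ and $y\in(y(t),\infty)$: on $(0,y(t))$ the weighted product lemmas \ref{e:weighted estimate product}, \ref{e:weighted double linear estimate} give analytic gains via $K(t)$; on $(y(t),\infty)$ the phase $\phi\le 0$ makes $e^\Phi,e^{\Psi_e}$ harmless and Sobolev control through $E(t)$ suffices, together with the identities $Zu=\varphi w_h^{\perp}+\varphi\nabla_xv$, $\partial_yv=-\nabla_x\cdot u$ that supply the bounds \eqref{eq:uw}, \eqref{eq:uw-2}. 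The vertical-transport terms $\widetilde v_a$, $\widetilde v$ are handled by factoring $\widetilde v_a/\varphi$, $\widetilde v/\varphi$ using the vanishing at $y=0$ and Hardy's inequality, so that the $\varphi$ in front of $w_e$ cancels the singular $1/\varphi$; the bilinear term $w_{e,3}\partial_y U_a$ yields the $\varepsilon^{4/3}E(t)^{5/3}$ contribution, exactly as in Step 6 of Proposition \ref{e:euler vorticity etimate}, via Sobolev interpolation between $\|\varepsilon^{-1}(w_e)_\Phi\|_{H^8_e(0,y(t))}$ and $\|((\partial_y+|D_x|)w_e)_\Phi\|_{H^8_e}$.

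The genuinely new contribution is the inhomogeneity $-\varepsilon^2\varphi''w_e-2\varepsilon^2\varphi'\partial_yw_e$ arising from $[\Delta,\varphi]$. Since $|\varphi'|+|\varphi''|\le C\delta$ uniformly, the $\varphi'' w_e$ piece contributes at most $C\varepsilon^2\|(w_e)_\Phi\|^2_{H^8_e}$, which is exactly the extra term $\varepsilon^2\|\varepsilon^{-1}(w_e)_\Phi\|^2_{H^8_e}$ on the right-hand side. For the $\varepsilon^2\varphi'\partial_y w_e$ piece, I will decompose $\partial_y=(\partial_y+|D_x|)-|D_x|=A-|D_x|$: Cauchy--Schwarz then gives
\[
\bigl|\langle \varepsilon^2\varphi' Aw_e,e^{2\Psi_e}(\varphi w_e)_\Phi\rangle_{H^8_e}\bigr|\le \tfrac{\varepsilon^4}{100}\|(Aw_e)_\Phi\|^2_{H^8_e}+C\|(\varphi w_e)_\Phi\|^2_{H^8_e},
\]
while the $\varepsilon^2|D_x|w_e$ contribution is handled by duality with $|D_x|^{1/2}$, producing a $C\varepsilon^2\|(w_e)_\Phi\|^2_{H^8_e}$ bound together with a fraction of $\|(\varphi w_e)_\Phi\|^2_{H^{8,1/2}_e}$ absorbed into the analytic term. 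The main technical obstacle, just as in Proposition \ref{e:euler vorticity etimate}, is to keep every dissipation or analytic-gain term that we spend (such as $\tfrac{\varepsilon^4}{100}\|(Aw_e)_\Phi\|^2_{H^8_e}$ or $\tfrac{\varepsilon^2}{100}\|(A(\varphi w_e))_\Phi\|^2_{H^8_e}$) strictly smaller than the half of the corresponding left-hand side quantity, so that the combined system of estimates closes when all weighted estimates are added. Finally, the remainder terms $\varphi(\mathrm{curl}(R_{e,h},R_{e,v})-M_e)$ are of size $\varepsilon^2$ by Lemma \ref{e:uniform boundness for error}, contributing only $C\varepsilon^4$ on the right, and collecting everything gives the claimed bound.
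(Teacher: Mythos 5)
Your strategy is essentially the paper's: everything except the commutator terms $-\varepsilon^2\varphi''w_e-2\varepsilon^2\varphi'\partial_yw_e$ is reduced to a line-by-line repetition of the proof of Proposition \ref{e:euler vorticity etimate}, and your observation that the weight $\varphi$ converts $\partial_y$ into the conormal derivative (the source of the improved $\varepsilon$-decay), as well as the remark that $\varphi w_e|_{y=0}=0$ kills the Dirichlet--Neumann boundary contribution, are exactly the mechanisms used there. The one point where you deviate is the treatment of $2\varepsilon^2\varphi'\partial_yw_e$. The paper integrates by parts so that the derivative lands on $(\varphi w_e)_\Phi$, producing $\frac{\varepsilon^2}{100}\|(\partial_y(\varphi w_e))_\Phi\|^2_{H^8_e}$ together with $C\varepsilon^2\|(w_e)_\Phi\|^2_{H^{8,1/2}_e}$, $\varepsilon^2\|\varepsilon^{-1}(w_e)_\Phi\|^2_{H^8_e}$ and $C\|(\varphi w_e)_\Phi\|^2_{H^8_e}$; the first of these is absorbed by the dissipation $\frac{\varepsilon^2}{2}\|(\nabla(\varphi w_e))_\Phi\|^2_{H^8_e}$ on the left-hand side of this very proposition. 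Your route via $\partial_y=A-|D_x|$ and Cauchy--Schwarz instead yields $\frac{\varepsilon^4}{100}\|(Aw_e)_\Phi\|^2_{H^8_e}$, and you should note that this term carries the weight $e^{\Psi_e}$ and is therefore \emph{not} dominated by the unweighted term $\frac{\varepsilon^4}{100}\|((\partial_y+|D_x|)w)_\Phi\|^2_{H^8_{co}}$ appearing on the stated right-hand side; as written your estimate does not land on the claimed inequality. The discrepancy is harmless in the global scheme, since $\frac{\varepsilon^4}{100}\|(Aw_e)_\Phi\|^2_{H^8_e}$ is eventually swallowed by the dissipation $\frac{\varepsilon^2}{10}\|((\partial_y+|D_x|)w_e)_\Phi\|^2_{H^8_e}$ on the left of Proposition \ref{e:euler vorticity etimate} once all vorticity estimates are summed, but the cleaner fix is the paper's integration by parts, which keeps the borrowed dissipation inside the present proposition. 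The rest of your outline (splitting at $y=y(t)$, Lemmas \ref{e:weighted estimate product} and \ref{e:weighted double linear estimate} on $(0,y(t))$, the identities behind \eqref{eq:uw}--\eqref{eq:uw-2}, the $C\varepsilon^4$ bound for $\varphi({\rm curl}(R_{e,h},R_{e,v})-M_e)$ via Lemma \ref{e:uniform boundness for error}) matches the intended argument.
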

\begin{proof}
The reason why $\varphi w_e$ has more decay is very similar to the case $w_{3,e}$. Now the weight $\varphi$ cancels one singularity from the derivative $\pa_y$ due to $\varphi\pa_y=Z$. Let us just deal with
the following terms.  First, it is easy to get
\begin{align}
\big<\varepsilon^2\varphi''(w_e)_\Phi, (\varphi w_e)_\Phi\big>_{H^8_e}\leq C\|(\varphi w_e)_\Phi\|^2_{H^8_e}+C\varepsilon^4\|(w_e)_\Phi\|^2_{H^8_e}.\nonumber
\end{align}
 We get by integration by parts that
\begin{align}
&\big|\big<\varepsilon^2\varphi'(\partial_yw_e)_\Phi, (\varphi w_e)_\Phi\big>_{H^8_e}\big|\nonumber\\
&\leq  \f {\varepsilon^2}{100}\big\|(\pa_y(\varphi w_e)_\Phi\big\|_{H^8_e}^2+C\varepsilon^2\|(w_e)_\Phi\big\|^2_{H^{8,\f12}_e}
+{\varepsilon^2}\Big\|\frac{(w_e)_\Phi}{\varepsilon}\Big\|_{H^8_e}^{2}+C\big\|(\varphi w_e)_\Phi\big\|_{H^8_e}^2.\nonumber
\end{align}
The estimates of the other terms can follow the proof of Proposition \ref{e:euler vorticity etimate}  line by line.
\end{proof}

\section{Tangential analytic estimate of the vorticity: Prandtl part}

Let us observe that $(w_p)_\Phi$ satisfies
\begin{align}\label{e:vorticity equation on prandtlr part}
&\partial_t(w_p)_\Phi+\lambda\big<D_x\big>(w_p)_\Phi-\varepsilon^2
(\Delta w_p)_\Phi+\big(\widetilde{U}_{a}\cdot\nabla w_p\big)_\Phi
+\big(\widetilde{U}\cdot\nabla w_{a,p}\big)_\Phi+\big(\widetilde{U}\cdot\nabla w_p\big)_\Phi\nonumber\\[3pt]
&-(w_{a,p}\cdot\nabla U)_\Phi-(w_{p}\cdot\nabla U_a)_\Phi
-(w_{p}\cdot\nabla U)_\Phi
=({\rm curl}(R_{p,h},R_{p,v})-M_p)_\Phi
\end{align}
together with the following initial-boundary conditions
\begin{align}
\left\{
\begin{array}{ll}
-\varepsilon^2((\partial_y+|D_x|)w_{p,h})_\Phi(t,x,0)-\varepsilon^2\partial_x(\Lambda_{ND}(\gamma\nabla_x\cdot w_{p,h}))_\Phi(t,x,0)\nonumber\\[3pt]
\quad \quad =-(\partial_y(-\triangle_D)^{-1}J_h)_\Phi|_{y=0}+(\partial_x(-\triangle_N)^{-1}J_3)_\Phi|_{y=0},\nonumber\\[3pt]
 (w_{p,3})_\Phi(t,x,0)=0,\nonumber\\[3pt]
(w_p)_\Phi(0,x,y)=0.\nonumber
\end{array}
\right.
\end{align}
\begin{Proposition}\label{e:prandtl vorticity etimate}
There exists $\delta_0>0$ such that for any $\delta\in (0, \delta_0)$, there holds
\begin{align}
&\frac{1}{2}\frac{d}{dt}\big\|(w_p)_\Phi\big\|^2_{H^8_p}+{\lambda}\big\|(w_p)_\Phi\big\|^2_{H^{8,\frac{1}{2}}_p((0,y(t))}
+({\lambda}-C)\Big\|\frac{y(w_p)_\Phi}{\varepsilon}\Big\|^2_{H^8_p(0,y(t))}
\nonumber\\
&\qquad+\frac{\varepsilon^2}{10}\big\|((\partial_y+|D|)w_p)_\Phi\big\|^2_{H^8_p}\le C\big(1+E(t)\big)(K(t)+E(t)+\varepsilon^2)+C\varepsilon^{-2}E(t)^2\nonumber\\
&\qquad\qquad \qquad \qquad \qquad \qquad \qquad +\frac{\varepsilon^2}{100}\|((\partial_y+|D_x|)w)_\Phi\|^2_{H^8_{co}}.\nonumber
\end{align}
\end{Proposition}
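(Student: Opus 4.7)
The strategy mirrors Proposition 8.1 for the Euler part, with the weight $\Psi_e$ replaced by the Prandtl weight $\Psi_p(t,y)=\tfrac{y^2}{\varepsilon^2}(\delta-\lambda t)$ and with the boundary condition for $w_{p,h}$ now carrying the nonlinear forcing $-\partial_y(-\triangle_D)^{-1}J_h+\partial_x(-\triangle_N)^{-1}J_3$. The plan is to apply $\partial_x^i Z^j$ to \eqref{e:vorticity equation on prandtlr part}, take $L^2$ inner product with $e^{2\Psi_p}\partial_x^iZ^j(w_p)_\Phi$, and sum over $|i|+j\le 8$. On the left side, the time derivative produces $\tfrac12\tfrac{d}{dt}\|(w_p)_\Phi\|^2_{H^8_p}$; the $\lambda\langle D_x\rangle$ damping produces $\lambda\|(w_p)_\Phi\|^2_{H^{8,\frac12}_p(0,y(t))}$ (the cut-off at $y(t)$ coming from $\phi(t,y)\le 0$ for $y\ge y(t)$); and crucially, the $t$-differentiation of $e^{\Psi_p}$ generates $\lambda\|y(w_p)_\Phi/\varepsilon\|^2_{H^8_p(0,y(t))}$, which is exactly the weighted dissipation claimed in the statement. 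Integration by parts on the Laplacian, together with the boundary identity and Lemma 5.1 used as in \eqref{e:diffusion estimate}--\eqref{e:part diffusion estimate positive}, then yields $\tfrac{\varepsilon^2}{4}\|((\partial_y+|D_x|)w_p)_\Phi\|^2_{H^8_p}$ modulo boundary traces and modulo cross terms $\varepsilon^2\nabla\Psi_p\cdot\nabla w_p$ which, since $\varepsilon^2\partial_y\Psi_p=2y(\delta-\lambda t)$, are absorbed by the $\|y(w_p)_\Phi/\varepsilon\|^2$ dissipation and by choosing $\delta_0$ small.

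The transport and stretching terms $\widetilde U_a\cdot\nabla w_p$, $\widetilde U\cdot\nabla w_{a,p}$, $\widetilde U\cdot\nabla w_p$, $w_{a,p}\cdot\nabla U$, $w_p\cdot\nabla U_a$, $w_p\cdot\nabla U$ are handled exactly as in Steps 1--4 of the proof of Proposition 8.1, but with the product estimates in the weighted space $H^m_p$ (Lemmas \ref{e:weighted estimate product} and \ref{e:weighted double linear estimate}). I would split each spatial integral into $[0,y(t)]$, on which the analytic norms $K_v(t),K_w(t)$ are available, and $[y(t),\infty)$, on which the Sobolev parts of $E(t)$ suffice (together with the fact $y(t)\ge c_0$). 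Where $\widetilde U_a|_{y=0}=0$, I absorb the singular vertical derivative $\tfrac{v_a-\varepsilon^2fe^{-y}}{\varphi}\cdot Z(\cdot)$ into $\|y(w_p)_\Phi/\varepsilon\|^2_{H^8_p(0,y(t))}$ through Hardy's inequality; the approximation $w_{a,p}$ is handled using its exponential decay in $z=y/\varepsilon$ (Lemma \ref{e:uniform boundness for approximate solution}), which is compatible with the $\Psi_p$ weight. Gradient terms involving $\partial_y U$ are rewritten through $Zu=\varphi w_h^\perp+\varphi\nabla_x v$ and $\partial_y v=-\nabla_x\cdot u$ as in \eqref{eq:uw}--\eqref{eq:uw-2}, so that conormal norms of $w$ take over. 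The fully nonlinear contribution $w_p\cdot\nabla U$ in $H^{8,\frac12}_p(0,y(t))$ is the source of the bound $C\varepsilon^{-2}E(t)^2$, obtained after Young's inequality and interpolation between the two halves of $E_w(t)$.

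The forcing term $(\mathrm{curl}(R_{p,h},R_{p,v})-M_p)_\Phi$ is controlled by Lemma \ref{e:uniform boundness for error} and contributes $O(\varepsilon^2)$ after appropriate multiplication. The genuinely new difficulty, and what I expect to be the main obstacle, is the boundary contribution at $y=0$ from the integration by parts on $\varepsilon^2\Delta w_p$, which now equals
\begin{equation*}
-\varepsilon^2\sum_{|i|\le 8}\int_{\R^2}\partial_x^i\bigl(A w_{p,h}+\partial_x\Lambda_{ND}\gamma\nabla_x\cdot w_{p,h}\bigr)_\Phi\cdot e^{2\Psi_p(t,0)}\partial_x^i(w_{p,h})_\Phi(t,x,0)\,dx
\end{equation*}
with $A=\partial_y+|D_x|$; substituting the boundary condition of \eqref{e:decompose vorticity equation-2} converts this into boundary pairings of $\partial_x^i(w_{p,h})_\Phi|_{y=0}$ against $\partial_x^i(\partial_y(-\triangle_D)^{-1}J_h)_\Phi|_{y=0}$ and $\partial_x^i(\partial_x(-\triangle_N)^{-1}J_3)_\Phi|_{y=0}$. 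I would move these boundary pairings back to volume integrals using $\int_0^\infty\partial_y(\cdot)dy$, trade $\partial_y(-\triangle_D)^{-1}$ and $\partial_x(-\triangle_N)^{-1}$ for bounded operators via Lemma \ref{e:one order elliptic estimate} (and its analogue for the Neumann problem), and then bound $\|J_h\|,\|J_3\|$ in $H^8_{tan}$ (resp.~$H^{8,\frac12}_{tan}(0,y(t))$) by combining Lemma \ref{lem:F-analytic}, Lemma \ref{lem:F-Sob}, the velocity estimates of Propositions \ref{p:analytical velocity estimate} and \ref{p:Sobolev velocity estimate}, and the pressure estimates of Lemmas \ref{e:analytical pressure estimate} and \ref{e:Sobolev pressure estimate}. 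This is where the remaining pieces $\tfrac{\varepsilon^2}{100}\|((\partial_y+|D_x|)w)_\Phi\|^2_{H^8_{co}}$ and $C\varepsilon^{-2}E(t)^2$ on the right-hand side are produced. Collecting all contributions and fixing $\delta\le\delta_0$ small enough to absorb the weight-generated cross terms gives the stated inequality.
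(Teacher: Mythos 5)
Your proposal follows essentially the same route as the paper's proof: the same weighted energy identity in $H^8_p$ (with the good terms $\lambda\|(w_p)_\Phi\|^2_{H^{8,1/2}_p(0,y(t))}$ and $\lambda\|y(w_p)_\Phi/\varepsilon\|^2_{H^8_p}$ generated by differentiating $\Phi$ and $\Psi_p$ in time), the same treatment of the transport/stretching terms via the weighted product lemmas and the splitting at $y(t)$, and the same handling of the new boundary contribution — substituting the boundary condition for $Aw_{p,h}$, converting the trace pairings against $\partial_y(-\triangle_D)^{-1}J_h$ and $\partial_x(-\triangle_N)^{-1}J_3$ into volume integrals over $(0,y(t))$ (the paper does this with a cut-off $\zeta(t,y)$), and invoking the $L^2$-boundedness of the Riesz-type operators together with Lemma \ref{lem:F-analytic} and Lemma \ref{lem:F-Sob}. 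The identified sources of the terms $\varepsilon^{-2}E(t)^2$ and $\tfrac{\varepsilon^2}{100}\|((\partial_y+|D_x|)w)_\Phi\|^2_{H^8_{co}}$ also match the paper's accounting, so the plan is sound as written.
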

\begin{proof}
Acting $\partial_x^iZ^j$ on both sides of (\ref{e:vorticity equation on prandtlr part}) and then taking $L^2$ inner product with $e^{2\Psi_p}\partial_x^iZ^j (w_p)_\Phi$, summing over all $|i|+j\leq 8$, we arrive at
\begin{align}
&\frac{1}{2}\frac{d}{dt}\big\|(w_p)_\Phi\big\|^2_{H^8_p}+\frac{\lambda}{2}\big\|(w_p)_\Phi\big\|^2_{H^{8,\frac12}_p(0,y(t))}
+\frac{\lambda}{2}\Big\|\frac{y(w_p)_\Phi}{\varepsilon}\Big\|^2_{H^8_p}-\varepsilon^2\sum\limits_{|i|+j\leq 8}\big<\partial_x^iZ^j(\Delta  w_p)_\Phi,e^{2\Psi_p}\partial_x^iZ^j(w_p)_\Phi\big>\nonumber\\
&\leq\Big|\sum\limits_{|i|+j\leq 8}\big<\partial_x^iZ^j(\widetilde{U}_{a}\cdot\nabla w_p)_\Phi,e^{2\Psi_p}\partial_x^iZ^j(w_p)_\Phi\big>\Big|
+\Big|\sum\limits_{|i|+j\leq 8}\big<\partial_x^iZ^j(\widetilde{U}\cdot\nabla w_{a,p})_\Phi,e^{2\Psi_p}\partial_x^iZ^j(w_p)_\Phi\big>\Big|\nonumber\\
&\quad+\Big|\sum\limits_{|i|+j\leq 8}\big<\partial_x^iZ^j(\widetilde{U}\cdot\nabla w_p)_\Phi,e^{2\Psi_p}\partial_x^iZ^j(w_p)_\Phi\big>\Big|
+\Big|\sum\limits_{|i|+j\leq 8}\big<\partial_x^iZ^j(w_{a,p}\cdot\nabla U)_\Phi,e^{2\Psi_p}\partial_x^iZ^j(w_p)_\Phi\big>\Big|\nonumber\\
&\quad+\Big|\sum\limits_{|i|+j\leq 8}\big<\partial_x^iZ^j(w_{p}\cdot\nabla U_a)_\Phi,e^{2\Psi_p}\partial_x^iZ^j(w_p)_\Phi\big>\Big|
+\Big|\sum\limits_{|i|+j\leq 8}\big<\partial_x^iZ^j(w_{p}\cdot\nabla U)_\Phi,e^{2\Psi_p}\partial_x^iZ^j(w_p)_\Phi\big>\Big|\nonumber\\
&+\Big|\sum\limits_{|i|+j\leq 8}\big<\partial_x^iZ^j({\rm curl}(R_{p,h},R_{p,v})-M_p)_\Phi,e^{2\Psi_p}\partial_x^iZ^j(w_p)_\Phi\big>\Big|
\tre\sum_{i=1}^7I_i.\nonumber
\end{align}

\no{\bf Step 1. Estimate of $I_1$.}

Following the argument of $I_1$ in Proposition \ref{e:euler vorticity etimate} , we can deduce that
\begin{align}
I_1\leq C\Big(\|w_p\|^2_{H^9_p}+\big\|(w_p)_\Phi\big\|^2_{H^{8,\frac{1}{2}}_p(0,y(t))}
+\Big\|\frac{y(w_p)_\Phi}{\varepsilon}\Big\|^2_{H^8_p(0,y(t))}\Big)
+\frac{\varepsilon^2}{100}\big\|((\partial_y+|D_x|)w_p)_\Phi\big\|^2_{H^8_p}.\nonumber
\end{align}

\no{\bf Step 2. Estimate of $I_2$.}

First, $I_2$ can be controlled by
\begin{align}
 &\Big|\sum\limits_{|i|+j\leq 8}\int_{y(t)}^{+\infty}\int_{\R^2}\partial_x^iZ^j(u\partial_xw_{a,p}+(v+\varepsilon^2f(t,x)e^{-y})\partial_yw_{a,p})_\Phi e^{2\Psi_p}\partial_x^iZ^j(w_p)_\Phi dxdy\Big|\nonumber\\
&+\Big|\sum\limits_{|i|+j\leq 8}\int_0^{y(t)}\int_{\R^2}\partial_x^iZ^j(u\partial_xw_{a,p}+(v+\varepsilon^2f(t,x)e^{-y})\partial_yw_{a,p})_\Phi e^{2\Psi_p}\partial_x^iZ^j(w_p)_\Phi dxdy\Big|\tre I_{21}+I_{22}.\nonumber
\end{align}
Using the fact that $\|e^{\Psi_p}\partial_x^iZ^j\partial_{x,y}w_{a,p}\|_{L^\infty(\R^2\times (y(t),\infty))}\leq C$, it is easy to get by \eqref{eq:uw} that
\begin{align}
I_{21}\leq C\big(\|w_p\|^2_{H^8_p}+\|U\|^2_{H^9_{tan}}+\|\varphi w\|^2_{H^8_{co}}+\ve^4\big).\nonumber
\end{align}
As $w_{a,p}=\partial_y(u_{a,p})-\partial_xv_{a,p}$,  we get by \eqref{eq:uw} that
\begin{align}
&\Big|\sum\limits_{|i|+j\leq 8}\int_0^{y(t)}\int_{\R^2}\partial_x^iZ^j(u\partial_xw_{a,p})_\Phi e^{2\Psi_p}\partial_x^iZ^j(w_p)_\Phi dxdy\Big|\nonumber\\
&\leq C\Big(\|(w_p)_\Phi\|^2_{H^8_p}+\varepsilon^{-2}\|U_\Phi\|^2_{H^9_{tan}}+\varepsilon^{-2}\|(\varphi w)_\Phi\|^2_{H^8_{co}}\Big).\nonumber
\end{align}
Using $\pa_yv=-\nabla_x\cdot u$ and $(v+\varepsilon^2f(t,x)e^{-y})|_{y=0}=0$ and Lemma \ref{e:uniform boundness for approximate solution}, we deduce that
\begin{align}
&\Big|\sum\limits_{|i|+j\leq 8}\int_0^{y(t)}\int_{\R^2}\partial_x^iZ^j((v+\varepsilon^2f(t,x)e^{-y})\partial_yw_{a,p})_\Phi e^{2\Psi_p}\partial_x^iZ^j(w_p)_\Phi dxdy\Big|\nonumber\\
&\leq \Big|\sum\limits_{|i|+j\leq 8}\int_0^{y(t)}\int_{\R^2}\partial_x^iZ^j\Big(\frac{1}{\varepsilon y}\int_0^y\Big(\partial_xu(x,y')+\varepsilon^2f(t,x)e^{-y'}\Big)dy'z(\varepsilon \partial_{xz}v_p^a+\partial_{zz}u_p^a)\Big)_\Phi e^{2\Psi_p}\partial_x^iZ^j(w_p)_\Phi dxdy\Big|\nonumber\\
&\leq C\|(w_p)_\Phi\|^2_{H^{8}_p}+C\varepsilon^{-2}\Big(\|(\varphi w)_\Phi\|^2_{H^{8}_{co}}
+\|U_\Phi\|^2_{H^{9}_{tan}}+\varepsilon^4\Big).\nonumber
\end{align}
Thus, we obtain
\begin{align}
I_{22}\leq C\|(w_p)_\Phi\|^2_{H^{8}_p}+C\varepsilon^{-2}\Big(\|(\varphi w)_\Phi\|^2_{H^{8}_{co}}
+\|U_\Phi\|^2_{H^{9}_{tan}}+\varepsilon^4\Big).\nonumber
\end{align}
This shows that
\begin{align}
I_2 \leq&  C\Big(\|w_p\|^2_{H^8_p}+\|U\|^2_{H^9_{tan}}+\|\varphi w\|^2_{H^8_{co}}+\|(w_p)_\Phi\|^2_{H^{8}_p}\Big)\nonumber\\
&+C\varepsilon^{-2}\Big(\|(\varphi w)_\Phi\|^2_{H^{8}_{co}}
+\|U_\Phi\|^2_{H^{9}_{tan}}+\varepsilon^4\Big).\nonumber
\end{align}

\no{\bf Step 3. Estimate of $I_3$ and $I_6$.}

Following the arguments of $I_3$ and $I_6$ in Proposition \ref{e:euler vorticity etimate}, we obtain
\begin{align}
I_3\leq& \frac{\varepsilon^2}{100}\|((\partial_y+|D_x|)w_p)_\Phi\|^2_{H^8_p}
+C\varepsilon^{-2}\|(w_p)_\Phi\|^2_{H^8_p}\big(\|U_\Phi\|^2_{H^9_{tan}}+\|(\varphi w)_\Phi\big\|^2_{H^8_{co}}
+\varepsilon^4\big)\nonumber\\
&+C\Big(\epsilon^2+\|U_\Phi\big\|^2_{H^9_{tan}}+\|w_\Phi\big\|^2_{H^8_{co}}\Big)
\big\|(w_p)_\Phi\big\|^2_{H^{8,\frac{1}{2}}_p(0,y(t))}\nonumber\\
&+C\|w_p\|^2_{H^9_{p}}\Big(\epsilon^2+\|U\|^2_{H^{10}_{tan}}+\|\varphi w\|^2_{H^9_{co}}\Big),\nonumber
\end{align}
and
\begin{align}
I_{6}\leq& \frac{\varepsilon^2}{100}\|((\partial_y+|D_x|)w_p)_\Phi\|^2_{H^8_p}
+\big\|\f {y(w_p)_\Phi} {\varepsilon}\big\|_{H^8_p(0,y(t))}^2+C\varepsilon^{-\f23}\|(w_p)_\Phi\|^2_{H^8_p}\big(\|U_\Phi\|_{H^9_{tan}}+\|w_\Phi\big\|_{H^8_{co}}+\|(w_p)_\Phi\|_{H^8_p}\big)^\f43\nonumber\\
&+C\big(\|U_\Phi\|_{H^9_{tan}}+\|w_\Phi\big\|_{H^8_{co}}\big)\|(w_p)_\Phi\|_{H^8_p}^2+\frac{\varepsilon^2}{100}\|((\partial_y+|D_x|)w)_\Phi\|^2_{H^8_{co}}+\big(\|U\|_{H^{10}_{tan}}+\|w\big\|_{H^9_{co}}\big)\|w_p\|_{H^8_{p}}^2.\nonumber
\end{align}

\no{\bf Step 4. Estimate of $I_4$.}

We split $I_4$ into two parts
\begin{align}
I_4\leq& \Big|\sum\limits_{|i|+j\leq 8}\big<\partial_x^iZ^j(w_{a,p,h}\partial_xU)_\Phi,e^{2\Psi_p}\partial_x^iZ^j(w_p)_\Phi\big>\Big|\nonumber\\
&+\Big|\sum\limits_{|i|+j\leq 8}\big<\partial_x^iZ^j(w_{a,p,3}\partial_yU)_\Phi,e^{2\Psi_p}\partial_x^iZ^j(w_p)_\Phi\big>\Big|\tre I_{41}+I_{42}.\nonumber
\end{align}
Using  $\|e^{\Psi_p}\partial_x^iZ^jw_{a,p,3}\|_{L^\infty}\leq C$,
we get by \eqref{eq:uw} and \eqref{eq:uw-2} that
\begin{align}
I_{42}\leq C\Big(\|U_\Phi\|^2_{H^9_{tan}}+\|w_\Phi\|^2_{H^8_{co}}+\|(w_p)_\Phi\|^2_{H^8_p}
+\|U\|^2_{H^9_{tan}}+\|w\|^2_{H^8_{co}}+\|w_{p}\|^2_{H^8_p}\Big),\nonumber
\end{align}
and using $\|e^{\Psi_p}\partial_x^iZ^jw_{a,p,h}\|_{L^\infty(\R^2\times (y(t),+\infty))}\leq C$, we deduce that
\begin{align}
I_{41}\leq& C\big(\|U\|^2_{H^9_{tan}}+\|\varphi w\|^2_{H^8_{co}}+\|w_{p}\|^2_{H^8_p}\big)
+\Big|\sum\limits_{|i|+j\leq 8}\int_0^{y(t)}\int_{\R^2}\partial_x^iZ^j(w_{a,p,h}\partial_xU)_\Phi e^{2\Psi_p}\partial_x^iZ^j(w_p)_\Phi dxdy\Big|\nonumber\\
\leq &C\Big(\|U\|^2_{H^9_{tan}}+\|\varphi w\|^2_{H^8_{co}}+\|w_{p}\|^2_{H^8_p}+\|(w_p)_\Phi\|^2_{H^8_p}\Big)\nonumber\\
&\qquad+C\varepsilon^{-2}\Big(\|(\varphi w)_\Phi\|^2_{H^{8}_{co}(0,y(t))}
+\|U_\Phi\|^2_{H^{9}_{tan}(0,y(t))}\Big).\nonumber
\end{align}
 Thus, we obtain
\begin{align}
I_{4}\leq& C\Big(\|U_\Phi\|^2_{H^9_{tan}}+\|w_\Phi\|^2_{H^8_{co}}+\|(w_p)_\Phi\|^2_{H^8_p}
+\|U\|^2_{H^9_{tan}}+\|w\|^2_{H^8_{co}}+\|w_{p}\|^2_{H^8_p}\Big)\nonumber\\
&\quad+C\varepsilon^{-2}\Big(\|(\varphi w)_\Phi\|^2_{H^{8}_{co}}
+\|U_\Phi\|^2_{H^{9}_{tan}}\Big).\nonumber
\end{align}

\no{\bf Step 5. Estimate of $I_5$.}

We split $I_5$ into two parts
\begin{align}
I_5\leq &\Big|\sum\limits_{|i|+j\leq 8}\big<\partial_x^iZ^j(w_{p,h}\partial_xU_a)_\Phi,e^{2\Psi_p}\partial_x^iZ^j(w_p)_\Phi \big>\Big|\nonumber\\
&+\Big|\sum\limits_{|i|+j\leq 8}\big<\partial_x^iZ^j(w_{p,3}\partial_yU_a)_\Phi,e^{2\Psi_p}\partial_x^iZ^j(w_p)_\Phi\big>\Big|\tre I_{51}+I_{52}.\nonumber
\end{align}
By Lemma \ref{e:uniform boundness for approximate solution}, we have
$$I_{51}\leq C\big(\|(w_p)_\Phi\|^2_{H^8_p}+\|w_{p}\|^2_{H^8_p}\big),$$
and
\begin{align}
I_{52}\leq C\big(\|w_p\|^2_{H^8_p}+\|(w_p)_\Phi\|^2_{H^8_p}\big)+C\varepsilon^{-2}\big(\|w_{p,3}\|^2_{H^8_p}
+\|(w_{p,3})_\Phi\|^2_{H^8_p}\big).\nonumber
\end{align}
This shows that
\begin{align}
I_5\leq C\big(\|w_p\|^2_{H^8_p}+\|(w_p)_\Phi\|^2_{H^8_p}\big)+C\varepsilon^{-2}\big(\|w_{p,3}\|^2_{H^8_p}
+\|(w_{p,3})_\Phi\|^2_{H^8_p}\big).\nonumber
\end{align}

\no{\bf Step 6. Estimate of $I_7$.}

Using Lemma \ref{e:uniform boundness for error}, it is easy to get
\begin{align}
I_7\leq C\|(w_p)_\Phi\|^2_{H^8_p}+C\varepsilon^2.\nonumber
\end{align}

\no{\bf Step 7. Estimate of dissipative term.}

Following the arguments  in Proposition \ref{e:euler vorticity etimate} , we can deduce that
 \begin{align}\label{e:diffusion on prandtl part}
&-\varepsilon^2\sum\limits_{|i|+j\leq 8}\big<\partial_x^iZ^j(\Delta  w_p)_\Phi,e^{2\Psi_p}\partial_x^iZ^jw_{p,\Phi}\big>\nonumber\\
&\geq \frac{\varepsilon^2}{2}\|(Aw_p)_\Phi\|^2_{H^8_p}
-C_0\delta^{2}\varepsilon^{2}\Big\|\langle D_x\rangle (w_p)_\Phi\Big\|^2_{H^8_p}
-C_0\delta^{2}\Big\|\frac{y(w_p)_\Phi}{\varepsilon}\Big\|_{H^8_p}^{2}\nonumber\\
&\quad+\varepsilon^2\sum_{|i|\leq 8}\int_{\R^2}\partial_x^i(Aw_p)_\Phi\partial_x^i(w_p)_\Phi(x,0)dx,
\end{align}
Following the argument of  (\ref{e:elliptic estimate one}), we have
\begin{align}
\varepsilon\big\||D_x|(w_p)_\Phi\big\|_{H^8_p}\leq C\delta\ve\|(w_p)_\Phi\|_{H^8_p}+{C\varepsilon}\Big\|(Aw_p)_\Phi\Big\|_{H^8_p}
+C\Big\|\frac{y(w_p)_\Phi}{\varepsilon}\Big\|_{H^8_p}.\nonumber
\end{align}
 Putting this estimate into (\ref{e:diffusion on prandtl part}) and fixing $\delta$ small, we arrive at
\begin{align}\label{e:final estimate on prandtl diffuaion}
&-\varepsilon^2\sum\limits_{|i|+j\leq 8}\big<\partial_x^iZ^j(\Delta w_p)_\Phi,e^{2\Psi_p}\partial_x^iZ^j(w_p)_\Phi\big>\nonumber\\
&\geq\frac{\varepsilon^2}{4}\|(Aw_p)_\Phi\|^2_{H^8_p}
-C\Big\|\frac{y(w_p)_\Phi}{\varepsilon}\Big\|_{H^8_p}^{2}-C\|(w_p)_\Phi\|^2_{H^8_p}\nonumber\\
&\qquad+\varepsilon^2\sum_{|i|\leq 8}\int_{\R^2}\partial_x^i((\partial_y+|D_x|)w_p)_\Phi\partial_x^i(w_p)_\Phi(t,x,0)dx.
\end{align}

Now, using the boundary condition of $w_p$, we arrive at
\begin{align}
&\varepsilon^2\sum_{|i|\leq 8}\int_{\R^2}\partial_x^i((\partial_y+|D_x|)w_p)_\Phi\partial_x^i(w_p)_\Phi(t,x,0)dx\nonumber\\
&=-\sum_{|i|\leq 8}\varepsilon^2\int_{\R^2}\partial_x^i\na_x(\Lambda_{ND}(\gamma\nabla_x\cdot w_{p,h}))_\Phi \cdot\partial_x^i(w_{p,h})_\Phi(t,x,0)dx\nonumber\\
&\qquad+\sum_{|i|\leq 8}\int_{\R^2}e^{2(\delta-\lambda t)|D_x|}\partial_x^iw_{p,h}\partial_x^i(-\partial_y(-\triangle_D)^{-1}J_h+\partial_x(-\triangle_N)^{-1}J_3)(t,x,0)dx.\nonumber
\end{align}
The same argument as (\ref{e:part diffusion estimate positive}) gives
$$-\varepsilon^2\int_{\R^2}\partial_x^i\na_x(\Lambda_{ND}(\gamma\nabla_x\cdot w_{p,h}))_\Phi\cdot\partial_x^i(w_{p,h})_\Phi(t,x,0)dx\geq 0.$$
To deal with another term, we introduce a smooth cut-off function $\zeta(t,y)$ satisfies $\zeta(t,0)=1$ and $\zeta(t,y)=0$ for $y>y(t)$.
Then we have
\begin{align}
&\int_{\R^2}e^{2(\delta-\lambda t)\langle D_x\rangle}\partial_x^iw_{p,h}(x,0)\partial_x^i(-\partial_y(-\triangle_D)^{-1}J_h+\partial_x(-\triangle_N)^{-1}J_3)(t,x,0)dx\nonumber\\
&=\int_{\R^2}\int_0^{y(t)}\partial_y\Big\{\zeta(t,y)e^{\Phi}\partial_x^iw_{p,h}\partial_x^i(-\partial_y(-\triangle_D)^{-1}J_{h,\Phi}
+\partial_x(-\triangle_N)^{-1}J_{3,\Phi})\Big\}dydx\nonumber\\
&=\int_{\R^2}\int_0^{y(t)}\zeta'(t,y)e^{\Phi}\partial_x^iw_{p,h}\partial_x^i(-\partial_y(-\triangle_D)^{-1}J_{h,\Phi}
+\partial_x(-\triangle_N)^{-1}J_{3,\Phi})dydx\nonumber\\
&\quad-\int_{\R^2}\int_0^{y(t)}\zeta(t,y)e^{\Phi}\partial_x^iw_{p,h}\theta'\langle D_x\rangle \partial_x^i(-\partial_y(-\triangle_D)^{-1}J_{h,\Phi}
+\partial_x(-\triangle_N)^{-1}J_{3,\Phi})dydx\nonumber\\
&\quad+\int_{\R^2}\int_0^{y(t)}\zeta(t,y)e^{\Phi}\partial_x^i\partial_yw_{p,h}\partial_x^i(-\partial_y(-\triangle_D)^{-1}J_{h,\Phi}
+\partial_x(-\triangle_N)^{-1}J_{3,\Phi})dydx\nonumber\\
&\quad+\int_{\R^2}\int_0^{y(t)}\zeta(t,y)e^{\Phi}\partial_x^iw_{p,h}\partial_x^i(-\partial_{yy}(-\triangle_D)^{-1}J_{h,\Phi})dydx\nonumber\\
&\quad+\int_{\R^2}\int_0^{y(t)}\zeta(t,y)e^{\Phi}\partial_x^iw_{p,h}\partial_x^i\partial_{xy}(-\triangle_N)^{-1}J_{3,\Phi}dydx\tre\sum_{k=1}^5I^b_k.\nonumber
\end{align}
Recall that $J=(J_h,J_3)=-\text{curl} (F, G)+\text{curl} R$, where
\beno
(F,G)=\widetilde{U}_a\cdot\nabla U+\widetilde{U}\cdot\nabla U_a+\widetilde{U}\cdot\nabla U.
\eeno
Using $L^2$ boundness of operators $\partial_y(-\triangle_D)^{-1}\partial_x$, $\partial_y(-\triangle_D)^{-1}\partial_y$, $\partial_{yy}(-\triangle_D)^{-1}$, $\nabla(-\triangle_N)^{-1}\partial_x$ and $\partial_{xy}(-\triangle_N)^{-1}$,  we can deduce from Lemma \ref{lem:F-analytic} that
\begin{align}
|I^b_1|\leq& C\|(w_p)_\Phi\|^2_{H^8_p}+C\sum_{|i|\leq 8}\big\|\partial_x^i(F,G)_\Phi\big\|^2_{L^2(\R^2\times (0,y(t)))}+C\varepsilon^4\nonumber\\
\leq &C\|(w_p)_\Phi\|^2_{H^8_p}+C\varepsilon^2(1+E(t))(E(t)+K(t)+\varepsilon^2),\nonumber
\end{align}
and
\begin{align}
|I^b_2|\leq& C\|(w_p)_\Phi\|^2_{H^{8,\frac12}_p}+C\sum_{|i|\leq 8}\big\|\langle D_x\rangle^{\frac12}\partial_x^i(F,G)_\Phi\big\|^2_{L^2(\R^2\times (0,y(t)))}+C\varepsilon^4\nonumber\\
\leq & C\big(1+E(t)\big)\big(\varepsilon^2+E(t)+K(t)\big).\nonumber
\end{align}
Similarly, we have
\begin{align}
|I^b_5|\leq& C\|(w_p)_\Phi\|^2_{H^{8,\frac12}_p}+C\sum_{|i|\leq 8}\big\|\langle D_x\rangle^{-\frac12}\partial_x^i(J_3)_\Phi\big\|^2_{L^2(\R^2\times (0,y(t)))}\nonumber\\
\leq & C\big(1+E(t)\big)\big(\varepsilon^2+E(t)+K(t)\big).\nonumber
\end{align}
For $I_3^b$, we have
\begin{align*}
I_3^b\le C\big(1+E(t)\big)\big(\varepsilon^2+E(t)+K(t)\big)+\f {\varepsilon^2} {100}\|(Aw_p)_\Phi\|_{H^8_p}^2+C\varepsilon^{-2}E(t)^2.\end{align*}
We write
\begin{align*}
I^b_4=\int_{\R^2}\int_0^{y(t)}\zeta(t,y)e^{\Phi}\partial_x^iw_{p,h}\partial_x^i(J_{h,\Phi})dydx
+\int_{\R^2}\int_0^{y(t)}\zeta(t,y)e^{\Phi}\partial_x^iw_{p,h}\partial_x^i(-\partial_{xx}(-\triangle_D)^{-1}J_{h,\Phi})dydx.
\end{align*}
Now, the second term can be handled as $I_5^b$, while the first term can be handed  as $I_1^b, I_2^b, I_3^b$ by integrating by parts and $(F, G)(t,x,0)=0$.

Thus, we arrive at
\begin{align}
&-\varepsilon^2\sum\limits_{|i|+j\leq 8}\big<\partial_x^iZ^j(\Delta w_p)_\Phi,e^{2\Psi_p}\partial_x^iZ^jw_{p,\Phi}\big>_2\nonumber\\
&\geq\frac{\varepsilon^2}{8}\|(Aw_p)_\Phi\|^2_{H^8_p}
-C\Big\|\frac{yw_p}{\varepsilon}\Big\|_{H^8_p}^{2}-
C\big(E(t)+1\big)\big(K(t)+E(t)+\varepsilon^2\big)+\varepsilon^{-2}E(t)^2.\nonumber
\end{align}

The proposition follows by combing the estimates in Step 1-Step 7.
\end{proof}
\medskip

As in $w_{e,3}, \varphi w_e$, $w_{p,3}$ and $\varphi w_p$ have
more decay in $\varepsilon$.  Let us just state the following results
without proof.

\begin{Proposition}\label{e:prandtl vertical vorticity etimate}
There exists $\delta_0>0$ such that for any $\delta\in (0, \delta_0)$, there holds
\begin{align}
&\frac{1}{2}\frac{d}{dt}\big\|(w_{p,3})_\Phi\big\|^2_{H^8_p}+{\lambda}\big\|(w_{p,3})_\Phi\big\|^2_{H^{8,\frac{1}{2}}_p(0,y(t))}
+({\lambda}-C)\Big\|\frac{y(w_{p,3})_\Phi}{\varepsilon}\Big\|^2_{H^8_p}
+\frac{\varepsilon^2}{2}\big\|(\nabla w_{p,3})_\Phi\big\|^2_{H^8_p}\nonumber\\
&\leq C\varepsilon^2\big(1+E(t)\big)\big(\ve^2+E(t)+K(t)\big)+C\varepsilon^\f 43E(t)^\f53\nonumber\\
&\qquad+\frac{\varepsilon^2}{100}\Big(\varepsilon^2\big\|((\partial_y+|D_x|)w_p)_\Phi\big\|^2_{H^8_p}
+\Big\|\frac{y(w_{p})_\Phi}{\varepsilon}\Big\|^2_{H^8_p((0,y(t))}+\varepsilon^2\big\|((\partial_y+|D_x|)w)_\Phi\big\|^2_{H^8_{co}}\Big).\nonumber
\end{align}
\end{Proposition}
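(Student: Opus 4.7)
The plan is to mirror the argument of Proposition \ref{e:prandtl vorticity etimate} term by term, but to exploit two structural features that are specific to the third component: (i) the boundary condition $w_{p,3}(t,x,0)=0$, which simplifies the boundary contribution of the dissipative estimate; and (ii) the fact that the ``stretching'' terms in the $w_{p,3}$-equation have the form $(w_{a,p}\cdot\nabla v)_\Phi$, $(w_p\cdot\nabla v_a)_\Phi$ and $(w_p\cdot\nabla v)_\Phi$, where the divergence-free identity $\partial_y v = -\nabla_x\cdot u$ converts every $\partial_y v$ into a tangential derivative of $u$. This is the same mechanism used in Proposition \ref{e:Euler vertical vorticity etimate}, and it is responsible for the extra factor of $\varepsilon^2$ on the right-hand side.

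I would proceed as follows. First, write the scalar equation satisfied by $(w_{p,3})_\Phi$ by extracting the third component of \eqref{e:vorticity equation on prandtlr part}, apply $\partial_x^iZ^j$ for $|i|+j\leq 8$, take $L^2$ inner product with $e^{2\Psi_p}\partial_x^iZ^j(w_{p,3})_\Phi$, and sum. The time derivative plus the weight produce the quantities $\tfrac12\tfrac{d}{dt}\|(w_{p,3})_\Phi\|_{H^8_p}^2$, $\lambda\|(w_{p,3})_\Phi\|_{H^{8,\f12}_p(0,y(t))}^2$ and $\lambda\|y(w_{p,3})_\Phi/\varepsilon\|_{H^8_p}^2$ on the left. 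For the convective, transport and forcing terms ($I_1,I_2,I_3,I_5,I_7$ in the notation of Proposition \ref{e:prandtl vorticity etimate}), I would copy the arguments from there verbatim, the only change being that $(w_p)_\Phi$ is replaced by $(w_{p,3})_\Phi$; this already yields bounds of order $E(t)+K(t)+\varepsilon^2$ up to multiplicative factors, which after inserting the a priori bound $E(t)\leq C_1\varepsilon^2$ produces the desired $\varepsilon^2$ prefactor in front of $(1+E(t))(\varepsilon^2+E(t)+K(t))$.

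The essential new work is in the stretching terms $I_4$ and $I_6$, i.e.\ $(w_{a,p}\cdot\nabla v)_\Phi$, $(w_p\cdot\nabla v_a)_\Phi$ and $(w_p\cdot\nabla v)_\Phi$, paired with $(w_{p,3})_\Phi$. In each of them I would split the gradient of $v$ into tangential part $\nabla_x v$ and normal part $\partial_y v = -\nabla_x\cdot u$. For the contributions involving $\nabla_x v$ or $\nabla_x v_a$, Lemmas \ref{e:weighted estimate product}--\ref{e:weighted double linear estimate} together with Lemma \ref{e:uniform boundness for approximate solution} give bounds by $\|(w_{p,3})_\Phi\|_{H^8_p}^2$, $\|U_\Phi\|_{H^{9,\f12}_{tan}(0,y(t))}^2$ and $\|w_\Phi\|_{H^{8,\f12}_{co}(0,y(t))}^2$, all of which carry an $\varepsilon^2$ factor through $E$ and $K$. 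For the normal part, replacing $\partial_y v$ by $-\nabla_x\cdot u$ turns what would have been an $O(1/\varepsilon)$ loss in the weight into a gain of one tangential derivative on $u$, controlled by $K_v(t)$. Exactly as in Proposition \ref{e:Euler vertical vorticity etimate}, the interior/boundary decomposition $(0,y(t))$ vs.\ $(y(t),\infty)$ is used: on $(y(t),\infty)$ one uses the Sobolev bounds in $H^{10}_{tan}$ and $H^9_{co}$ together with the exponential smallness of $e^{\Psi_p}$ for $y\geq y(t)$; on $(0,y(t))$ one uses the analytic norms and absorbs small fractions of $\|((\partial_y+|D_x|)w_p)_\Phi\|_{H^8_p}^2$, $\|y(w_p)_\Phi/\varepsilon\|_{H^8_p(0,y(t))}^2$ and $\|((\partial_y+|D_x|)w)_\Phi\|_{H^8_{co}}^2$ into the left-hand side, producing the $\tfrac{\varepsilon^2}{100}$-type error terms in the statement. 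A product-type bound using Lemma \ref{e:weighted estimate product} together with the elementary interpolation $\|g\|_{L^\infty_y}\lesssim \|g\|_{L^2_y}^{1/2}\|\partial_y g\|_{L^2_y}^{1/2}$ is what produces the $C\varepsilon^{4/3}E(t)^{5/3}$ term, in the same way as in the $I_{61}$ step of Proposition \ref{e:euler vorticity etimate}.

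The dissipative term $-\varepsilon^2\sum\langle \partial_x^iZ^j(\Delta w_{p,3})_\Phi, e^{2\Psi_p}\partial_x^iZ^j(w_{p,3})_\Phi\rangle$ is the main simplification: integration by parts and the weight estimates $|\theta'|+|\theta''|\le C_0\delta$ give, for $\delta$ small enough, a lower bound
\[
\frac{\varepsilon^2}{2}\|(\nabla w_{p,3})_\Phi\|^2_{H^8_p} - C\|(w_{p,3})_\Phi\|^2_{H^8_p} - C\Big\|\frac{y(w_{p,3})_\Phi}{\varepsilon}\Big\|^2_{H^8_p},
\]
with no boundary contribution thanks to $w_{p,3}(t,x,0)=0$; in particular no Dirichlet-Neumann analysis is needed here, in contrast with Proposition \ref{e:prandtl vorticity etimate}. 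Collecting everything, absorbing the small fractions of $\|(\nabla w_{p,3})_\Phi\|_{H^8_p}^2$, $\|(Aw_p)_\Phi\|_{H^8_p}^2$ and $\|y(w_p)_\Phi/\varepsilon\|_{H^8_p(0,y(t))}^2$ that have been produced on the right into the left, and fixing $\lambda$ large and $\delta$ small, yields the stated estimate. The main technical obstacle is the precise bookkeeping in the stretching terms on the strip $(0,y(t))$: one must split $\partial_y v$ via divergence-free, and simultaneously split $v_a$ and $v$ into the parts that vanish at $y=0$ (to apply Hardy's inequality and conormal structure) and the parts that behave like $\varepsilon^2 f(t,x) e^{-y}$, and only then can one recover the global $\varepsilon^2$ prefactor uniformly in $\delta$ and $\lambda$.
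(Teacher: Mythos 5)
Your proposal is essentially the argument the paper intends: the paper states this proposition without proof, indicating only that it follows the estimates for $w_{e,3}$ (Proposition \ref{e:Euler vertical vorticity etimate}) and for $w_p$ (Proposition \ref{e:prandtl vorticity etimate}), and you have correctly identified the two decisive points — the homogeneous Dirichlet condition $w_{p,3}|_{y=0}=0$, which removes the Dirichlet--Neumann boundary analysis from the dissipative term and yields the full $\frac{\varepsilon^2}{2}\|(\nabla w_{p,3})_\Phi\|^2_{H^8_p}$, and the substitution $\partial_yv=-\nabla_x\cdot u$ in the stretching terms, which produces the extra decay. One small correction to your bookkeeping: the global $\varepsilon^2$ prefactor on the right-hand side for the ``diagonal'' terms does not come from inserting the a priori bound $E(t)\le C_1\varepsilon^2$ (which would degrade the inequality to a form unusable in the bootstrap), but rather from the fact that $E_w$ and $K_w$ carry the factor $\varepsilon^{-2}$ in front of the $w_{p,3}$-components, so that $\|(w_{p,3})_\Phi\|^2_{H^8_p}\le \varepsilon^2E(t)$ and $\|(w_{p,3})_\Phi\|^2_{H^{8,\frac12}_p(0,y(t))}\le \varepsilon^2K(t)$ automatically.
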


\begin{Proposition}\label{e:analytical weighted Prandtl vorticity estimate}
There exists $\delta_0>0$ such that for any $\delta\in (0, \delta_0)$, there holds
\begin{align}
&\frac{1}{2}\frac{d}{dt}\|(\varphi w_p)_\Phi\|^2_{H^8_p}+{\lambda}\big\|(\varphi w_p)_\Phi\big\|^2_{H^{8,\frac12}_p(0,y(t))}
+({\lambda}-C)\Big\|\frac{y(\varphi w_p)_\Phi}{\varepsilon}\Big\|^2_{H^8_p}+\frac{\varepsilon^2}{2}\big\|(\nabla(\varphi w_p))_\Phi\big\|^2_{H^8_p}\nonumber\\
&\leq C\varepsilon^2\big(1+E(t)\big)\big(\varepsilon^2+E(t)+
K(t)\big)+C\varepsilon^{\f 43}E(t)^\f 53+\frac{\varepsilon^4}{100}\big\|((\partial_y+|D_x|)w)_\Phi\big\|^2_{H^8_{co}}
+\varepsilon^2\Big\|\frac{y(w_{p})_\Phi}{\varepsilon}\Big\|^2_{H^8_p}.
\nonumber
\end{align}
\end{Proposition}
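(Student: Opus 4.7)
\textbf{Proof proposal for Proposition \ref{e:analytical weighted Prandtl vorticity estimate}.}

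The plan is to mimic the strategy of Proposition \ref{e:analytical weighted Euler vorticity estimate} (the weighted Euler estimate) but in the Prandtl weight $\Psi_p$, while inheriting the boundary-term analysis from Proposition \ref{e:prandtl vorticity etimate}. First I would derive the equation for $\varphi w_p$ by multiplying (\ref{e:decompose vorticity equation-2}) through by $\varphi(y)$, which yields
\begin{align*}
&\partial_t(\varphi w_p)-\varepsilon^2\Delta(\varphi w_p)+\varphi\widetilde{U}_a\cdot\nabla w_p+\varphi\widetilde{U}\cdot\nabla w_{a,p}+\varphi\widetilde{U}\cdot\nabla w_p\\
&\qquad-(\varphi w_{a,p})\cdot\nabla U-(\varphi w_p)\cdot\nabla U_a-(\varphi w_p)\cdot\nabla U\\
&=\varphi\bigl({\rm curl}(R_{p,h},R_{p,v})-M_p\bigr)-\varepsilon^2\varphi'' w_p-2\varepsilon^2\varphi'\partial_y w_p.
\end{align*}
Note that $\varphi(0)=0$ gives the homogeneous Dirichlet condition $(\varphi w_p)|_{y=0}=0$, which is the structural reason for the extra $\varepsilon^2$-gain in the statement, since all boundary terms from integration by parts of the Laplacian (including the delicate Dirichlet--Neumann contribution from Proposition \ref{e:prandtl vorticity etimate}) now disappear entirely or are absorbed into $\|(w_p)_\Phi\|_{H^8_p}$ terms.

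Next I would apply $\partial_x^i Z^j$ for $|i|+j\le 8$, multiply by $e^{2\Psi_p}\partial_x^i Z^j(\varphi w_p)_\Phi$, and sum. The tangential analytic multiplier $e^\Phi$ as before contributes the dissipation $\lambda\|(\varphi w_p)_\Phi\|^2_{H^{8,\frac12}_p(0,y(t))}$, and the Prandtl weight contributes $(\lambda-C)\|y(\varphi w_p)_\Phi/\varepsilon\|^2_{H^8_p}$ (via the identity $\partial_t\Psi_p=-\lambda y^2/\varepsilon^2$). The convection, stretching, approximation and remainder terms on the left and right are estimated using Lemmas \ref{e:weighted estimate product} and \ref{e:weighted double linear estimate} exactly as in Proposition \ref{e:prandtl vorticity etimate}, with the key observation $\varphi\partial_y = Z$ letting me trade a normal derivative for a conormal one wherever $\pa_y w_{a,p}$ or $\pa_y w_p$ appears, exactly as was done for $\varphi w_e$. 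The only genuinely new pieces are the two commutator terms $-\varepsilon^2\varphi'' w_p$ and $-2\varepsilon^2\varphi'\partial_y w_p$: the first is trivially bounded by $C\|(\varphi w_p)_\Phi\|_{H^8_p}^2+C\varepsilon^4\|(w_p)_\Phi\|_{H^8_p}^2$, while for the second an integration by parts in $y$ (using again $(\varphi w_p)|_{y=0}=0$) gives
\[
\bigl|\langle \varepsilon^2\varphi'(\partial_y w_p)_\Phi,(\varphi w_p)_\Phi\rangle_{H^8_p}\bigr|\le \tfrac{\varepsilon^2}{100}\|\pa_y(\varphi w_p)_\Phi\|_{H^8_p}^2+C\|(\varphi w_p)_\Phi\|_{H^8_p}^2+\varepsilon^2\bigl\|\tfrac{y(w_p)_\Phi}{\varepsilon}\bigr\|_{H^8_p}^2,
\]
mirroring exactly the analogous step in Proposition \ref{e:analytical weighted Euler vorticity estimate}.

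The dissipation estimate requires the most care: after integration by parts in the Laplacian, the Dirichlet--Neumann boundary term that was the main obstacle in Proposition \ref{e:prandtl vorticity etimate} now vanishes because of $(\varphi w_p)|_{y=0}=0$, and I recover $\tfrac{\varepsilon^2}{4}\|(\partial_y+|D_x|)(\varphi w_p)_\Phi\|^2_{H^8_p}$ after absorbing tangential errors via Lemma \ref{e:one order elliptic estimate} applied to $e^{\Psi_p}\partial_x^i Z^j(\varphi w_p)_\Phi$ and fixing $\delta$ small. The expected main obstacle is the trilinear term $\varphi\widetilde{U}\cdot\nabla w_{a,p}$ paired against $(\varphi w_p)_\Phi$ in the analytic energy: as in Step 2 of Proposition \ref{e:prandtl vorticity etimate}, the factor $(v+\varepsilon^2 fe^{-y})\partial_y w_{a,p}$ produces the dangerous $\varepsilon^{-1}$ through $z\partial_z u_p^a$, and I will control it by rewriting $(v+\varepsilon^2 fe^{-y})/(\varepsilon y)\cdot z(\varepsilon\partial_{xz}v_p^a+\partial_{zz}u_p^a)$, using Hardy's inequality together with $\pa_y v=-\nabla_x\cdot u$ and the Prandtl decay supplied by Lemma \ref{e:uniform boundness for approximate solution}; the extra factor $\varphi\sim \delta y$ then absorbs precisely one power of $1/y$, explaining why the $\varepsilon^{-2}$ prefactors in Proposition \ref{e:prandtl vorticity etimate} are replaced by harmless $O(1)$ constants multiplied by $\varepsilon^2$ in the present statement. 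Collecting all estimates and fixing $\delta$ small yields the claimed inequality.
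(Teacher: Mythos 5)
Your proposal is correct and takes essentially the same route the paper implicitly intends: the authors state Proposition \ref{e:analytical weighted Prandtl vorticity estimate} without proof precisely because it is obtained by running the argument of Proposition \ref{e:analytical weighted Euler vorticity estimate} in the Prandtl weight while importing the boundary-term treatment from Proposition \ref{e:prandtl vorticity etimate}, which is what you do. You correctly identify the two structural gains: $(\varphi w_p)|_{y=0}=0$ kills the Dirichlet--Neumann boundary contribution after integration by parts in the Laplacian, and the extra $\varphi\sim\delta y=\delta\varepsilon z$ cancels the $\varepsilon^{-1}$ singularity coming from $\partial_y w_{a,p}\sim\varepsilon^{-2}\partial_z^2 u_{a,p}$ after Hardy, which together with $\|(\varphi w_p)_\Phi\|^2_{H^8_p}\le\varepsilon^2E(t)$ produces the $\varepsilon^2$-prefactored bound. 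Your treatment of the two commutator terms $-\varepsilon^2\varphi'' w_p$ and $-2\varepsilon^2\varphi'\partial_y w_p$ also mirrors the Euler case. One small omission: in the integration-by-parts estimate for $\varepsilon^2\varphi'(\partial_y w_p)_\Phi$ you drop the contribution from the analytic shift commutator $(\partial_y f)_\Phi=\partial_yf_\Phi+\theta'\langle D_x\rangle f_\Phi$, which in the Euler analogue produces the term $C\varepsilon^2\|(w_e)_\Phi\|^2_{H^{8,1/2}_e}$; the Prandtl analogue $C\varepsilon^2\|(w_p)_\Phi\|^2_{H^{8,1/2}_p}$ should appear in your bound as well, but it is harmless since it is absorbed into the $C\varepsilon^2\big(1+E(t)\big)\big(\varepsilon^2+E(t)+K(t)\big)$ term of the final statement.
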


\section{Conormal Sobolev estimate of the vorticity: Euler part}

\begin{Proposition}\label{e:Sobolev euler vorticity etimate}
There exists $\delta_0> 0$ such that for any $\delta\in (0,\delta_0)$,
there holds
\begin{align}
&\frac{1}{2}\frac{d}{dt}\big\|w_e\big\|^2_{H^9_{co}}+\frac{\varepsilon^2}{2}\big\|(\partial_y+|D_x|)w_e\big\|^2_{H^9_{co}}\nonumber\\
&\leq
C\big(E(t)+\varepsilon^2\big)+\frac{\varepsilon^2}{100}\|(\partial_y+|D_x|)w\|^2_{H^9_{co}}+C\ve^{-2}E(t)^2.\nonumber
\end{align}
\end{Proposition}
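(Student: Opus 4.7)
The plan is to mirror the argument of Proposition \ref{e:euler vorticity etimate} with the analytic weight $e^{\Psi_e}$ and the Fourier multiplier $e^{\Phi}$ removed. I apply $\pa_x^iZ^j$ with $|i|+j\le 9$ to the vorticity equation \eqref{e:decompose vorticity equation-1} for $w_e$, pair the result with $\pa_x^iZ^jw_e$ in $L^2(\R^3_+)$, and sum over all such multi-indices. The time derivative yields $\f12\f{d}{dt}\|w_e\|^2_{H^9_{co}}$. For the dissipation $-\ve^2\Delta w_e$, I write $-\Delta=-\pa_y(\pa_y+|D_x|)+\pa_y|D_x|-\Delta_x$ and integrate by parts as in \eqref{e:diffusion estimate}. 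The boundary contribution at $y=0$ coming from the Dirichlet--Neumann piece of the boundary condition for $w_{e,h}$, combined with $w_{e,3}|_{y=0}=0$, is non-negative by the Plancherel computation \eqref{e:part diffusion estimate positive}; Lemma \ref{e:one order elliptic estimate} then gives $\ve\||D_x|w_e\|_{H^9_{co}}\le C\ve\|(\pa_y+|D_x|)w_e\|_{H^9_{co}}+C\|w_e\|_{H^9_{co}}$, and the commutators $[Z^j,\pa_y]$ are of order zero. Altogether the dissipative contribution is bounded below by $\f{\ve^2}{2}\|(\pa_y+|D_x|)w_e\|^2_{H^9_{co}}-C\|w_e\|^2_{H^9_{co}}$.

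For the background transport $\widetilde U_a\cdot\na w_e$, Lemma \ref{e:product estimate} together with Lemma \ref{e:uniform boundness for approximate solution} reduces matters to $u_a\cdot\na_xw_e$, trivially bounded by $C\|w_e\|^2_{H^9_{co}}$, and $\tilde v_a\pa_yw_e$: since $\tilde v_a|_{y=0}=0$, Hardy's inequality makes $\tilde v_a/\varphi$ bounded together with its derivatives, so I rewrite $\tilde v_a\pa_y=(\tilde v_a/\varphi)Z$ and conclude by $C\|w_e\|^2_{H^9_{co}}$. The terms $\widetilde U\cdot\na w_{a,e}$ and $w_{a,e}\cdot\na U$ are essentially harmless because $w_{a,e}$ is supported in $\{y\ge c_0>0\}$ throughout $[0,T_a]$, so only $L^\infty$ control of its derivatives is needed; combined with \eqref{eq:uw} this yields $C(\|U\|^2_{H^{10}_{tan}}+\|\varphi w\|^2_{H^9_{co}}+\ve^4)\le C(E(t)+\ve^2)$. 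The stretching by the background gradient $w_e\cdot\na U_a$ contributes $C\|w_e\|^2_{H^9_{co}}$ directly by Lemma \ref{e:uniform boundness for approximate solution}, and the source $\mathrm{curl}(R_{e,h},R_{e,v})-M_e$ contributes $C(\|w_e\|^2_{H^9_{co}}+\ve^4)$ via Lemma \ref{e:uniform boundness for error}.

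The genuinely trilinear terms are $\widetilde U\cdot\na w_e$ and $w_e\cdot\na U$. After integrating $\widetilde U\cdot\na(\pa_x^iZ^jw_e)\cdot\pa_x^iZ^jw_e$ by parts --- and using $\na\cdot\widetilde U=\ve^2fe^{-y}$ together with $\tilde v|_{y=0}=-\ve^2f$ to absorb boundary pieces at order $O(\ve^2)$ --- only the commutators $[\pa_x^iZ^j,\widetilde U\cdot\na]w_e$ and $[\pa_x^iZ^j,w_e\cdot\na]U$ remain. For these I expand by Leibniz, systematically use the identities $\pa_yu=w_h^{\perp}+\na_xv$ and $\pa_yv=-\na_x\cdot u$ to convert every bare $\pa_yU$ into vorticity plus tangential derivatives, and split by Sobolev embedding into an $L^\infty$ factor times an $L^2$ factor. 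A Young's inequality tuned to absorb the top-derivative $L^2$ factor by $\tfrac{\ve^2}{100}\|(\pa_y+|D_x|)w\|^2_{H^9_{co}}$ produces exactly that term on the right-hand side, together with $C\|w_e\|^2_{H^9_{co}}\big(\|U\|^2_{H^{10}_{tan}}+\|w\|^2_{H^9_{co}}+\ve^4\big)$; since $E(t)$ controls $\ve^{-2}\|U\|^2_{H^{10}_{tan}}$, $\|w\|^2_{H^9_{co}}$, and $\|w_e\|^2_{H^9_{co}}$, this last product is $\le C\ve^{-2}E(t)^2+C(E(t)+\ve^2)$.

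The principal obstacle is the trilinear stretching $w_e\cdot\na U$, in particular its $w_{e,3}\pa_yU$ component: in this unweighted Sobolev setting we have neither the exponential growth of $\Psi_e$ used in Proposition \ref{e:euler vorticity etimate} nor the tangential analyticity budget $\lambda K(t)$ used in Proposition \ref{p:analytical velocity estimate} to recover a lost derivative, so every $\pa_yU$ falling on $w_e$ must be paid for against the Laplacian dissipation. This is why the recasting $\pa_yU=(w_h^{\perp}+\na_xv,-\na_x\cdot u)$ is essential: it replaces $\pa_yU$ by quantities already estimated in $H^9_{co}$ up to the factor $(\pa_y+|D_x|)w$, and only a carefully tuned Young inequality produces a cubic bound of the exact form $C\ve^{-2}E(t)^2$ instead of the forbidden $\ve^{-2}E(t)$, completing the estimate.
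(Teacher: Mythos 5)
Your overall strategy coincides with the paper's: the $H^9_{co}$ energy identity, the non-negativity of the Dirichlet--Neumann boundary contribution in the dissipation, Lemma \ref{e:one order elliptic estimate} to recover $|D_x|w_e$, the Hardy-type rewriting $\tilde v_a\pa_y=(\tilde v_a/\varphi)Z$ for the background transport, and the systematic substitution $\pa_yu=w_h^{\perp}+\na_xv$, $\pa_yv=-\na_x\cdot u$ in the trilinear terms all match Steps~1--5 of the paper's proof, and your Young-inequality bookkeeping yields a right-hand side that dominates the paper's $C\ve^{-2/3}E(t)^{5/3}+CE(t)^{3/2}$ and is therefore acceptable.

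There is, however, one genuine gap. Your claim that the background stretching $w_e\cdot\na U_a$ ``contributes $C\|w_e\|^2_{H^9_{co}}$ directly by Lemma \ref{e:uniform boundness for approximate solution}'' fails as stated: $U_a$ contains the Prandtl profile $u_{a,p}(t,x,y/\ve)$, so $\pa_yu_{a,p}=\ve^{-1}(\pa_zu_{a,p})(t,x,y/\ve)$ is of size $\ve^{-1}$ in $L^\infty$ near the boundary, and Lemma \ref{e:uniform boundness for approximate solution} controls only the $z$-derivatives of the profile, not $\pa_y$ of its rescaling. Consequently $w_{e,3}\pa_yu_{a,p}$ cannot be estimated by $C\|w_e\|^2_{H^9_{co}}$. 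The paper's Step~4 isolates precisely this term: in $w_e\cdot\na U_a=w_{e,h}\cdot\na_xU_a+w_{e,3}\pa_yU_a$ only the \emph{third} component of the vorticity meets the singular normal derivative, and the resulting bound is $C\big(\|w_e\|^2_{H^9_{co}}+\|\ve^{-1}w_{e,3}\|^2_{H^9_{co}}\big)$, which is admissible only because $E_w(t)$ was built to contain $\ve^{-2}\|w_{e,3}\|^2_{H^9_{co}}$ --- the ``improved behaviour of the third component'' highlighted in the introduction and propagated by Proposition \ref{e:Sobolev Euler vertical vorticity etimate}. Without invoking this weighted control your estimate of the $w_e\cdot\na U_a$ term does not close. (A second, harmless slip: with $\widetilde U=(u,v+\ve^2fe^{-y})$ one has $\tilde v|_{y=0}=0$ and $\na\cdot\widetilde U=-\ve^2fe^{-y}$, so the boundary pieces you propose to ``absorb at order $O(\ve^2)$'' in the integration by parts in fact vanish.)
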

\begin{proof}
First, acting $\partial_x^iZ^j$ on both sides of  (\ref{e:vorticity equation on Euler part}), and then taking $L^2$ inner product with $\partial_x^iZ^j w_e$, summing over all $|i|+j\leq 9$, we arrive at
\begin{align}
&\frac{1}{2}\frac{d}{dt}\big\|w_e\big\|^2_{H^9_{co}}-\varepsilon^2\sum\limits_{|i|+j\leq 9}\big<\partial_x^iZ^j(\Delta  w_e),\partial_x^iZ^jw_{e}\big>\nonumber\\
&\leq\Big|\sum\limits_{|i|+j\leq 9}\big<\partial_x^iZ^j(\widetilde{U}_{a}\cdot\nabla w_e),\partial_x^iZ^jw_e\big>\Big|
+\Big|\sum\limits_{|i|+j\leq 9}\big<\partial_x^iZ^j(\widetilde{U}\cdot\nabla w_{a,e}),\partial_x^iZ^jw_e\big>\Big|\nonumber\\
&\quad+\Big|\sum\limits_{|i|+j\leq 9}\big<\partial_x^iZ^j(\widetilde{U}\cdot\nabla w_{e}),\partial_x^iZ^jw_e\big>\Big|
+\Big|\sum\limits_{|i|+j\leq 9}\big<\partial_x^iZ^j(w_{a,e}\cdot\nabla U),\partial_x^iZ^jw_e\big>\Big|\nonumber\\
&\quad+\Big|\sum\limits_{|i|+j\leq 9}\big<\partial_x^iZ^j(w_{e}\cdot\nabla U_a),\partial_x^iZ^jw_e\big>\Big|
+\Big|\sum\limits_{|i|+j\leq 9}\big<\partial_x^iZ^j(w_{e}\cdot\nabla U
),\partial_x^iZ^jw_e\big>\Big|\nonumber\\
&\quad+\Big|\sum\limits_{|i|+j\leq 9}\big<\partial_x^iZ^j({\rm curl}(R_{e,h},R_{e,v})-M_e),\partial_x^iZ^jw_e\big>\Big|
\tre\sum_{i=1}^7I_i.\nonumber
\end{align}

\no{\bf Step 1. Estimate of $I_1$.}

We bound $I_1$ as
\begin{align}
I_1\leq C
&\sum\limits_{|i|+j\leq 9}\sum\limits_{(m,n)<(i,j)}\Big|\int_{\R^3_+}\big(\partial_x^{i-m}Z^{j-n}u_{a}\partial_x\partial_x^{m}Z^{n}w_e\nonumber\\
&+\partial_x^{i-m}Z^{j-n}(v_{a}-\varepsilon^2f(t,x)e^{-y})\partial_x^{m}Z^{n}\partial_yw_e\big)\partial_x^iZ^jw_edxdy\Big|\nonumber\\
&+\sum\limits_{|i|+j\leq 9}\Big|\int_{\R^3_+}\big(u_{a}\partial_x\partial_x^iZ^jw_e+(v_{a}-\varepsilon^2f(t,x)e^{-y})\partial_x^iZ^j\partial_yw_e\big)\partial_x^iZ^jw_edxdy\Big|.\nonumber
\end{align}
By Lemma \ref{e:uniform boundness for approximate solution}, the first term can be controlled by
$C\|w_e\|^2_{H^9_{co}}$,
where we used $(v_{a}-\varepsilon^2f(t,x)e^{-y})|_{y=0}=0$ and the fact
\begin{align}\label{e:property of va}
\Big\|\frac{\partial_x^{i-m}Z^{j-n}(v_{a}-\varepsilon^2f(t,x)e^{-y})}{\varphi}\Big\|_{L^\infty}\leq C.
\end{align}
Notice that $[\partial_y, \partial_x^{i}Z^{j}]=j\varphi'\partial_x^{i}Z^{j-1}\partial_y$. By Lemma \ref{e:uniform boundness for approximate solution} and integrating by parts, we deduce that the second term can be bounded by
\begin{align}
\sum\limits_{|i|+j\leq 9, j\geq 1}\Big|\int_{\R^3_+}(v_a-\varepsilon^2f(t,x)e^{-y})\varphi'\partial_x^{i}Z^{j-1}\partial_yw_e\partial_x^iZ^jw_edxdy\Big|+ C\|w_e\|^2_{H^9_{co}}.\nonumber
\end{align}
Thus, by (\ref{e:property of va}), we deduce that the second term is bounded by
$
C\|w_e\|^2_{H^9_{co}}.
$
Thus, we arrive at
\begin{align}
I_1\leq C\|w_e\|^2_{H^9_{co}}.\nonumber
\end{align}

\no{\bf Step 2. Estimate of $I_2$ and $I_7$.}

By Lemma \ref{e:uniform boundness for approximate solution}, $Zu=\varphi w_h^{\bot}+\varphi\na_xv$ and $\pa_yv=-\nabla_x\cdot u$, it is easy to obtain
\begin{align}
I_2
\leq C\Big(\|w_{e}\|^2_{H^9_{co}}+\|\varphi w\|^2_{H^9_{co}}+\big\|U\big\|^2_{H^9_{tan}}
+\varepsilon^4\Big).\nonumber
\end{align}
We infer from Lemma \ref{e:uniform boundness for error} that
\begin{align}
 I_7\leq C\big(\|w_{e}\|^2_{H^8_{co}}+\varepsilon^4\big).\nonumber
\end{align}

\no{\bf Step 3. Estimate of $I_3$ and $I_6$.}

First, $I_3$ can be controlled by
\begin{align}
\sum\limits_{|i|+j\leq 9}\sum\limits_{(m,n)<(i,j)}\Big|\big<\partial_x^{i-m}Z^{j-n}\widetilde{U}\cdot\partial_x^mZ^n\nabla w_e,\partial_x^iZ^jw_e\big>\Big|
+\Big|\sum\limits_{|i|+j\leq 9}\big<\widetilde{U}\cdot\partial_x^iZ^j\nabla w_e,\partial_x^iZ^jw_e\big>\Big|.\nonumber
\end{align}
Integrating by parts, the second term can be bounded by
\begin{align}
 C\varepsilon^2\|w_e\|^2_{H^9_{co}}+\sum\limits_{|i|+j\leq 9,j\geq 1}\Big|\big<(v+\varepsilon^2f(t,x)e^{-y})\varphi'\partial_x^iZ^{j-1}\partial_yw_e,\partial_x^iZ^jw_e\big>\Big|.\nonumber
\end{align}
Using $(v+\varepsilon^2f(t,x)e^{-y})|_{y=0}=0,$  we infer that
\begin{align}
&\sum\limits_{|i|+j\leq 9,j\geq 1}\Big|\big<(v+\varepsilon^2f(t,x)e^{-y})\varphi'\partial_x^iZ^{j-1}\partial_yw_e,\partial_x^iZ^jw_e\big>\Big|\nonumber\\
&=\sum\limits_{|i|+j\leq 9,j\geq 1}\Big|\Big<\frac{(v+\varepsilon^2f(t,x)e^{-y})}{\varphi}\varphi'\partial_x^iZ^{j}w_e,\partial_x^iZ^jw_e\Big>\Big|\nonumber\\
&\leq
C\big(\|U\|_{H^{10}_{tan}}+\varepsilon^2\big)\|w_e\|^2_{H^9_{co}}.\nonumber
\end{align}
Therefore, the second term can be bounded by
$$C\big(\varepsilon^2+\|U\|_{H^{10}_{tan}}\big)\|w_e\|^2_{H^9_{co}}.$$
Using $Zu=\varphi w_h^{\bot}+\varphi\na_xv$ and $\pa_yv=-\nabla_x\cdot u$, the first term is bounded by
\begin{align}
&\frac{\varepsilon^2}{100}\|(\partial_y+|D_x|) w_e\|^2_{H^9_{co}}+
C\varepsilon^{-2}\Big(\varepsilon^4+\|U\big\|^2_{H^{10}_{tan}}+\|w\big\|^2_{H^9_{co}}\Big)\|w_{e}\big\|^2_{H^9_e}\nonumber\\
&\quad+C\big(\varepsilon^2+\|U\|_{H^{10}_{tan}}+\|w\|_{H^9_{co}}\big)\|w_e\|^2_{H^9_{co}}.\nonumber
\end{align}
Thus, we obtain
\begin{align}
I_3\leq& \frac{\varepsilon^2}{100}\|(\partial_y+|D_x|) w_e\|^2_{H^9_{co}}+
C\varepsilon^{-2}\Big(\varepsilon^4+\|U\big\|^2_{H^{10}_{tan}}+\|w\big\|^2_{H^9_{co}}\Big)\|w_{e}\big\|^2_{H^9_e}\nonumber\\
&+C\big(\varepsilon^2+\|U\|_{H^{10}_{tan}}+\|w\|_{H^9_{co}}\big)\|w_e\|^2_{H^9_{co}}.\nonumber
\end{align}

Now, $I_6$ can be controlled by
\begin{align}
&\Big|\sum\limits_{|i|+j\leq 9,|m|+n\leq 5}\int_{\R^3_+}\partial_x^{i-m}Z^{j-n}w_{e}\cdot\partial_x^{m}Z^{n}\nabla U ,\partial_x^iZ^jw_{e}dxdy\Big|\nonumber\\
&+\Big|\sum\limits_{|i|+j\leq 9,|m|+n\geq 6}\int_{\R^3_+}\partial_x^{i-m}Z^{j-n}w_{e}\cdot\partial_x^{m}Z^{n}\nabla U ,\partial_x^iZ^jw_{e}dxdy\Big|\tre I_{61}+I_{62}.\nonumber
\end{align}
By Sobolev inequality and $\partial_yu= w_h^{\bot}+\na_xv$, we deduce that
\begin{align}
&I_{61}
\leq C\|w_{e}\|^2_{H^9_{co}}\Big(\|w\|^{\frac12}_{H^7_{co}}\|\partial_yw\|^{\frac12}_{H^7_{co}}+\|w\|_{H^9_{co}}+\|U\|_{H^{10}_{tan}}\Big),\nonumber\\
&I_{62}\leq C\|w_{e}\|_{H^9_{co}}\big(\|w\|_{H^9_{co}}+\|U\|_{H^{10}_{tan}}\big)\|w_e\|_{H^7_{co}}^\f12\|\nabla w_e\|_{H^7_{co}}^{\frac12}.\nonumber
\end{align}
This shows that
\begin{align}
I_6\leq& C\varepsilon^{-\f23}E(t)^\f 53+CE(t)^\f32+\frac{\varepsilon^2}{100}\|(\partial_y+|D_x|)w\|^2_{H^9_{co}}.\nonumber
\end{align}

\no{\bf Step 4. Estimate of $I_4$ and $I_5$.}

By Lemma \ref{e:uniform boundness for approximate solution} and $\partial_yu=w_h^{\bot}+\partial_xv$, we deduce that
\begin{align}
I_4\leq \Big|\sum\limits_{|i|+j\leq 9}\big<\partial_x^iZ^j(w_{a,e,h}\cdot\na_xU+w_{a,e,3}\partial_yU),\partial_x^iZ^jw_e\big>\Big|
\leq C\big(\|U\big\|^2_{H^{10}_{tan}}+\|w\big\|^2_{H^9_{co}}\Big).\nonumber
\end{align}
And $I_5$ can be controlled by
\begin{align}
&\Big|\sum\limits_{|i|+j\leq 9}\big<\partial_x^iZ^j(w_{e,3}\partial_yu_{a,p}),\partial_x^iZ^jw_{e,h}\big>\Big|\nonumber\\
&+\sum\limits_{|i|+j\leq 9}\Big|\big<\partial_x^iZ^j(w_{e,3}\partial_yv_{a,p}),\partial_x^iZ^jw_{e,3}\big>+\big<\partial_x^iZ^j(w_{e,3}\partial_yU_{a,e}+w_{e,h}\partial_xU_a),\partial_x^iZ^jw_e\big>\Big|.\nonumber
\end{align}
By Lemma \ref{e:uniform boundness for approximate solution}, the second term is bounded by $ C\|w_{e}\|^2_{H^9_{co}}$, and
the first term can be controlled by
$
C\Big(\|w_{e}\|^2_{H^9_{co}}+\Big\|\frac{w_{e,3}}{\varepsilon}\Big\|^2_{H^9_{co}}\Big).$ Hence,
\begin{align}
I_5\leq C\Big(\|w\big\|^2_{H^9_{co}}+\Big\|\frac{w_{e,3}}{\varepsilon}\Big\|^2_{H^9_{co}}\Big).\nonumber
\end{align}

\no{\bf Step 5. Estimate of dissipative term.}

We get, by integrating by parts, that
 \begin{align}
-\varepsilon^2\sum\limits_{|i|+j\leq 9}\big<\partial_x^iZ^j(\Delta  w_e),\partial_x^iZ^jw_{e}\big>
=&\varepsilon^2\|Aw_e\|^2_{H^9_{co}}-\varepsilon^2\sum\limits_{|i|+j\leq 9,j\geq 1}\big<[\partial_x^iZ^j,\partial_y](A  w_e),\partial_x^iZ^jw_{e}\big>\nonumber\\
&-\varepsilon^2\sum\limits_{|i|\leq 9}\int_{\R^2}\partial_x^i\na_x(\Lambda_{ND}(\gamma\nabla_x\cdot w_{e,h})) \cdot\partial_x^iw_{e,h}(t,x,0)dx\nonumber\\
&+\varepsilon^2\sum\limits_{|i|+j\leq 9,j\geq 1}\big<\partial_x^iZ^j  (Aw_e),[\partial_y,\partial_x^iZ^j]w_{e}\big>\nonumber\\
\geq&\frac{\varepsilon^2}{2}\big\|Aw_e\big\|^2_{H^9_{co}}
-C_0\delta^{2}\varepsilon^{2}\big\| w_{e}\big\|^2_{H^9_{co}},\nonumber
\end{align}
where we used
\begin{align}
-\varepsilon^2\sum\limits_{|i|\leq 9}\int_{R^2}\partial_x^i\na_x(\Lambda_{ND}(\gamma\nabla_x\cdot w_{e,h}))\cdot \partial_x^iw_{e,h}(t,x,0)dx\ge 0.\nonumber
\end{align}

Combining the estimates in Step 1-Step 5,  we complete the proof of the proposition.
\end{proof}
\smallskip

Similarly, we can prove the following improved decay estimate
in $\varepsilon$ for $w_{e,3}$ and $\varphi w_e$. Here we omit the details.

\begin{Proposition}\label{e:Sobolev Euler vertical vorticity etimate}
There exists $\delta_0> 0$ such that for any $\delta\in (0,\delta_0)$,
there holds
\begin{align}
\frac{1}{2}\frac{d}{dt}\big\|w_{e,3}\big\|^2_{H^9_{co}}
+\frac{\varepsilon^2}{2}\big\|\nabla w_{e,3}\big\|^2_{H^9_{co}}
\leq&C\varepsilon^2\big(E(t)+\varepsilon^2\big)\big(1+E(t)\big)\nonumber\\
&+\frac{\varepsilon^4}{100}\|(\partial_y+|D_x|)w\|^2_{H^9_{co}}+C\varepsilon^{\frac43}E(t)^{\frac53}.\nonumber
\end{align}
\end{Proposition}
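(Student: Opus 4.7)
\medskip

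\textbf{Proof proposal.} The plan is to parallel the scheme of Proposition 11.1, working with the scalar equation for $w_{e,3}$ extracted from \eqref{e:decompose vorticity equation-1}, namely
\begin{align*}
&\partial_tw_{e,3}-\varepsilon^2\Delta w_{e,3}+\widetilde{U}_{a}\cdot\nabla w_{e,3}+\widetilde{U}\cdot\nabla w_{a,e,3}+\widetilde{U}\cdot\nabla w_{e,3}\\
&\quad-w_{a,e}\cdot\nabla v-w_{e}\cdot\nabla v_a-w_{e}\cdot\nabla v=\mathrm{curl}(R_{e,h},R_{e,v})_3-M_{e,3},
\end{align*}
with the Dirichlet boundary condition $w_{e,3}|_{y=0}=0$. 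I would apply $\partial_x^i Z^j$ to this equation, pair with $\partial_x^i Z^j w_{e,3}$ in $L^2(\R^3_+)$, and sum over $|i|+j\le 9$, obtaining seven integrals $I_1,\dots,I_7$ of the same shape as in the proof of Proposition 11.1. For $I_1$ (transport by $\widetilde{U}_a$), $I_2$ (background stretching by $w_{a,e,3}$), and $I_7$ (errors), the bounds are identical line by line to Proposition 11.1 after replacing $w_e$ by $w_{e,3}$, since these terms depend only on the scalar structure of the conormal energy and on Lemma \ref{e:uniform boundness for approximate solution}, Lemma \ref{e:uniform boundness for error}.

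The real improvement enters in the stretching terms $-w_{a,e}\cdot\nabla v-w_{e}\cdot\nabla v_a-w_{e}\cdot\nabla v$ and in the transport term $\widetilde{U}\cdot\nabla w_{e,3}$. The key observation, exactly as in Proposition 9.2, is that the dangerous normal derivative of $v$ can be rewritten using incompressibility: $\partial_y v=-\nabla_x\cdot u$ and $\partial_y v_a=-\nabla_x\cdot u_a$. This converts a loss of $\partial_y$ into a tangential derivative, so that after distributing $\partial_x^i Z^j$ by Leibniz, every factor is either a tangential derivative of $U$ (controlled by $\|U\|_{H^{10}_{tan}}$) or an interior factor $\varphi w$ or $w_{e,3}$ itself. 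Combined with the Sobolev embeddings in the spirit of $I_6$ in Proposition 11.1, and the pointwise bound $\|\partial_y v_a\|_{L^\infty}\le C$ from Lemma \ref{e:uniform boundness for approximate solution}, this yields a clean $\varepsilon^2\big(E(t)+\varepsilon^2\big)\big(1+E(t)\big)$ contribution plus an interpolation term $C\varepsilon^{4/3}E(t)^{5/3}$ (arising exactly as for $I_6$ when one of the factors is placed in $L^\infty$ via $\|w\|^{1/2}_{H^7_{co}}\|\partial_y w\|^{1/2}_{H^7_{co}}$ and Young's inequality is applied to absorb part of $\|(\partial_y+|D_x|)w\|^2_{H^9_{co}}$).

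For the dissipative term, the Dirichlet condition $w_{e,3}|_{y=0}=0$ kills the trace $\partial_x^i Z^j w_{e,3}|_{y=0}$ for $j=0$, and for $j\ge 1$ the factor $\varphi(y)$ in $Z=\varphi\partial_y$ vanishes at $y=0$. Hence straightforward integration by parts recovers \emph{the full gradient} $\|\nabla w_{e,3}\|^2_{H^9_{co}}$ rather than only the oblique combination $\|(\partial_y+|D_x|)w_{e,3}\|^2_{H^9_{co}}$; the commutator $[\partial_x^i Z^j,\partial_y]$ and the small factor $|\varphi'|\le C_0\delta$ yield only lower-order terms that can be absorbed into $\tfrac{\varepsilon^2}{4}\|\nabla w_{e,3}\|^2_{H^9_{co}}$.

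The main obstacle I anticipate is bookkeeping in the split $I_3=\int_0^{y(t)}+\int_{y(t)}^\infty$: one must, as in Step 3 of Proposition 11.1, convert factors of $v+\varepsilon^2 f e^{-y}$ into $\nabla_x\cdot u$ via $\partial_y v=-\nabla_x\cdot u$ \emph{before} inserting the pointwise bound from Sobolev embedding, so that all $L^\infty$ control is on tangential derivatives of $U$ and no loss of a normal derivative occurs. Provided this rearrangement is executed carefully, summing the seven pieces and fixing $\delta$ small gives
\[
\tfrac12\tfrac{d}{dt}\|w_{e,3}\|^2_{H^9_{co}}+\tfrac{\varepsilon^2}{2}\|\nabla w_{e,3}\|^2_{H^9_{co}}\le C\varepsilon^2(E+\varepsilon^2)(1+E)+\tfrac{\varepsilon^4}{100}\|(\partial_y+|D_x|)w\|^2_{H^9_{co}}+C\varepsilon^{4/3}E^{5/3},
\]
as claimed.
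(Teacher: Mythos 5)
Your proposal is correct and matches the paper's (omitted) argument, which defers to the patterns of Proposition \ref{e:Sobolev euler vorticity etimate} for the conormal bookkeeping and Proposition \ref{e:Euler vertical vorticity etimate} for the mechanism behind the improved $\varepsilon$-decay of the third component. You have correctly identified all three key points: the incompressibility rewrites $\partial_yv=-\nabla_x\cdot u$, $\partial_yv_a=-\nabla_x\cdot u_a$ (together with the $\varphi$-weight/Hardy trick for the $w_{e,h}\cdot\nabla_x v$-type terms, so every factor lands on $\varphi w$, $w_{e,3}$, or a tangential derivative of $U$); the smallness $\|w_{e,3}\|^2_{H^9_{co}}\lesssim\varepsilon^2E_w(t)$ as opposed to $\|w_{e}\|^2_{H^9_{co}}\lesssim E_w(t)$, which is what supplies the overall gain of $\varepsilon^2$; and the Dirichlet condition $w_{e,3}|_{y=0}=0$ which, since $\varphi(0)=0$ kills all conormal traces, yields the full gradient dissipation $\varepsilon^2\|\nabla w_{e,3}\|^2_{H^9_{co}}$ rather than the oblique combination $\varepsilon^2\|(\partial_y+|D_x|)w_e\|^2_{H^9_{co}}$.
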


\begin{Proposition}\label{e:Sobolev estimate for weihgted we}
There exists $\delta_0> 0$ such that for any $\delta\in (0,\delta_0)$,
there holds
\begin{align}
\frac{1}{2}\frac{d}{dt}\|\varphi w_e\|^2_{H^9_{co}}+\frac{\varepsilon^2}{2}\big\|\nabla(\varphi w_e)\big\|^2_{H^9_{co}}
\leq&C\varepsilon^2\big(E(t)+\varepsilon^2\big)\big(1+E(t)\big)\nonumber\\
&+\frac{\varepsilon^4}{100}\|(\partial_y+|D_x|)w\|^2_{H^9_{co}}+C\varepsilon^{\frac43}E(t)^{\frac53}.\nonumber
\end{align}
\end{Proposition}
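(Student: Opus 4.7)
The plan is to mirror the Sobolev estimate for $w_e$ carried out in Proposition \ref{e:Sobolev euler vorticity etimate}, but with the test variable replaced by $\varphi w_e$ and the extra commutator terms $-\varepsilon^2\varphi''w_e-2\varepsilon^2\varphi'\partial_y w_e$ on the right-hand side of the $(\varphi w_e)$-equation. Specifically, I would act $\partial_x^iZ^j$ (for $|i|+j\le 9$) on the equation for $\varphi w_e$ written just above Proposition \ref{e:analytical weighted Euler vorticity estimate}, take the $L^2(\mathbb R^3_+)$ inner product with $\partial_x^iZ^j(\varphi w_e)$, and sum. This produces the diffusive term $\varepsilon^2\|\nabla(\varphi w_e)\|^2_{H^9_{co}}$, the transport terms $I_1,\dots,I_6$ analogous to those in Proposition \ref{e:Sobolev euler vorticity etimate}, the source $I_7$, plus two new terms $J_1,J_2$ coming from $-\varepsilon^2\varphi''w_e$ and $-2\varepsilon^2\varphi'\partial_y w_e$.

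For the transport terms, the gain from the weight is identical in spirit to what was exploited in Proposition \ref{e:analytical weighted Euler vorticity estimate}: every time a derivative lands on $\varphi$ we recover a factor comparable to $1$ thanks to $|\varphi'|+|\varphi''|\le C\delta$, and the ``bad'' structural factors $(v_a-\varepsilon^2fe^{-y})/\varphi$ and $v/\varphi$ are uniformly bounded in the relevant norms (using Hardy's inequality together with $v_a|_{y=0}=0$, $v|_{y=0}=-\varepsilon^2f$, and $\partial_y v=-\nabla_x\cdot u$). The identities $\partial_y u=w_h^\perp+\nabla_x v$, $Zu=\varphi w_h^\perp+\varphi\nabla_x v$, together with Lemma \ref{e:uniform boundness for approximate solution}, Lemma \ref{e:uniform boundness for error}, and Sobolev embedding, reduce each term to quantities already controlled by $E(t)$ and $\|(\partial_y+|D_x|)w\|^2_{H^9_{co}}$, producing contributions of the form $C\varepsilon^2(1+E(t))(E(t)+\varepsilon^2)$ or an $\varepsilon^{4/3}E(t)^{5/3}$ piece from the trilinear term $\widetilde U\cdot\nabla w_e$ (exactly as in Step 3 of Proposition \ref{e:Sobolev euler vorticity etimate}).

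The two extra terms are straightforward: $|\langle\varepsilon^2\varphi'' w_e,\varphi w_e\rangle_{H^9_{co}}|\le C\varepsilon^2(\|\varphi w_e\|^2_{H^9_{co}}+\|w_e\|^2_{H^9_{co}})\le C\varepsilon^4(1+E(t))^2$, while for $2\varepsilon^2\varphi'\partial_y w_e$ I would integrate by parts in $y$ to move the $\partial_y$ off $w_e$; the boundary contribution at $y=0$ vanishes since $\varphi(0)=0$, and the bulk terms are absorbed into $\tfrac{\varepsilon^2}{2}\|\nabla(\varphi w_e)\|^2_{H^9_{co}}$ plus $C\varepsilon^4\|w_e\|^2_{H^9_{co}}$. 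For the dissipation, I would use the boundary condition $-\varepsilon^2(\partial_y+|D_x|)w_{e,h}-\varepsilon^2\partial_x\Lambda_{ND}(\gamma\nabla_x\cdot w_{e,h})=0$ exactly as in Step 5 of Proposition \ref{e:Sobolev euler vorticity etimate}: the corresponding boundary integral at $y=0$ picks up a $\varphi(0)=0$ factor and simply drops out, while the $\Lambda_{ND}$ piece has the favourable sign; the remaining commutators with the conormal fields are handled by $|\varphi'|\le C\delta$.

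The main technical obstacle I expect is tracking the interplay between the weight $\varphi$ and the commutators $[\partial_y,Z^j]$ and $[\Delta,\varphi]$ that arise when we differentiate up to order $9$, so as to show that every term either has a $\varphi$ factor that kills the $1/y$ singularity of $\partial_y v_a$, $\partial_y u_{a,p}$, etc., or else lands in a norm already bounded by $E(t)$. Once these bookkeeping issues are organized as in the proof of Proposition \ref{e:Sobolev euler vorticity etimate}, combining all the estimates and absorbing the small fraction of $\tfrac{\varepsilon^2}{2}\|\nabla(\varphi w_e)\|^2_{H^9_{co}}$ into the left-hand side yields the claimed inequality.
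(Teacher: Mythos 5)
Your proposal follows exactly the route the paper intends: the proof of this proposition is omitted in the paper ("we omit the details"), the indicated argument being to rerun the conormal Sobolev scheme of Proposition \ref{e:Sobolev euler vorticity etimate} on the $\varphi w_e$ equation while treating the extra terms $-\varepsilon^2\varphi''w_e-2\varepsilon^2\varphi'\partial_yw_e$ and exploiting the cancellation $\varphi\partial_y=Z$ as in Proposition \ref{e:analytical weighted Euler vorticity estimate}, which is precisely what you do. The only caveat is that your intermediate bounds $C\varepsilon^4(1+E(t))^2$ and $C\varepsilon^4\|w_e\|^2_{H^9_{co}}$ for the two extra terms overstate the gain, since $\|w_e\|^2_{H^9_{co}}$ is only controlled by $E(t)$ rather than $\varepsilon^2E(t)$; the correct bound $C\varepsilon^2E(t)$ still fits inside the allowed right-hand side, so the argument closes.
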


\section{Conormal Sobolev estimates of the vorticity:Prandtl part}

\begin{Proposition}\label{e:prandtl vorticity Sobolev etimate}
There exists $\delta_0> 0$ such that for any $\delta\in (0,\delta_0)$,
there holds
\begin{align}
&\frac{1}{2}\frac{d}{dt}\big\|w_{p}\big\|^2_{H^9_p}
+({\lambda}-C)\Big\|\frac{yw_{p}}{\varepsilon}\Big\|^2_{H^9_{p}}
+\frac{\varepsilon^2}{10}\big\|(\partial_y+|D_x|)w_p\big\|^2_{H^9_p}\nonumber\\
&\leq C\big(E(t)+\varepsilon^2\big)+\frac{\varepsilon^2}{100}\|(\partial_y+|D_x|)w\|^2_{H^9_{co}}+C\ve^{-2}E(t)^2.\nonumber
\end{align}
\end{Proposition}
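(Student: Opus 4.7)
The plan is to mirror the strategy of Proposition \ref{e:prandtl vorticity etimate} in the pure Sobolev setting, dropping the tangential analytic weight $e^\Phi$ while retaining only the Prandtl weight $e^{\Psi_p}$. First I would apply $\partial_x^iZ^j$ to the equation for $w_p$ from system (\ref{e:decompose vorticity equation-2}), take $L^2$ inner product with $e^{2\Psi_p}\partial_x^iZ^jw_p$, and sum over $|i|+j\le 9$. Since $\partial_t\Psi_p=-\lambda y^2/\varepsilon^2$, the time derivative yields $\tfrac{1}{2}\tfrac{d}{dt}\|w_p\|_{H^9_p}^2+\lambda\|yw_p/\varepsilon\|_{H^9_p}^2$ on the left, which together with the dissipation term gives the three quantities we want to bound.

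Next I would process the six transport/stretching terms in the same order as in Proposition \ref{e:prandtl vorticity etimate}. The advection $\widetilde U_a\cdot\nabla w_p$ becomes harmless once the factor $\tilde v_a/\varphi\in L^\infty$ is used (thanks to $\tilde v_a|_{y=0}=0$), producing $\|w_p\|_{H^9_p}^2$ plus a small fraction of $\varepsilon^2\|(\partial_y+|D_x|)w_p\|_{H^9_p}^2$ and $\|yw_p/\varepsilon\|_{H^9_p}^2$. The term $\widetilde U\cdot\nabla w_{a,p}$ splits according to the support of $w_{a,p}$: away from the boundary the $L^\infty$ bound of Lemma \ref{e:uniform boundness for approximate solution} yields $\|U\|_{H^{10}_{tan}}^2+\|\varphi w\|_{H^9_{co}}^2$, while near $y=0$ one invokes Hardy's inequality applied to $v+\varepsilon^2 f e^{-y}$ (which vanishes at the boundary) together with the exponential decay of $\partial_yw_{a,p}$ in $z=y/\varepsilon$, producing the expected $\varepsilon^{-2}$ factor that is absorbed by $E(t)$. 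The stretching couplings $w_{a,p}\cdot\nabla U$ and $w_p\cdot\nabla U_a$ are treated by the weighted product estimates of Lemma \ref{e:weighted estimate product}--\ref{e:weighted double linear estimate}, using $Zu=\varphi w_h^\perp+\varphi\nabla_x v$ and $\partial_yv=-\nabla_x\cdot u$ to trade normal derivatives for conormal ones. The nonlinear self-interaction $w_p\cdot\nabla U$ is split high--low/low--high by Sobolev embedding: the highest-derivative piece is absorbed into $\tfrac{\varepsilon^2}{100}\|(\partial_y+|D_x|)w\|_{H^9_{co}}^2$, and an interpolation followed by Young's inequality produces the $C\varepsilon^{-2}E(t)^2$ remainder.

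For the dissipation $-\varepsilon^2\Delta w_p$, I would integrate by parts to extract the positive quantity $\tfrac{\varepsilon^2}{2}\|(\partial_y+|D_x|)w_p\|_{H^9_p}^2$, handle the weight-derivative commutators using $|\theta'|\le C_0\delta$ and the $y^2/\varepsilon^2$ behavior of $\nabla\Psi_p$ (absorbing them into $\|yw_p/\varepsilon\|_{H^9_p}^2$ plus a small multiple of the dissipation), and then treat the boundary integral at $y=0$. Substituting the boundary condition for $w_{p,h}$ yields a positive contribution from the $\partial_x\Lambda_{ND}(\gamma\nabla_x\cdot w_{p,h})$ piece (exactly as in the proof of Proposition \ref{e:euler vorticity etimate}), together with the pairing of $w_{p,h}|_{y=0}$ against $\partial_y(-\triangle_D)^{-1}J_h-\partial_x(-\triangle_N)^{-1}J_3$; I would rewrite this boundary pairing as a bulk $y$-integral via a cutoff $\zeta(t,y)$ supported in $\{y\le y(t)\}$ and bound it through the $L^2$-boundedness of $\partial_y(-\triangle_D)^{-1}\nabla_{x,y}$ and $\partial_{x,y}(-\triangle_N)^{-1}$ combined with the Sobolev control of $(F,G)$ from Lemma \ref{lem:F-Sob}.

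The principal obstacle is precisely this boundary step. Without the analytic weight $e^\Phi$ that was available in Proposition \ref{e:prandtl vorticity etimate}, one loses the exponential frequency damping that made the pseudodifferential factors $\partial_y(-\triangle_D)^{-1}$ and $\partial_x(-\triangle_N)^{-1}$ transparent, so the source $J=-\mathrm{curl}(F,G)+\mathrm{curl}(R_{p,h},R_{p,v})$ must be controlled in raw Sobolev norms. Lemma \ref{lem:F-Sob} supplies the correct bound for $(F,G)$, but the quadratic piece $\widetilde U\cdot\nabla U$ inside $J$ generates a term of size $\|w\|_{H^9_{co}}\|U\|_{H^{10}_{tan}}\le CE(t)$, whose combination with the $\varepsilon^{-2}$ coming from the elliptic boundary-to-bulk transfer produces exactly the $C\varepsilon^{-2}E(t)^2$ contribution on the right. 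Verifying that this term closes against the bootstrap $E(t)\le C_1\varepsilon^2$ (so that $\varepsilon^{-2}E(t)^2\le C\varepsilon^2 E(t)$) is the delicate check that ensures consistency with the global energy inequality \eqref{eq:energy}.
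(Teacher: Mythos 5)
Your plan reproduces the paper's proof almost step for step: same testing against $e^{2\Psi_p}\partial_x^iZ^jw_p$ over $|i|+j\le 9$, same extraction of $\lambda\|yw_p/\varepsilon\|^2_{H^9_p}$ from $\partial_t\Psi_p$, same use of $\tilde v_a/\varphi\in L^\infty$, Hardy's inequality on $v+\varepsilon^2fe^{-y}$, the identities $Zu=\varphi w_h^{\perp}+\varphi\na_xv$ and $\pa_yv=-\na_x\cdot u$, the sign of the $\Lambda_{ND}$ boundary contribution, and the cutoff $\zeta$ converting the $J$-pairing into a bulk integral estimated via the $L^2$-bounded Riesz-type operators and Lemma \ref{lem:F-Sob}. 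Your accounting of where the $C\varepsilon^{-2}E(t)^2$ term originates is also consistent with the paper's.

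There is, however, one concrete step where the proposal as written would fail, and it sits exactly at the point you yourself single out as the principal obstacle. In the boundary contribution $\sum_{|i|\leq 9}\int\zeta\,\partial_x^iw_{p,h}\,\partial_x^i\partial_{xy}(-\triangle_N)^{-1}J_3\,dydx$ (and likewise the $\partial_{xx}(-\triangle_D)^{-1}J_h$ piece), the operator $\partial_{xy}(-\triangle_N)^{-1}$ is $L^2$-bounded but carries one net extra tangential derivative: with $|i|=9$ you need either $\partial_x^{10}$ of $(F,G)$ or $\partial_x^{10}w_{p,h}$ in $L^2$, and neither Lemma \ref{lem:F-Sob} (which stops at $H^9_{tan}$) nor $\|w_p\|_{H^9_p}$ supplies that. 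The paper closes this by choosing $\zeta$ supported in $\{y\le y(t)/2\}$, where $\phi(t,y)\ge c\delta>0$, so that any finite number of tangential derivatives of $w_{p,h}$ there is dominated by the \emph{tangential-analytic} norm $\|(w_p)_\Phi\|^2_{H^8_p}$, which is part of $E(t)$; this is why the "Sobolev" estimate still secretly borrows the analyticity near the boundary. Your cutoff is supported up to $y(t)$, where $\phi$ vanishes, so this device is unavailable, and "raw Sobolev control of $J$" alone does not close the derivative count. Adding this one ingredient (shrink the support of $\zeta$ to $\{y\le y(t)/2\}$ and absorb the extra derivative into $\|(w_p)_\Phi\|^2_{H^8_p}\le E(t)$) makes your argument match the paper's.
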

\begin{proof}
First, acting $\partial_x^iZ^j$ on  both sides of (\ref{e:decompose vorticity equation-2}), and then taking $L^2$ inner product with $e^{2\Psi_p}\partial_x^iZ^j w_{p}$, summing over all $|i|+j\leq 9$, we arrive at
\begin{align}\label{e:energy estimate for prandtl part}
&\frac{1}{2}\frac{d}{dt}\big\|w_{p}\big\|^2_{H^9_p}
+\lambda\Big\|\frac{yw_{p}}{\varepsilon}\Big\|^2_{H^9_p}
-\varepsilon^2\sum\limits_{|i|+j\leq 9}\big<\partial_x^iZ^j(\Delta  w_p),e^{2\Psi_p}\partial_x^iZ^jw_{p}\big>\nonumber\\
&\leq\Big|\sum\limits_{|i|+j\leq 9}\big<\partial_x^iZ^j(\widetilde{U}_{a}\cdot\nabla w_p),e^{2\Psi_p}\partial_x^iZ^jw_{p}\big>\Big|
+\Big|\sum\limits_{|i|+j\leq 9}\big<\partial_x^iZ^j(\widetilde{U}\cdot\nabla w_{a,p}),e^{2\Psi_p}\partial_x^iZ^jw_{p}\big>\Big|\nonumber\\
&\quad+\Big|\sum\limits_{|i|+j\leq 9}\big<\partial_x^iZ^j(\widetilde{U}\cdot\nabla w_p),e^{2\Psi_p}\partial_x^iZ^jw_{p}\big>\Big|
+\Big|\sum\limits_{|i|+j\leq 9}\big<\partial_x^iZ^j(w_{a,p}\cdot\nabla U),e^{2\Psi_p}\partial_x^iZ^jw_{p}\big>\Big|\nonumber\\
&\quad+\Big|\sum\limits_{|i|+j\leq 9}\big<\partial_x^iZ^j(w_{p}\cdot\nabla U_a),e^{2\Psi_p}\partial_x^iZ^jw_{p}\big>\Big|
+\Big|\sum\limits_{|i|+j\leq 9}\big<\partial_x^iZ^j(w_{p}\cdot\nabla U),e^{2\Psi_p}\partial_x^iZ^jw_{p}\big>\Big|\nonumber\\
&\quad+\Big|\sum\limits_{|i|+j\leq 9}\big<\partial_x^iZ^j({\rm curl}(R_{p,h},R_{p,v})-M_p),e^{2\Psi_p}\partial_x^iZ^jw_{p}\big>\Big|
\tre\sum_{i=1}^7I_i.\nonumber
\end{align}

\no{\bf Step 1. Estimate of $I_1$.}

We bound $I_1$ as follows
\begin{align}
I_1\leq
&\sum\limits_{|i|+j\leq 9}\sum\limits_{(m,n)<(i,j)}\Big|\int_{\R^3_+}\big[\partial_x^{i-m}Z^{j-n}u_{a}\partial_x\partial_x^{m}Z^{n}w_p\nonumber\\
&\quad+\partial_x^{i-m}Z^{j-n}(v_{a}-\varepsilon^2f(t,x)e^{-y})\partial_x^{m}Z^{n}\partial_yw_p\big]e^{2\Psi_p}\partial_x^iZ^jw_pdxdy\Big|\nonumber\\
&\qquad+\sum\limits_{|i|+j\leq 9}\Big|\int_{\R^3_+}\big[u_{a}\partial_x\partial_x^iZ^jw_p+(v_{a}-\varepsilon^2f(t,x)e^{-y})\partial_x^iZ^j\partial_yw_p\big]e^{2\Psi_p} \partial_x^iZ^jw_pdxdy\Big|.\nonumber
\end{align}
By Lemma \ref{e:uniform boundness for approximate solution}, the first term can be bounded by $C\|w_p\|^2_{H^9_p}$, where we used $(v_{a}-\varepsilon^2f(t,x)e^{-y})|_{y=0}=0$ and (\ref{e:property of va}).
Integrating by parts, the second term can be bounded by
\begin{align}
&\sum\limits_{|i|+j\leq 9, j\geq 1}\Big|\int_{R^3_+}(v_{a}-\varepsilon^2f(t,x)e^{-y})[\partial_x^iZ^j,\partial_y]w_pe^{2\Psi_p} \partial_x^iZ^jw_pdxdy\Big|+C\|w_p\|^2_{H^9_p}\nonumber\\
&\qquad+C\sum\limits_{|i|+j\leq 9}\Big|\int_{R^3_+}(v_{a}-\varepsilon^2f(t,x)e^{-y})(\partial_x^iZ^jw_p)^2\frac{y}{\varepsilon^2}e^{2\Psi_p} dxdy\Big|.\nonumber
\end{align}
Using $[\partial_x^iZ^j,\partial_y]=\varphi'\partial_x^iZ^{j-1}\partial_y$ and (\ref{e:property of va}), we have
\begin{align}
\sum\limits_{|i|+j\leq 9, j\geq 1}\Big|\int_{R^3_+}(v_{a}-\varepsilon^2f(t,x)e^{-y})[\partial_x^iZ^j,\partial_y]w_pe^{2\Psi_p}\partial_x^iZ^jw_p dxdy\Big|\leq C\|w_p\|^2_{H^9_p},\nonumber
\end{align}
and
\begin{align}
&\sum\limits_{|i|+j\leq 9}\Big|\int_{\R^3_+}(v_{a}-\varepsilon^2f(t,x)e^{-y})(\partial_x^iZ^jw_p)^2\frac{y}{\varepsilon^2}e^{2\Psi_p} dxdy\Big|\nonumber\\
&=\sum\limits_{|i|+j\leq 9}\Big|\int_{\R^3_+}\frac{(v_{a}-\varepsilon^2f(t,x)e^{-y})}{y}(\partial_x^iZ^jw_p)^2\frac{y^2}{\varepsilon^2}e^{2\Psi_p}dxdy\Big|\leq C\Big\|\frac{yw_{p}}{\varepsilon}\Big\|^2_{H^9_p}.\nonumber
\end{align}
Thus, we obtain
\begin{align}
I_1\leq C\Big\|\frac{yw_{p}}{\varepsilon}\Big\|^2_{H^9_p}+ C\|w_p\|^2_{H^9_p}.\nonumber
\end{align}

\no{\bf Step 2. Estimate of $I_2$.}

We bound $I_2$ as follows
\begin{align}
 &\Big|\sum\limits_{|i|+j\leq 9}\int_{y(t)}^{+\infty}\int_{\R^2}\partial_x^iZ^j(u\cdot\na_xw_{a,p}+(v+\varepsilon^2f(t,x)e^{-y})\partial_yw_{a,p}) e^{2\Psi_p}\partial_x^iZ^jw_{p}dxdy\Big|\nonumber\\
&+\Big|\sum\limits_{|i|+j\leq 9}\int_0^{y(t)}\int_{\R^2}(\partial_x^iZ^j(u\cdot\na_xw_{a,p}+(v+\varepsilon^2f(t,x)e^{-y})\partial_yw_{a,p}) e^{2\Psi_p}\partial_x^iZ^jw_{p}dxdy\Big|.\nonumber
\end{align}
Using the fact that $\|e^{\Psi_p}\partial_x^iZ^j\partial_{x,y}w_{a,p}\|_{L^\infty(\R^2\times (y(t),\infty))}\leq C$, $Zu=\varphi w_h^{\bot}+\varphi\na_xv$ and $\pa_yv=-\nabla_x\cdot u$, the first term can be bounded by
\begin{align}
C\big(\|w_p\|^2_{H^9_p}+\|U\|^2_{H^9_{tan}}+\|\varphi w\|^2_{H^8_{co}}\big).\nonumber
\end{align}
Notice that $w_{a,p}=\partial_y(u_{a,p})-\partial_xv_{a,p}$, $Zu=\varphi w_h^{\bot}+\varphi\na_xv$, thus, by Lemma \ref{e:uniform boundness for approximate solution},
we have
\begin{align}
&\Big|\sum\limits_{|i|+j\leq 9}\int_0^{y(t)}\int_{\R^2}\partial_x^iZ^j(u\cdot\na_xw_{a,p}) e^{2\Psi_p}\partial_x^iZ^jw_pdxdy\Big|\nonumber\\
 &\leq C\|w_{p}\|^2_{H^9_p}+C\varepsilon^{-2}\Big(\|U\|^2_{H^{10}_{tan}}+\| (\varphi w)\|^2_{H^9_{co}}\Big).\nonumber
\end{align}
Using $\pa_yv=-\nabla_x\cdot u$, $(v+\varepsilon^2f(t,x)e^{-y})|_{y=0}=0$ and Lemma \ref{e:uniform boundness for approximate solution}, we get
\begin{align}
&\Big|\sum\limits_{|i|+j\leq 9}\int_0^{y(t)}\int_{\R^2}\partial_x^iZ^j((v+\varepsilon^2f(t,x)e^{-y})\partial_yw_{a,p}) e^{2\Psi_p}\partial_x^iZ^jw_{p}dxdy\Big|\nonumber\\
&\leq \Big|\sum\limits_{|i|+j\leq 9}\int_0^{y(t)}\int_{\R^2}\partial_x^iZ^j\Big(\frac{1}{\varepsilon y}\int_0^y\Big(\na_x\cdot u(x,y')+\varepsilon^2f(t,x)e^{-y'}\Big)dy'z(\varepsilon\partial_{xz}v_{a,p}+\partial_{zz}u_{a,p})\Big) e^{2\Psi_p}\partial_x^iZ^jw_{p}dxdy\Big|\nonumber\\
&\leq C\|w_{p}\|^2_{H^9_p}+C\varepsilon^{-2}\Big(\|U\|^2_{H^{10}_{tan}}+\| (\varphi w)\|^2_{H^9_{co}}+\varepsilon^4\Big).\nonumber
\end{align}
Thus, we arrive at
\begin{align}
I_2 \leq C\|w_{p}\|^2_{H^9_p}+C\varepsilon^{-2}\Big(\|U\|^2_{H^{10}_{tan}}+\| (\varphi w)\|^2_{H^9_{co}}+\varepsilon^4\Big).\nonumber
\end{align}

\no{\bf Step 3. Estimate of $I_3$ and $I_6$.}

Following the arguments of Step 3 in Proposition \ref{e:Sobolev euler vorticity etimate}, we obtain
\begin{align}
I_3\leq& \frac{\varepsilon^2}{100}\|(\pa_y+|D_x|)w_p\|^2_{H^9_{p}}+\Big\|\frac{yw_{p}}{\varepsilon}\Big\|^2_{H^9_{p}}+C\varepsilon^{-2}\Big(\varepsilon^4+\|U\big\|^2_{H^{10}_{tan}}+\|w\big\|^2_{H^9_{co}}\Big)\|w_{p}\big\|^2_{H^9_p}\nonumber\\
&\quad+C\big(\varepsilon^2+\|U\|_{H^{10}_{tan}}+\|w\|_{H^9_{co}}\big)\|w_p\|^2_{H^9_{p}},\nonumber
\end{align}
and
\begin{align}
I_6\leq& C\varepsilon^{-\f23}E(t)^\f 53+CE(t)^\f32+\frac{\varepsilon^2}{100}\|(\partial_y+|D_x|)w\|^2_{H^9_{co}}\nonumber\\
&+\frac{\varepsilon^2}{100}\|(\partial_y+|D_x|)w_p\|^2_{H^9_{p}}+\Big\|\frac{yw_{p}}{\varepsilon}\Big\|^2_{H^9_{p}}.\nonumber
\end{align}

\no{\bf Step 4. Estimate of $I_4$.}

We split $I_4$ into two parts
\begin{align}
I_4\leq& \Big|\sum\limits_{|i|+j\leq 9}\big<\partial_x^iZ^j(w_{a,p,h}\cdot\na_xU),e^{2\Psi_p}\partial_x^iZ^jw_{p}\big>\Big|\nonumber\\
&+\Big|\sum\limits_{|i|+j\leq 9}\big<\partial_x^iZ^j(w_{a,p,3}\partial_yU),e^{2\Psi_p}\partial_x^iZ^jw_{p}\big>\Big|\tre I_{41}+I_{42}.\nonumber
\end{align}
Using that $\|e^{\Psi_p}\partial_x^iZ^jw_{a,p,3}\|_{L^\infty}\leq C$, $\partial_yu=w_h^{\bot}+\na_xv$ and $\pa_yv=-\nabla_x\cdot u$, we obtain
\begin{align}
I_{42}\leq C\Big(\big\| w\big\|^2_{H^9_{co}}
+\|U\big\|^2_{H^{10}_{tan}}+\|w_{p}\big\|^2_{H^9_p}\Big),\nonumber
\end{align}
While, using $\big\|e^{\Psi_p}\partial_x^iZ^jw_{a,p,h}\big\|_{L^\infty}\leq \frac{C}{\varepsilon}$, we obtain
\begin{align}
I_{41}\leq C\varepsilon^{-2}\Big(\big\| \varphi w\big\|^2_{H^9_{co}}
+\|U\big\|^2_{H^{10}_{tan}}\Big)+C\|w_{p}\big\|^2_{H^9_p}.\nonumber
\end{align}
This shows that
\begin{align}
I_{4}\leq C\varepsilon^{-2}\Big(\big\| \varphi w\big\|^2_{H^9_{co}}
+\|U\|^2_{H^{10}_{tan}}\Big)+C\big(\|w_{p}\|^2_{H^9_p}+\big\| w\big\|^2_{H^9_{co}}\big).\nonumber
\end{align}

\no{\bf Step 5. Estimate of $I_5$.}

First, we split $I_5$ into two parts
\begin{align}
II^p_5\leq &\Big|\sum\limits_{|i|+j\leq 9}\big<\partial_x^iZ^j(w_{p,h}\partial_xU_a),e^{2\Psi_p}\partial_x^iZ^jw_{p}\big>\Big|\nonumber\\
&+\Big|\sum\limits_{|i|+j\leq 9}\big<\partial_x^iZ^j(w_{p,3}\partial_yU_a),e^{2\Psi_p}\partial_x^iZ^jw_{p}\big>\Big|\tre I_{51}+I_{52}.\nonumber
\end{align}
We get by Lemma \ref{e:uniform boundness for approximate solution} that
$$I_{51}\leq C\|w_{p}\|^2_{H^9_p}.$$
Using  $\|\partial_x^iZ^j\partial_yU_a\|_{L^\infty}\leq \frac{C}{\varepsilon}$, we have
\begin{align}
I_{52}\leq  C\Big(\big\|w_{p}\big\|^2_{H^9_p}+\Big\|\frac{w_{p,3}}{\varepsilon}\Big\|^2_{H^9_p}
\Big).\nonumber
\end{align}
Thus, we get
\begin{align}
I_5\leq C\Big(\big\|w_{p}\big\|^2_{H^9_p}+\Big\|\frac{w_{p,3}}{\varepsilon}\Big\|^2_{H^9_p}\Big).\nonumber
\end{align}

\no{\bf Step 6. Estimate of $I_7$.}

It follows from Lemma \ref{e:uniform boundness for error} that
\begin{align}
I_7\leq C\|w_{p}\|^2_{H^9_p}+\varepsilon^2.\nonumber
\end{align}

\no{\bf Step 7. Estimate of dissipative term.}

 We get, by integrating by parts, that
\begin{align}
&-\varepsilon^2\sum\limits_{|i|+j\leq 9}\big<\partial_x^iZ^j(\Delta  w_p),e^{2\Psi_p}\partial_x^iZ^jw_{p}\big>\nonumber\\
&=\varepsilon^2\|Aw_p\|^2_{H^9_p}-\varepsilon^2\sum\limits_{|i|+j\leq 9,j\geq 1}\big<[\partial_x^iZ^j,\partial_y](A  w_p),e^{2\Psi_p}\partial_x^iZ^jw_{p}\big>_2+\varepsilon^2\sum_{|i|\leq 9}\int_{\R^2}\partial_x^i(Aw_p)\partial_x^iw_{p}(x,0)dx\nonumber\\
&\quad+\varepsilon^2\sum\limits_{|i|+j\leq 9,j\geq 1}\big<\partial_x^iZ^j  (Aw_p),e^{2\Psi_p}[\partial_y,\partial_x^iZ^j]w_p\big>
+4(\delta-\lambda t)\sum\limits_{|i|+j\leq 9}\big<\partial_x^iZ^j  (Aw_p),e^{2\Psi_p}y\partial_x^iZ^jw_{p}\big>\nonumber\\
&\geq\frac{\varepsilon^2}{2}\|Aw_p\|^2_{H^9_p}-C\delta\varepsilon^2\big\|w_{p}\big\|_{H^9_p}
-C\delta^{2}\Big\|\frac{yw_{p}}{\varepsilon}\Big\|_{H^9_p}^{2}+\varepsilon^2\sum_{|i|\leq 9}\int_{\R^2}\partial_x^i(Aw_p)\partial_x^iw_{p}(x,0)dx.
\end{align}
Using the boundary condition of  $(\partial_y+|D_x|)w_p$, we arrive at
\begin{align}
&\varepsilon^2\sum\limits_{|i|\leq 9}\int_{\R^2}\partial_x^i((\partial_y+|D_x|)w_p)\partial_x^iw_{p}(t,x,0)dx\nonumber\\
&=-\varepsilon^2\sum\limits_{|i|\leq 9}\int_{\R^2}\partial_x^i\na_x(\Lambda_{ND}(\gamma\nabla_x\cdot w_{p,h}))\cdot \partial_x^iw_{p,h}(t,x,0)dx\nonumber\\
&\quad+\sum\limits_{|i|\leq 9}\int_{\R^2}\partial_x^iw_{p,h}\partial_x^i(-\partial_y(-\triangle_D)^{-1}J_h+\partial_x(-\triangle_N)^{-1}J_3)(t,x,0)dx\nonumber\\
&\ge \sum\limits_{|i|\leq 9}\int_{\R^2}\partial_x^iw_{p,h}\partial_x^i(-\partial_y(-\triangle_D)^{-1}J_h+\partial_x(-\triangle_N)^{-1}J_3)(t,x,0)dx.\nonumber
\end{align}
Thus, we only need to deal with
$$\sum\limits_{|i|\leq 9}\int_{\R^2}\partial_x^iw_{p,h}\partial_x^i(-\partial_y(-\triangle_D)^{-1}J_h+\partial_x(-\triangle_N)^{-1}J_3)(t,x,0)dx,$$
which can be controlled by
\begin{align}
&\sum\limits_{|i|\leq 9}\Big|\int_{\R^2}\int_0^{\frac{y(t)}{2}}\zeta'(t,y)\partial_x^iw_{p,h}\partial_x^i(-\partial_y(-\triangle_D)^{-1}J_{h}
+\partial_x(-\triangle_N)^{-1}J_{3})dydx\Big|\nonumber\\
&+\sum\limits_{|i|\leq 9}\Big|\int_{\R^2}\int_0^{\frac{y(t)}{2}}\zeta(t,y)\partial_x^i\partial_yw_{p,h}\partial_x^i(-\partial_y(-\triangle_D)^{-1}J_{h}
+\partial_x(-\triangle_N)^{-1}J_{3})dydx\Big|\nonumber\\
&+\sum\limits_{|i|\leq 9}\Big|\int_{\R^2}\int_0^{\frac{y(t)}{2}}\zeta(t,y)\partial_x^iw_{p,h}\partial_x^i\partial_{yy}(-\triangle_D)^{-1}J_{h}dydx\Big|\nonumber\\
&+\sum\limits_{|i|\leq 9}\Big|\int_{\R^2}\int_0^{\frac{y(t)}{2}}\zeta(t,y)\partial_x^iw_{p,h}\partial_x^i\partial_{xy}(-\triangle_N)^{-1}J_{3}) dydx\Big|\tre\sum_{k=1}^4I_b^k,\nonumber
\end{align}
where $\zeta(t,y)$ is a smooth cut-off function which satisfies $\zeta(t,0)=1$ and $\zeta(t,y)=0$ for $y\ge y(t)/2$.

Using $L^2$ boundness of operators $\partial_y(-\triangle_D)^{-1}\partial_x$, $\partial_y(-\triangle_D)^{-1}\partial_y$, $\partial_{yy}(-\triangle_D)^{-1}$, $\nabla(-\triangle_N)^{-1}\partial_x$ and $\partial_{xy}(-\triangle_N)^{-1}$,  we can deduce from Lemma \ref{lem:F-Sob} that
\begin{align}
I_b^1\leq& C\|w_p\|^2_{H^9_p}+C\|\widetilde{U}_a\cdot\nabla U+\widetilde{U}\cdot\nabla U_a+\widetilde{U}\cdot\nabla U+R\|^2_{H^9_{tan}}\nonumber\\
\leq &C\|w_p\|^2_{H^9_p}+C\ve^2\big(\ve^2+E(t)\big)(E(t)+1),\nonumber
\end{align}
and by Lemma \ref{e:one order elliptic estimate},
\begin{align}
I_b^2\leq& \kappa\varepsilon^2\|\partial_yw_p\|^2_{H^9_{tan}}+C\varepsilon^{-2}\|\widetilde{U}_a\cdot\nabla U+\widetilde{U}\cdot\nabla U_a+\widetilde{U}\cdot\nabla U+R\|^2_{H^9_{tan}}\nonumber\\
\leq &\kappa\varepsilon^2\|(\partial_y+|D_x|)w_p\|^2_{H^9_p}+C\big(\ve^2+E(t)\big)(E(t)+1).\nonumber
\end{align}
On the other hand, using the fact $\phi(t,y)\geq c\delta>0$ for $y\leq \frac{y(t)}{2}$, we deduce that
\begin{align*}
I_b^4\leq& C\|(w_p)_\Phi\|^2_{H^{8}_p}+C\|\widetilde{U}_a\cdot\nabla U+\widetilde{U}\cdot\nabla U_a+\widetilde{U}\cdot\nabla U+R\|^2_{H^9_{tan}}\nonumber\\
\leq& C\|(w_p)_\Phi\|^2_{H^8_p}+C\ve^2\big(\ve^2+E(t)\big)(E(t)+1).
\end{align*}
We write
\begin{align*}
I_3^b\le& \sum\limits_{|i|\leq 9}\Big|\int_{\R^2}\int_0^{\frac{y(t)}{2}}\zeta(t,y)\partial_x^iw_{p,h}\partial_x^iJ_{h}dydx\Big|\nonumber\\
&+\sum\limits_{|i|\leq 9}\Big|\int_{\R^2}\int_0^{\frac{y(t)}{2}}\zeta(t,y)\partial_x^iw_{p,h}\partial_x^i\partial_{xx}(-\triangle_D)^{-1}J_{h}dydx\Big|.\nonumber\end{align*}
Noticing that $(F,G)(t,x,0)=0$, the first term can be handed as $I_1^b, I_2^b$ by integrating by parts, while the second term can be handled as $I_4^b$.

Finally, fixing $\kappa$ small, we obtain
\begin{align}
&-\varepsilon^2\sum\limits_{|i|+j\leq 9}\big<\partial_x^iZ^j(\Delta w_p),e^{2\Psi_p}\partial_x^iZ^jw_{p}\big>\nonumber\\
&\geq\frac{\varepsilon^2}{4}\|Aw_p\|^2_{H^9_p}-C\Big\|\frac{yw_p}{\varepsilon}\Big\|_{H^9_p}^{2}-C\big(\varepsilon^2+E(t)\big)\big(1+E(t)\big).\nonumber
\end{align}

The proposition follows by combing the estimates in Step 1-Step 7.
\end{proof}

\smallskip

Similarly, we can prove the following improved decay estimate
in $\varepsilon$ for $w_{p,3}$ and $\varphi w_p$. Again we omit the details.

\begin{Proposition}\label{e:prandtl vertical vorticity Sobolev etimate}
There exists $\delta_0> 0$ such that for any $\delta\in (0,\delta_0)$,
there holds
\begin{align}
&\frac{1}{2}\frac{d}{dt}\|w_{p,3}\|^2_{H^9_p}
+({\lambda}-C)\Big\|\frac{yw_{p,3}}{\varepsilon}\Big\|^2_{H^9_p}
+\frac{\varepsilon^2}{2}\big\|\nabla w_{p,3}\big\|^2_{H^9_p}\nonumber\\
&\leq C\varepsilon^2\big(E(t)+\varepsilon^2\big)\big(1+E(t)\big)+\frac{\varepsilon^4}{100}\|(\partial_y+|D_x|)w\|^2_{H^9_{co}}+C\varepsilon^{\frac43}E(t)^{\frac53}.\nonumber
\end{align}
\end{Proposition}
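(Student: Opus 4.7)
The plan is to mimic the proof of Proposition \ref{e:prandtl vorticity Sobolev etimate} for the full Prandtl vorticity $w_p$, but exploit the divergence-free condition to extract an extra factor of $\varepsilon^2$ on all the right-hand side nonlinear contributions involving a vertical derivative on $v$. Concretely, I would apply $\partial_x^iZ^j$ to the third component of \eqref{e:decompose vorticity equation-2} and pair it with $e^{2\Psi_p}\partial_x^iZ^j w_{p,3}$, summing over $|i|+j\le 9$. This gives the same seven terms $I_1,\dots,I_7$ as in Proposition \ref{e:prandtl vorticity Sobolev etimate}, together with the parabolic term $-\varepsilon^2\langle (\Delta w_{p,3})_\Phi\cdots\rangle$ and the $\partial_t\Psi_p$ contribution generating $\lambda\|yw_{p,3}/\varepsilon\|^2_{H^9_p}$.

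For the transport terms $I_1$ (with $\widetilde{U}_a\cdot\nabla w_{p,3}$) and $I_3$ (with $\widetilde{U}\cdot\nabla w_{p,3}$), I would repeat Step 1 and Step 3 of Proposition \ref{e:prandtl vorticity Sobolev etimate} verbatim, since no vertical derivative of $v$ appears. The gain comes in the stretching terms $I_4,I_5,I_6$ of the form $(w\cdot\nabla v)$: here $\partial_y v=-\nabla_x\cdot u$, so every top-order piece $w_{*,3}\partial_y v$ is replaced by $w_{*,3}\nabla_x\cdot u$, losing one $y$-derivative but producing an additional horizontal derivative on $u$ that is controlled by $\|U\|_{H^{10}_{tan}}\lesssim \varepsilon^2 E(t)$ instead of $\|\varphi w\|_{H^9_{co}}$ or $\|w\|_{H^9_{co}}$. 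The terms involving $w_{a,p,3}\partial_yv$ similarly benefit from $\|e^{\Psi_p}\partial_x^iZ^jw_{a,p,3}\|_{L^\infty}\le C$ (rather than the $C/\varepsilon$ bound one has on $w_{a,p,h}$). Summing these improvements yields the extra $\varepsilon^2$ on the right-hand side. The remainder terms $I_2,I_7$ are bounded by Lemma \ref{e:uniform boundness for approximate solution} and Lemma \ref{e:uniform boundness for error} as before, contributing $C\varepsilon^2(E(t)+\varepsilon^2)$.

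For the dissipative term, integration by parts produces $\varepsilon^2\|(\partial_y+|D_x|)w_{p,3}\|_{H^9_p}^2$, commutator terms absorbed as $C\delta\|yw_{p,3}/\varepsilon\|_{H^9_p}^2$, and a boundary contribution at $y=0$. Thanks to $w_{p,3}(t,x,0)=0$, this boundary contribution simply vanishes, which is actually simpler than in Proposition \ref{e:prandtl vorticity Sobolev etimate} where one had to handle the Dirichlet–Neumann term via a cutoff and the $L^2$ boundedness of $\partial_{xy}(-\triangle)^{-1}$. In addition, Lemma \ref{e:one order elliptic estimate} applied to $(\partial_y+|D_x|)w_{p,3}$ upgrades $|D_x|$ control in the same way as \eqref{e:elliptic estimate one}, so that $\frac{\varepsilon^2}{2}\|\nabla w_{p,3}\|^2_{H^9_p}$ can be placed on the left after absorbing small constants.

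The main technical point to check carefully is the nonlinear term $I_6=(w_{p,3}\cdot\nabla v)$, where one has to balance between using the third inequality of Lemma \ref{e:weighted estimate product} on $(0,y(t))$ (contributing $\|yw_{p,3}/\varepsilon\|^2_{H^9_p}$ and picking up $\varepsilon^{-2/3}$ via the interpolation $\|\cdot\|_{L^\infty}\lesssim \|\cdot\|^{1/2}_{L^2}\|\partial_y\cdot\|^{1/2}_{L^2}$) and using Sobolev embedding on $(y(t),\infty)$. As in Step 3 of Proposition \ref{e:prandtl vorticity Sobolev etimate}, this generates the $C\varepsilon^{4/3}E(t)^{5/3}$ term after rescaling by $\varepsilon^2$ and absorbing $\frac{\varepsilon^4}{100}\|(\partial_y+|D_x|)w\|^2_{H^9_{co}}$. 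Collecting all estimates and fixing $\delta$ small gives the stated bound.
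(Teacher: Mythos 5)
Your proposal is correct and follows essentially the route the paper intends: the paper omits this proof, stating only that it is obtained ``similarly'' by repeating the conormal Sobolev estimate of Proposition \ref{e:prandtl vorticity Sobolev etimate} while exploiting, as in the proof of Proposition \ref{e:Euler vertical vorticity etimate}, that the stretching terms now involve $\nabla v$ with $\partial_yv=-\nabla_x\cdot u$ and that $w_{a,p,3}$ (unlike $w_{a,p,h}$) carries no $1/\varepsilon$ singularity. Your additional observations — that the Dirichlet condition $w_{p,3}(t,x,0)=0$ kills the boundary contribution of the dissipative term (so the $J$-terms $I_b^k$ are not needed), and that the $\varepsilon^{4/3}E(t)^{5/3}$ term arises from the same interpolation used in Step 3 — are exactly the right points to single out.
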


\begin{Proposition}\label{e:Sobolev estimate for weihgted wp}
There exists $\delta_0> 0$ such that for any $\delta\in (0,\delta_0)$,
there holds
\begin{align}
&\frac{1}{2}\frac{d}{dt}\|\varphi w_p\|^2_{H^9_{p}}+({\lambda}-C)\Big\|\frac{y(\varphi w_{p})}{\varepsilon}\Big\|^2_{H^9_p}+\frac{\varepsilon^2}{2}\big\|\nabla(\varphi w_p)\big\|^2_{H^9_{p}}\nonumber\\
&\leq C\varepsilon^2\big(E(t)+\varepsilon^2\big)\big(1+E(t)\big)+\frac{\varepsilon^4}{100}\|(\partial_y+|D_x|)w\|^2_{H^9_{co}}+C\varepsilon^{\frac43}E(t)^{\frac53}.\nonumber
\end{align}
\end{Proposition}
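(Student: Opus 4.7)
The plan is to mimic the proof of Proposition \ref{e:prandtl vorticity Sobolev etimate}, but applied to the equation obtained by multiplying the $w_p$-equation \eqref{e:decompose vorticity equation-2} by $\varphi$. Using $\pa_t\varphi=0$ and $\Delta(\varphi w_p)=\varphi\Delta w_p+\varphi''w_p+2\varphi'\pa_yw_p$, one finds that $\varphi w_p$ solves
\begin{align*}
&\partial_t(\varphi w_p)-\varepsilon^2\Delta(\varphi w_p)+\varphi\widetilde{U}_a\cdot\nabla w_p+\varphi\widetilde{U}\cdot\nabla w_{a,p}+\varphi\widetilde{U}\cdot\nabla w_p\\
&\quad-\varphi w_{a,p}\cdot\nabla U-(\varphi w_p)\cdot\nabla U_a-(\varphi w_p)\cdot\nabla U\\
&=\varphi\big(\mathrm{curl}(R_{p,h},R_{p,v})-M_p\big)-\varepsilon^2\varphi''w_p-2\varepsilon^2\varphi'\pa_yw_p,
\end{align*}
together with the boundary conditions inherited from $w_p$, which now read $(\varphi w_p)|_{y=0}=0$. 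I will apply $\pa_x^iZ^j$ for $|i|+j\le 9$, take the weighted $L^2$ inner product with $e^{2\Psi_p}\pa_x^iZ^j(\varphi w_p)$, and sum.

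Most of the analysis then tracks Proposition \ref{e:prandtl vorticity Sobolev etimate} term by term. The key point is that the prefactor $\varphi$ absorbs the $1/\varepsilon$ singularities coming from vertical derivatives of the Prandtl profiles: for instance $\varphi\pa_yu_{a,p}=Z u_{a,p}+O(\varepsilon)$ and $\varphi w_{a,p,h}=Z u_{a,p,h}-\varphi\pa_xv_{a,p}$, both of which satisfy $\|e^{\Psi_p}\pa_x^iZ^j(\cdot)\|_{L^\infty}\le C$ by Lemma \ref{e:uniform boundness for approximate solution}. Thus the analogue of $I_4$, namely $|\langle \pa_x^iZ^j(\varphi w_{a,p}\cdot\nabla U), e^{2\Psi_p}\pa_x^iZ^j(\varphi w_p)\rangle|$, is bounded by $C(\|U\|_{H^{10}_{tan}}^2+\|\varphi w\|_{H^9_{co}}^2+\|\varphi w_p\|_{H^9_p}^2)\le C\varepsilon^2(E(t)+\varepsilon^2)$, without any $\varepsilon^{-2}$ loss. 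The analogues of $I_1$, $I_3$, $I_5$, $I_7$ are controlled exactly as before using $(v_a-\varepsilon^2fe^{-y})|_{y=0}=0$, \eqref{e:property of va}, Sobolev embedding, \eqref{eq:uw}, \eqref{eq:uw-2}, and Lemma \ref{e:uniform boundness for error}; the cubic commutator/self-interaction piece produces the $C\varepsilon^{4/3}E(t)^{5/3}$ term together with a small $\tfrac{\varepsilon^4}{100}\|(\pa_y+|D_x|)w\|_{H^9_{co}}^2$ remainder. The extra forcing $-\varepsilon^2\varphi''w_p-2\varepsilon^2\varphi'\pa_yw_p$ contributes, after Cauchy--Schwarz and integration by parts, no more than $C\varepsilon^2\|w_p\|_{H^9_p}^2+\tfrac{\varepsilon^2}{100}\|\nabla(\varphi w_p)\|_{H^9_p}^2\le C\varepsilon^2(E(t)+\varepsilon^2)+\tfrac{\varepsilon^2}{100}\|\nabla(\varphi w_p)\|_{H^9_p}^2$, since $\varphi',\varphi''$ are uniformly bounded in $\delta$.

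For the dissipation, integration by parts in $-\varepsilon^2\sum\langle\pa_x^iZ^j\Delta(\varphi w_p),e^{2\Psi_p}\pa_x^iZ^j(\varphi w_p)\rangle$ produces $\tfrac{\varepsilon^2}{2}\|\nabla(\varphi w_p)\|_{H^9_p}^2$ modulo two items: the commutator $[\pa_y,\pa_x^iZ^j]=j\varphi'\pa_x^iZ^{j-1}\pa_y$ which, after one more integration by parts, gives terms absorbed by $C\|\varphi w_p\|_{H^9_p}^2+C\delta^2\|y(\varphi w_p)/\varepsilon\|_{H^9_p}^2$; and the weight derivative $4(\delta-\lambda t)y$ from $\nabla\Psi_p$, whose cross term produces $\lambda\|y(\varphi w_p)/\varepsilon\|_{H^9_p}^2$ on the coercive side. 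Crucially, because $\varphi(0)=0$ the boundary contributions at $y=0$ vanish entirely, so the delicate Dirichlet--Neumann/trace analysis of Proposition \ref{e:prandtl vorticity Sobolev etimate} is \emph{not} needed here. The main obstacle is therefore not analytic but bookkeeping: one must verify that every interaction in which $\pa_y$ falls on a Prandtl quantity is paired with one power of $\varphi$ (so that $\varphi\pa_y=Z$), and that the extra forcing $\varepsilon^2\varphi'\pa_yw_p$ is closed without re-introducing an $\varepsilon^{-2}$ factor. Combining everything and choosing $\delta$ small enough so that the $C_0\delta^2\|y(\varphi w_p)/\varepsilon\|_{H^9_p}^2$ term is absorbed into the coercive $(\lambda-C)$-term yields the stated inequality.
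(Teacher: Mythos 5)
Your proposal is essentially correct and takes the same route the paper intends when it writes ``Similarly\ldots we omit the details'': multiply the $w_p$-equation by $\varphi$, run the energy argument of Proposition \ref{e:prandtl vorticity Sobolev etimate} term by term, and use the identity $\varphi\partial_y=Z$ (together with $\varphi\sim\varepsilon z$ on the boundary-layer scale, so $\varphi\,\partial_y(\cdot)_{a,p}=\widetilde{Z}(\cdot)_{a,p}$) to absorb one factor of $\varepsilon^{-1}$ from every vertical derivative of a Prandtl profile, which is the source of the $\varepsilon^2$ gain relative to Proposition \ref{e:prandtl vorticity Sobolev etimate}. This is exactly the mechanism used for the analytic counterparts in Propositions \ref{e:analytical weighted Euler vorticity estimate} and \ref{e:analytical weighted Prandtl vorticity estimate}, including the treatment of the two extra commutator forcings $-\varepsilon^2\varphi''w_p-2\varepsilon^2\varphi'\partial_yw_p$. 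Your observation that $\varphi(0)=0$ makes the boundary trace of $\varphi w_p$ vanish, so the Dirichlet--Neumann/trace bookkeeping from the proof of Proposition \ref{e:prandtl vorticity Sobolev etimate} disappears, is also the right one.

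One small inaccuracy worth fixing: you attribute the coercive term $(\lambda-C)\|y(\varphi w_p)/\varepsilon\|^2_{H^9_p}$ to the ``weight derivative $4(\delta-\lambda t)y$ from $\nabla\Psi_p$'' arising from integrating the Laplacian by parts. That cross term is something to be \emph{controlled} (by Cauchy--Schwarz and absorption for $\delta$ small); it does not generate coercivity. The coercive term comes from $\partial_t\Psi_p=-\lambda y^2/\varepsilon^2$ when differentiating $\|e^{\Psi_p}\partial_x^iZ^j(\varphi w_p)\|_{L^2}^2$ in time, exactly as the $\lambda\|y w_p/\varepsilon\|^2_{H^9_p}$ term appears in Proposition \ref{e:prandtl vorticity Sobolev etimate}. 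Similarly, the bound you state for the forcing $-2\varepsilon^2\varphi'\partial_yw_p$ cannot be obtained by a direct Cauchy--Schwarz, since $\|\partial_yw_p\|_{H^9_p}$ is not under control near $y=0$; one must integrate by parts first (as the paper does for $\varphi w_e$ in Proposition \ref{e:analytical weighted Euler vorticity estimate}) and then absorb the resulting $\|y w_p/\varepsilon\|$-type piece into the coercive term of Proposition \ref{e:prandtl vorticity Sobolev etimate}. Neither point changes the validity of your overall argument.
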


\section{Appendix}

In this appendix, we prove the well-posedness of the Euler system (\ref{e:Euler equation}) and Prandtl system (\ref{e:prandtl equation}). The proof of well-posedness of the linearized equations (\ref{e:linearized Euler equation}) and (\ref{e:linearized prandtl equation}) is similar, thus we omit the details.

\subsection{Well-posedness of the Euler system}

\begin{Proposition}\label{e:unifrom boundness for Euler approximate}
Let $(u_0,v_0)\in H^{30}(\R^3_+)$ with ${\rm div}(u_0, v_0)=0$ and $(u_0, v_0)(x,0)=0$. Moreover, assume that ${\rm curl}(u_0, v_0)=0$ in the domain $\{(x,y)\in \R^3_+:y\leq 2\}.$ Then there exists $T_e>0$ such that the system (\ref{e:Euler equation}) has a unique solution $U^e=(u^e,v^e)$ on $[0,T_e]$, which satisfies
\begin{align}
\sup_{0\leq t\leq T_e}\|U_{\Phi_e}^e\|^2_{H^{30}}\leq C,\quad \sup_{0\leq t\leq T_e}\|(\partial_tU^e)_{\Phi_e}\|^2_{H^{29}}\leq C,\nonumber
\end{align}
where $\Phi_e=(1-y)_{+}\langle\xi\rangle$ and $H^m$ is the usual Sobolev space.
\end{Proposition}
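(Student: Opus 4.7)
The plan is to propagate the tangential analyticity that the initial data enjoys in the strip $\{y\leq 2\}$ through the Euler flow by a weighted energy method. Since $\omega_0\equiv 0$ for $y\leq 2$, the Biot--Savart reconstruction in that strip produces $(u_0,v_0)$ that is actually \emph{fully} analytic there, so in particular $\|U_0^{\wt \Phi_e(0)}\|_{H^{30}}<\infty$ for any weight of the form $\wt\Phi_e(t,y,\xi)=(1+\delta_0-\lam_e t-y)_+\langle\xi\rangle$ with $\delta_0>0$ small and $\lam_e$ large. I would work with this slightly larger, time-decreasing weight $\wt\Phi_e$ and set $T_e=\delta_0/\lam_e$, so that $\wt\Phi_e(t)\geq \Phi_e$ on $[0,T_e]$ and the uniform bound with the larger weight immediately implies the stated bound with $\Phi_e$.

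First I would regularize by a vanishing-viscosity approximation (Navier--Stokes with Navier-slip boundary condition, or a Galerkin truncation) to get smooth solutions $U^{e,n}$, and then derive uniform-in-$n$ energy estimates for $\|U^{e,n}_{\wt \Phi_e}\|^2_{H^{30}}$. The scheme is the half-space analogue of Proposition 7.1 of the present paper, considerably simplified by the absence of a boundary layer: acting $\pa_x^i\pa_y^j e^{\wt\Phi_e}$ on the Euler system for $|i|+j\le 30$ and pairing with $\pa_x^i\pa_y^j U^{e,n}_{\wt \Phi_e}$, the time derivative $\pa_t\wt\Phi_e=-\lam_e\chi_{\{y<1+\delta_0-\lam_e t\}}\langle\xi\rangle$ generates a dissipation $\lam_e\|U^{e,n}_{\wt\Phi_e}\|^2_{H^{30,\f12}(\Omega(t))}$ that absorbs the loss of one tangential derivative from the nonlinear transport terms $u^e\cdot\na_x u^e$ and $v^e\pa_y u^e$, estimated via the obvious analogues of Lemma 4.1 and Lemma 4.2. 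The vertical velocity is harmless because $v^e|_{y=0}=0$, so one writes $v^e=-\int_0^y\na_x\cdot u^e(y')\,dy'$ and applies a Hardy-type bound. The pressure solves the Neumann problem $-\Delta p^e=\na\cdot(U^e\cdot\na U^e)$ with $\pa_y p^e|_{y=0}=0$, whose analytic $H^{30}$ estimate follows the scheme of Lemma 6.3 but is simpler (no $\ve$-dependent boundary flux). Passing to the limit in $n$ by an Aubin--Lions argument produces the claimed $U^e$, and standard $L^2$ energy estimates on the difference of two solutions yield uniqueness. The $H^{29}$ bound on $(\pa_t U^e)_{\Phi_e}$ is then read off from the equation together with the pressure estimate.

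The main obstacle is the lack of smoothness of the weight $\wt\Phi_e$ in the $y$ variable: the $(\,\cdot\,)_+$ produces a corner along the hyperplane $\{y=1+\delta_0-\lam_e t\}$, so differentiating $e^{\wt\Phi_e}$ in $y$ yields the bounded but discontinuous factor $-\chi_{\Omega(t)}\langle\xi\rangle$, which has to pass through full $H^{30}$ Sobolev regularity including $\pa_y^{30}$. Each commutator $[\pa_y^j,e^{\wt\Phi_e}]$ produces a distribution supported on $\{y=1+\delta_0-\lam_e t\}$ together with bulk terms carrying at most one power of $\langle D_x\rangle$; since $\omega^e$ remains supported in $\{y\geq 2-Ct\}$ for small $t$ by transport of vorticity, the solution is actually smooth across this hyperplane, so the singular part is harmless, while the bulk part is absorbed by the dissipation $\lam_e\langle D_x\rangle^{\f12}$ after an interpolation. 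A cleaner way to circumvent this technicality is to replace $(\cdot)_+$ by a smoothed version, close the estimates uniformly with respect to the smoothing parameter, and then take the limit; either route closes the argument within $[0,T_e]$.
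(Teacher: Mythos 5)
Your proposal is workable in principle, but it takes a genuinely different and much heavier route than the paper. The paper never performs an analytic energy estimate for the Euler system at all: it runs a \emph{plain} $H^{29}$ energy estimate on the vorticity equation, observes that transport keeps ${\rm supp}\,w^e$ inside $\{y\geq 3/2\}$ for small time, and then recovers the weighted bound on the velocity purely from the elliptic div--curl reconstruction ($\triangle v^e=\partial_{x_2}w^e_1-\partial_{x_1}w^e_2$ with $v^e|_{y=0}=0$, together with $\partial_yu^e=w^{e,\bot}_h+\na_x v^e$, ${\rm div}_x u^e=-\pa_y v^e$, ${\rm curl}_x u^e=w^e_3$). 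Since the weight $\Phi_e=(1-y)_+\langle\xi\rangle$ lives only where the source is zero, the Green kernels supply the factor $e^{-|\xi|(3/2-y)}$ and one gets $\|(\nabla U^e)_{\Phi_e}\|^2_{H^{29}}\leq C(\|w^e\|^2_{H^{29}}+\|U^e_{\Phi_e}\|^2_{L^2})$ with \emph{no analytic norm of the vorticity} on the right: the tangential analyticity is regenerated at each time from harmonicity rather than propagated by the flow, which is also why the paper obtains a time-independent radius while yours necessarily shrinks like $\delta_0-\lam_e t$. Your direct Cauchy--Kowalevskaya-type estimate on the velocity would be more robust (it would survive if the data were merely tangentially analytic near the boundary instead of irrotational there), but it forces you to carry out essentially all of the machinery of Sections 6--8 for the Euler system: the analytic product estimates, an analytic Neumann estimate for the pressure, the commutators of $e^{\wt\Phi_e}$ with $v\pa_y$ pushed through $\pa_y^{30}$ (each $\pa_y$ hitting the weight trades a vertical derivative for $\chi\langle D_x\rangle$, and the top-order transport term must be integrated by parts in $y$ \emph{inside} the twisted product before the $\|v\|_{L^\infty}$-weighted half-derivative losses can be fed to the $\lam_e$-dissipation), and the regularization of the corner of $(\cdot)_+$. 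None of these is a fatal gap, but all are only sketched in your write-up, and the observation you relegate to a technical footnote --- that the vorticity support stays away from the boundary --- is in fact the entire engine of the paper's two-line argument.
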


\begin{proof}
Here we only present a priori estimate of the solution. We consider the vorticity equation of the system (\ref{e:Euler equation})
\begin{align}
\left\{
\begin{aligned}
&\partial_t w^e+ U^e\cdot \nabla w^e-w^e\cdot\nabla U^e=0,\\
&w^e(0,x,y)={\rm curl}(u_0, v_0)\tre w^e_0.\nonumber
\end{aligned}
\right.
\end{align}

First of all, the standard energy estimate ensures that
\beno
\sup_{t\in [0,T_e]}\|w^e(t)\|_{H^{29}}\leq C
\eeno
for some $T_e>0$. Because of  $w^e_0=0$ in $\{(x,y)\in \R^3_+:y\leq 2\}$,  we can deduce that $w^e(t,x,y)=0$ in $\big\{(x,y)\in \R^3_+:y\leq \frac32\big\}$ for any $t\in [0, T_e]$(take $T_e$ smaller if necessary). 

Thanks to
\beno
\triangle v^e=\partial_{x_2}w_1^e-\partial_{x_1}w_2^e,\quad v^e|_{y=0}=0,
\eeno
we deduce that
\ben\label{eq:ve-est}
\|(\nabla v^e)_{\Phi_e}\|^2_{H^{29}}\leq C\big(\|w^e\|^2_{H^{29}}+\| v^e_{\Phi_e}\|^2_2\big).
\een
On the other hand,
$$\partial_yu^e=w_h^{e,\bot}+\partial_xv^e,\quad \nabla_x\cdot u^e=-\partial_yv^e,\quad {\rm curl}_x u^e=w_3^e,
$$
therefore,
\beno
&\triangle_x u^e_1=-\partial_{yx_1}v^e+\partial_{x_2}w^e_3,\quad \partial_yu^e_1=w^e_2+\partial_{x_1}v^e,\\
&\triangle_x u^e_2=-\partial_{yx_2}v^e-\partial_{x_1}w^e_3,\quad \partial_yu^e_2=-w^e_1+\partial_{x_2}v^e,
\eeno
which along with \eqref{eq:ve-est} imply that
$$\|(\nabla u^e)_{\Phi_e}\|^2_{H^{29}}\leq C\big(\|w^e\|^2_{H^{29}}+\| U^e_{\Phi_e}\|^2_2\big).$$
Thus, we arrive at
\beno
\|(\nabla U^e)_{\Phi_e}\|^2_{H^{29}}\leq C\big(\|w^e\|^2_{H^{29}}+\| U^e_{\Phi_e}\|^2_2\big),
\eeno
which implies 
\begin{align}
\|U^e_{\Phi_e}\|^2_{H^{30}}\leq C\big(\|w^e\|^2_{H^{29}}+\| U^e\|^2_2\big)\leq C\nonumber
\end{align}
by using the fact that  for any $\varepsilon\in (0,1)$,
\beno
\| U^e_{\Phi_e}\|^2_2\leq \varepsilon\|(\nabla U^e)_{\Phi_e}\|^2_2+ C(\varepsilon)\| U^e\|^2_2.
\eeno

A similar argument as above gives
\beno
\|(\partial_t U^e)_{\Phi_e}\|^2_{H^{29}}\leq C\big(\|\partial_tw^e\|^2_{H^{28}}+\|\partial_t U^e\|^2_2\big).
\eeno
Now, we deduce
$
\|\partial_tw^e\|_{H^{28}}\leq C
$
from the vorticity equation, and $\|\partial_t U^e\|_2\leq C$ from the velocity equation. Thus, 
\beno
\|(\partial_t U^e)_{\Phi_e}\|^2_{H^{29}}\leq C.
\eeno
This completes the proof.
\end{proof}

\subsection{Well-posedness of the Prandtl system}
To prove the well-posedness of the Prandtl system, we first introduce some weighted norms
\begin{align*}
\|U^p\|^2_{\overline{H}^{m}_p}=&\sum\limits_{|j|+k\leq m}\int_{\R^3_+}\big| e^{\phi_p(t,z)}\partial_x^j\widetilde{Z}^kU^p\big|^2dxdz,\\
\|U^p\|^2_{\overline{H}^{m,\frac12}_p}=&\sum\limits_{|j|+k\leq m}\int_{\R^3_+}\big| e^{\phi_p(t,z)}\langle D_x \rangle^{\frac12}\partial_x^j\widetilde{Z}^kU^p\big|^2dxdz+\sum\limits_{|j|+k\leq m}\int_{\R^3_+}\big| ze^{\phi_p(t,z)} \partial_x^j\widetilde{Z}^kU^p\big|^2dxdz.
\end{align*}
where $\phi_p(t,z)=\rho_p(t)z^2$ with $\rho_p(t)=1-\lambda_pt\geq 1$ and $\lambda_p$ defined later.
\begin{Proposition}\label{e:well posedness of prandtl equation}
Let $(u^e,v^e)$ be given as above proposition. There exists $T_P>0$ such that the system (\ref{e:prandtl equation}) has a unique solution $U^p=(u^p,v^p)$ on $[0,T_P]$, which satisfies
\begin{align}
\sup_{0\leq t\leq T_P}\|U^p_{\Phi_p}\|^2_{\overline{H}^{27}_p}\leq C_0,\quad \sup_{0\leq t\leq T_P}\Big(\|(\partial_tU^p)_{\Phi_p}\|^2_{\overline{H}^{24}_p}+\|(\partial_z^2U^p)_{\Phi_p}\|^2_{\overline{H}^{24}_p}\Big)\leq C,\nonumber
\end{align}
where $\Phi_p=\rho_p(t)\langle\xi\rangle$.
\end{Proposition}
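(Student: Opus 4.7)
The approach is a tangentially analytic Cauchy--Kovalevskaya-type energy method combined with the Gaussian weight $e^{\phi_p(t,z)}$ in the normal variable. I would first reduce to homogeneous Dirichlet boundary data via the substitution of the Remark following \eqref{e:prandtl equation}, which replaces $(u^p,v^p)$ by $(\widetilde u^p,\widetilde v^p)$ satisfying a Prandtl system driven by $\na_x p^e|_{y=0}$. I would then construct approximate solutions via a Friedrichs scheme that truncates tangential frequency and mollifies in $z$, for which short-time existence is standard; the core is a uniform a priori bound. For $|j|+k\le 27$, apply $\pa_x^j\widetilde Z^k$ to the equation, pair against $e^{2\phi_p(t,z)}\pa_x^j\widetilde Z^k U^p_{\Phi_p}$, and sum. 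The time derivative combined with the Fourier multiplier $e^{\rho_p(t)\langle\xi\rangle}$ produces, via $\pa_t\rho_p=-\lambda_p$, the half-derivative dissipation $\lambda_p\|\langle D_x\rangle^{\f12}U^p_{\Phi_p}\|^2$, while the $z$-weight contributes $\lambda_p\|z\,U^p_{\Phi_p}\|^2$ through $\pa_t e^{\phi_p}=-\lambda_p z^2 e^{\phi_p}$. After commuting $-\pa_{zz}$ with $e^{\phi_p}$ (the resulting lower-order error being absorbed by the just-produced Gaussian moment), the left-hand side majorizes
\begin{align*}
\tfrac12\tfrac{d}{dt}\|U^p_{\Phi_p}\|^2_{\overline H^{27}_p}+\lambda_p\|U^p_{\Phi_p}\|^2_{\overline H^{27,\f12}_p}+\|\pa_z U^p_{\Phi_p}\|^2_{\overline H^{27}_p}.
\end{align*}

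The hard part is the derivative loss in the transport-type term $\widetilde v^p\pa_z u^p$ and in the non-local contribution $\big(\int_0^\infty\pa_x u^p\,dz'\big)\pa_z u^p$. For the former I would exploit the divergence-free identity $v^p(t,x,z)=-\int_0^z\na_x\cdot u^p\,dz'$ to trade the $\pa_z$ on $u^p$ for a tangential derivative on $u^p$ inside a $z$-integral, which is controlled in $L^\infty_z$ by the Gaussian weight; analytic-Sobolev product estimates analogous to those in Section~4 then reduce the term to one full extra tangential derivative on $u^p$, exactly of the form that the half-derivative dissipation $\lambda_p\|\langle D_x\rangle^{\f12}U^p_{\Phi_p}\|^2_{\overline H^{27}_p}$ can swallow after Cauchy--Schwarz, provided $\lambda_p$ is chosen large relative to $\|U^p_{\Phi_p}\|_{\overline H^{27}_p}$. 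The non-local term is treated in the same manner, with $\int_0^\infty\pa_x u^p\,dz'$ viewed as an $x$-multiplier whose $L^\infty$ norm is bounded by the Gaussian weighted energy. The remaining terms such as $u^p\cdot\na_x u^e(t,x,0)$, $z\pa_y v^e(t,x,0)\pa_z u^p$, $u^e(t,x,0)\cdot\na_x u^p$ and the Euler forcing are lower order and handled directly by product estimates using Proposition~\ref{e:unifrom boundness for Euler approximate}.

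A continuity/bootstrap argument then yields $\|U^p_{\Phi_p}\|^2_{\overline H^{27}_p}\le C_0$ on a time interval $[0,T_P]$ with $T_P< 1/(2\lambda_p)$, on which $\rho_p(t)$ stays bounded below by a positive constant and the approximation parameters can be sent to their limits. The estimates on $(\pa_t U^p)_{\Phi_p}$ and $(\pa_z^2 U^p)_{\Phi_p}$ in $\overline H^{24}_p$ are then obtained by reading $\pa_{zz}u^p$ off of the equation, estimating every other term in $\overline H^{24}_p$ from the just-established bound together with the product estimates, and then recovering $\pa_t u^p$ from the equation; $v^p$ and its derivatives are recovered via $v^p=-\int_0^z\na_x\cdot u^p\,dz'$ together with the Gaussian weight. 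Uniqueness follows by subtracting two solutions and running the same energy argument on the difference at a slightly smaller analytic radius; the tangential analytic radius loss again absorbs the derivative loss, and the main obstacle throughout remains the simultaneous control of $\widetilde v^p\pa_z u^p$ and the non-local term, which is precisely what the Cauchy--Kovalevskaya mechanism is tailored to defeat.
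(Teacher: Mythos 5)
Your plan follows essentially the same route as the paper: homogenize the boundary data, then run a tangentially analytic, Gaussian-weighted energy estimate in $\overline{H}^{27}_p$ in which the shrinking radius $\rho_p(t)=1-\lambda_p t$ generates the dissipation $\lambda_p\|\cdot\|^2_{\overline{H}^{27,\f12}_p}$ (both the half tangential derivative and the $z$-moment), and this together with $\|\pa_z\cdot\|^2_{\overline{H}^{27}_p}$ absorbs the derivative loss coming from $v^p\pa_zu^p$ and the non-local term; the bounds on $\pa_tU^p$ and $\pa_z^2U^p$ are then read off the equation, exactly as in the paper.

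The one step that fails as literally written is the homogenization. You propose to use the substitution of the Remark, i.e.\ $u^p\mapsto u^p+u^e(t,x,0)$. That makes the trace at $z=0$ vanish, but it replaces the far-field condition by $\lim_{z\to\infty}\widetilde u^p=u^e(t,x,0)\neq 0$, and a function with a nonzero limit at infinity has infinite $\overline{H}^{27}_p$ norm, since the weight $e^{\phi_p(t,z)}=e^{\rho_p(t)z^2}$ grows like a Gaussian. The paper instead lifts by the \emph{decaying} function $g=e^{-2\phi_p(t,z)}u^e(t,x,0)$, setting $\overline u^p=u^p+g$, which kills the boundary value at $z=0$ while keeping $\overline u^p\to 0$ as $z\to\infty$ and $e^{\phi_p}g\in L^2$; the price is the extra forcing $-\pa_tg+\pa_{zz}g+\cdots$, which is harmless by Proposition \ref{e:unifrom boundness for Euler approximate}. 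This is an easy repair, but without it the weighted energy functional you propose to propagate is not even finite, so you should state the lift explicitly. The rest of your argument (Friedrichs approximation for existence, the absorption of the quasilinear terms by $\lambda_p\|\cdot\|^2_{\overline{H}^{27,\f12}_p}$ via a continuity argument starting from zero data, and uniqueness by a radius-loss estimate on the difference) is consistent with the paper's a priori estimate and adds the standard construction and uniqueness steps the paper omits.
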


\begin{proof}
As in Proposition \ref{e:unifrom boundness for Euler approximate}, we only give a priori estimate. Recall that $u^p$ satisfies
\begin{align}
\left\{
\begin{aligned}
&\partial_tu^p-\partial_{zz}u^p+u^p\cdot\na_xu^e(t,x,0)+\big(u^e(t,x,0)+u^p\big)\cdot\na_xu^p
\nonumber\\&\qquad\qquad\qquad\qquad+\Big(-\int_0^z\na_x\cdot u^p(t,x,z)dz+ z\partial_yv^e(t,x,0)\Big)\partial_zu^p=0,\\[3pt]
&u^p(0,x,z)=0,\\[3pt]
&\lim\limits_{z\rightarrow \infty}u^p(t,x,z)=0,\quad u_p(t,x,0)=-u^e(t,x,0).
\end{aligned}
\right.
\end{align}
We set
$$\overline{u}^p=u^p+e^{-2\phi_p(t,z)}u^e(t,x,0)\tre u^p+g.$$
Thus, it's easy to verify that $\overline{u}^p$ satisfies
\begin{align}\label{e:mofified prandtl equation}
\left\{
\begin{aligned}
&\partial_t\overline{u}^p-\partial_{zz}\overline{u}^p+F^p=0,\\[3pt]
&\overline{u}^p(0,x,y)=0,\\[3pt]
&\lim\limits_{z\rightarrow \infty}\overline{u}^p(t,x,z)=0,\quad \overline{u}^p(t,x,0)=0,
\end{aligned}
\right.
\end{align}
where
\begin{align}
F^p=&-\partial_tg+\partial_{zz} g+(\overline{u}^p-g)\cdot\na_xu^e(t,x,0)+(u^e(t,x,0)+\overline{u}^p-g)\cdot\na_x(\overline{u}^p-g)\nonumber\\
&+\Big(-\int_0^z\na_x\cdot(\overline{u}^p-g)(t,x,z')dz'+z\partial_yv^e(t,x,0)\Big)\partial_z(\overline{u}^p-g).\nonumber
\end{align}

Acting $\partial_x^j\widetilde{Z}^ke^{\rho_p(t)\delta\langle D_x\rangle}$ on both sides of (\ref{e:mofified prandtl equation}), then taking $L^2$ inner product with $e^{2\phi_p(t,z)}\partial_x^j\widetilde{Z}^k \overline{u}_{\rho_p}^p$, integrating by parts, summing over $|j|+k\leq 27$ and fixing $\delta$ small, we arrive at
\begin{align}
\frac12\frac{d}{dt}\|\overline{u}_{\Phi_p}^p\|^2_{\overline{H}^{27}_p}+\frac{\lambda_p}{2}\big\|\overline{u}^p_{\Phi_p}\big\|^2_{\overline{H}^{27,\frac12}_p}
+\frac{1}{2}\big\|(\partial_z\overline{u}^p)_{\Phi_p}\big\|^2_{\overline{H}^{27}_p}\leq C\sum_{|j|+k\leq 27}\Big|\Big<\partial_x^j\widetilde{Z}^kF_{\Phi_p}^p, e^{2\phi_p(t,z)}\partial_x^j\widetilde{Z}^k \overline{u}_{\Phi_p}^p\Big>\Big|.\nonumber
\end{align}

Notice that $g=e^{-2\phi_p(t,z)}u^e(t,x,0)$. It is easy to get  by Proposition \ref{e:unifrom boundness for Euler approximate} that
\begin{align}
&\sum_{|j|+k\leq 27}\Big|\Big<\partial_x^j\widetilde{Z}^k(\partial_tg+\triangle g+g\partial_xu^e(t,x,0)+(u^e(t,x,0)-g)\partial_xg)_{\Phi_p}, e^{2\phi_p(t,z)}\partial_x^j\widetilde{Z}^k \overline{u}_{\Phi_p}^p\Big>\Big|\nonumber\\
&\quad\leq  C+\|\overline{u}_{\Phi_p}^p\|^2_{\overline{H}^{27}_p},\nonumber
\end{align}
and
\begin{align}
&\sum_{|j|+k\leq 27}\Big|\Big<\partial_x^j\widetilde{Z}^k\Big(\int_0^z\partial_xg(t,x,z')dz'\partial_zg+z\partial_yv^e(t,x,0)\partial_zg\Big)_{\Phi_p}, e^{2\phi_p(t,z)}\partial_x^j\widetilde{Z}^k \overline{u}_{\Phi_p}^p\Big>\Big|\nonumber\\
&\quad\leq  C+\|\overline{u}_{\Phi_p}^p\|^2_{\overline{H}^{27}_p}.\nonumber
\end{align}
By Proposition \ref{e:unifrom boundness for Euler approximate} again, we have
\begin{align}
&\sum_{|j|+k\leq 27}\Big|\Big<\partial_x^j\widetilde{Z}^k(\overline{u}^p\partial_xu^e(t,x,0))_{\Phi_p}, e^{2\phi_p(t,z)}\partial_x^j\widetilde{Z}^k \overline{u}_{\Phi_p}^p\Big>\Big|
\leq  C\|\overline{u}_\Phi^p\|^2_{\overline{H}^{27}_p}.\nonumber
\end{align}
Finally, we consider the transport term
\begin{align}
&\sum_{|j|+k\leq 27}\Big|\Big<\partial_x^j\widetilde{Z}^k\Big(\overline{u}^p\partial_x\overline{u}^p
-\int_0^z\partial_x\overline{u}^p(t,x,z')dz'\partial_z\overline{u}^p\Big)_{\Phi_p}, e^{2\phi_p(t,z)}\partial_x^j\widetilde{Z}^k \overline{u}_{\Phi_p}^p\Big>\Big|.\nonumber
\end{align}
First, there holds
\begin{align}
&\sum_{|j|+k\leq 27}\Big|\int_{\R^3_+}\partial_x^j\widetilde{Z}^k\big(\overline{u}^p\partial_x\overline{u}^p
\big)_{\Phi_p} e^{2\phi_p(t,z)}\partial_x^j\widetilde{Z}^k \overline{u}_{\Phi_p}^pdxdz\Big|\nonumber\\
&\quad\leq  C\Big(\|\overline{u}_{\Phi_p}^p\|_{\overline{H}^{27}_p}+1\Big)\|\overline{u}_{\Phi_p}^p\|^2_{\overline{H}^{27,\frac12}_p}
+\frac{1}{10}\big\|(\partial_z\overline{u}^p)_{\Phi_p}\big\|^2_{\overline{H}^{27}_p}.\nonumber
\end{align}
Then, a direct computation gives
\begin{align}
&\sum_{|j|+k\leq 27}\Big|\int_{\R^3_+}\partial_x^j\widetilde{Z}^k\Big(\int_0^z\partial_x\overline{u}^p(t,x,z')dz'\partial_z\overline{u}^p\Big)_{\Phi_p} e^{2\phi_p(t,z)}\partial_x^j\widetilde{Z}^k \overline{u}_{\Phi_p}^pdxdz\Big|\nonumber\\
&\quad\leq  C\|(\partial_z\overline{u}^p)_{\Phi_p}\|_{\overline{H}^{27}_p}\|\overline{u}_{\Phi_p}^p\|^2_{\overline{H}^{27}_p}
+C\|\overline{u}_{\Phi_p}^p\|^2_{\overline{H}^{27}_p}\|\overline{u}_{\Phi_p}^p\|^2_{\overline{H}^{27,\frac12}_p}.\nonumber
\end{align}
Thus, collecting these estimates, we arrive at
\begin{align}
&\frac12\frac{d}{dt}\|\overline{u}_{\Phi_p}^p\|^2_{\overline{H}^{27}_p}+\frac{\lambda_p}{4}\big\|\overline{u}^p_{\Phi_p}\big\|^2_{\overline{H}^{27,\frac12}_p}
+\frac{1}{10}\big\|(\partial_z\overline{u}^p)_{\Phi_p}\big\|^2_{\overline{H}^{27}_p}\nonumber\\
&\quad\leq C\big(1+\|(\overline{u}^p)_{\Phi_p}\|^2_{H^{27}_p}\big)^2+C\Big(\|\overline{u}_{\Phi_p}^p\|^2_{\overline{H}^{27}_p}+1\Big)\|\overline{u}_{\Phi_p}^p\|^2_{\overline{H}^{27,\frac12}_p}.\nonumber
\end{align}
With this, a continuous argument ensures that there exists $T_p> 0$ so that
$$\sup_{0\leq t\leq T_p}\|\overline{u}_{\Phi_p}^p\|^2_{\overline{H}^{27}_p}\leq C,$$
from which and  Proposition \ref{e:unifrom boundness for Euler approximate}, we infer that
$$\sup_{0\leq t\leq T_p}\|(u^p,v^p)_{\Phi_p}\|^2_{\overline{H}^{26}_p}\leq C.$$
For the second estimate, we can first show that
$$\sup_{0\leq t\leq T_p}\|\partial_t(u^p,v^p)_{\Phi_p}\|^2_{\overline{H}^{24}_p}\leq C,$$
then the desired estimate can be deduced by using the equation.
\end{proof}

\subsection{Proof of Lemma \ref{e:uniform boundness for approximate solution} and Lemma \ref{e:uniform boundness for error}.}

By the same arguments as in Proposition \ref{e:unifrom boundness for Euler approximate} and Proposition \ref{e:well posedness of prandtl equation}, we can prove the well-posedness of the linearized Euler equation (\ref{e:linearized Euler equation}) and linearized Prandtl equation (\ref{e:linearized prandtl equation}) with the associated initial-boundary conditions. Especially, there hold the following uniform bounds for the approximate solution:
\begin{align}
&\|(u_e^{(0)},v_e^{(0)})_{\Phi_e}\|^2_{H^{30}}\leq C,\quad \|(u_e^{(1)},v_e^{(1)})_{\Phi_e}\|^2_{H^{28}}\leq C,\quad \|\partial_t(u_e^{(0)},v_e^{(0)})_{\Phi_e}\|^2_{H^{29}}\leq C,\nonumber\\
& \|\partial_t(u_e^{(1)},v_e^{(1)})_{\Phi_e}\|^2_{H^{17}}\leq C,\quad
\|(u_p^{(0)},v_p^{(1)})_{\Phi_p}\|^2_{\overline{H}^{27}_p}\leq C,\quad \|(u_p^{(1)},v_p^{(2)})_{\Phi_p}\|^2_{\overline{H}^{20}_p}\leq C,\nonumber\\
& \|\partial_t(u_p^{(0)},v_p^{(1)})_{\Phi_p}\|^2_{\overline{H}^{24}_p}+\|\partial_{zz}(u_p^{(0)},v_p^{(1)})_{\Phi_p}\|^2_{\overline{H}^{24}_p}\leq C,\nonumber\\
& \|\partial_t(u_p^{(1)},v_p^{(2)})_{\Phi_p}\|^2_{\overline{H}^{17}_p}+\|\partial_{zz}(u_p^{(1)},v_p^{(2)})_{\Phi_p}\|^2_{\overline{H}^{17}_p}\leq C.\nonumber
\end{align}
With these bounds, Lemma \ref{e:uniform boundness for approximate solution} and Lemma \ref{e:uniform boundness for error} follow easily.

\bigskip

\noindent {\bf Acknowledgments.}
The authors are deeply grateful to Chao Wang for his valuable discussions and Yuxi Wang for careful reading of the manuscript.
M. Fei is partially supported by NSF of China under Grant 11301005 and AHNSF grant 1608085MA13. Z. Zhang is partially supported by NSF of China under Grant
11371039 and 11421101.

\medskip

\end{document}